\newcommand\myshade{85}
\colorlet{mylinkcolor}{violet}
\colorlet{mycitecolor}{blue}
\colorlet{myurlcolor}{green}
\newcommand{\sssec}[0]{\subsubsection}
\newcommand{\on}{\operatorname}
\newcommand{\nc}{\newcommand}
\nc{\TV}{Toen--Vezzosi}
\nc{\et}{{\on{\acute{e}t}}}
\nc{\virg}[1]{``#1"}
\nc{\bigt}[1]{\big( #1 \big) }
\nc{\biggt}[1]{\bigg( #1 \bigg) }
\nc{\Bigt}[1]{\Big( #1 \Big) }
\nc{\Biggt}[1]{\Bigg( #1 \Bigg) }
\nc{\biggq}[1]{\bigg[ #1 \bigg]}
\nc{\bigq}[1]{\big[ #1 \big] }
\nc{\Bigq}[1]{\Big[ #1 \Big] }
\nc{\wt}{\tilde}
\nc{\violet}[1]{\textcolor{violet}{#1}}
\nc{\green}[1]{\textcolor{green}{#1}}
\nc{\orange}[1]{\textcolor{orange}{#1}}
\theoremstyle{plain}
\newtheorem{thm}[subsubsection]{Theorem}
\newtheorem{mainthm}{Theorem}
\newtheorem{lem}[subsubsection]{Lemma}
\newtheorem{cor}[subsubsection]{Corollary}
\newtheorem{conj}[subsubsection]{Conjecture}
\theoremstyle{definition}
\newtheorem{defn}[subsubsection]{Definition}
\theoremstyle{remark}
\newtheorem{rmk}[subsubsection]{Remark}
\newtheorem{rem}[subsubsection]{Remark}
\newtheorem{example}[subsubsection]{Example}
\theoremstyle{plain}
\theoremstyle{definition}
\newcommand{\ccC}[0]{\mathcal{C}}
\newcommand{\ccE}[0]{\mathcal{E}}
\newcommand{\ccH}[0]{\mathcal{H}}
\newcommand{\ccM}[0]{\mathcal{M}}
\newcommand{\ccO}[0]{\mathcal{O}}
\newcommand{\ccS}[0]{\mathcal{S}}
\newcommand{\bbA}[0]{\mathbb{A}}
\newcommand{\bbE}[0]{\mathbb{E}}
\newcommand{\bbL}[0]{\mathbb{L}}
\newcommand{\bbN}[0]{\mathbb{N}}
\newcommand{\bbQ}[0]{\mathbb{Q}}
\newcommand{\bbU}[0]{\mathbb{U}}
\newcommand{\bbZ}[0]{\mathbb{Z}}
\newcommand{\ffF}[0]{\mathfrak{F}}
\newcommand{\ffL}[0]{\mathfrak{L}}
\newcommand{\ffT}[0]{\mathfrak{T}}
\newcommand{\fff}[0]{\mathfrak{f}}
\newcommand{\ffg}[0]{\mathfrak{g}}
\newcommand{\ffh}[0]{\mathfrak{h}}
\newcommand{\ffi}[0]{\mathfrak{i}}
\newcommand{\ffk}[0]{\mathfrak{k}}
\newcommand{\fft}[0]{\mathfrak{t}}
\newcommand{\ffu}[0]{\mathfrak{u}}
\newcommand{\rmB}[0]{\mathrm{B}}
\newcommand{\scrH}[0]{\mathscr{H}}
\newcommand{\scrK}[0]{\mathscr{K}}
\newcommand{\scrY}[0]{\mathscr{Y}}
\nc{\sA}{{\mathsf{A}}}
\nc{\sB}{{\mathsf{B}}}
\nc{\sC}{{\mathsf{C}}}
\nc{\sD}{{\mathsf{D}}}
\nc{\sE}{{\mathsf{E}}}
\nc{\sF}{{\mathsf{F}}}
\nc{\sG}{{\mathsf{G}}}
\nc{\sK}{{\mathsf{K}}}
\nc{\sM}{{\mathsf{M}}}
\nc{\sN}{{\mathsf{N}}}
\nc{\sO}{{\mathsf{O}}}
\nc{\sW}{{\mathsf{W}}}
\nc{\sQ}{{\mathsf{Q}}}
\nc{\sP}{{\mathsf{P}}}
\nc{\sR}{{\mathsf{R}}}
\nc{\sS}{{\mathsf{S}}}
\nc{\sT}{{\mathsf{T}}}
\nc{\sU}{{\mathsf{U}}}
\nc{\sV}{{\mathsf{V}}}
\nc{\sZ}{{\mathsf{Z}}}
\newcommand{\up}[1]{\on{#1}}
\newcommand{\ul}[1]{\underline{#1}}
\newcommand{\ol}[1]{\overline{#1}}
\newcommand{\usotimes}{\otimes}
\nc{\enh}{{\on{enh}}}
\nc{\eZ}{\on{e-}\!\red{Z}}
\nc{\Getp}{\uG_0\on{-etp}}
\nc{\at}{\on{at}}
\nc{\colim}{\varinjlim}
\renewcommand{\Bar}[0]{\on{Bar}}
\newcommand{\inj}[0]{\on{inj}}
\renewcommand{\epsilon}{\varepsilon}
\newcommand{\eps}{\epsilon}
\newcommand{\op}[0]{{\on{op}}}
\nc{\cat}{\on{cat}} 
\newcommand{\etp}[0]{\on{etp}}
\newcommand{\tp}[0]{\on{tp}}
\newcommand{\oo}[0]{\infty}
\newcommand{\uG}[0]{\on{G}}
\newcommand{\uH}[0]{\on H}
\newcommand{\uI}[0]{\on{I}}
\newcommand{\uK}[0]{\on{K}}
\newcommand{\uL}[0]{\on{L}}
\newcommand{\uR}[0]{\on{R}}
\newcommand{\GLK}[0]{\on{G}_{\on{L/K}}}
\newcommand{\Tr}[0]{\on{Tr}}
\newcommand{\coFib}[0]{\on{coFib}}
\newcommand{\Fib}[0]{\on{Fib}}
\nc{\act}{{\on{act}}}
\nc{\rev}{{\on{rev}}}
\nc{\env}{{\on{env}}}
\newcommand{\angles}[1]{\langle #1 \rangle}
\newcommand{\Angles}[1]{\bigl \langle #1 \bigr \rangle}
\newcommand{\can}[0]{\on{can}}
\newcommand{\xto}[1]{\xrightarrow{#1}}
\newcommand{\tto}{\twoheadrightarrow}
\newcommand{\hto}{\hookrightarrow}
\DeclareMathOperator{\ev}{ev} 
\DeclareMathOperator{\coev}{coev} 
\DeclareMathOperator{\id}{id} 
\newcommand{\longto}{\longrightarrow}
\newcommand{\IK}{{\uI_{\uK}}}
\newcommand{\IL}{{\uI_{\uL}}}
\newcommand{\CAlg}{\on{CAlg}}
\newcommand{\sCAlg}{\on{CAlg}^{\Delta^{\op}}}
\newcommand{\Alg}{\on{Alg}}
\newcommand{\Ql}[1]{\bbQ_{\ell #1}}
\newcommand{\BU}[0]{\rmB \bbU}
\newcommand{\End}[0]{\up{End}}
\newcommand{\Qell}{\bbQ_{\ell}}
\nc{\sBenv}{\sB^{\env}}
\nc{\sCenv}{\sC^{\env}}
\newcommand{\QellI}{\bbQ_\ell^{\uI}}
\newcommand{\QellIGred}{\bbQ_\ell^{\uI}(\GLK)}
\nc{\Benv}{\CB^{\env}}
\nc{\Sym}{\on{Sym}}
\nc{\red}[1]{#1_{\on{red}}}
\newcommand{\Spec}[1]{\on{Spec}(#1)}
\newcommand{\dSch}[0]{\on{dSch}}
\newcommand{\Sch}[0]{\on{Sch}}
\nc{\bareta}{\bar{\eta}}
\DeclareMathOperator{\Hom}{Hom} 
\DeclareMathOperator{\Map}{Map} 
\newcommand{\pr}[1]{\on{pr}_{#1}}
\nc{\olpr}{\ol{\pr}}
\nc{\tdelta}{\widehat\delta}
\nc{\Ind}[0]{\on{Ind}}
\newcommand{\Tor}[0]{\on{Tor}}
\newcommand{\sTor}[0]{\on{sTor}}
\newcommand{\HH}[0]{\on{HH}}
\newcommand{\HK}[0]{\on{HK}}
\newcommand{\G}[0]{\on{G}}
\newcommand{\Set}[0]{\on{Set}}
\newcommand{\sSet}[0]{\on{Set}^{\Delta^{\op}}}
\newcommand{\dgCat}{\on{dgCat}}
\newcommand{\dgCAT}[0]{\on{dgCAT}}
\newcommand{\Mod}[0]{\on{Mod}}
\newcommand{\Fun}[0]{\on{Fun}}
\newcommand{\dgMod}[0]{\on{Mod}_{\on{dg}}}
\newcommand{\cofdgMod}[0]{\on{Mod}_{\on{dg}}^{\on{cof}}}
\newcommand{\Cohlevel}[1]{\mathsf{D^{\leq #1}_{coh}}}
\newcommand{\Cohlevelplus}[1]{\mathsf{D^{\geq #1}_{coh}}}
\DeclareMathOperator{\Coh}{\mathsf{D^b_{coh}}} 
\DeclareMathOperator{\Cohminus}{\mathsf{D^{--}_{coh}}} 
\DeclareMathOperator{\Cohinfty}{\mathsf{D_{coh}}}
\DeclareMathOperator{\Cohext}{\mathsf{D^{b,\boxtimes}_{coh}}} 
\DeclareMathOperator{\Perf}{\mathsf{D_{perf}}} 
\DeclareMathOperator{\Perfext}{\mathsf{D^{\boxtimes}_{perf}}} 
\DeclareMathOperator{\QCoh}{\mathsf{D_{qcoh}}} 
\DeclareMathOperator{\Sing}{\mathsf{D_{sg}}} 
\DeclareMathOperator{\Singext}{\mathsf{D^{\boxtimes}_{sg}}} 
\DeclareMathOperator{\MF}{\on{MF}}
\DeclareMathOperator{\MFcoh}{\on{MF}^{\on{coh}}}
\newcommand{\SH}[0]{\ccS \ccH}
\newcommand{\Shv}[0]{\on{Shv}_{\Ql{}}}
\newcommand{\Mv}[0]{\ccM^{\vee}}
\newcommand{\rl}[0]{\on{r}^{\ell}}
\newcommand{\chern}[0]{\ccC\!\on{h}^{\ell}}
\newcommand{\rell}{\rl}
\newcommand{\Sw}[0]{\on{Sw}}
\newcommand{\dimtot}[0]{\on{dimtot}}
\newcommand{\ar}[0]{\on{ar}_{\uL\!/\!\uK}}
\nc{\arcat}{\on{ar}_{\uL\!/\!\uK}^{\on{cat}}}
\nc{\idcat}{\id^{\cat}}
\nc{\Bl}{\on{Bl}} 
\nc{\Ar}{\on{Ar}} 
\nc{\hB}{\on{h}_{\sB}}
\nc{\hC}{\on{h}_{\sC}}
\nc{\restr}[2]{\left. #1 \right |_{#2}}
\nc{\wideprime}[1]{#1'}
\renewcommand{\setminus}{\smallsetminus}
\renewcommand{\sim}{\simeq}
\nc{\aug}{\on{aug}}
\nc{\rot}{\on{rot}}
\nc{\comm}{\on{comm}}
\title{Categorification of the localized intersection product and Bloch conductor formula}
\author{Dario Beraldo \and Massimo Pippi}
\address[D.~Beraldo]{Department of Mathematics, Université de Genève, Genève, Suisse}
\email{dario.beraldo@unige.ch}
\address[M.~Pippi]{Univ Angers, CNRS-UMR 6093, LAREMA, SFR MATHSTIC, F-49000 Angers,France}
\email{massimo.pippi@univ-angers.fr}
\begin{document}

\begin{abstract}
We categorify the localized intersection product 
on arithmetic schemes defined by Kato--Saito in \cite{katosaito04}.
As an application, we prove a generalization of Bloch conductor conjecture.
\end{abstract}

\maketitle

\tableofcontents


\section{Introduction}

In 1985, S. Bloch proposed a conjectural formula which relates numerical invariants of different nature of an arithmetic scheme, see \cite{bloch87} and Conjecture \ref{conj:Bloch} below. In this paper, we prove this conjecture together with its generalization, Conjecture \ref{conj:gBCC}.

Our method amounts to revisiting the Kato--Saito localized intersection product (\cite[Definition 5.1.5]{katosaito04}), relating this with the non-commutative approach by \TV{} (\cite{toenvezzosi22}), and then combining these two ingredients with our earlier work (\cite{beraldopippi24}).

\subsection{The generalized Bloch conductor conjecture}

We recall the statement of the conjecture proposed by Bloch in \cite{bloch87} and the generalization we prove in this paper.

\sssec{}

Let $A$ be a complete discrete valuation ring (DVR) with algebraically closed residue field $k$. Denote by $\uK=\on{Frac}(A)$ the fraction field and fix a separable closure $\ol \uK$.
We will use the standard notations
$$
  s := \Spec{k}
    \hto 
  S := \Spec{A}
    \hookleftarrow
  \eta := \Spec{\uK}
    \leftarrow
  \bareta := \Spec{\ol\uK}.
$$

\sssec{}\label{hypothesis gBCC}

Let $p:X\to S$ be an arithmetic $S$-scheme. 
By this, we mean that $X$ is a flat, generically smooth and regular $S$-scheme of finite type. 
We further assume that the singular locus $Z$ of $p:X\to S$ is proper over $S$.
We will denote $X_s$, $X_{\eta}$ and $X_{\bareta}$ the special, generic and geometric generic fiber, respectively.

\sssec{}

Let us fix a prime number $\ell$ which is different from the residue characteristic of $A$. 
Let 
$$
  \Phi
    :=
  \Phi_p(\Ql{,X})
$$
be the $\ell$-adic complex of vanishing cycles of $p:X\to S$ (\cite[Expos\'e I]{sga7i}, \cite[Expos\'e XIII]{sga7ii}).

\sssec{}

The \emph{generalized Bloch Conductor Conjecture}, to be referred to in the sequel simply as \virg{gBCC}, relates three numerical invariants of $X/S$: 

\begin{itemize}
    \item the Bloch intersection number 
          $
          \Bl(X/S)
          $, 
          an \virg{algebro-geometric} invariant related to algebraic differential forms of $X/S$;
    \item the Euler characteristic 
          $
          \chi \bigt{\uH^*_{\et}(X_s,\Phi)}
          $, a \virg{topological} invariant;
    \item the Swan conductor 
          $
          \Sw \bigt{\uH^*_{\et}(X_s,\Phi)}
          $, an \virg{arithmetic} invariant related to the action of the wild inertia group on 
          $
          \Phi
          $.
\end{itemize}

We refer the reader to \cite{bloch87, katosaito04, beraldopippi24} for details on the definitions of such integers.

\begin{conj}[Generalized Bloch Conductor Formula] \label{conj:gBCC}
Let $p: X \to S$ be an arithmetic $S$-scheme 
with $S$-proper singular locus $Z$. 
Then there is an equality of integers
\begin{equation}
\tag{gBCF}
\label{eqn:gBCC in intro}
  \Bl(X/S)
    =
  -
  \chi \bigt{\uH^*_{\et}(X_s,\Phi)}
  - 
  \Sw \bigt{\uH^*_{\et}(X_s,\Phi)}.   
\end{equation}
\end{conj}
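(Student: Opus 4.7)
My approach is to prove \eqref{eqn:gBCC in intro} by constructing, from $p\colon X\to S$, a single dg-categorical invariant whose numerical shadows compute both sides of the formula: on one hand it will recover the Bloch intersection number $\Bl(X/S)$ via a categorical Hochschild/trace calculation, and on the other it will recover $-\chi - \Sw$ applied to the vanishing cycles complex via $\ell$-adic realization. This is the mechanism hinted at in the introduction: categorify Kato--Saito, compare with \TV{}, and feed the outcome into the categorical Swan conductor formula of \cite{beraldopippi24}.

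First, I would construct a dg-category $\ccM$ of singularity-category type --- morally $\Sing(X/S)$ with support on $Z$, or a matrix-factorization-like object --- that refines the Kato--Saito localized self-intersection of the diagonal of $X\times_S X$. Since $Z$ is proper over $S$, the dg-category $\ccM$ is proper over $s$, hence its Hochschild homology is finite-dimensional. An HKR-type trace computation, playing the role of the cycle-theoretic formula in \cite[Definition 5.1.5]{katosaito04}, should then identify
$$\Bl(X/S) \;=\; \chi\bigl(\HH_*(\ccM)\bigr),$$
thereby categorifying the Kato--Saito localized intersection product and giving a non-commutative expression for the left-hand side of \eqref{eqn:gBCC in intro}.

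Next, I would apply the non-commutative $\ell$-adic realization functor of \TV{} \cite{toenvezzosi22} to $\ccM$, obtaining a bounded complex of $\Ql{}$-sheaves on $s$ equipped with a continuous action of $\on{Gal}(\ol\uK/\uK)$. The Toen--Vezzosi comparison theorem, extended to the relative setting with non-proper total space but proper singular locus, identifies this realization with a shift of $\uH^*_{\et}(X_s,\Phi)$ as a Galois representation. The Euler characteristic of the realization then matches $-\chi\bigl(\uH^*_{\et}(X_s,\Phi)\bigr)$, while the categorical Swan invariant introduced in our earlier paper \cite{beraldopippi24}, evaluated on $\ccM$, matches $-\Sw\bigl(\uH^*_{\et}(X_s,\Phi)\bigr)$. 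Combining with the previous step yields \eqref{eqn:gBCC in intro}.

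The main obstacle is the first step: the Kato--Saito product is defined via a delicate excess-intersection construction on a generally non-reduced, non-flat singular locus, and matching it with the Hochschild trace of $\ccM$ requires a precise compatibility between derived self-intersection in dg-categories and the algebraic cycle class produced in \cite{katosaito04}. A secondary but nontrivial difficulty is ensuring that the Galois-equivariant structure on the $\ell$-adic realization matches the monodromy action on vanishing cycles at the level of precision required by \cite{beraldopippi24}, so that the categorical Swan invariant actually reproduces the Swan conductor of $\uH^*_{\et}(X_s,\Phi)$ and not some variant of it.
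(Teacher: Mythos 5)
Your overall strategy---categorify the Kato--Saito localized intersection product, compare with the \TV{} categorical intersection class, and feed the result into the categorical formula of \cite{beraldopippi24}---is indeed the skeleton of the argument, but the proposal skips the step where all the actual difficulty sits: the \emph{decategorification} in mixed characteristic. The categorical formula you want to use does not produce a number directly; it is an equality of classes in $\uH^0_{\et}\bigt{S,\rl_S\bigt{\HH(\sB/A)}}$, the realization of the Drinfeld cocenter of the convolution category $\sB=\Sing(s\times_S s)$. In pure characteristic one can extract integers because $\sB$ is symmetric monoidal, so the multiplication gives a retraction $\HH(\sB/A)\to\sB$ whose realization is known. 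In mixed characteristic no such retraction exists, and a priori the vector space $\uH^0_{\et}\bigt{S,\rl_S\bigt{\HH(\sB/A)}}$ could even be zero, in which case every "trace" or "Euler characteristic" you propose to read off would be the tautology $0=0$. Your statement that $\ccM$ is proper so ``its Hochschild homology is finite-dimensional'' and an ``HKR-type trace computation'' identifies $\Bl(X/S)=\chi(\HH_*(\ccM))$ does not engage with this: the class $\Bl^{\cat}(X/S)=\chern\bigt{[\ev^{\HH}(\Delta_X)]}$ lives in the cocenter of $\sB$, not in the Hochschild homology of a category attached to $X$ alone, and there is no known HKR mechanism relating it to the Kato--Saito number without further input.

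What is actually needed---and what constitutes the technical core of the paper---is a functor out of $\HH(\sB/A)$ with computable realization, namely $\oint:\HH(\sB/A)\to\Sing(\bbA^1_S[-1])_s$. Its construction requires showing that the cyclic symmetries of the $n$-fold convolution on $\sB$ become coherently trivialized after pushing to the infinitesimal neighbourhoods $s(n)\hto S$ of the closed point (the explicit simplicial homotopies of Theorems \ref{thm: the homotopy} and \ref{thm: master homotopies}); this is the ingredient that replaces the missing symmetric monoidal structure. Once $\oint$ exists, one must still prove (i) that $\oint\bigt{\Bl^{\cat}(X/S)}=\Bl(X/S)$, which is done by factoring $\int_{X/S}=\oint\circ\ev^{\HH}$ through pullback to the diagonal and identifying the induced map on G-theory with Kato--Saito's stable Tor pairing $[\![\Delta_X,\Delta_X]\!]_S$ (Theorem \ref{thm: intersection with the diagonal} and Corollary \ref{cor: intersection with the diagonal}), and (ii) that $\oint\circ\idcat=\id$ and $\oint\circ\arcat=\ar$ (Lemma \ref{lem: idcat and arcat}), so that the right-hand side of the categorical formula really decategorifies to $-\chi-\Sw$ of the vanishing cohomology rather than to zero or to ``some variant''. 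Your proposal names the comparison with Kato--Saito as the main obstacle but offers no mechanism to overcome it, and it omits entirely the problem of non-symmetry of $\sB$ and the possible degeneracy of the target of the categorical formula; as written, the argument therefore has a genuine gap at its central step.
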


\sssec{}

The original \emph{Bloch conductor formula}, to be referred to as \eqref{eqn:Bloch-formula}, is the special case when $p: X \to S$ is proper. 
In this case, the singular locus of $p$ is automatically proper over $S$; moreover, the vanishing cohomology and the Swan conductor simplify to give the following conjectural formula.

\begin{conj}[Bloch Conductor Formula]\label{conj:Bloch}
Let $p:X \to S$ be a proper arithmetic $S$-scheme. 
Then there is an equality of integers
\begin{equation}
\tag{BCF}
\label{eqn:Bloch-formula}
  \Bl(X/S) 
    =
  \chi(X_s;\Qell) -\chi(X_{\bareta};\Qell) 
    -
  \Sw\bigt{\uH^*_{\et}(X_{\bareta},\Qell)}.
\end{equation}
\end{conj}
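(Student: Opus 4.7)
My plan is to deduce Conjecture \ref{conj:Bloch} from the generalized version, Conjecture \ref{conj:gBCC}, and to devote the bulk of the work to the latter. For the reduction, when $p: X \to S$ is proper the singular locus $Z$ is automatically $S$-proper, so gBCC applies. The fundamental distinguished triangle of constant, nearby, and vanishing cycles
\[
  \Qell
    \to
  R\Psi\, \Qell
    \to
  \Phi
    \xto{+1}
\]
on $X_s$, combined with proper base change $R\Gamma(X_s, R\Psi\Qell) \simeq R\Gamma(X_{\bareta}, \Qell)$, yields
\[
  \chi\bigt{\uH^*_{\et}(X_s, \Phi)}
    =
  \chi(X_{\bareta}; \Qell) - \chi(X_s; \Qell).
\]
The same triangle is equivariant under the inertia subgroup of the absolute Galois group of $\uK$; the constant sheaf $\Qell$ on $X_s$ has trivial inertia action and hence vanishing Swan conductor, so additivity on distinguished triangles gives $\Sw\bigt{\uH^*_{\et}(X_s, \Phi)} = \Sw\bigt{\uH^*_{\et}(X_{\bareta}, \Qell)}$. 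Substituting both equalities into \eqref{eqn:gBCC in intro} produces exactly \eqref{eqn:Bloch-formula}, after a sign check.

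The main task is thus gBCC, and my strategy is to categorify both sides of \eqref{eqn:gBCC in intro} and compare them via an $\ell$-adic realization of dg-categories. On the geometric side, the Kato--Saito localized intersection product \cite[Definition 5.1.5]{katosaito04} defining $\Bl(X/S)$ is by construction the trace of a correspondence --- the graph of a uniformizer intersected with the diagonal --- supported along $Z$. I would lift this to a categorical trace inside an $S$-linear dg-category $\ccC_{X/S}$ encoding the singularity of $p$ with support on $Z$; a natural candidate is a variant of the singularity category $\Sing(X_s)$, or equivalently a matrix factorization category $\MF(X,\pi)$ for a chosen uniformizer $\pi$ of $A$. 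The intermediate statement to prove is a categorified localized intersection formula identifying $\Bl(X/S)$ with the Hochschild trace of a natural endomorphism of $\ccC_{X/S}$, thereby bridging the commutative framework of \cite{katosaito04} with the non-commutative one of \cite{toenvezzosi22}.

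On the arithmetic side, one applies the $\ell$-adic realization of dg-categories to $\ccC_{X/S}$ and invokes our earlier identification \cite{beraldopippi24} of this realization with the vanishing cycles complex $\Phi$, equipped with its full inertia action. The non-commutative trace formula of \TV{} \cite{toenvezzosi22} then converts the categorical trace into $-\chi\bigt{\uH^*_{\et}(X_s,\Phi)} - \Sw\bigt{\uH^*_{\et}(X_s,\Phi)}$, matching the RHS of gBCC; the hypothesis that $Z$ is $S$-proper is essential for this categorical trace to be well-defined. The main obstacle, and the reason earlier approaches typically stopped at the tame case, is the Swan summand: wild ramification lies outside the ordinary $\ell$-adic realization, so the realization must be refined so as to record the higher ramification filtration of the inertia, and one must verify that the categorical trace recovers exactly the Swan contribution with the correct sign, rather than only the tame Euler part.
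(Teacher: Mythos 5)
Your reduction of \eqref{eqn:Bloch-formula} to \eqref{eqn:gBCC in intro} is fine and is exactly how the paper views the proper case: the triangle $\Qell \to R\Psi\,\Qell \to \Phi \xto{+1}$, proper base change, and triviality of the inertia action on $\uH^*_{\et}(X_s,\Qell)$ give $\chi\bigt{\uH^*_{\et}(X_s,\Phi)} = \chi(X_{\bareta};\Qell)-\chi(X_s;\Qell)$ and $\Sw\bigt{\uH^*_{\et}(X_s,\Phi)} = \Sw\bigt{\uH^*_{\et}(X_{\bareta},\Qell)}$, so gBCC specializes to BCF.

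The gap is in your treatment of gBCC itself: your argument stops precisely where the paper's real work begins. The \TV{} categorical trace of $\Delta_X$ (the class $\chern\bigt{\bigq{\ev^{\HH}(\Delta_X)}}$) lives in $\uH^0_{\et}\Bigt{S,\rl_S\bigt{\HH(\sB/A)}}$, and in mixed characteristic the $\ell$-adic realization of the Drinfeld cocenter $\HH(\sB/A)$ is unknown; as \cite[Remark 5.2.3]{toenvezzosi22} stresses, this vector space (or the maps $\idcat$, $\arcat$) could a priori vanish, so ``invoking the non-commutative trace formula'' cannot by itself convert the categorical identity of \cite{beraldopippi24} into an equality of integers. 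Your proposed remedy --- refining the realization to record the higher ramification filtration --- misidentifies the obstruction: the wild/Swan contribution is already encoded in \cite{beraldopippi24} through the choice of $\uL/\uK$ with unipotent inertia action, the splitting of $\rl_S\bigt{\Sing(X\times_SX)}$, and the homomorphism $\arcat$ on the reduced group algebra $\Qell(\GLK)$. What is actually missing, and what the paper supplies, is a decategorification mechanism compatible with the non-symmetric convolution structure on $\sB$: the construction of the integration dg-functor $\oint:\HH(\sB/A)\to \Sing(\bbA^1_S[-1])_s$ (whose realization is $\Qell$), built from the coherent homotopies of Theorems \ref{thm: the homotopy} and \ref{thm: master homotopies} trivializing the cyclic rotations after pushing to the infinitesimal neighbourhoods $s(n)$, together with the identification (Theorem \ref{thm: intersection with the diagonal} and Corollary \ref{cor: intersection with the diagonal}, via the eventually two-periodic/stable Tor comparison) that $\int_{X/S}=\oint\circ\ev^{\HH}$ computes the Kato--Saito pairing, so that $\oint\bigt{\Bl^{\on{cat}}(X/S)}=\Bl(X/S)$, while $\oint\circ\idcat=\id$ and $\oint\circ\arcat=\ar$ (Lemma \ref{lem: idcat and arcat}). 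Your ``intermediate categorified localized intersection formula'' for $\ccC_{X/S}=\Sing(X_s)$ or $\MF(X,\pi)$ is essentially this statement, but without the construction of $\oint$ (or some substitute map out of $\HH(\sB/A)$ with computable realization) there is no bridge from the categorical identity to numbers, and the proof of gBCC --- hence of BCF --- does not go through.
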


\sssec{}

As mentioned, the latter conjecture was stated by Bloch in 1985 in \cite{bloch87}. On the other hand, Conjecture \ref{conj:gBCC} first appeared in print in \cite{orgogozo03} and it was taken up by us in \cite{beraldopippi22,beraldopippi23, beraldopippi24}.
There, following a program initiated by \TV{} in \cite{toenvezzosi17,toenvezzosi19,toenvezzosi22},
we proved a categorical version of \eqref{eqn:gBCC in intro}: this categorical version is an equality of classes in 
\begin{equation} \label{eqn:place of cat BCC}
 \uH^0_{\et}\Bigt{S, \rl_S \bigt{\HH(\sB/A)}},    
\end{equation}
the $\Qell$-vector space of global sections of a certain complex of $\ell$-adic sheaves that we are going to review below.
In this paper, we show how Conjecture \ref{conj:gBCC} can be deduced from this categorical version.

\begin{rmk}

Several cases of Conjecture \ref{conj:Bloch} have already been established.

\begin{itemize}
    \item In \cite[Expos\'e XVI]{sga7ii}, Deligne states (an equivalent reformulation of, see \cite{orgogozo03}) the conjecture in the case of an isolated singularity and proves it in relative dimension $0$, in the case of ordinary quadratic singularities and in the case of pure characteristic.
    \item In \cite{bloch87}, Bloch states his conjectural formula and proves it in the case of relative dimension $1$.
    \item In \cite{kapranov91}, Kapranov essentially provides a proof in the case of characteristic $0$.
    \item In \cite{katosaito04}, Kato--Saito gave a proof under the assumption that the reduced special fiber is a normal crossing divisor.
    \item In \cite{saito21}, Saito proved the formula in the pure characteristic case assuming $X/S$ projective.
    In \cite{abe22a,abe22b}, this approach is extended to the case where $X/S$ is proper, still in pure characteristic.
    \item In \cite{beraldopippi24}, we proved Conjecture \ref{conj:gBCC} in the mixed-characteristic case under the assumption that $X$ is a Cartier divisor in a smooth $S$-scheme, as well as in the pure characteristic case in general.
\end{itemize}
\end{rmk}

\subsection{The categorical generalized Bloch conductor formula}

\sssec{}

Let us recall the various objects appearing in \eqref{eqn:place of cat BCC}:

\begin{itemize}
    \item 
    $\sB$ is the monoidal dg-category of singularities of the derived self-intersection 
    $$
      G:=s\times_Ss
    $$
    of $s\hto S$, see Section \ref{sec: differential forms on two-periodic complexes} for details;
    \item 
    $\HH(\sB/A)$ is the Drinfeld cocenter (or Hochschild homology) of the monoidal $A$-linear dg-category $\sB$, see Section \ref{sec: differential forms on two-periodic complexes}; 
    \item 
    $\rl_S$ is the $\infty$-functor of $\ell$-adic realization of dg-categories introduced in \cite{brtv18} and quickly reviewed in Section \ref{ssec: cohomology of dg-categories}. It associates a complex of $\ell$-adic sheaves to any $A$-linear dg-category. 
\end{itemize}

\sssec{}

As mentioned above, the proof of gBCC relies on its categorical counterpart proved in \cite{beraldopippi24}.
We can summarize a large part of the content of \emph{loc. cit.} as follows.
Let $p:X \to S$ be as in Section \ref{hypothesis gBCC}.
In \cite{toenvezzosi22}, \TV{} introduced a dg-functor
$$
  \ev^{\HH}: 
  \Sing(X\times_SX) 
    \to 
  \HH(\sB/A)
$$
and considered the induced homomorphism of $\Qell$-vector spaces
$$
  \uH^0_{\et}\Bigt{S,\rl_S\bigt{\Sing(X\times_SX)}}
    \to
  \uH^0_{\et}\Bigt{S,\rl_S\bigt{\HH(\sB/A)}}.
$$
Here $\Sing(X\times_SX)$ denotes the $A$-linear dg-category of singularities associated to $X\times_SX$, see Section \ref{ssec: singularity categories} below. 
The definition of $\ev^{\HH}$ will be recalled in Section \ref{sssec: defn evHH}. 
It amounts to combining three facts: 

\begin{itemize}
    \item the dg-category of singularities 
          $
          \sT := \Sing(X_s)
          $ 
          is naturally a dualizable $(\sB,A)$-bimodule; 
    \item the dual of $\sT$ is the $(A,\sB)$-bimodule $\sT^{\op}$; 
    \item there is a \virg{K\"unneth formula} 
          $
          \sT^{\op} \otimes_\sB \sT 
            \simeq 
          \Sing(X \times_S X)
          $.
\end{itemize}
Then the dg-functor $\ev^{\HH}$ is obtained from the evaluation of this pair of dual objects.

\sssec{} 

The corresponding coevaluation is determined by the object $\Delta_X \in \Sing(X\times_SX)$ associated to the structure sheaf of the diagonal $X \to X \times_S X$. This object defines a cohomology class 
$$
  \chern ([\Delta_X]) 
    \in 
  \uH^0_{\et}\Bigt{S,\rl_S\bigt{\Sing(X\times_SX)}},
$$
which can be 
pushed along the previous map
to obtain the \emph{categorical Bloch intersection class}
$$
  \Bl^{\on{cat}}(X/S)
    :=
  \chern \bigt{\bigq{\ev^{\HH}(\Delta_X)}}
    \in 
  \uH^0_{\et}\Bigt{S,\rl_S\bigt{\HH(\sB/A)}},
$$
see \cite[Definition 5.2.1]{toenvezzosi22}.
Here $\chern$ is the non-commutative Chern character, also defined by \TV{} (see Section \ref{sssec: nc Chern character}).

\sssec{}

In \cite[Section 5.7.5]{beraldopippi24}, we proved that the class $\chern ([\Delta_X])$ is the sum of two other naturally defined cohomology classes: 
$$
  \chern ([\Delta_X])
    =
  \gamma_{\sT}
    +
  \gamma_{\sU}.
$$
In fact, for every finite Galois extension $\uK\subseteq \uL$ 
whose inertia group $\IL$ acts unipotently\footnote{The existence of $\uL$ is guaranteed by Grothendieck's monodromy theorem.} on $\Phi$, there is a splitting of the $\ell$-adic sheaf $\rl_S\bigt{\Sing(X\times_SX)}$ as the direct sum of two sub-sheaves.
This induces the above mentioned decomposition of $\chern ([\Delta_X])$.

Moreover, the restriction of 
$$
  \uH^0_{\et}\Bigt{S,\rl_S\bigt{\Sing(X\times_SX)}}
    \to
  \uH^0_{\et}\Bigt{S,\rl_S\bigt{\HH(\sB/A)}}
$$
to each of these two sub-sheaves factors through two homomorphisms, denoted
$$
  \idcat: 
  \Qell 
    \to 
  \uH^0_{\et}\Bigt{S,\rl_S\bigt{\HH(\sB/A)}}
$$
and
$$
  -\arcat:  
  \Qell(\GLK) 
    \to 
  \uH^0_{\et}\Bigt{S,\rl_S\bigt{\HH(\sB/A)}}
$$
in \cite{beraldopippi24}.
Here $\GLK$ is the Galois group of $\uK\subseteq \uL$ and 
$$
  \Qell(\GLK)
    :=
  \frac{\Qell[\GLK]}{\Angles{\sum_{g\in \GLK}g}}
$$
denotes the associated reduced group algebra.

\sssec{}

The image of $\gamma_{\sT}$ in $\Qell$ is known to be 
$$
  \Tr_{\QellI}\bigt{\id, \uH^*_{\et}(X_s,\Phi^{\IK}[-1])},
$$
while the image of $\gamma_{\sU}$ in $\Qell(\GLK)$ is known to be 
$$
  \Tr_{\QellI(\GLK)}\bigt{\id, \uH^*_{\et}(X_s,\Phi^{\IL}/\Phi^{\IK})}
    =
  \frac{1}{|\GLK|}
  \sum_{g\in \GLK} 
  \Tr_{\QellI}\bigt{g, \uH^*_{\et}(X_s,\Phi^{\IL}/\Phi^{\IK})}
  \angles{g^{-1}}.
$$
This yields the following equality of classes in the $\Qell$-vector space $\uH^0_{\et}\Bigt{S,\rl_S\bigt{\HH(\sB/A)}}$, which we named \emph{categorical generalized Bloch conductor formula} (\cite[Theorem A]{beraldopippi24}):
\begin{align}
\tag{cat-gBCF}
\label{eqn: categorical gBCF}
  \Bl^{\on{cat}}(X/S)
    = 
    &
  -\idcat \Bigt{\Tr_{\QellI}\bigt{\id, \uH^*_{\et}(X_s,\Phi^{\IK})}}
  \\
  \nonumber
  &
  -\arcat \Bigt{\Tr_{\QellI(\GLK)}\bigt{\id, \uH^*_{\et}(X_s,\Phi^{\IL}/\Phi^{\IK})}}.
\end{align}

\sssec{}

It is our opinion that the formula displayed above conceptually explains why Bloch conductor conjecture holds. 
To corroborate this claim, we need to extract an equality of numbers from \eqref{eqn: categorical gBCF}.
Thus, it becomes a natural and important task to investigate the $\Qell$-vector space in which this formula takes place.
Indeed, as highlighted in \cite[Remark 5.2.3]{toenvezzosi22}, we cannot a priori exclude the possibility that the vector space $\uH^0_{\et}\Bigt{S,\rl_S\bigt{\HH(\sB/A)}}$ is zero (or that the maps $\idcat$ and $\arcat$ are zero).
This would lead to the disappointing conclusion that the formula \eqref{eqn: categorical gBCF} is just the equality $0=0$.

\sssec{} 

In \cite{beraldopippi22, beraldopippi24}, we found a way to avoid this issue, at the cost of imposing the additional hypothesis that $X$ is a Cartier divisor in a smooth $S$-scheme. 
As a consequence, we proved there several instances of \eqref{eqn:gBCC in intro}, including the case of isolated singularities. 
The latter goes under the name of Deligne--Milnor formula, see \cite[Expos\'e XVI]{sga7ii}.

\sssec{} \label{sssec:pure-char}

Furthermore, the above problem does not occur when $A$ has pure characteristic $p\geq 0$. In this case, $\sB$ is a \emph{symmetric} monoidal dg-category; in particular, the canonical dg-functor $\sB \to \HH(\sB/A)$ admits a retraction $\on{m}: \HH(\sB/A)\to \sB$ induced by the multiplication of $\sB$.
Hence, we can push \eqref{eqn: categorical gBCF} along the homomorphism\footnote{When confusion is unlikely, we abusively denote a dg-functor and its $\ell$-adic realization by the same symbol.}
$$
  \on{m}:
  \uH^0_{\et}\Bigt{S,\rl_S\bigt{\HH(\sB/A)}}
    \to 
  \uH^0_{\et}\bigt{S,\rl_S(\sB)}.
$$
By \cite[Proposition 4.27]{brtv18}, the latter vector space equals $\Qell$ and, as proven in \cite[Section 6.3]{beraldopippi24}, this procedure indeed yields \eqref{eqn:gBCC in intro} in pure characteristic.

\begin{rem}
The monoidal structure on $\sB$ fails to be symmetric in mixed characteristic. In particular, the previous method does not make sense and new ideas are required. 
\end{rem}

\subsection{Contents of this work}

The above remark is the starting point of the present paper.

\sssec{}

As mentioned, the goal is to deduce the generalized Bloch conductor conjecture (Conjecture \ref{conj:gBCC}) from its categorical version \eqref{eqn: categorical gBCF}: this amounts to understanding \eqref{eqn:place of cat BCC} and ultimately the dg-category $\HH(\sB/A)$. 
The latter dg-category is a fundamental invariant of the discrete valuation ring $A$, hence we expect that its study will be of interest beyond the remit of the Bloch conductor formulas.

\sssec{}

In practice, we will construct a map 
\begin{equation}
  \oint: \uH^0_{\et}\Bigt{S, \rl_S\bigt{\HH(\sB/A)}}
    \to
  \Qell
\end{equation}
that transforms the known formula \eqref{eqn: categorical gBCF} into the desired formula \eqref{eqn:gBCC in intro}.

\begin{rmk}

We chose the notation $\oint$ guided by some vague analogies: the Drinfeld cocenter is a categorification of Hochschild homology, which gives back differential forms in the commutative case, while $G$ can be regarded as the derived scheme of (infinitesimal and algebraic) loops in $S$ based at $s$.
Therefore, we think of $\oint$ as (a decategorification of) a map which \virg{integrates differential forms on the space of loops}.
\end{rmk}

\sssec{}

To construct $\oint$, we first work categorically. 
The Drinfeld cocenter $\HH(\sB/A)$ is the colimit of a simplicial dg-category,
the cyclic bar construction:
\begin{equation*}
\begin{tikzcd}
    \cdots
    \;\;
    \sB\otimes_A\sB\otimes_A\sB\otimes_A\sB 
    \arrow[r,shift right=3]
    \arrow[r,shift right=1]
    \arrow[r,shift left=1]
    \arrow[r,shift left=3]
    &
    \sB\otimes_A\sB\otimes_A\sB
    \arrow[r,shift right=2]
    \arrow[r]
    \arrow[r,shift left=2]
    &
    \sB\otimes_A\sB
    \arrow[r,shift right=1]
    \arrow[r,shift left=1]
    &
    \sB.
\end{tikzcd}
\end{equation*}
We approximate $\HH(\sB/A)$ by taking the colimits $F_n\HH(\sB/A)$ of the various truncations of this simplicial diagram.
Informally, $F_n\HH(\sB/A)$ can be characterized as the universal dg-category where the $(n+1)$-fold product on $\sB$ becomes invariant with respect to the canonical action of the cyclic group $C_{n+1}$.
This procedure yields a natural filtration
$$
  \HH(\sB/A)
    \simeq 
  \varinjlim_{\bbN}
  F_n\HH(\sB/A).
$$

\sssec{}

We use the latter equivalence to construct a filtered dg-functor $\xi$ out of $\HH(\sB/A)$.
To this end, the key observation is that considering the infinitesimal neighbourhoods
of $s\hto S$ \emph{creates enough space for certain homotopies to exist}. It follows from this observation that the natural target of $\xi$ is the dg-category $\sB(\infty)$, whose definition is sketched below.

\sssec{} 

Fix a uniformizing element $\pi \in A$ once and for all. For $n\geq 1$, set
$k(n) := A/\pi^n$ and consider the closed embedding
$$
  s(n)
    :=
  \Spec{k(n)}
    \hto 
  S
    :=
  \Spec{A}.
$$
Notice that $k(1) = k$ and $s(1) =s$.

\sssec{} 

For each $n \geq 0$, we will construct a dg-functor
$$
  \xi_n:
  F_n\HH(\sB/A)
    \to 
  \sB(n+2),
$$
where the target is a 
certain 
dg-category of singularities associated to the derived scheme 
$$
  \bbA^1_{s(n+2)}
    :=
  s(n+2) \times_{\bbA^1_{s(n+2)}}s(n+2).
$$
The dg-functor
$$
  \xi:
  \HH(\sB/A)
    \to 
  \sB(\infty)
    := 
  \varinjlim_{\bbN} \, \sB(n+2)
$$
will then be defined simply by taking the colimit of the $\xi_n$'s.

\sssec{} 

By the universal property of $F_n\HH(\sB/A)$ mentioned above, the construction of $\xi_n$ amounts to showing that the natural dg-functor $\sB:= \sB(1) \to \sB(n+1)$ coequalizes the actions of the cyclic groups $C_m$ on the $m$-fold product $\sB^{\otimes_Am}\to \sB$ for $m\leq n+1$ (in a homotopy-coherent manner).
This task can be performed geometrically, by first exhibiting a family of coherent homotopies
\begin{equation}\label{eqn: coherent homotopies in intro}
  \Delta^{n}
    \otimes 
  s^{\times_S(n+1)}
    \to 
  s(n+1)
\end{equation}

among the $n+1$ morphisms 
$$
  s^{\times_S(n+1)}
    \xto{\on{rotate}^j}
  s^{\times_S(n+1)}
    \xto{\on{multiply}}
  s
    \xto{\on{include}}
  s(n+1),
$$
where the left arrow cyclically rotates the copies of $s$ by $j$ spots (for $0 \leq j \leq n$).
This is the content of Theorem \ref{thm: the homotopy}, the combinatorial core of this paper.

Moreover, the morphisms \eqref{eqn: coherent homotopies in intro} are compatible with the lower homotopies
in a precise sense, see Theorem \ref{thm: master homotopies}.
This guarantees that the dg-functors $\xi_n$ define an $\bbN$-indexed diagram.

\begin{rmk}
The existence of the homotopies described above shows that, for each $n\geq 0$, there is a morphism of derived $S$-schemes
$$
  \varinjlim_{i=1,\dots,n+1}\bigt{
                                s^{\times_S(n+1)}
                                  \xto
                                  {\pr{i}}s
                                  }
                                  \to
                                  s(n+1).
$$
We suspect this might actually be an equivalence.
In any case, the existence of this map is deeply connected to the Hopf algebroid structure on $G$, which is the main theme of this paper.

In particular, while $G$ admits two distinct $s$-scheme structures (the two projections $G\to s$ are not homotopic in the mixed characteristic situation), it does have a canonical $s(2)$-scheme structure: take $n=1$ in the above formula.
\end{rmk}

\sssec{} 

Having $\xi$ at our disposal, we could apply the decategorification $\rl_S$ and thus study the complex of $\Qell$-vector spaces\footnote{By construction, $\rl_S$ commutes with filtered colimits: this yields the isomorphism in the displayed formula.}
$$
  \uH^*_{\et} 
  \Bigt{ 
        S , \rell_S \bigt{ \sB(\infty)}
       }
    \simeq
  \varinjlim_{\bbN}
  \uH^*_{\et} 
  \Bigt{ 
        S , \rell_S \bigt{ \sB(n+1)}
        }.
$$
However, the following extra categorical step yields a further simplification.

\sssec{} 

Let $\bbA^1_S[-1]:= S \times_{0,\bbA^1_S,0} S $ and observe that $G(n)$ sits in the fiber product
\begin{equation}
\nonumber
\begin{tikzpicture}[scale=1.5]
\node (00) at (0,0) {$S $};
\node (10) at (2,0) {$\bbA^1_S.$ };
\node (01) at (0,1) {$
G(n)$};
\node (11) at (2,1) {$ \bbA^1_S[-1]$}; 
\path[->,font=\scriptsize,>=angle 90]
(00.east) edge node[above] {$\pi^n$}  (10.west); 
\path[->,font=\scriptsize,>=angle 90]
(01.east) edge node[above] {$r(n) $} (11.west); 
\path[->,font=\scriptsize,>=angle 90]
(01.south) edge node[right] {$ $} (00.north);
\path[->,font=\scriptsize,>=angle 90]
(11.south) edge node[right] {$ $} (10.north);
\end{tikzpicture}
\end{equation}
Pushing forward along the arrows $r(n)$ yields a dg-functor
$$
  r(\oo)_*
    := 
  \colim_{\bbN} r(n)_*
    :
  \sB(\infty) 
    \to 
  \Sing(\bbA^1_S[-1])_s,
$$
where $\Sing(\bbA^1_S[-1])_s$ is the singularity category of $\bbA^1_S[-1]$ with support on $s$ (see Section \ref{ssec: singularity categories}).
Combining this with the previously defined $\xi:\HH(\sB/A) \to \sB(\infty)$, we obtain a dg-functor
$$
  \oint:
  \HH(\sB/A) 
    \to 
  \Sing(\bbA^1_S[-1])_s.
$$

\begin{rmk}
Strictly speaking, for the purpose of deducing the generalized Bloch conductor formula \eqref{eqn:gBCC in intro} from \eqref{eqn: categorical gBCF}, it is not necessary to construct the filtered dg-functor $\xi=\varinjlim_n \xi_n$.
In fact, it is possible to construct the dg-functor
$$
  \HH(\sB/A)
    \to
  \Sing(\bbA^1_S[-1])_s
  $$
directly; for this, the homotopies of Theorem \ref{thm: the homotopy} are not necessary.
Nevertheless, we believe that the convolution monoidal structure on $\sB$ is a fundamental object of study in mixed characteristic, and thus it deserves an analysis as fine as possible.
\end{rmk}

\sssec{} 

The $\ell$-adic realization of $\Sing(\bbA^1_S[-1])_s$ is easily computed from \cite[Proposition 4.27]{brtv18}: we obtain a canonical isomorphism
$$
  \uH^0_{\et} 
  \Bigt{ 
        S , \rell_S
                   \bigt{
                         \Sing(\bbA^1_S[-1])_s
                        }
        }
    \simeq 
  \Qell
$$
and thus a homomorphism
$$
  \oint: 
  \uH^0_{\et}\Bigt{
                   S, \rl_S\bigt{
                                 \HH(\sB/A)
                                 }
                    } 
    \to 
  \uH^0_{\et} 
             \Bigt{ 
                   S , \rell_S
                              \bigt{
                                    \Sing(\bbA^1_S[-1])_s
                                    }
                  }
    \simeq 
  \Qell.
$$

\sssec{}

Next, we apply $\oint$ to \eqref{eqn: categorical gBCF} and obtain an equality of $\ell$-adic numbers. It remains to identify these numbers with the invariants appearing in \eqref{eqn:gBCC in intro}.
Of course, we expect the formulas
$$
  \oint 
  \Bigt{ 
        \Bl^{\on{cat}}(X/S)
       }
    = 
  \Bl(X/S),
    \hspace{.4cm}
  \oint \circ \idcat 
    = 
  \id,
    \hspace{.4cm}
  \oint \circ \arcat 
    = 
  \ar.
$$
The last two formulas are the content of Lemma \ref{lem: idcat and arcat}.
The proof of the first formula, to which Sections \ref{sec: categorical intersection theory} and \ref{sec: proof of gBCC} are dedicated, is more involved.

\sssec{}

More precisely, Corollary \ref{cor: intersection with the diagonal} proves that the composition
$$
  \G_0(X\times_SX)
    \to
  \uH^0_{\et}\Bigt{S,\rl_S \bigt{\Sing(X\times_SX)}}
    \xto{\ev^{\HH}}
  \uH^0_{\et}\Bigt{S, \rl_S\bigt{\HH(\sB/A)}}
    \xto{\oint}
  \Qell,
$$
where the left map is 
a shortcut for
$$
  \G_0(X\times_SX)
    \xto{\chern}
  \uH^0_{\et}\Bigt{S,\rl_S \bigt{\Coh(X\times_SX)}}
    \to
  \uH^0_{\et}\Bigt{S,\rl_S \bigt{\Sing(X\times_SX)}},
$$
sends a G-theory class $[M]\in \G_0(X\times_SX)$ to the integer
$$
  [\![M,\Delta_X]\!]_S
    :=
  \sum_{i=0,1}(-1)^{i}\deg \bigq{
                                 \ul \sTor_i^{X\times_SX}(M,\Delta_X)
                                 } 
    \in 
  \bbZ 
    \subseteq 
  \Qell.
$$

\sssec{} 
Here $\ul \sTor_i^{X\times_SX}(M,\Delta_X)$ denotes the $i^{th}$ stable Tor sheaf, i.e. the sheaf
$$
  \ccH^{-2n-i}\bigt{
                    M
                    \otimes^{\bbL}_{X\times_SX}
                    \Delta_X
                    }, 
  \;\;\;\; 
  n\gg 0.
$$
It is known after Kato--Saito that 
$
  \ccH^{-2n-i}\bigt{
                    M
                    \otimes^{\bbL}_{X\times_SX}
                    \Delta_X
                    }
$ is a coherent $\ccO_Z$-module for $n\gg0$ and that its G-theory class does not depend on $n$ (\cite[Theorem 3.2.1]{katosaito04}).
Furthermore,  \cite[Corollary 3.4.5]{katosaito04} guarantees that 
$$
  \Bl(X/S)
    =
  \sum_{i=0,1}(-1)^{i}\deg \bigq{
                                \ul \sTor_i^{X\times_SX}(\Delta_X,\Delta_X)
                                }.
$$

\sssec{} 

To summarize the above discussion, consider the \emph{integration dg-functor}
$$
  \int_{X/S}:
  \Sing(X \times_S X)
    \xto{\ev^{\HH}}
  \HH(\sB/A)
    \xto{\oint}
  \Sing(\bbA^1_S[-1])_s
$$
discussed above and introduced properly in Section \ref{sec: categorical intersection theory}. 
On the one hand, it is a categorification of Kato--Saito's localized intersection product; on the other hand, by definition, it involves \TV's categorical intersection product. As explained above, the former interpretation gives us access to the number $\Bl(X/S)$, while the latter lets us employ the categorical methods of \cite{beraldopippi24}. The interplay between these two points of view is what ultimately yields the proof of Conjecture \ref{conj:gBCC}.

\begin{rmk}
As observed by Toen, much of the reasoning in this paper does not require $A$ to be a DVR: the important points work essentially unchanged for $S$ a regular scheme and $s = Z(f) \subseteq S$ the zero locus of a regular section $f \in \uH^0(S, \ccO_S)$. We have not investigated this research direction.
\end{rmk}

\subsection{Outline}

This paper is organized as follows.
\begin{itemize}
    \item In Section \ref{sec: preliminaries and notation}, we first fix some notation related to simplicial sets and simplicial algebras, which play a crucial technical role for the main computations performed afterwards. 
    Then we recall the needed notions in the theory of dg-categories, including their realizations.
    
    \item In Section \ref{sec: differential forms on two-periodic complexes}, we introduce our main object of study, the monoidal dg-category $\sB$, and we consider the standard filtration on its cyclic bar complex whose colimit computes the Drinfeld cocenter $\HH(\sB/A)$.
    Furthermore, we describe the nodes of the cyclic bar complex in terms of certain singularity categories.
    In a brief digression, we explain why the monoidal structure is symmetric in the pure-characteristic case.
    
    \item
    In Section \ref{sec: combinatorial preliminaries}, we provide cofibrant models for the infinitesimal neighbourhoods of $s \hto S$ and $G \hto S$. 
    
    \item Section \ref{sec: the E_2 structure on G} lies at the very heart of this paper. Here we produce the coherent higher homotopies \eqref{eqn: coherent homotopies in intro} we referred to in the introduction.
    We do this by working with strict models: we need to produce some explicit morphisms of simplicial rings in the ordinary category of simplicial $A$-algebras, endowed with the usual model structure introduced by Quillen.
    
    \item In Section \ref{sec: integrating differential forms on two-periodic complexes}, we construct the dg-functor $\oint$ that \virg{integrates} differential forms on $\sB$. 
    Relying on Section \ref{sec: the E_2 structure on G}, we proceed to construct a dg-functor for each level of the filtration of the cyclic bar complex.
    We use this construction to partially answer a question of \TV{}, see \cite[Remark 5.2.3]{toenvezzosi22}.
    
    \item In Section \ref{sec: categorical intersection theory} we study the composition $\int_{X/S} := \oint \circ \ev^{\HH}$, our categorical localized intersection product, and we write it in two other equivalent ways.
    
    \item Finally, in Section \ref{sec: proof of gBCC}, we show that the dg-functor $\oint \circ \ev^{\HH}$ categorifies the localized intersection product introduced by Kato--Saito. As a consequence, we deduce the generalized Bloch conductor formula from its categorical version.
    
\end{itemize}

\subsection*{Acknowledgements}

We thank Bertrand Toen and Gabriele Vezzosi for many insightful discussions related to this work and for their support over the years.
We are also grateful to Benjamin Hennion, Valerio Melani, Mauro Porta and Marco Robalo for numerous valuable conversations.
This project has received funding from PEPS JCJC 2024 and the ANR project DAG-Arts.


\section{Preliminaries and notation}\label{sec: preliminaries and notation}

In this section, we introduce some notions that will play important roles in the main body of this paper.
First we recall some facts about simplicial sets, simplicial rings and their homotopy theory.
This gives us the chance to fix some of the notation appearing later in the paper.
Next we recall some basic facts concerning the Morita theory of dg-categories and then the definitions of some dg-categories of sheaves on derived schemes.
We also introduce some non-standard dg-categories associated to fiber products of derived schemes: these dg-categories will naturally appear in the study of the Drinfeld cocenter of $\sB$.
Then we introduce eventually two-periodic complexes and relate them to (coherent) matrix factorizations.
Finally, we fix the notation for the motivic and $\ell$-adic realizations of dg-categories.

\sssec*{Convention}

We will freely use the language of $\oo$-categories as developed in \cite{luriehtt, lurieha}, using for the most part the same notations.

\subsection{Simplicial sets}

\sssec{The simplex category}

For $n\geq 0$, denote by $[n]$ the set $\{0,\dots,n\}$, considered with its standard linear order.
The simplex category $\Delta$ is the (ordinary) category whose objects are the linearly ordered finite sets $[n]$ (for $n\geq 0$) and whose morphisms are order-preserving functions.
Among order-preserving functions, particularly relevant are the functions
$$
  \delta^n_i: 
  [n-1]
    \to 
  [n] 
  \;\;\;\; 
  0
    \leq 
  i 
    \leq 
  n,
$$
$$
  \sigma^n_i: 
  [n+1]
    \to 
  [n] 
  \;\;\;\; 
  0 
    \leq 
  i 
    \leq 
  n.
$$
The function 
$
  \delta^n_i: 
  [n-1]
    \to 
  [n]
$ 
is uniquely determined by the property that the image is 
$$
  \{
    0,\dots,i-1,i+1,\dots,n
  \}
  \subseteq 
  [n],
$$
while $\sigma^n_i: [n+1]\to [n]$
by the fact that it is surjective and the element $i \in \{0,\dots,n\}$ has two pre-images.
The morphisms $\delta^n_i$ and $\sigma^m_j$ satisfy a set of relations (depending on $n,m,i$ and $j$):
\begin{equation*}
    \begin{cases}
    \delta_j^{n+1}\circ \delta_i^{n}=\delta^{n+1}_{i}\circ \delta^{n}_{j-1} & \text{if } i<j;
    \\
    \sigma_j^{n-1}\circ \delta_i^n= \delta^{n-1}_i \circ\sigma^{n-2}_{j-1} & \text{if } i<j;
    \\
    \sigma_i^{n-1} \circ \delta_i^n=\id = \sigma^{n-1}_i \circ  \delta^{n}_{i+1} &
    \\
    \sigma_j^{n-1}\circ\delta_i^n=\delta^{n-1}_{i-1}  \circ \sigma^{n-2}_{j}& \text{if } i>j+1;
    \\
    \sigma_j^{n-1} \circ \sigma_i^n=\sigma^{n-1}_i \circ \sigma^{n}_{j+1}& \text{if } i\leq j.
    \end{cases}
\end{equation*}
Their importance stems from the fact that every map in $\Delta$ can be written as a composition of such morphisms.

\sssec{Simplicial sets}

A simplicial set $X$ is a functor
$$
  X: 
  \Delta^{\op}
    \to 
  \Set
$$
$$
  (f:[p]\to [q])
    \mapsto 
  (f^*: X_q \to X_p).
$$
This is equivalent to the datum of a collection of sets 
$
  \{X_n\}_{n\geq 0}
$ 
together with a collection of morphisms
$$
  \text{(face maps)} 
    \;\;\;\;\;\;\;\;\;\;\;\;\;\;\; 
  d^p_i
    =
  (\delta^p_i)^*:
  X_p 
    \to 
  X_{p-1}
    \;\;\;\; 
  0\leq i \leq p,
$$
$$
  \text{(degeneracy maps)} 
    \;\;\;\; 
  s^p_i
    =
  (\sigma^p_i)^*:
  X_p 
    \to
  X_{p+1}
    \;\;\;\; 
  0\leq i \leq p,
$$
which verify relations analogous to those satisfied by the $\delta^p_i$ and $\sigma^q_j$'s:
\begin{equation*}
    \begin{cases}
    d_i^p\circ d_j^{p+1}=d^p_{j-1}\circ d^{p-1}_i & \text{if } i<j;
    \\
    d_i^p\circ s_j^{p-1}=s^{p-2}_{j-1}\circ d^{p-1}_i & \text{if } i<j;
    \\
    d_i^p\circ s_i^{p-1}=\id = d^{p}_{i+1}\circ s^{p-1}_i &
    \\
    d_i^p\circ s_j^{p-1}=s^{p-2}_{j}\circ d^{p-1}_{i-1} & \text{if } i>j+1;
    \\
    s_i^p\circ s_j^{p-1}=s^{p}_{j+1}\circ s^{p-1}_i & \text{if } i\leq j.
    \end{cases}
\end{equation*}
We will write $\sSet$ for the category of simplicial sets.

\sssec{} 

Consider the Yoneda embedding $\scrY:\Delta \to \sSet$.
Then the simplicial sets
$$
  \Delta^n
    := 
  \scrY([n])
$$
are called simplices.

The morphism $\delta^n_i:[n-1]\to [n]$ corresponds to the inclusion of the $i^{th}$ face $\Delta^{n-1}\hto \Delta^n$.
We set
$$
  \partial \Delta^n
    := 
  \underbrace{
              \Delta^{n-1}\amalg_{\Delta^{n-2}}\Delta^{n-1}\dots \amalg_{\Delta^{n-2}}\Delta^{n-1}
              }_{
                 n+1 \;\; \text{ copies}
                 } 
    \hto 
  \Delta^n
$$
where each copy of $\Delta^{n-1}$ corresponds to one of the $n+1$ faces of dimension $n-1$ in $\Delta^n$.

The simplicial set $\Lambda^n_i$ ($0\leq i \leq n$) is obtained by gluing all the faces of dimension $n-1$ in $\Delta^n$ except for the $i^{th}$ one, corresponding to the face in $\Delta^n$ opposite to the vertex $\{i\}$.

\sssec{Weak-equivalences and fibrations}

By a theorem due to Quillen, the category of simplicial sets $\sSet$ admits a simplicial model structure called \emph{classical (or Quillen) model structure} (see \cite{quillen67}).
This model structure is cofibrantly generated (\cite{hovey99}).
The set of generating cofibrations is $\{\partial \Delta^n \hto \Delta^n\}_{n \geq 0}$ and the set of generating trivial cofibrations is $\{ \Lambda^n_i\hto \Delta^n\}_{n\geq 0,\, 0\leq i \leq n}$.

The $\oo$-category underlying this simplicial model category is $\ccS$, the $\oo$-category of spaces.
See \cite{luriehtt} for details.

\subsection{Derived algebraic geometry}

We refer to \cite{luriedag1,toenvezzosi08,toen14} for more details on derived algebraic geometry.
Here we just recall some facts about the model category of simplicial rings, which provides a model for the $\oo$-category of derived affine schemes.

\sssec{Simplicial \texorpdfstring{$A$}{A}-algebras}

Let $\CAlg_{A/}$ denote the (ordinary) category of commutative $A$-algebras.
A simplicial $A$-algebra $R$ is a functor
$$
  R: 
  \Delta^{\op}
    \to 
  \CAlg_{A/}
$$
$$
  \bigt{ 
        f:[p]\to[q]
        }
    \mapsto
  \bigt{ 
        f^*:R_q \to R_p 
        }.
$$
Just like simplicial sets, such a functor is determined by a sequence of  $A$-algebras $\{R_p\}_{p\geq 0}$ and by the face and degeneracy maps:
$$
  d^p_i
    :=
  (\delta^p_i)^*:
  R_{p}
    \to 
  R_{p-1}
    \;\;\;\; 
  (i=0,\dots,p),
$$
$$
  s^p_i
    :=
  (\sigma^p_i)^*:
  R_{p-1}
    \to 
  R_{p}
    \;\;\;\; 
  (i=0,\dots,p-1).
$$
These maps satisfy the same simplicial identities that we made explicit for simplicial sets.

We will write $\sCAlg_{A/}$ for the category of simplicial commutative $A$-algebras.

\sssec{Simplicial enrichment}

For $R\in \sCAlg_{A/}$ and $K \in \sSet$, one can define the simplicial $A$-algebra
$$
  K\otimes R: 
  \Delta^\op 
    \to 
  \CAlg_{A/}
$$
$$
  [p]
    \mapsto 
  \bigotimes_{K_p}R_p. 
$$
Any morphism $f:[p]\to [q] \in \Delta$ induces a morphism of commutative rings
$$
  f^*: 
  (K\otimes R)_q
    =
  \bigotimes_{K_q}R_q
    \to
  \bigotimes_{K_p}R_p=(K\otimes R)_p
$$
which corresponds to the coproduct of the maps
$$
\underbrace{R_q}_{j^{th} \text{ copy}} \to \underbrace{R_p}_{f(j)^{th} \text{ copy}} 
$$
induced by the simplicial structure of $R$.

\begin{rmk}

One uses the fact that $\sCAlg_{A/}$ is tensored over $\sSet$ to enrich the former category over the latter. 
Namely, for $R,R' \in \sCAlg_{A/}$, the \emph{simplicial mapping space} 
$$
  \Map_{\sCAlg_{A/}}(R,R'): 
  \Delta^{\op}
    \to 
  \Set
$$
is the simplicial set defined on objects as
$$
  [n] 
    \mapsto 
  \Hom_{\sCAlg_{A/}}(\Delta^n \otimes R,R')
$$
and in the obvious way on morphisms.
\end{rmk}

\sssec{Weak-equivalences and fibrations}

Every simplicial $A$-algebra
has an underlying simplicial set, which is always cofibrant.

A morphism $f:R\to R'$
in $\sCAlg_A$
is a \emph{weak-equivalence} if it induces a weak-equivalence at the level of the underlying simplicial sets.
In other words, $f:R\to R'$ is a weak equivalence if it induces an isomorphism of $A$-algebras
$$
  \pi_0(R)
    \to 
  \pi_0(R')
$$
and, for every $r\in R$ and for every $i\geq 1$, an isomorphism of abelian groups
$$
  \pi_i(R,r)
    \to 
  \pi_i(R',f(r)).
$$
In fact, it suffices to verify the latter condition for $r=0$.

A morphism $f:R\to R'$ is a \emph{fibration} if it induces a fibration at the level of the underlying simplicial sets.

\sssec{Homotopy theory of simplicial rings}

The starting point of derived algebraic geometry is the following result, due to Quillen:
the category $\sCAlg_{A/}$ admits the structure of a simplicial model category where the classes of weak-equivalences and fibrations are those specified above.

\sssec{}

The $\oo$-category of \emph{derived affine $S$-schemes}, to be denoted $\dSch_{/S}^{\on{aff}}$, is by definition the opposite of the $\oo$-localization $\sCAlg_{A/}[W_{\on{we}}^{-1}]$. The object corresponding to a simplicial $A$-algebra $R$ is called $\Spec{R}$. The adjunction between $A$-algebras and simplicial $A$-algebras induces an adjunction
$$
  \iota: 
  \Sch_{/S}^{\on{aff}}
    \leftrightarrows 
  \dSch_{/S}^{\on{aff}}: 
  \pi_0,
$$
with $\iota$ fully faithful. Fiber products exist in $\dSch_{/S}^{\on{aff}}$ and correspond to derived tensor products.

\sssec{Derived schemes} 

More generally, a derived scheme over $S$ is a pair $(Y,\ccO_Y)$ where $Y$ is a topological space and $\ccO_Y$ is a sheaf of simplicial $A$-algebras (up to weak-equivalences) which is locally equivalent to $\bigt{\Spec{\pi_0(R)},R}$ for some simplicial commutative $A$-algebra $R$ and such that $\pi_i\bigt{\ccO_Y}$ is a quasi-coherent $\pi_0\bigt{\ccO_Y}$-module for each $i\geq 1$.

Derived $S$-schemes are naturally organized in an $\oo$-category $\dSch_{/S}$.

\sssec{} 

Just as for affine schemes, the assignment $(Y,\ccO_Y)\mapsto (Y,\pi_0(Y))$ defines an $\oo$-functor
$$
  \dSch_{/S}
    \to 
  \Sch_{/S}
$$
which is right adjoint to a fully-faithful embedding
of the ordinary category of $S$-schemes in $\dSch_{/S}$.

\subsection{Dg-categories}

\sssec{}

An $A$-linear dg-category is a category enriched over the category of $A$-dg-modules.
In this paper, every dg-category is $A$-linear, unless stated otherwise.

\sssec{}

We will work with (small) dg-categories up to Morita equivalences.
By this we mean that we will work in the $\oo$-category $\dgCat_A$, which is the $\oo$-localization of the ordinary category of small dg-categories against the class of Morita equivalences.
We will say that a dg-functor is \emph{Karoubi essentially surjective} if its essential image generates the target dg-category under finite (homotopy) colimits and (homotopy) retracts. 
We refer to \cite{toen07} for more details.

\sssec{}

It is proven in \cite{toen07} that the $\dgCat_A$ admits an internal theory of localizations and quotients. 
Let $\sT$ be a dg-category and let $\on{W}$ denote a class of morphisms in $\sT$ which is stable under composition and that contains the identities.
Denote by $\Map_{\dgCat_A}^{\on{W}}(\sT,\sU)\subseteq \Map_{\dgCat_A}(\sT,\sU)$  the full sub-groupoid consisting of those dg-functors $\sT \to \sU$ that send every morphism in $\on{W}$ to an equivalence in $\sU$.
It is proved in \emph{loc. cit.} that the $\oo$-functor
$$
  \dgCat_A 
    \to 
  \ccS
$$
$$
  \sU 
    \mapsto 
  \Map_{\dgCat_A}^{\on{W}}(\sT,\sU)
$$
is representable. 
We will denote by $L_{\on{W}}(\sT)$ the dg-category which represents this functor:
$$
  \Map_{\dgCat_A}\bigt{L_{\on{W}}(\sT),\sU}
    \simeq
  \Map_{\dgCat_A}^{\on{W}}(\sT,\sU).
$$
The existence of dg-localizations in $\dgCat_A$ implies the existence of dg-quotients as well.
Let $\sT_0\subseteq \sT$ denote a full embedding in $\dgCat_A$. 
We can consider the class $\on{W}_{\sT_0}$ of morphisms in $\sT$ whose (homotopy) cone belongs to $\sT_0$. 
Then the dg-quotient of $\sT_0\subseteq \sT$ is defined as
$$
  \sT/\sT_0 
    := 
  L_{\on{W}_{\sT_0}}(\sT).
$$
We will say that
$$
  \sT_0 
    \hto 
  \sT 
    \to 
  \sT/\sT_0
$$
is a localization sequence.

\sssec{}

Finally recall that there is a non-full faithful embedding
$$
  \widehat{(-)}:
  \dgCat_A 
    \hto  
  \dgCAT_A,
$$
induced by ind-completion,
where $\dgCAT_A$ denotes the $\oo$-category of compactly generated presentable dg-categories and continuous dg-functors. Dg-localizations and dg-quotients exist in $\dgCAT_A$ as well.

\subsection{Singularity categories}\label{ssec: singularity categories}

\sssec{}

Let $\Spec{R}\in \dSch_{/S}^{\on{aff}}$.
Consider the dg-algebra $N(R)$ which corresponds to $R$ via the Dold-Kan equivalence. By definition, the derived category of $\Spec{R}$ is
$$
  \QCoh(X)
    :=
  \Mod_{N(R)} 
    \in 
  \dgCAT_A.
$$
This is the dg-category of $N(R)$-dg-modules up to quasi-isomorphisms, that is, the localization of the ordinary category of $N(R)$-dg-modules along the class of quasi-isomorphisms.

For an arbitrary derived scheme (or stack) $X$, we set
$$
  \QCoh(X)
    := 
  \varprojlim_{\Spec{R}\to X}
  \QCoh(\Spec{R}) 
    \in 
  \dgCAT_A.
$$

\sssec{}

Assume that $X$ is a bounded noetherian derived $S$-scheme. 
We consider the full subcategories 
$$
  \Perf(X)
    \subseteq 
  \Coh(X)
    \subseteq 
  \QCoh(X).
$$
Here $\Perf(X)$ denotes the dg-category of perfect complexes and $\Coh(X)$ that of quasi-coherent complexes with coherent total cohomology.
Under the above hypotheses on $X$, one defines the singularity category of $X$ as the dg-quotient
$$
  \Sing(X)
    := 
  \frac{\Coh(X)}{\Perf(X)}.
$$
We refer the reader to \cite[\S 2]{brtv18} for details on singularity categories.

\sssec{} 

If $Y\subseteq X$ is a closed embedding, we write $\Perf(X)_Y$ (respectively: $\Coh(X)_Y$, $\Sing(X)_Y$, $\QCoh(X)_Y$) for the subcategory of $\Perf(X)$ (respectively: $\Coh(X)$, $\Sing(X)$, $\QCoh(X)$) consisting of objects supported on $Y$.
Notice that
$$
  \Sing(X)_Y 
    \simeq 
  \frac{\Coh(X)_Y}{\Perf(X)_Y}.
$$

\subsection{External products}
\label{ssec: categories external products}

In the main body of the paper, we will need to deal with certain dg-categories Karoubi generated by external products of coherent sheaves. 
We record their definitions and the main properties here.

\begin{defn}
Let $Y_1, \dots,Y_m$ be $S$-schemes of finite type.
Denote by
$$
  \Cohext(\times_SY_i)
    \subseteq 
  \QCoh(\times_SY_i)
$$
the full subcategory Karoubi-generated by the external tensor products $\boxtimes_SE_i$, where $E_i \in \Coh(Y_i)$ for every $i$.
Denote by
$$
  \Perfext(\times_SY_i)
    \subseteq 
  \Cohext(\times_SY_i)
$$
the full subcategory Karoubi-generated by the external tensor products $\boxtimes_SE_i$, where at least one of the $E_i$'s is perfect.
Finally, set
$$
  \Singext(\times_SY_i)
    :=
  \frac{\Cohext(\times_SY_i)}{\Perfext(\times_SY_i)}.
$$
\end{defn}

\sssec{}

Now assume we have other $S$-schemes $Z_1, \ldots, Z_m$ and morphisms $f_i: Y_i \to Z_i$ of $S$-schemes.
Consider the product morphism
$$
  F
    := 
  f_1 \times \dots \times f_m:
  Y_1 \times_S \cdots \times_S Y_m
    \to 
  Z_1 \times_S \cdots \times_S Z_m.
$$
The following useful formula is a pleasant exercise with base-change:
\begin{equation}\label{eqn:push of boxtimes}
  F_*
  \bigt{ 
        E_1 \boxtimes_S \cdots \boxtimes_S E_m
       }
    \simeq
  f_{1*} (E_1) \boxtimes_S \cdots \boxtimes_S 
  f_{m*} (E_m).
\end{equation}

\begin{lem} \label{lem:Cohext contained in Coh}

With the above notation, suppose that each $Z_i$ is a regular $S$-scheme
and that each $f_i$ is a quasi-smooth closed embedding. 
Then the inclusion
$$
  \Cohext(\times_SY_i)
    \subseteq 
  \QCoh(\times_SY_i)
$$
factors through $\Coh(\times_SY_i)$.
\end{lem}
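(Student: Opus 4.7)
The plan is to reduce the coherence question from the derived scheme $Y := Y_1 \times_S \cdots \times_S Y_m$ to the product of the ambient regular schemes $Z := Z_1 \times_S \cdots \times_S Z_m$, where regularity makes the analysis straightforward. Since $\Cohext(\times_SY_i)$ is Karoubi-generated by the external products $E_1 \boxtimes_S \cdots \boxtimes_S E_m$ with $E_i \in \Coh(Y_i)$, and $\Coh(Y) \subseteq \QCoh(Y)$ is closed under shifts, cones, summands, and finite direct sums, it suffices to prove that each generating external product lies in $\Coh(Y)$.

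First I apply the formula \eqref{eqn:push of boxtimes} to $F := f_1 \times \cdots \times f_m : Y \to Z$, which is a closed embedding as a fiber product of closed embeddings:
$$
F_* \bigt{E_1 \boxtimes_S \cdots \boxtimes_S E_m} \simeq f_{1*} E_1 \boxtimes_S \cdots \boxtimes_S f_{m*} E_m.
$$
Since $F$ is affine, $F_*$ is $t$-exact and conservative, and reflects coherence: the key observation is the elementary fact that an $\ccO_Z/I$-module is finitely generated over $\ccO_Z/I$ if and only if it is so over $\ccO_Z$, applied cohomology sheaf by cohomology sheaf. Therefore it suffices to verify that the right-hand side belongs to $\Coh(Z)$.

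At this point the regularity hypothesis enters. Each pushforward $f_{i*}E_i$ is coherent on $Z_i$, and since $Z_i$ is regular Noetherian, every coherent sheaf has finite Tor-amplitude, so $f_{i*}E_i \in \Perf(Z_i)$. Perfectness is stable under pullback along the projections $\pr{i}: Z \to Z_i$ and under finite tensor products, and hence
$$
f_{1*}E_1 \boxtimes_S \cdots \boxtimes_S f_{m*}E_m \simeq \pr{1}^*(f_{1*}E_1) \otimes^{\bbL}_{\ccO_Z} \cdots \otimes^{\bbL}_{\ccO_Z} \pr{m}^*(f_{m*}E_m) \in \Perf(Z) \subseteq \Coh(Z),
$$
completing the reduction. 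The main subtlety is justifying that $F_*$ reflects coherence in the derived setting, but this follows cleanly from the fact that $F$ is an affine, $t$-exact closed embedding; all other steps are routine, and the quasi-smoothness of each $f_i$ is not directly used here (it is only needed implicitly to ensure $Y$ and $Z$ are bounded Noetherian, so that the singularity categories are defined).
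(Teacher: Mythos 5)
Your proof is correct and follows essentially the same route as the paper: push forward along $F$ using \eqref{eqn:push of boxtimes}, use that $F_*$ detects coherence to reduce to the target, observe that each $f_{i*}E_i$ is perfect because $Z_i$ is regular, and conclude via $\Perf(\times_S Z_i)\subseteq \Coh(\times_S Z_i)$ on the bounded product. The only (cosmetic) difference is that you justify the coherence-detection step directly from $F$ being an affine, $t$-exact, conservative closed embedding, whereas the paper attributes it to quasi-smoothness of $F$; your remark that quasi-smoothness is really what guarantees boundedness of $\times_S Y_i$ (so that $\Coh$ makes sense there) is an accurate accounting of where that hypothesis enters.
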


\begin{proof}
Since $F$ is a quasi-smooth closed embedding, the dg-functor $F_*$ detects coherency. 
Hence, it suffices to prove that the right-hand side of \eqref{eqn:push of boxtimes} is coherent. 
We see that this object is perfect since each $f_{i*} (E_i)$ is coherent and hence perfect ($Z_i$ is regular). 
The regularity assumption finally ensures that the derived $S$-scheme $\times_S Z_i$ is bounded, and so $\Perf(\times_S Z_i)$ is contained in $\Coh(\times_S Z_i)$.
\end{proof}

\sssec{Variants with support}

Suppose that, for each $i=1,\dots m$, we are given a closed subset $W_i\hto Y_i$.
We denote by
$$
  \Cohext(\times_SY_i)_{\times_SW_i}
    \subseteq 
  \QCoh(\times_SY_i)
$$
the full subcategory Karoubi-generated by the external tensor products $\boxtimes_SE_i$, where $E_i \in \Coh(Y_i)_{W_i}$.
Similarly, we denote by 
$$
  \Perfext(\times_SY_i)_{\times_SW_i} 
    \subseteq 
  \Cohext(\times_SY_i)_{\times_SW_i}
$$
the full subcategory Karoubi-generated by external tensor products $\boxtimes_S E_i$, with $E_i \in \Coh(Y_i)_{W_i}$ and at least of them is perfect. 
Notice that 
$
  \Perfext(\times_SY_i)_{\times_SW_i} 
    \subseteq 
  \Perfext(\times_SY_i)
$ 
and 
$
  \Cohext(\times_SY_i)_{\times_SW_i} 
    \subseteq 
  \Cohext(\times_SY_i)
$.
We will use the following notation:
$$
  \Singext(\times_SY_i)_{\times_SW_i} 
    :=
  \frac{\Cohext(\times_SY_i)_{\times_SW_i}}{\Perfext(\times_SY_i)_{\times_SW_i}}.
$$

\subsection{Eventually two-periodic complexes}\label{ssec: etp complexes}

\sssec{}

We now recall the notion of \emph{eventually two-periodicity}.

\sssec{}

Let $Y$ be a noetherian $S$-scheme and $u$ a free variable in cohomological degree $2$. Consider the algebra object
$$
  \ccO_Y[u]
    :=
  \Sym_{\ccO_Y}(\ccO_Y[-2])
$$
in $\QCoh(Y)$ and denote by
$$
  \Mod_{\ccO_Y[u]}
    := 
  \Mod_{\ccO_Y[u]}\bigt{\QCoh(Y)}
$$
the dg-category of $\ccO_Y[u]$-modules. Recall the standard adjunction
$$
  \ccO_Y[u] \otimes_{\ccO_Y}-: 
  \QCoh(Y)
    \leftrightarrows
  \Mod_{\ccO_Y[u]}
  :\on{forget}.
$$

\sssec{}

For a full subcategory $\ccC \subseteq \QCoh(Y)$, 
we adopt the notation
$$
  \Mod_{\ccO_Y[u]}\bigt{\ccC}
    := 
  \varprojlim
  \Bigt{
        \Mod_{\ccO_Y[u]}\xto{\on{forget}}\QCoh(Y)\hookleftarrow \ccC
        }.
$$

\begin{defn}

Let $\ccC \subseteq \QCoh(Y)$ be a full subcategory and let $E \in \Mod_{\ccO_Y[u]}(\ccC)$.
We say that $E$ is \emph{eventually two-periodic} if
$$
  \Fib (E\xto{u} E[2]) 
    \in
  \Coh(Y)\cap \ccC.
$$
We denote by
$$
  \Mod^{\etp}_{\ccO_Y[u]}(\ccC)
    \subseteq 
  \Mod_{\ccO_Y[u]}(\ccC)
$$
the full subcategory spanned by eventually two-periodic complexes.
\end{defn}

\begin{rmk}\label{rmk: Modetp Karoubi complete}
Assume that $\ccC \subseteq \QCoh(Y)$ is a Karoubi-complete full subcategory.
Then notice that 
$$
  \Mod^{\etp}_{\ccO_Y[u]}(\ccC) 
    \subseteq 
  \Mod_{\ccO_Y[u]}
$$
is a Karoubi-complete dg-subcategory.
Let us verify the closure under retracts.
Let $F \in \Mod^{\etp}_{\ccO_Y[u]}(\ccC)$ and
assume that $E \in \Mod_{\ccO_Y[u]}$ is a retract of $F$.
Since $\ccC$ is Karoubi-complete, we deduce that $E\in \Mod_{\ccO_Y}(\ccC)$.
Moreover, we have decompositions
$$
  \ccH^a(F)
    \simeq 
  \ccH^a(E) \oplus \ccH^a(F/E)
$$
compatible with the morphism
$
  \ccH^a(F) 
    \to
  \ccH^{a+2}(F)
$
induced by $u$.
\end{rmk}

\begin{rmk}

Let $\ccC \subseteq \QCoh(Y)$ be a Karoubi-complete stable full subcategory and suppose that
$
  \ccC 
    \subseteq
  \Coh(Y).
$
Then the obvious inclusion is an equivalence:
$$
  \Mod_{\ccO_Y[u]}^{\etp}(\ccC)
    \simeq
  \Mod_{\ccO_Y[u]}(\ccC).
$$
\end{rmk}

\begin{lem}\label{lem: Coh(A1[-1]) vs Cohminus etp}
Let $\bbA^1_Y[-1]:=Y\times_{0,\bbA^1_Y,0}Y$.
Then there is an equivalence
$$
  \Coh(\bbA^1_Y[-1])
    \simeq
  \Mod_{\ccO_Y[u]}^{\etp}\bigt{\Cohminus(Y)}.
$$
\end{lem}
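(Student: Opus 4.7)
The plan is to use Koszul duality. I would first identify $\bbA^1_Y[-1] = \on{Spec}_Y(A)$, where
$$A := \ccO_Y \otimes^{\bbL}_{\ccO_Y[t]} \ccO_Y \simeq \Sym_{\ccO_Y}(\ccO_Y[1])$$
is a square-zero extension of $\ccO_Y$ by $\ccO_Y[1]$. This yields $\QCoh(\bbA^1_Y[-1]) \simeq \Mod_A(\QCoh(Y))$, under which $\Coh(\bbA^1_Y[-1])$ corresponds to the $A$-modules whose underlying $\ccO_Y$-complex is bounded with coherent cohomologies. The Koszul-type resolution $A \otimes_{\ccO_Y} \Sym_{\ccO_Y}(\ccO_Y[2]) \to \ccO_Y$ (with differential $dy = \epsilon$, where $y$ and $\epsilon$ are the generators of degrees $-2$ and $-1$ respectively) identifies $\Hom_A(\ccO_Y, \ccO_Y) \simeq \prod_{n \geq 0}\ccO_Y[-2n]$, containing $\ccO_Y[u] = \Sym_{\ccO_Y}(\ccO_Y[-2])$ as the polynomial subalgebra. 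This produces a candidate functor
$$\Phi: \Mod_A(\QCoh(Y)) \to \Mod_{\ccO_Y[u]}(\QCoh(Y)), \quad M \mapsto \Hom_A(\ccO_Y, M),$$
with tentative quasi-inverse $\Psi$ given by a bar construction.

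Next, I would verify that $\Phi$ restricts to the claimed equivalence. Using the Koszul resolution, $\Phi(M)$ is computed as a twisted product $\prod_{n \geq 0} M[-2n]$: when $M$ has cohomology in a bounded range, each cohomological degree of this product involves only finitely many summands and vanishes in sufficiently positive degrees, so $\Phi(M) \in \Cohminus(Y)$. The $u$-action simply shifts the index $n$, and is therefore an isomorphism in sufficiently negative degrees (once the range of contributing $n$ stabilizes), so $\on{Fib}(u : \Phi(M) \to \Phi(M)[2])$ is concentrated in a bounded range of degrees and remains coherent. Conversely, for $E \in \Mod^{\etp}_{\ccO_Y[u]}(\Cohminus(Y))$, the eventually 2-periodic condition ensures that the bar-construction $\Psi(E)$ has bounded underlying $\ccO_Y$-complex and hence defines a coherent object on $\bbA^1_Y[-1]$.

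The main obstacle is to establish that $\Phi$ and $\Psi$ are mutually inverse on the specified subcategories. This is essentially a standard Koszul duality argument: one checks that the unit and counit of the adjunction are equivalences on a collection of compact generators (such as $\ccO_Y$ viewed as $A$-module, or the truncations $\ccO_Y[u]/u^n$ on the other side), and then extends to the full subcategories using the finiteness conditions. The delicate interplay between ``bounded above coherent plus eventually 2-periodic'' on the $\ccO_Y[u]$-side and ``bounded coherent'' on the $A$-side is exactly what this bookkeeping captures.
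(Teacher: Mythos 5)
Your overall strategy --- Koszul duality between $A=\ccO_Y[\eps]\simeq\Sym_{\ccO_Y}(\ccO_Y[1])$ and $\ccO_Y[u]$ --- is the same one the paper uses, but you have picked the wrong duality functor, and this is a genuine gap rather than a cosmetic difference. With your own conventions, $\Phi(M)=\Hom_A(\ccO_Y,M)$ is the twisted product $\prod_{n\geq 0}M[-2n]$, which for a bounded coherent $M$ is bounded \emph{below} and unbounded \emph{above}: already for $M=i_*\ccO_Y$, where $i\colon Y\to\bbA^1_Y[-1]$ is the canonical closed embedding, your own computation gives $\Phi(M)\simeq\ccO_Y[u]$, which has cohomology in every non-negative even degree and therefore does not lie in $\Cohminus(Y)$. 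So the assertion that the product ``vanishes in sufficiently positive degrees'' has the direction reversed, $\Phi$ does not land in $\Mod^{\etp}_{\ccO_Y[u]}\bigt{\Cohminus(Y)}$, and it cannot be the claimed equivalence (it would instead relate $\Coh(\bbA^1_Y[-1])$ to a bounded-\emph{below} variant, which is not the category in the statement and not the model used later, e.g.\ in Lemma \ref{lem: pr^* vs etp u-tors}). The functor that does match the statement is the \emph{tensor} functor $\ccO_Y\otimes_{\ccO_Y[\eps]}-$ used in the paper: on a bounded $M$ its value is the telescope with underlying object $\bigoplus_{n\geq 0}M[2n]$, which is bounded above, degreewise coherent, and eventually two-periodic under the $u$-action given by shifting the telescope.

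Separately, even after switching to the correct functor, the step you outsource to ``standard Koszul duality bookkeeping on compact generators'' is precisely the non-formal part. The inverse is $\ccO_Y\otimes_{\ccO_Y[u]}-$, and the essential verification is that for every eventually two-periodic $E$ the composite gives $\ccO_Y\otimes_{\ccO_Y[\eps]}\bigt{\ccO_Y\otimes_{\ccO_Y[u]}E}\simeq\ccO_Y[u^{-1}]\otimes_{\ccO_Y[u]}E\simeq E$; the paper proves this by writing $\ccO_Y[u^{-1}]\simeq\varinjlim_{n}\Fib\bigt{\ccO_Y[u]\xto{u^n}\ccO_Y[u][2n]}$ and using the etp hypothesis to identify $\varinjlim_n\Fib(E\xto{u^n}E[2n])\simeq E$. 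Checking unit and counit on $u$-torsion objects such as $\ccO_Y[u]/u^n$ tensored with coherent sheaves does not suffice: a general object of $\Mod^{\etp}_{\ccO_Y[u]}\bigt{\Cohminus(Y)}$ is an infinite, eventually periodic complex and is not built from such torsion objects by finite colimits and retracts, so the finiteness conditions in the statement cannot be propagated from generators alone --- indeed exterior/polynomial Koszul duality fails to be an equivalence on the full module categories, and the etp-plus-boundedness condition is exactly what the explicit telescope computation exploits.
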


\begin{proof}

We will construct two mutually inverse functors.

\sssec*{Construction first functor}

The derived scheme $\bbA^1_Y[-1]$ is affine over $Y$ and corresponds to the sheaf of simplicial algebras $\ccO_Y[\eps]$, where $\eps$ is a free generator of degree $1$ mapped to $0$ by the differential. 
Consider the dg-functor
$$
  \ccO_Y\otimes_{\ccO_Y[\eps]}-:
  \Coh(\bbA^1_Y[-1])
    \to
  \Cohminus(Y).
$$
By \cite[Lemma 2.39]{brtv18} we have that
$$
  \Hom_{\bbA^1_S[-1]}(\ccO_S,\ccO_S)
    \simeq
  \ccO_S[u].
$$
Since $\bbA^1_Y[-1]\simeq \bbA^1_S[-1]\times_S Y$, we see that the functor above factors as
$$
  \ccO_Y\otimes_{\ccO_Y[\eps]}-:
  \Coh(\bbA^1_Y[-1])
    \to
  \Mod_{\ccO_Y[u]}\bigt{\Cohminus(Y)}.
$$

Moreover, for every $M \in \Coh(\bbA^1_Y[-1])$, we have that $\ccO_Y\otimes_{\ccO_Y[\eps]}M$ is eventually two-periodic.
To see this, we notice that
$$
  \ccO_Y
    \simeq
  \cdots
  \xto{\eps}
  \ccO_Y[\eps][4]
  \xto{\eps}
  \ccO_Y[\eps][2]
  \xto{\eps}
  \ccO_Y[\eps],
$$
so that
$$
  \ccO_Y\otimes_{\ccO_Y[\eps]}M
    \simeq
  \cdots
  \xto{\eps}
  M[4]
  \xto{\eps}
  M[2]
  \xto{\eps}
  M.
$$
Then the action of $u$ is induced by
\begin{equation*}
    \begin{tikzcd}
      \cdots
      \rar["\eps"]
      &
      M[4]
      \dar["\id"]
      \rar["\eps"]
      &
      M[2]
      \dar["\id"]
      \rar["\eps"]
      &
      M
      \\
      \cdots
      \rar["\eps"]
      &
      M[4]
      \rar["\eps"]
      &
      M[2].
    \end{tikzcd}
\end{equation*}
Since $M\in \Coh(\bbA^1_Y[-1])$ we see that $\ccO_Y\otimes_{\ccO_Y[\eps]}M$ is eventually two-periodic.
Hence, we have a dg-functor
$$
  \ccO_Y\otimes_{\ccO_Y[\eps]}-:
  \Coh(\bbA^1_Y[-1])
    \to
  \Mod_{\ccO_Y[u]}^{\etp}\bigt{\Cohminus(Y)}.
$$

\sssec*{Construction of the second functor}

Consider the dg-functor
$$ 
  \ccO_Y\otimes_{\ccO_Y[u]}-:
  \Mod_{\ccO_Y[u]}^{\etp}\bigt{\Cohminus(Y)}
  \to
  \QCoh(Y)
$$
Using once again \cite[Lemma 2.39]{brtv18}, we see that this factors as
$$
  \ccO_Y\otimes_{\ccO_Y[u]}-:
  \Mod_{\ccO_Y[u]}^{\etp}\bigt{\Cohminus(Y)}
    \to
  \QCoh(\bbA^1_Y[-1]).
$$
Moreover, the eventually-two periodic condition guarantees that we land in 
$$
  \Coh(\bbA^1_Y[-1])
    \subseteq 
  \QCoh(\bbA^1_Y[-1]).
$$

\sssec*{Equivalence}

To conclude the proof, notice the chain of canonical equivalences
$$
  \ccO_Y\otimes_{\ccO_Y[u]}(\ccO_Y\otimes_{\ccO_Y[\eps]}M) 
    \simeq
  (\ccO_Y\otimes_{\ccO_Y[u]}\ccO_Y)\otimes_{\ccO_Y[\eps]}M
    \simeq 
  \ccO_Y[\eps]\otimes_{\ccO_Y[\eps]}M
    \simeq
  M.
$$
In particular, the composition
$$
  \Coh(\bbA^1_Y[-1])
    \xto{\ccO_Y\otimes_{\ccO_Y[\eps]}-}
  \Mod_{\ccO_Y[u]}^{\etp}\bigt{\Cohminus(Y)}
    \xto{\ccO_Y\otimes_{\ccO_Y[u]}-}
  \Coh(\bbA^1_Y[-1])
$$
is equivalent to the identity functor. On the other hand, 
$$
  \ccO_Y\otimes_{\ccO_Y[\eps]}(\ccO_Y\otimes_{\ccO_Y[u]}M) 
    \simeq
  (\ccO_Y\otimes_{\ccO_Y[\eps]}\ccO_Y)\otimes_{\ccO_Y[u]}M
    \simeq 
  \ccO_Y[u^{-1}]\otimes_{\ccO_Y[u]}M.
$$
Now notice that
$$
  \ccO_Y[u^{-1}]
    \simeq
  \varinjlim_{n\geq 0}\Fib(\ccO_Y[u]\xto{u^n}\ccO_Y[u][2n]),
$$
so that
$$
  \ccO_Y[u^{-1}]\otimes_{\ccO_Y[u]}M
    \simeq
  \varinjlim_{n\geq 0} \Fib (M\xto{u^n}M[2n])
    \simeq
  M.
$$
Therefore, the composition
$$
  \Mod_{\ccO_Y[u]}^{\etp}\bigt{\Cohminus(Y)}
    \xto{\ccO_Y\otimes_{\ccO_Y[u]}-}
  \Coh(\bbA^1_Y[-1])
    \xto{\ccO_Y\otimes_{\ccO_Y[\eps]}-}
  \Mod_{\ccO_Y[u]}^{\etp}\bigt{\Cohminus(Y)}
$$
is equivalent to the identity functor as well.
\end{proof}

\sssec{}\label{sssec: pr^*Coh}

Consider the full subcategory
$$
  \Angles{\pr{}^*\Coh(Y)}
    \subseteq 
  \Coh(\bbA^1_Y[-1])
$$
Karoubi-generated by the essential image of the functor
$$
  \pr{}^*:
  \Coh(Y)
    \to
  \Coh(\bbA^1_Y[-1]),
$$
where $\pr{}:\bbA^1_Y[-1]=Y\times_{0,\bbA^1_Y,0}Y\to Y$ is the canonical projection.

\sssec{}

Our next task is to identify the full subcategory of $\Mod_{\ccO_Y[u]}^{\etp}\bigt{\Cohminus(U)}$ which corresponds to 
$$
  \Angles{\pr{}^*\Coh(Y)}
    \subseteq 
  \Coh(\bbA^1_Y[-1])
$$
under the equivalence of Lemma \ref{lem: Coh(A1[-1]) vs Cohminus etp}.

For this purpose, we consider the full subcategory
$$
  \Mod_{\ccO_Y[u]}^{\etp}\bigt{\Cohminus(U)}^{u-\on{tors}}
    \subseteq 
  \Mod_{\ccO_Y[u]}^{\etp}\bigt{\Cohminus(U)}
$$
of $u$-torsion modules, i.e. objects $M \in \Mod_{\ccO_Y[u]}^{\etp}\bigt{\Cohminus(U)}$ such that there exists some $n\geq 0$ with
$$
  u^n \sim 0:
  M
    \to
  M[2n].
$$

Notice that this condition joint with the etp one immediately implies that the underlying complex of $\ccO_Y$-modules of $M$ lies in $\Coh(Y)$:
$$
  \Mod_{\ccO_Y[u]}^{\etp}\bigt{\Cohminus(Y)}^{u-\on{tors}}
    \simeq
  \Mod_{\ccO_Y[u]}\bigt{\Coh(Y)}^{u-\on{tors}}.
$$

\begin{lem}\label{lem: pr^* vs etp u-tors}
The full subcategory 
$
  \Angles{\pr{}^*\Coh(Y)}
    \subseteq 
  \Coh(\bbA^1_Y[-1])
$
corresponds to the full subcategory of $u$-torsion $\ccO_Y[u]$-modules
$$
  \Mod_{\ccO_Y[u]}^{\etp}\bigt{\Cohminus(Y)}^{u-\on{tors}}
    \subseteq 
  \Mod_{\ccO_Y[u]}^{\etp}\bigt{\Cohminus(Y)}
$$
under the equivalence of Lemma \ref{lem: Coh(A1[-1]) vs Cohminus etp}.
\end{lem}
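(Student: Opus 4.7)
The plan is to work through the equivalence $\Phi := \ccO_Y \otimes_{\ccO_Y[\eps]} -$ from Lemma \ref{lem: Coh(A1[-1]) vs Cohminus etp} and verify both containments of full subcategories.

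For the inclusion $\Phi(\Angles{\pr{}^*\Coh(Y)}) \subseteq \Mod^{\etp}_{\ccO_Y[u]}\bigt{\Cohminus(Y)}^{u\text{-tors}}$, I would first compute directly, for $N \in \Coh(Y)$:
$$
\Phi(\pr{}^* N)
\;=\;
\ccO_Y \otimes_{\ccO_Y[\eps]}\bigt{\ccO_Y[\eps] \otimes_{\ccO_Y} N}
\;\simeq\;
N,
$$
and check that the $u$-action on this image is (strictly) zero. For this, I would invoke the explicit description of $u$ from the proof of Lemma \ref{lem: Coh(A1[-1]) vs Cohminus etp}: $u$ is induced by the shift operator on the resolution $\cdots \xto{\eps} M[4] \xto{\eps} M[2] \xto{\eps} M$, and when $M = \pr{}^* N = \ccO_Y[\eps] \otimes_{\ccO_Y} N$ the $\eps$-multiplication acts only on the first tensor factor and is therefore killed by $\ccO_Y \otimes_{\ccO_Y[\eps]}-$. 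Since the full subcategory of $u$-torsion etp modules is stable under retracts and finite (co)fibers (a (co)fiber of maps between $u^{a}$- and $u^{b}$-torsion objects is $u^{a+b}$-torsion, by applying the functoriality of $u$ to the defining triangle), this containment propagates to the Karoubi closure.

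For the reverse containment, I would construct, for any $M$ with $u^n \sim 0$, a finite filtration $0 = M_0 \to M_1 \to \cdots \to M_n = M$ with $M_k := \Fib(u^k: M \to M[2k])$. The natural transitions $M_{k-1} \to M_k$ exist because $u^k = u \cdot u^{k-1}$ kills $M_{k-1}$, giving the required lift through the defining fiber sequence of $M_k$. Applying the octahedral axiom to $M_{k-1} \to M_k \to M$, the cofiber $C_k := \Cofib(M_{k-1} \to M_k)$ sits in a fiber sequence $C_k \to \Cofib(M_{k-1} \to M) \to \Cofib(M_k \to M)$; the two outer cofibers are identified with $M[2(k-1)]$ and $M[2k]$ respectively (this is the cofiber-side of the fiber sequences $M_j \to M \to M[2j]$), and the middle arrow is $u$. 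Thus $C_k \simeq \Fib\bigt{u: M[2(k-1)] \to M[2k]} = M_1[2(k-1)]$, and by induction on $n$ one is reduced to showing that $M_1$ itself lies in $\Phi(\Angles{\pr{}^*\Coh(Y)})$; here the nullhomotopy $u^{n-1}\sim 0$ on $M_1$ is produced from $u^n\sim 0$ on $M$ by chasing through the fiber sequence $M[1] \to M_1 \to M$. The base case is $u \sim 0$: here the etp condition, combined with $\Fib(0) \simeq M \oplus M[1] \in \Coh(Y)$ and idempotent-completeness of $\Coh(Y)$, forces $M \in \Coh(Y)$, and then $M \simeq \Phi(\pr{}^* M) \in \Phi(\pr{}^*\Coh(Y))$ by the computation of the first direction. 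Assembling $M$ as an iterated cofiber of the $C_k$'s concludes.

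The main obstacle is the octahedral-axiom computation and, more subtly, the construction of the nullhomotopy $u_{M_1}^{n-1}\sim 0$ from $u^n_M \sim 0$: in a stable $\infty$-category, a composite of null maps need not be canonically null, and one must work with the chosen nullhomotopies via the connecting morphism of $M[1] \to M_1 \to M$. Alongside this, one has to verify that the etp and $\Cohminus(Y)$ conditions are preserved by the fibers and cofibers appearing in the filtration, which follows formally from the exactness of $\Fib(u: -)$ and the stability of $\Coh(Y)$ under (co)fibers, but demands careful bookkeeping.
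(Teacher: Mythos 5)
Your first containment, and the filtration of $M_n=\Fib(u^n\colon M\to M[2n])$ by the $M_k$'s with graded pieces $M_1[2(k-1)]$, are fine; but the converse direction has two genuine gaps. First, the base case: from ``$u\sim 0$ and the underlying complex lies in $\Coh(Y)$'' you conclude $M\simeq \ccO_Y\otimes_{\ccO_Y[\eps]}\pr{}^*M$, i.e.\ that the $\ccO_Y[u]$-module structure on $M$ is restricted along the augmentation $\ccO_Y[u]\to\ccO_Y$. This is precisely the delicate point of the whole lemma, and it does not follow from ``the computation of the first direction'': for a general algebra, a null-homotopy of the single action map $u\colon M\to M[2]$ does not trivialize the module structure (the higher coherences survive). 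The statement is true here, but for a reason you never invoke: $\ccO_Y[u]$ is the free associative algebra on one generator in degree $2$, so an $\ccO_Y[u]$-module is no more data than a pair (object, degree-$2$ endomorphism), and a choice of null-homotopy of $u$ is exactly an equivalence of pairs with $(M,0)$; absent this (or an equivalent Koszul-duality/base-change argument), your base case is unproven. Second, your induction needs ``$u^n\sim 0$ on $M$ implies $u^{n-1}\sim 0$ on $M_1=\Fib(u)$''. You flag this as the main obstacle and only sketch a chase; but the chase only shows that $u^{n-1}_{M_1}$ becomes null after composing with $M_1[2n-2]\to M[2n-2]$ (that composite is $u^{n-2}\circ(u\circ\iota)\sim 0$), hence factors through $M[2n-1]$ --- not that it is null. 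Over a field this can be checked on cohomology using formality, but over a general base it is a genuinely non-formal assertion, and no argument is given; the induction collapses without it.

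Both problems disappear if the d\'evissage is performed on the coefficients rather than on $M$, which is how the paper proceeds: consider $\coFib(u^n\colon M\to M[2n])\simeq M\otimes_{\ccO_Y[u]}\bigt{\ccO_Y[u]/u^n}$. On one hand it splits as $M[1]\oplus M[2n]$ because $u^n\sim 0$; on the other hand $\ccO_Y[u]/u^n$ is a finite iterated extension of shifts of the augmentation module, so the cofiber is built by finitely many cones from objects of the form $\bigt{M\otimes_{\ccO_Y[u]}\ccO_Y}\otimes_{\ccO_Y}\ccO_Y[\eps]$, whose $\ccO_Y$-coefficients lie in $\Coh(Y)$ by the remark preceding the lemma (etp together with $u$-torsion forces the underlying complex of $M$ into $\Coh(Y)$). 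Hence the cofiber lands in $\Angles{\pr{}^*\Coh(Y)}$, and Karoubi-completeness recovers $M$ as a retract. If you want to keep your filtration, replace the induction on the torsion exponent by this single tensoring against $\ccO_Y[u]/u^n$.
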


\begin{proof}
It is clear that the dg-functor
$$
  \Coh(\bbA^1_Y[-1])
    \xto{\ccO_Y\otimes_{\ccO_Y[\eps]}-}
  \Mod_{\ccO_Y[u]}^{\etp}\bigt{\Cohminus(Y)}
$$
restricts to
$$
  \Angles{\pr{}^*\Coh(Y)}
    \to
  \Mod_{\ccO_Y[u]}^{\etp}\bigt{\Cohminus(Y)}^{u-\on{tors}}.
$$
Conversely, assume that $M\in \Mod_{\ccO_Y[u]}^{\etp}\bigt{\Cohminus(U)}^{u-\on{tors}}$.
Choose some $n\geq 0$ such that
$$
  u^n \sim 0:
  M
    \to
  M[2n].
$$
Then we see that the action of $\ccO_Y[u]$ on $\coFib(M\xto{u^n}M[2n])$ factors through the canonical augmentation $\ccO_Y[u]\to \ccO_Y$, hence
\begin{align*}
  \coFib(M\xto{u^n}M[2n]) \otimes_{\ccO_Y[u]}\ccO_Y
  &
    \simeq
  \coFib(M\xto{u^n}M[2n])\otimes_{\ccO_Y}\ccO_Y \otimes_{\ccO_Y[u]}\ccO_Y 
  \\
  &
    \simeq
  \coFib(M\xto{u^n}M[2n]) \otimes_{\ccO_Y} \ccO_Y[\eps]
    \in
  \Angles{\pr{}^*\Coh(Y)}.
\end{align*}
Since $\coFib(M\xto{u^n}M[2n])\simeq M[1]\oplus M[2n]$ and $\Angles{\pr{}^*\Coh(Y)}$ is by definition Karoubi-complete, we conclude that $M\in \Angles{\pr{}^*\Coh(Y)}$.
\end{proof}

\begin{cor}\label{cor: MFcoh vs Cohminus etp/Cohb}
  With the same notation as above, there is an equivalence
  $$
    \frac{\Coh(\bbA^1_Y[-1])}{\Angles{\pr{}^*\Coh(Y)}}
      \simeq
    \frac{\Mod_{\ccO_Y[u]}^{\etp}\bigt{\Cohminus(Y)}}{\Mod_{\ccO_Y[u]}^{\etp}\bigt{\Cohminus(Y)}^{u-\on{tors}}}.
  $$
\end{cor}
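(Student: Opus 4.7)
The plan is to simply combine the two lemmas that precede the corollary. Lemma \ref{lem: Coh(A1[-1]) vs Cohminus etp} produces a Morita equivalence of the two ambient dg-categories
$$
  \Phi := \ccO_Y \otimes_{\ccO_Y[\eps]}-:
  \Coh(\bbA^1_Y[-1])
    \xto{\sim}
  \Mod_{\ccO_Y[u]}^{\etp}\bigt{\Cohminus(Y)},
$$
while Lemma \ref{lem: pr^* vs etp u-tors} asserts that, under $\Phi$, the full subcategory $\Angles{\pr{}^*\Coh(Y)}$ is identified with $\Mod_{\ccO_Y[u]}^{\etp}\bigt{\Cohminus(Y)}^{u-\on{tors}}$.

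With these two inputs at hand, the corollary is a formal consequence of the functoriality of the dg-quotient in $\dgCat_A$. Namely, given a commutative square of full embeddings
$$
\begin{tikzcd}
  \sT_0 \rar["\sim"] \dar[hook] & \sT_0' \dar[hook] \\
  \sT \rar["\sim"] & \sT'
\end{tikzcd}
$$
in which both horizontal arrows are equivalences, the induced dg-functor $\sT/\sT_0 \to \sT'/\sT_0'$ is again an equivalence. This is immediate from the universal property characterizing dg-quotients recalled in the preliminary Section on dg-categories: a dg-functor out of $\sT/\sT_0$ is the same datum as a dg-functor out of $\sT$ sending the morphisms in $\on{W}_{\sT_0}$ to equivalences, and under an equivalence $\sT \simeq \sT'$ matching $\sT_0$ with $\sT_0'$ the classes $\on{W}_{\sT_0}$ and $\on{W}_{\sT_0'}$ correspond.

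Applying this general principle to the square
$$
\begin{tikzcd}
  \Angles{\pr{}^*\Coh(Y)} \rar["\sim"] \dar[hook] & \Mod_{\ccO_Y[u]}^{\etp}\bigt{\Cohminus(Y)}^{u-\on{tors}} \dar[hook] \\
  \Coh(\bbA^1_Y[-1]) \rar["\sim","\Phi"'] & \Mod_{\ccO_Y[u]}^{\etp}\bigt{\Cohminus(Y)}
\end{tikzcd}
$$
yields the desired equivalence of quotients. There is no serious obstacle here; the only thing to double-check is that the top horizontal arrow is indeed well defined (i.e.\ that $\Phi$ restricts as claimed), but this is precisely the content of Lemma \ref{lem: pr^* vs etp u-tors}.
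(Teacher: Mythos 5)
Your argument is correct and is exactly the one the paper intends: the corollary is stated as an immediate consequence of Lemma \ref{lem: Coh(A1[-1]) vs Cohminus etp} and Lemma \ref{lem: pr^* vs etp u-tors}, with the passage to quotients justified by the universal property of dg-quotients, just as you write. Nothing further is needed.
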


\sssec{}\label{sssec: defn MFcoh}

The dg-category 
$
  \frac{\Coh(\bbA^1_Y[-1])}{\Angles{\pr{}^*\Coh(Y)}}
$
is known to be equivalent to the dg-category of coherent matrix factorizations, see \cite[Remark 3.13]{pippi22b}.
Therefore, we will adopt the following notation:
$$
  \MFcoh(Y,0)
    :=
  \frac{\Coh(\bbA^1_Y[-1])}{\Angles{\pr{}^*\Coh(Y)}}.
$$

\sssec{}

The dg-category introduced above admit variants with (set-theoretic) support conditions. 
The equivalences of Lemmas \ref{lem: Coh(A1[-1]) vs Cohminus etp} and \ref{lem: pr^* vs etp u-tors} respect this condition.
Thus, in the sequel, for $W\subseteq Y$ a closed subset, we will freely use the equivalences
$$
  \Coh(\bbA^1_Y[-1])_W
    \simeq
  \Mod_{\ccO_Y[u]}^{\etp}\bigt{\Cohminus(Y)_W}
$$
$$
  \MFcoh(Y,0)_W
    \simeq
  \frac{\Mod_{\ccO_Y[u]}^{\etp}\bigt{\Cohminus(Y)_W}}{\Mod_{\ccO_Y[u]}^{\etp}\bigt{\Cohminus(Y)_W}^{u-\on{tors}}}.
$$

\subsection{Realizations of dg-categories}\label{ssec: cohomology of dg-categories}

In the main body of the paper, we will need certain cohomology theories defined for dg-categories: the \emph{motivic realization} and the \emph{$\ell$-adic realization}. 
These two realizations are related by a \emph{non-commutative Chern character}. 
We recall the main definitions here, referring to the original sources for the constructions and proofs.

\sssec{}

Let $\SH_S$ denote the stable homotopy $\oo$-category of schemes (see \cite{morelvoevodsky99,robalo15}). 
We first recall the \emph{motivic realization of dg-categories}, defined in \cite{brtv18}. 
This is a lax monoidal $\oo$-functor
$$
  \Mv_S:
  \dgCat_A 
    \to 
  \SH_S
$$
that sends localization sequences of dg-categories to fiber sequences of motivic spectra. 
In other words, $\Mv_S$ is a localizing invariant.

\sssec{}

In \cite{robalo15}, it is proven that
$$
  \Mv_S\bigt{\Perf(S)}
    \simeq 
  \BU_S,
$$
where $\BU_S$ denotes the motivic spectrum of non-connective homotopy-invariant algebraic K-theory.
In particular, for every qcqs scheme $X$, we have that 
$$
  \pi_0 
  \Hom_{\SH_S}\bigt{
                    \Mv_S(\Perf(S)),\Mv_S(\Perf(X))
                    }
    \simeq 
  \pi_0 \BU_S(X)
    \simeq 
  \HK_0(X).
$$
By abstract nonsense, the $\oo$-functor can be upgraded to
$$
  \Mv_S: 
  \dgCat_A 
    \to 
  \Mod_{\BU_S}\bigt{\SH_S}.
$$

\sssec{}

For every noetherian bounded derived scheme $X$, we denote $\G(X)\in \SH_S$ the associated G-theory spectrum. 
Then we have that
$$
  \G(X)
    \simeq 
  \Mv_S\bigt{\Coh(X)}(S).
$$
The canonical inclusion $\Perf(X)\hto \Coh(X)$ recovers the canonical morphism
$$
  \HK(X) 
    \to 
  \G(X).
$$
We will use the notation
$$
  \HK^{\on{sg}}(X)
    := 
  \coFib \bigt{\HK(X)\to \G(X)}.
$$
Since $\Mv_S$ is a localizing invariant and $\Sing(X):=\Coh(X)/\Perf(X)$, we see that
$$
  \HK^{\on{sg}}(X)
    \simeq 
  \Mv_S\bigt{\Sing(X)}(S).
$$

\sssec{} 

For $X$ as above, let $Z\hto X$ be a closed subscheme.
We write $\HK(X)_Z$ (respectively, $\G(X)_Z$)
for the homotopy invariant non-connective algebraic K-theory (respectively, G-theory) with support on $Z$.
Similarly, we write $\HK^{\on{sg}}(X)_Z$ for the cofiber of $\HK(X)_Z\to \G(X)_Z$ in the $\oo$-category of spectra.

\sssec{} 

Let $\Shv(S)$ denote the ind-completion of the stable $\oo$-category of constructible $\Qell$-adic sheaves on $S_{\et}$.
Following \cite{brtv18}, we define the $\ell$-adic realization of dg-categories as the composition
$$
  \rl_S: 
  \dgCat_A 
    \to 
  \Mod_{\BU_S}\bigt{\SH_S}
    \xto{\rho^{\ell}} 
  \Mod_{\Ql{,S}(\beta)}\bigt{\Shv(S)},
$$
where $\rho^{\ell}$ is the $\ell$-adic realization functor of \cite{ayoub14,cisinskideglise16} and
$$
  \Ql{,S}(\beta)
    =
  \bigoplus_{i\in \bbZ}\Ql{,S}(i)[2i].
$$

\sssec{} 

For an object $\sT \in \dgCat_A$, we will use the following notation:

$$
  \uH^*_{\et}\bigt{S,\rl_S(\sT)}
    := 
  \Hom_{\Shv(S)} \bigt{\Ql{,S}, \rl_S(\sT)}
    \simeq 
  \Hom_{\Mod_{\Ql{,S}(\beta)}\bigt{\Shv(S)}} \bigt{\Ql{,S}(\beta), \rl_S(\sT)}.
$$
In particular, $\uH^0_{\et}\bigt{S,\rl_S(\sT)}:= \pi_0 \Hom_{\Shv(S)} \bigt{\Ql{,S}, \rl_S(\sT)}$.

\sssec{}\label{sssec: nc Chern character}

As explained in \cite{toenvezzosi22}, there is a canonical morphism
$$
  \chern : 
  \HK(\sT)
    \to 
    \uH^*_{\et} \bigt{S,\rl_S(\sT)}
$$
to be referred to as the \emph{non-commutative $\ell$-adic Chern character}. 
For $\sT=\Perf(Y)$ the dg-category of perfect complexes on a regular $S$-scheme, this induces the usual Chern character
$$
  \up{K}_0(Y)
    \simeq 
  \HK_0(\sT)
    \to 
  \uH^0_{\et} \bigt{S,\rl_S(\sT)}
    \simeq 
  \bigoplus_{i\in \bbZ}\uH^{2i}_{\et} \bigt{Y,\Qell(i)}.
$$
Using this fact and \cite[Proposition 3.30]{brtv18}, it is easy to see that, for $\sT=\Sing(\bbA^1_S[-1])_s$, the non-commutative $\ell$-adic Chern character induces the inclusion
$$
  \up{K}_0(S)_s
    \simeq 
  \bbZ 
    \hto 
  \Qell 
    \simeq 
  \uH^0_{\et} \bigt{S,\rl_S(\sT)}.
$$


\section{The convolution monoidal category \texorpdfstring{$\sB$}{B} and its Drinfeld cocenter}\label{sec: differential forms on two-periodic complexes}

In this section, we begin our study of the derived groupoid $
  G 
  := 
  s\times_S s
$
and of its associated convolution monoidal dg-categories $\sB^+ := \Coh(G)$ and 
$ \sB := \Sing(G)$.
As a plain $A$-linear dg-category, $\Sing(G)$ is equivalent to the dg-category of two-periodic complexes of $k$-vector spaces, 
but the monoidal structure is quite subtle in mixed-characteristic: for instance, $\sB$ is not symmetric monoidal in that case. 
In a brief digression, we explain the simplification that occurs in pure characteristic.

Afterwards, we introduce the Drinfeld cocenter $\HH(\sB/A)$, which can be calculated using the cyclic bar complex of $\sB$ relative to $A$. 
We give geometric descriptions of the nodes of this cyclic bar complex in terms of the dg-categories introduced in Section \ref{ssec: categories external products}.

\subsection{The derived groupoid \texorpdfstring{$G$}{G}}

Following \cite{beraldopippi22, beraldopippi23, beraldopippi24, toenvezzosi22}, we recall here the explicit algebraic model for the derived $s$-groupoid
$$
  G
  :=
  s\times_S s.
$$

\sssec{}

Let $A[\eps] \in \sCAlg_{A/}$ denote the free simplicial $A$-algebra with a generator $\eps$ in degree $1$ that is sent to $\pi$ by the differential.
We see that $A[\eps]$ is a cofibrant replacement for the $A$-algebra $k$ and that
$$
  A[\eps_1,\eps_2]
    \simeq 
  A[\eps]\otimes_A A[\eps] 
    \in 
  \sCAlg_{A/}.
$$
In particular, we have
$$
  G 
  \simeq 
  \Spec {A[\eps_1, \eps_2]}.
$$

\sssec{} \label{sssec:Hopf-algebroid}

We now recall that the derived scheme $G$ admits a groupoid structure over $s$ in the $\oo$-category $\dSch_{/S}$. 
Working with strict models, we have to endow $A[\eps_1,\eps_2]$ with the structure of an Hopf algebroid over $A[\eps]$ in the category $\sCAlg_{A/}$. 
According to \cite[Section 4.1.1]{toenvezzosi22} and \cite[Section 3.1]{beraldopippi24}, the latter structure arises naturally as follows:
\begin{itemize}

\item 
the source and target morphisms
$$
  \sigma: 
  A[\eps] 
    \to 
  A[\eps_1,\eps_2]
$$
$$
  \tau: 
  A[\eps] 
    \to 
  A[\eps_1,\eps_2]
$$
are uniquely determined by $\sigma(\eps)=\eps_1$ and $\tau(\eps)=\eps_2$;

\item
the co-multiplication
$$
  \nabla: 
  A[\eps_1,\eps_2] 
    \to 
  A[\eps_1,\eps_2]
  \usotimes_{A[\eps]}
  A[\eps_1,\eps_2]
$$
is uniquely determined by setting
$$
  \nabla(\eps_1) 
    = 
  \eps_1 
  \otimes 1, 
  \;\;  
  \text{and} 
  \;\; 
  \nabla(\eps_2) 
    = 
  1 \otimes \eps_2;
$$

\item 
the co-unit
$$
  \xi : 
  A[\eps_1,\eps_2]
    \to 
  A[\eps]
$$
is uniquely determined by requiring that 
$$
  \xi(\eps_1)
    =
  \xi(\eps_2)
    = 
  \eps;
$$

\item 
the antipode
$$
  \alpha: 
  A[\eps_1,\eps_2]
    \to 
  A[\eps_1,\eps_2]
$$
exchanges the free variables in degree $-1$, namely
$$
  \alpha(\eps_1) 
    = 
  \eps_2 
  \;\; 
  \text{and} 
  \;\; 
  \alpha(\eps_2) 
    = 
  \eps_1.
$$
\end{itemize}

\subsection{The monoidal dg-category \texorpdfstring{$\sB$}{B}}

We equip $\Sing(G)$ with a monoidal structure induced by the groupoid structure of $G$.

\sssec{}

The groupoid structure on $G$ induces a monoidal structure on $\Coh(G)$. The tensor product is given by the functor
$$
  -\odot -: 
  \Coh(G)\otimes_A \Coh(G) 
    \to 
  \Coh(G)
$$
$$
  (E,F)
    \mapsto 
  (\pr{13})_*\bigt{\pr{12}^*E\otimes \pr{23}^*F}.
$$
Here $\pr{ij}: s\times_Ss\times_Ss \to s\times_Ss=G$ denotes the projection onto the $i^{th}$ and $j^{th}$ factor.
Moreover, the object $(\delta_{s/S})_*\ccO_s$ is a unit for this tensor product, where $\delta_{s/S} : s \to G=s\times_Ss$ is the diagonal embedding.
This monoidal structure induces a monoidal structure on the singularity category $\Sing(G)$, with parallel formulas.

\sssec{}

To make the above statements precise, we proceed as in \cite{toenvezzosi22}.
Consider the dg-algebra corresponding to $A[\eps_1,\eps_2]$ via the Dold-Kan equivalence. 
We shall denote such dg-algebra $A[\eps_1,\eps_2]$ as well
Then consider the (ordinary) category $\dgMod(A[\eps_1,\eps_2])$ of $A[\eps_1,\eps_2]$ dg-modules.
This is a monoidal $A$-linear dg-category,
the convolution tensor product being induced by the Hopf algebroid structure on $A[\eps_1,\eps_2]$.
Explicitly, we have that the convolution product is given by
$$
  -\odot-:
  \dgMod(A[\eps_1,\eps_2])\otimes_A \dgMod(A[\eps_1,\eps_2])
    \to
  \dgMod(A[\eps_1,\eps_2])
$$

$$
  (M,N) 
    \mapsto
  \tau_*M\otimes_{A[\eps]}\sigma_*N.
$$
As it is easy to see, the unit of $-\odot-$ is the diagonal $A[\eps_1,\eps_2]$ dg-module $A[\eps]$.

We endow $\dgMod(A[\eps_1,\eps_2])$ with the projective model structure and notice that this is compatible with the monoidal structure above (\cite[Section 4.1.1]{toenvezzosi22}).

Finally, we restrict our attention to $\cofdgMod(A[\eps_1,\eps_2])\subseteq \dgMod(A[\eps_1,\eps_2])$, the full subcategory spanned by those $A[\eps_1,\eps_2]$ dg-modules which are cofibrant and strictly perfect over $A$ and by the diagonal dg-module $A[\eps]$.

\sssec{}

We consider two classes of morphisms in $\cofdgMod(A[\eps_1,\eps_2])$ to be denoted $W_{\on{qi}}$ and $W_{\on{pe}}$ respectively.
The former is the class of quasi-isomorphisms while the latter is the class of morphisms in $\cofdgMod(A[\eps_1,\eps_2])$ whose homotopy cone is a perfect $A[\eps_1,\eps_2]$ dg-module.

\sssec{}

Using the theory of dg-localizations due to Toen (\cite{toen07}), we find that 
$$
  L_{W_{\on{qi}}}\bigt{\cofdgMod(A[\eps_1,\eps_2])}
    \simeq 
  \Coh(G) 
    \in 
   \dgCat_A,
$$
$$
  L_{W_{\on{pe}}}\bigt{\cofdgMod(A[\eps_1,\eps_2])}
    \simeq 
  \Sing(G) 
    \in 
  \dgCat_A.
$$
See \cite{toenvezzosi22, brtv18} for details.

\sssec{}

Arguing as in \cite[Appendix A]{toenvezzosi22}, one sees that both $\Coh(G)$ and $\Sing(G)$ acquire monoidal structures from that of $\cofdgMod(A[\eps_1,\eps_2])$. In the sequel, we denote by 
$$
  \sB^+ 
    \in 
  \Alg_{\bbE_1}(\dgCat_A)
$$
the monoidal dg-category corresponding to $\Coh(G)$ and by
$$
  \sB 
    \in 
  \Alg_{\bbE_1}(\dgCat_A)
$$
the one corresponding to $\Sing(G)$. Moreover, we let $\sB^+_{\on{pe}}$ denote the non unital $\bbE_1$-algebra in $\dgCat_A$ corresponding to $\Perf(G)$.

\subsection{Digression: the pure characteristic case}

In this subsection we explain why the situation simplifies in a substantial way when $A$ is of pure characteristic.

\sssec{}

Assume that $A$ is a $k$-algebra. 
Let $k[\eps]$ denote the simplicial Koszul algebra with $\eps$ a generator in degree $1$ sent to zero by the differential.
Note that $G \simeq \Spec{k[\eps]}$.
The simplicial ring $k[\eps]$ supports a Hopf algebra structure over $k$, where the co-multiplication is given by
$$
  \ol{\nabla}:
  k[\eps]
    \to 
  k[\eps]\otimes_k k[\eps]
$$
$$
  \eps 
    \mapsto 
  \eps \otimes 1 + 1\otimes \eps,
$$
the co-unit is given by
$$
  \ol{\xi}: 
  k[\eps]
    \to 
  k
$$
$$
  \eps \mapsto 0
$$
and the antipode is given by
$$
  \ol{\alpha}: 
  k[\eps]
    \to 
  k[\eps]
$$
$$
\eps \mapsto -\eps.
$$

\sssec{}

Moreover, it is easily seen that this Hopf algebroid structure is co-commutative.
Indeed, the following diagram commutes:
\begin{equation*}
    \begin{tikzcd}
    & k[\eps] \arrow[ld,swap,"\ol{\nabla}"] \arrow[rd,"\ol{\nabla}"] & \\
    k[\eps]\otimes_kk[\eps] \arrow[rr,"\on{swap}"]& & k[\eps]\otimes_kk[\eps].
    \end{tikzcd}
\end{equation*}
Here $\on{swap} : k[\eps]\otimes_kk[\eps]\to k[\eps]\otimes_kk[\eps]$ is the map uniquely determined by
$$
  \eps \otimes 1 
    \mapsto 
  1 \otimes \eps, 
  \;\;\;\; 
  1 \otimes \eps 
    \mapsto 
  \eps \otimes 1.
$$

\sssec{}

Let $i:k\hto A$ denote the canonical embedding of $k$ into $A$.
The following fact is the key observation in pure characteristic.

\begin{lem}
The morphism $i:k\hto A$ induces an equivalence of Hopf algebroids
$$
  \iota:
  k[\eps]
    \hto 
  A[\eps_1,\eps_2] 
$$
$$ 
  \eps 
  \mapsto 
  \eps_1 - \eps_2.
$$
\end{lem}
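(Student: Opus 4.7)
First I would check that $\iota$ is well-defined as a morphism of simplicial commutative rings. Since $d(\eps_1 - \eps_2) = \pi - \pi = 0$, the element $\eps_1 - \eps_2$ is a $1$-cycle of $A[\eps_1,\eps_2]$. As $k[\eps]$ is the free simplicial commutative $k$-algebra on a degree-$1$ cycle, using the $k$-algebra structure on $A$ provided by $i$, this specifies a unique morphism $\iota:k[\eps] \to A[\eps_1,\eps_2]$ with $\iota(\eps) = \eps_1 - \eps_2$.

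\medskip

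Next I would verify on generators that $\iota$ intertwines all structure maps. For the counit: $\xi(\iota(\eps)) = \xi(\eps_1) - \xi(\eps_2) = \eps - \eps = 0 = i(\ol{\xi}(\eps))$. For the antipode: $\alpha(\iota(\eps)) = \alpha(\eps_1) - \alpha(\eps_2) = \eps_2 - \eps_1 = \iota(-\eps) = \iota(\ol{\alpha}(\eps))$. Compatibility with source and target reduces to noting that $\sigma \circ i$ and $\tau \circ i$ both coincide with the obvious inclusion $A \hto A[\eps_1,\eps_2]$, so they agree with $\iota$ composed with the unit $k \hto k[\eps]$. The most interesting check concerns comultiplication: one computes
$$
  \nabla\bigt{\iota(\eps)}
    = \nabla(\eps_1) - \nabla(\eps_2)
    = \eps_1 \otimes 1 - 1 \otimes \eps_2,
$$
$$
  (\iota \otimes \iota)\bigt{\ol{\nabla}(\eps)}
    = (\eps_1 - \eps_2)\otimes 1 + 1 \otimes (\eps_1 - \eps_2).
$$
These two elements of $A[\eps_1,\eps_2] \otimes_{A[\eps]} A[\eps_1,\eps_2]$ agree after applying the $A[\eps]$-balancing relation $\eps_2 \otimes 1 = \tau(\eps) \otimes 1 = 1 \otimes \sigma(\eps) = 1 \otimes \eps_1$.

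\medskip

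To upgrade $\iota$ to an equivalence, I would show that the induced morphism of simplicial commutative $A[\eps]$-algebras
$$
  k[\eps] \otimes_k A[\eps] \longrightarrow A[\eps_1,\eps_2]
$$
(where the $A[\eps]$-structure on the right is via $\tau$) is a weak equivalence. This is a change-of-variables computation: setting $\delta := \eps_1 - \eps_2$ and $\eps'' := \eps_2$, the pair $(\delta,\eps'')$ freely generates $A[\eps_1,\eps_2]$ as a simplicial commutative $A$-algebra, with $d(\delta) = 0$ and $d(\eps'') = \pi$. Hence $A[\eps_1,\eps_2] \simeq A[\delta] \otimes_A A[\eps''] \simeq (A \otimes_k k[\eps]) \otimes_A A[\eps] \simeq k[\eps] \otimes_k A[\eps]$ as simplicial commutative $A[\eps]$-algebras.

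\medskip

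\textbf{Main obstacle.} The technical subtlety is mostly bookkeeping: one has to commit to a precise meaning of ``equivalence of Hopf algebroids'' between objects over different bases ($k$ versus $A[\eps]$), namely as an equivalence of Hopf algebroids over $A[\eps]$ after base change of $k[\eps]$ along the composition $k \xto{i} A \to A[\eps]$. The most delicate point of the structural check is the comultiplication: the apparent discrepancy between the ``$+$'' sign coming from the cocommutative $\ol{\nabla}$ on $k[\eps]$ and the ``$-$'' sign appearing in $\nabla$ on $A[\eps_1,\eps_2]$ is reconciled exactly by the $A[\eps]$-balancing $\eps_2 \otimes 1 = 1 \otimes \eps_1$ -- a relation which crucially depends on the equalizing of the source and target ideals, and which becomes available once the $A$-algebra structure is pulled back from the retraction $i$.
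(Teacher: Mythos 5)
Your proposal is correct and follows essentially the same route as the paper: verify compatibility of $\iota$ with the structure maps on the generator $\eps$, where the only nontrivial point is the comultiplication and it is settled by the balancing relation $\eps_2\otimes 1=\tau(\eps)\otimes 1=1\otimes\sigma(\eps)=1\otimes\eps_1$ in $A[\eps_1,\eps_2]\otimes_{A[\eps]}A[\eps_1,\eps_2]$, exactly as in the paper's computation. The only difference is that where the paper dismisses the weak-equivalence step as ``an elementary calculation,'' you actually supply it via the change of variables $(\eps_1,\eps_2)\mapsto(\eps_1-\eps_2,\eps_2)$, identifying $A[\eps_1,\eps_2]$ with $k[\eps]\otimes_k A[\eps]$; this is a valid (and welcome) way to make that step explicit, and it also pins down the intended meaning of ``equivalence of Hopf algebroids'' over the two different bases.
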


\begin{proof}

We start by proving the compatibility of the Hopf structures.

\sssec*{Co-multiplication} 
We need to verify that the following square commutes
\begin{equation*}
    \begin{tikzcd}
    k[\eps] \arrow[r,"\ol{\nabla}"] \arrow[d,"\iota"] & k[\eps]\otimes_k k[\eps]\arrow[d,"\iota \otimes \iota"] \\
    A[\eps_1,\eps_2] \arrow[r,"\nabla"] & A[\eps_1,\eps_2]\otimes_{\tau,A[\eps],\sigma} A[\eps_1,\eps_2].
    \end{tikzcd}
\end{equation*}
We see that
\begin{align*}
    (\iota \otimes \iota) \circ \ol{\nabla} (\eps) & = 
    (\iota \otimes \iota) (\eps \otimes 1 +1 \otimes \eps )\\
                     & = (\eps_1-\eps_2)\otimes 1 + 1\otimes (\eps_1-\eps_2)\\
                                      & = \eps_1 \otimes 1 - 1 \otimes \eps_2\\
                                      & = \nabla (\eps_1-\eps_2)\\
                                      & = \nabla \circ \iota (\eps),
\end{align*}
which immediately implies that the square commutes.

\sssec*{Co-unit}
We need to verify that the following square commutes
\begin{equation*}
    \begin{tikzcd}[column sep = 3cm]
    k[\eps] 
    \arrow[r,"\ol{\xi}"] 
    \arrow[d,"\iota"] 
    & k 
    \arrow[d,"i"] 
    \\
    A[\eps_1,\eps_2] 
    \arrow[r,"\xi"] 
    & 
    A[\eps].
    \end{tikzcd}
\end{equation*}
This is immediate as the top circuit is characterized by the requirement that $\eps \mapsto 0$, a property that the lower one satisfies as well.

\sssec*{Antipode} 

It remains to show that the morphism is compatible with the antipodes as well, i.e. that the square

\begin{equation*}
    \begin{tikzcd}[column sep = 3cm]
    k[\eps] 
    \arrow[r,"\ol{\alpha}"] 
    \arrow[d,"\iota"] 
    & k[\eps] 
    \arrow[d,"\iota"] 
    \\
    A[\eps_1,\eps_2] 
    \arrow[r,"\alpha"] 
    & 
    A[\eps_1,\eps_2],
    \end{tikzcd}
\end{equation*}
which is immediate.

\sssec*{Equivalence} 
The last step is to prove that the map $k[\eps]\to A[\eps_1,\eps_2]$ is a weak equivalence, which follows from an elementary calculation.
\end{proof}

\begin{cor}\label{cor: retraction B to HH(B/A) in pure char}

In the pure characteristic case, the derived groupoid $G$ is a commutative group object. Accordingly, the monoidal dg-category $\sB$ is symmetric monoidal and in fact equivalent to
$$
  \sB 
    \simeq 
  \Perf \bigt{k[u,u^{-1}]} 
    \in 
  \Alg_{\bbE_{\oo}}\bigt{\dgCat_A}.
$$
\end{cor}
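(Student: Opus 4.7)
The plan is to extract three distinct statements from the previous lemma and assemble them. First, the previous lemma gives an equivalence of Hopf algebroids $\iota:k[\eps]\xto{\simeq} A[\eps_1,\eps_2]$. However, $k[\eps]$ carries a genuine Hopf algebra structure over $k$ (not merely an algebroid structure), since both the source map and the target map $k\to k[\eps]$ factor through the common augmentation and agree after the equivalence $\iota$ (they correspond to $\sigma,\tau:A[\eps]\to A[\eps_1,\eps_2]$ composed with $\iota$). Consequently, $G\simeq \Spec{k[\eps]}$ is a genuine group object in $\dersch_{/s}$, not just a groupoid. The cocommutativity diagram displayed above shows that this group is moreover \emph{commutative}.

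Second, the monoidal structure on $\sB=\Sing(G)$ is, by construction, the convolution structure induced by the groupoid (now group) structure on $G$. A standard functoriality argument---applied at the level of $\cofdgMod(k[\eps])$ via the strict model and then passed to $\sB$ through the dg-localization $L_{W_{\on{pe}}}$, along the lines of \cite[Appendix A]{toenvezzosi22}---shows that cocommutativity of the Hopf algebra structure on $k[\eps]$ upgrades $\sB$ from an $\bbE_1$-algebra to an $\bbE_\oo$-algebra in $\dgCat_A$. The symmetry isomorphism $E\odot F\simeq F\odot E$ comes precisely from the cocommutativity swap displayed in the preceding paragraph.

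Third, to identify the underlying dg-category, observe that $\Spec{k[\eps]}$ is exactly the derived scheme $\bbA^1_k[-1]=s\times_{0,\bbA^1_s,0}s$ appearing in Section~\ref{ssec: etp complexes} (taking $Y=s$). Since $k$ is a field, $\Coh(s)=\Perf(s)$, so $\Angles{\pr{}^*\Coh(s)}$ coincides with $\Perf(\bbA^1_s[-1])$, and hence
\[
  \sB=\Sing(G)\simeq \Sing(\bbA^1_s[-1])\simeq \MFcoh(s,0).
\]
By Corollary~\ref{cor: MFcoh vs Cohminus etp/Cohb} this is in turn equivalent to the Verdier quotient of $\Mod^{\etp}_{k[u]}(\Cohminus(s))$ by its full subcategory of $u$-torsion objects. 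On the quotient, $u$ becomes invertible, and using the finiteness built into the eventually two-periodic condition, this quotient is equivalent to $\Perf(k[u,u^{-1}])$.

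The main obstacle is the middle step: transporting the $\bbE_\oo$-structure through the chain of equivalences so that the convolution monoidal structure on $\sB$ matches the ordinary tensor product on $\Perf(k[u,u^{-1}])$. The cleanest approach is to perform the argument entirely at the level of (co)commutative Hopf algebra objects in simplicial rings: the cocommutative Hopf $k$-algebra $k[\eps]$ makes $\cofdgMod(k[\eps])$ a symmetric monoidal category, and all the subsequent identifications ($\Sing$, $\Cohminus$ with eventually two-periodic modules, inversion of $u$) are manifestly symmetric monoidal. Alternatively, one can invoke the essential uniqueness of $\bbE_\oo$-structures on $\Perf(k[u,u^{-1}])$ compatible with its underlying $A$-linear structure, reducing the identification to the level of plain dg-categories, which is handled by the chain of equivalences above.
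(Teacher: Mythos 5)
Your proposal is correct and follows the route the paper intends: the corollary is deduced directly from the preceding lemma (the cocommutative Hopf $k$-algebra structure on $k[\eps]$, transported along $\iota$, makes $G$ a commutative group object over $s$, whence the convolution product on $\sB$ is symmetric monoidal), and your identification of the underlying dg-category via $G\simeq \bbA^1_s[-1]$, $\Sing(\bbA^1_s[-1])\simeq \MFcoh(s,0)$ and inversion of $u$ is a correct expansion of the step the paper leaves implicit (equivalently, $\sB$ is Karoubi-generated by $(\delta_{s/S})_*\ccO_s$ with endomorphisms $k[u,u^{-1}]$). The only caveat is your fallback appeal to an ``essential uniqueness'' of $\bbE_{\oo}$-structures on $\Perf\bigt{k[u,u^{-1}]}$, which is unjustified (a fixed dg-category may carry inequivalent symmetric monoidal structures); you should rely on your first option instead, carrying the cocommutative Hopf structure on $k[\eps]$ through the strict models and the Koszul-duality identification of convolution with the tensor product over $k[u,u^{-1}]$, which is the argument implicit in the paper.
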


\sssec{}\label{sssec: retraction B to HH(B/A) in pure char}

The above corollary implies that the canonical dg-functor 
$$
  \sB 
    \to 
  \HH(\sB/A)
    =
  \sB\otimes_{\sB \otimes_A \sB}\sB
$$
admits a retraction
$$
  \on{m}: 
  \HH(\sB/A)
    \to 
  \sB
$$
induced by the (commutative) multiplication of $\sB$. The existence of such a retraction is an important feature of the pure characteristic situation: see the discussion of Section \ref{sssec:pure-char}.

\subsection{The canonical filtration on the Drinfeld cocenter}

\sssec{} 

Let $\sB^{\rev}$ denote the associative algebra in $\dgCat_A$ obtained from $\sB$ by reversing the monoidal structure (\cite[Remark 4.1.1.7]{lurieha}).
Moreover, we let 
$
  \sB^{\env}
    :=
  \sB \otimes_A \sB^{\rev}
$
denote the enveloping algebra of $\sB$.

\sssec{}

Tautologically, $\sB$ admits both a left and a right $\sB^{\env}$-module structure.
We will denote $\sB^{\uL}$ (resp. $\sB^{\uR}$) this canonical left (resp. right) $\sB^{\env}$-module.

\sssec{}

The main object of study in this paper is the \emph{Drinfeld cocenter of $\sB$ relative to $\Perf(A)$}:
$$
  \HH(\sB/A)
    := 
  \sB^{\uR}\otimes_{\sB^{\env}} \sB^{\uL} 
    \in 
  \dgCat_A.
$$

\begin{rmk}

Since $\sB$ is only an $\bbE_1$-algebra in general, we have that
$$
  \HH(\sB/A)
    \in 
  \Alg_{\bbE_0}(\dgCat_A).
$$
This means that $\HH(\sB/A)$ is an $A$-linear dg-category with a marked object (the image of $(1_{\sB},1_{\sB})$ along the canonical functor $\sB\otimes_A \sB\to \HH(\sB/A)$) but no other structure.
\end{rmk}

\sssec{}

It is well-known that $\HH(\sB/A)$ can be computed as the colimit of the diagram

\begin{equation*}
\begin{tikzcd}
    \cdots
    \;\;
    \sB\otimes_A\sB\otimes_A\sB\otimes_A\sB 
    \arrow[r,shift right=3]
    \arrow[r,shift right=1]
    \arrow[r,shift left=1]
    \arrow[r,shift left=3]
    &
    \sB\otimes_A\sB\otimes_A\sB
    \arrow[r,shift right=2]
    \arrow[r]
    \arrow[r,shift left=2]
    &
    \sB\otimes_A\sB
    \arrow[r,shift right=1]
    \arrow[r,shift left=1]
    &
    \sB,
\end{tikzcd}
\end{equation*}
which is usually referred to as the \emph{cyclic bar complex}. More precisely, the cyclic bar complex is a functor
$$
  \Bar(\sB/A):
  (\Delta_{\inj})^\op 
    \to 
  \dgCat_A,
$$
with $\Delta_{\inj}$ the category whose objects are directed finite sets $[n]$ for all $n \geq 0$ and morphisms 
$$
\Hom_{\Delta_{\inj}}\bigt{[i],[j]}=\Bigl \{\alpha :[i]\to [j]: \alpha \text{ is monotone and injective} \Bigr \}.
$$

\sssec{}

The arrows in the cyclic bar complex are defined using the associative algebra structure on $\sB$. 
In particular, the arrows
$
  \sB^{\otimes_A n}\to \sB
$
are obtained by pre-composition of 
$$
  -\odot- \dots -\odot - :\sB^{\otimes_A n}\to \sB
$$
with the $n$ cyclic permutations on $\sB^{\otimes_A n}$.

\begin{rmk}

Strictly speaking, the cyclic bar complex of $\sB \in \Alg_{\bbE_1}(\dgCat_A)$ is a \emph{simplicial} diagram $\Delta^{\op}\to \dgCat_A$.
However, since we will be interested in the colimit (a.k.a. geometric realization) of this diagram, we can harmlessly consider its underlying \emph{semi-simplicial} diagram by \cite[Lemma 6.5.3.7]{luriehtt}.
\end{rmk}

\sssec{}

We will be interested in approximating the cyclic bar complex via finite sub-complexes. 
To this end, let $\Delta^{\leq m}_{\inj}$ denote full subcategory of $\Delta_{\inj}$ whose objects are the directed finite sets $[0],[1],\dots,[m]$.
If $m_1 \leq m_2$, there are obvious fully faithful embeddings
$$
  \Delta^{\leq m_1}_{\inj} \subseteq \Delta^{\leq m_2}_{\inj}
    \subseteq
  \Delta_{\inj}.
$$

\begin{lem}\label{lem : Deltainj = colim Deltainjm}

The natural functor
$$
  \varinjlim_{m \in \bbN} \Delta_{\inj}^{\leq m}
    \to 
  \Delta_{\inj}
$$
is an equivalence of categories.
\end{lem}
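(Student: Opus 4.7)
The filtered diagram $m \mapsto \Delta_{\inj}^{\leq m}$ consists of fully faithful inclusions, so the plan is to verify that the comparison functor to $\Delta_{\inj}$ is bijective on objects and fully faithful; this will then imply that it is an equivalence (in fact, an isomorphism) of ordinary categories.

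First I would unwind the colimit. Since the diagram is filtered and each transition functor $\Delta_{\inj}^{\leq m_1} \hookrightarrow \Delta_{\inj}^{\leq m_2}$ is a fully faithful inclusion on object sets, the colimit category $\varinjlim_m \Delta_{\inj}^{\leq m}$ may be described explicitly: its objects are the union $\bigcup_{m \geq 0} \on{Ob}(\Delta_{\inj}^{\leq m}) = \{[n] : n \geq 0\}$, and for any two objects $[i],[j]$, the hom-set is the filtered colimit
$$
  \varinjlim_{m \geq \max(i,j)} \Hom_{\Delta_{\inj}^{\leq m}}([i],[j]).
$$

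Next I would check the two required properties of the comparison functor $\Psi: \varinjlim_m \Delta_{\inj}^{\leq m} \to \Delta_{\inj}$. For essential surjectivity (indeed bijectivity on objects), every $[n] \in \Delta_{\inj}$ lies in $\Delta_{\inj}^{\leq n}$, and the objects of the colimit have been identified with the set $\{[n]\}_{n \geq 0}$. For full faithfulness, fix $[i], [j]$ and pick $m \geq \max(i,j)$. Since $\Delta_{\inj}^{\leq m} \subseteq \Delta_{\inj}$ is by definition a \emph{full} subcategory, the natural map $\Hom_{\Delta_{\inj}^{\leq m}}([i],[j]) \to \Hom_{\Delta_{\inj}}([i],[j])$ is a bijection; hence the filtered colimit on the left of the display above is also naturally in bijection with $\Hom_{\Delta_{\inj}}([i],[j])$.

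There is no real obstacle here: the argument is a formality about filtered colimits of fully faithful inclusions of ordinary categories. The only subtlety worth flagging is to be explicit about in which category (or $\infty$-category) the colimit is taken; but because every transition functor is fully faithful and injective on objects, the 1-categorical colimit and the $\infty$-categorical colimit in $\on{Cat}$ agree, so the statement is unambiguous.
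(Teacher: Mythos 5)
Your proposal is correct and is essentially the paper's argument spelled out in detail: the paper simply observes that the comparison functor is fully faithful and essentially surjective, which is exactly what you verify via the explicit description of the filtered colimit as the union of the full subcategories $\Delta_{\inj}^{\leq m}$. Your remark that full faithfulness of the transition functors makes the $1$-categorical and $\infty$-categorical colimits agree is a reasonable extra precision, not a departure from the paper's route.
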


\begin{proof}

This functor is clearly fully faithful and essentially surjective.
\end{proof}

\begin{defn}
 
Let $n\geq 0$. 
The \emph{$n^{th}$ truncated cyclic bar complex} is the functor 
$$
  \Bar_n(\sB/A): 
  (\Delta^{\leq n}_{\inj})^\op 
    \hto 
  (\Delta_{\inj})^\op 
    \xto{\Bar(\sB/A)} 
  \dgCat_A,
$$
depicted as
\begin{equation*}
\Bar_n(\sB/A)
=
\begin{tikzcd}
    \Bigl (
    \sB^{\otimes_A n+1}
    \arrow[r,shift right=3]
    \arrow[r, draw=none, "\raisebox{+0.5ex}{\dots}" description]
    \arrow[r,shift left=3]
    &
    \cdots
    \arrow[r,shift right=2]
    \arrow[r]
    \arrow[r,shift left=2]
    &
    \sB\otimes_A\sB
    \arrow[r,shift right=1]
    \arrow[r,shift left=1]
    &
    \sB
    \Bigr ).
\end{tikzcd}
\end{equation*}
We denote by
$$
  F_n\HH(\sB/A):=
  \varinjlim \on{Bar}_n(\sB/A) 
    \in
  \dgCat_A
$$
its colimit.
\end{defn}

\sssec{}

For $n \leq m$ in $\bbN$, there is a canonical equivalence
$$
  \Bar_n(\sB/A)
    \simeq 
  \Bar_{m}(\sB/A)_{|(\Delta^{\leq n}_{\inj})^\op}
$$
induced by the embedding $\Delta^{\leq n}_{\inj}\hto \Delta^{\leq m}_{\inj}$.
This yields a compatible system of dg-functors
$$
  F_n\HH(\sB/A)
    \to 
  F_{n+m}\HH(\sB/A),
$$
that is, a diagram
$$
  F_\bullet \HH(\sB/A):
  \bbN 
    \to 
  \dgCat_A,
$$
where $\bbN$ is the category associated to the directed set $(\bbN=\bbZ_{\geq 0}, \leq)$.

\sssec{}

We will need analogous constructions for the monoidal dg-category $\sB^+$. 
Explicitly, let
$$
  \on{Bar}(\sB^+/A):
  (\Delta_{\on{inj}})^\op 
    \to 
  \dgCat_A
$$
be the cyclic bar complex of $\sB^+$ relative to $A$, its $n^{th}$ truncation
$$
  \on{Bar}_n(\sB^+/A):
  (\Delta_{\on{inj}}^{\leq n})^\op 
    \to 
  \dgCat_A
$$
and the colimit
$$
  F_n\HH(\sB^+/A)
    :=
  \varinjlim \on{Bar}_n(\sB^+/A)
    \in
  \dgCat_A.
$$

\begin{lem}

With the same notation as above, we have canonical equivalences
$$
  \HH(\sB^+/A)
    \simeq 
  \varinjlim_{\bbN} 
  \bigt{ F_n\HH(\sB^+/A)}
$$
$$
  \HH(\sB/A)
    \simeq 
  \varinjlim_{\bbN} \bigt{ F_n\HH(\sB/A)}
$$
in $\dgCat_A$.
\end{lem}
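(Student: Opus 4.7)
The plan is to deduce both equivalences from Lemma \ref{lem : Deltainj = colim Deltainjm} via a standard Fubini-type argument for iterated colimits. Since the two statements admit identical proofs, I treat them simultaneously; let $\ccC$ stand for either $\sB$ or $\sB^+$.

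First, I would pass to opposites: since taking opposites is an automorphism of the category of small categories, Lemma \ref{lem : Deltainj = colim Deltainjm} yields an equivalence
$$
(\Delta_{\inj})^{\op} \simeq \varinjlim_{m \in \bbN} (\Delta_{\inj}^{\leq m})^{\op},
$$
identifying $(\Delta_{\inj})^{\op}$ with the filtered union of its full subcategories $(\Delta_{\inj}^{\leq m})^{\op}$. By construction, the cyclic bar complex restricts compatibly:
$$
\Bar(\ccC/A)|_{(\Delta_{\inj}^{\leq m})^{\op}} \simeq \Bar_m(\ccC/A).
$$

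Next, I would invoke the following general principle: for any filtered colimit of small $\infty$-categories $\ccI \simeq \varinjlim_m \ccI_m$, any cocomplete target $\infty$-category $\ccD$, and any diagram $\Phi : \ccI \to \ccD$, the canonical comparison map
$$
\varinjlim_{m \in \bbN} \varinjlim_{\ccI_m} \Phi|_{\ccI_m} \;\longrightarrow\; \varinjlim_{\ccI} \Phi
$$
is an equivalence. Conceptually, both sides compute the left Kan extension of $\Phi$ along $\ccI \to *$; formally, this can be deduced from the equivalence $\Fun(\ccI, \ccD) \simeq \varprojlim_m \Fun(\ccI_m, \ccD)$ combined with the adjunction between the colimit functor and the constant-diagram functor. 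Applying this principle with $\ccD = \dgCat_A$ (which admits all small colimits), $\ccI = (\Delta_{\inj})^{\op}$, $\ccI_m = (\Delta_{\inj}^{\leq m})^{\op}$, and $\Phi = \Bar(\ccC/A)$, and unpacking the definitions of $\HH(\ccC/A)$ and $F_m \HH(\ccC/A)$, immediately yields the desired equivalence.

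There is no real obstacle in this argument: all the content is concentrated in Lemma \ref{lem : Deltainj = colim Deltainjm}, and the remaining steps are formal categorical manipulations relying only on the cocompleteness of $\dgCat_A$.
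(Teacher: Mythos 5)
Your proof is correct and follows essentially the same route as the paper: both reduce to Lemma \ref{lem : Deltainj = colim Deltainjm} (writing $(\Delta_{\inj})^{\op}$ as the filtered union of the $(\Delta_{\inj}^{\leq m})^{\op}$) and then conclude by a formal iterated-colimit argument, which the paper phrases as \virg{colimits commute with colimits} and you justify slightly more explicitly via $\Fun(\ccI,\ccD)\simeq \varprojlim_m \Fun(\ccI_m,\ccD)$ and the colimit/constant-diagram adjunction. The only implicit ingredient in both arguments is the preceding identification of $\HH(\ccC/A)$ with the colimit of the (semi-)simplicial cyclic bar complex, which the paper has already recorded before the lemma.
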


\begin{proof}

The two proofs are analogous, so we only focus on the second one.
By \cite[Lemma 5.5.2.6]{lurieha} and \cite[Lemma 6.5.3.7]{luriehtt} we have
\begin{align*}
    \HH(\sB/A) 
      & 
      \simeq 
    \varinjlim \Bar(\sB/A) 
      \\
      & 
      \simeq 
    \varinjlim \bigt{\varinjlim_{\bbN}\Bar_n(\sB/A)},
\end{align*}
where the last equivalence follows from Lemma \ref{lem : Deltainj = colim Deltainjm}. 
Then the statement is an immediate consequence of the fact that colimits commute with colimits:
\begin{align*}
  \varinjlim \bigt{\varinjlim_{\bbN}\Bar_n(\sB/A)} 
    & 
    \simeq 
  \varinjlim_{\bbN} \bigt{\varinjlim \Bar_n(\sB/A)}
    \\
    &
    =
  \varinjlim_{\bbN} \bigt{ F_n\HH(\sB/A)}.
\end{align*}
\end{proof}

\subsection{The nodes of the cyclic bar complex}

We now turn our attention to the individual pieces of the cyclic bar complex, that is, the dg-categories $(\sB^{\otimes_An})$'s.

\sssec{}

As a preliminary step, we give geometric descriptions of the tensor products $\sB^{+,\otimes_An}$.
Recall the definition of $\Cohext(G^{\times_Sn})$ from Section \ref{ssec: categories external products}.

\begin{lem}\label{lem: nodes Bar(B^+/A)}

The exterior product $  \sB^{+,\otimes_An} \to 
  \Coh(G^{\times_Sn})$ restricts to an equivalence 
$$
  \sB^{+,\otimes_An}
    \xto{\sim}  
  \Cohext(G^{\times_Sn}) 
$$
in $\dgCat_A$.
\end{lem}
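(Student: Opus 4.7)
The plan is to show that the exterior-product dg-functor is Karoubi essentially surjective and fully faithful, which together give the claimed Morita equivalence. The exterior product defines a dg-functor $\Phi_n : \sB^{+,\otimes_A n} \to \QCoh(G^{\times_S n})$ sending $E_1 \otimes \cdots \otimes E_n$ to $E_1 \boxtimes_S \cdots \boxtimes_S E_n$. As a first step I would check, via Lemma \ref{lem:Cohext contained in Coh}, that $\Phi_n$ lands in $\Coh(G^{\times_S n})$ and in fact in $\Cohext(G^{\times_S n})$. The hypotheses of that lemma are met because $G \to S$ is a quasi-smooth closed embedding: $s \hto S$ is cut out by the regular element $\pi \in A$, hence is quasi-smooth, and $G = s \times_S s \to s$ is its base change, so the composition $G \to s \to S$ is a quasi-smooth closed embedding into the regular scheme $S$.

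Karoubi essential surjectivity onto $\Cohext(G^{\times_S n})$ is then immediate: by its very definition, $\Cohext(G^{\times_S n})$ is the Karoubi closure of the essential image of $\Phi_n$.

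The crux of the proof is fully faithfulness. I would use the strict cofibrant model $\sB^+ \simeq L_{W_{\on{qi}}}\bigl(\cofdgMod(A[\eps_1,\eps_2])\bigr)$ recalled in Section \ref{sec: differential forms on two-periodic complexes}: every object of $\sB^+$ is represented by a cofibrant $A[\eps_1,\eps_2]$-dg-module that is, in addition, strictly perfect over $A$. For such models $M_i, N_i$ of $E_i, F_i \in \Coh(G)$, the exterior product $M_1 \otimes_A \cdots \otimes_A M_n$ is an analogous cofibrant $A$-strictly-perfect model for $\boxtimes_S E_i$ as an $A[\eps_1,\eps_2]^{\otimes_A n}$-dg-module. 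Because the tensor product in $\dgCat_A$ can be computed by tensoring these cofibrant models (and then applying the dg-localization $L_{W_{\on{qi}}}$), both sides of the Hom comparison reduce to strict Hom complexes, and $A$-perfectness gives the Künneth isomorphism
\begin{equation*}
\on{Hom}_{A[\eps_1,\eps_2]^{\otimes_A n}}\bigl(\textstyle\bigotimes_{A,i} M_i,\ \bigotimes_{A,i} N_i\bigr)
\;\simeq\;
\bigotimes_{A,i}\on{Hom}_{A[\eps_1,\eps_2]}(M_i,N_i).
\end{equation*}

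I expect the main obstacle to be precisely this Künneth identification: one must make rigorous both that the $\dgCat_A$-tensor product is modeled by the termwise tensor product of $\cofdgMod$'s after $L_{W_{\on{qi}}}$, and that the $A$-perfectness built into $\cofdgMod$ supplies the derived Künneth isomorphism for Hom complexes over $A[\eps_1,\eps_2]^{\otimes_A n}$. Both points rely on the monoidal model structure on $\cofdgMod(A[\eps_1,\eps_2])$ recalled in Section \ref{sec: differential forms on two-periodic complexes} and together upgrade Karoubi essential surjectivity to the claimed Morita equivalence.
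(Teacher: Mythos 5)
Your skeleton coincides with the paper's: the target identification and Karoubi essential surjectivity are indeed immediate (your verification that $G\to S$ is a quasi-smooth closed embedding into the regular scheme $S$, so that Lemma \ref{lem:Cohext contained in Coh} applies, is fine), and the whole content is the K\"unneth comparison for mapping complexes. The gap is in how you propose to prove that comparison. Your premise that ``every object of $\sB^+$ is represented by a cofibrant $A[\eps_1,\eps_2]$-dg-module that is, in addition, strictly perfect over $A$'' fails at exactly the crucial object, namely the monoidal unit $1_{\sB^+}=(\delta_{s/S})_*\ccO_s$, i.e.\ the diagonal module $A[\eps]$ --- which is why it is adjoined \emph{separately}, and non-cofibrantly, in the definition of $\cofdgMod(A[\eps_1,\eps_2])$. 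If $A[\eps]$ admitted a model that is simultaneously cofibrant over $A[\eps_1,\eps_2]$ and strictly perfect over $A$, its mapping complex in $\sB^+=L_{W_{\on{qi}}}(\cofdgMod(A[\eps_1,\eps_2]))$ would be computed by the strict endomorphism complex of that model, which is bounded; but $\End_{\sB^+}(1_{\sB^+})\simeq k[u]$ with $u$ of degree $2$ (this is \cite[Lemma 2.39]{brtv18}, the very input the lemma's proof relies on), which is unbounded. So the assertion that ``both sides of the Hom comparison reduce to strict Hom complexes'' breaks precisely where the lemma has content.

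Once you do replace the diagonal module by a genuine cofibrant (semi-free) resolution, it is no longer strictly perfect over $A$ (the resolution is infinite), the strict Hom complexes become infinite products, and ``$A$-perfectness supplies the K\"unneth isomorphism'' no longer applies: the natural map $\bigotimes_A\Hom_{B}(M_i,N_i)\to\Hom_{B^{\otimes_An}}(\otimes_AM_i,\otimes_AN_i)$ is automatic when the $M_i$ are perfect over $B=A[\eps_1,\eps_2]$, but $A[\eps]$ is not perfect over $B$, and that non-perfect case is the whole point. The paper avoids this by first reducing to a single Hom computation --- $\sB^{+,\otimes_An}$ is Karoubi-generated by $(1_{\sB^+},\dots,1_{\sB^+})$, so fully faithfulness need only be checked on that object --- and then identifying the endomorphisms of its image $(\delta_{s/S}^{\times_Sn})_*\ccO_{s^{\times_Sn}}$ with $k[u_1]\otimes_A\cdots\otimes_Ak[u_n]$ via \cite[Theorem 1.2]{ben-zvifrancisnadler10} together with \cite[Lemma 2.39]{brtv18}. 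If you want to keep your strict-model route, the missing input is a locally finite (two-periodic, Koszul-type) semi-free resolution of $A[\eps]$, with finitely many generators in each degree and degrees tending to infinity, combined with boundedness of the targets: this makes the infinite products in the strict Hom complexes degreewise finite, so they commute with $\otimes_A$. As written, your argument assumes away exactly this step.
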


\begin{proof}

We proceed in steps.

\sssec*{Step 1:  image of the generator} 

Denote by 
$$
  \ccE_S^n:
  \sB^{+,\otimes_n}
    \to 
  \Coh(G^{\times_Sn})
$$
$$
  (M_1,\dots,M_n)
    \mapsto 
  M_1\boxtimes_S\dots \boxtimes_SM_n
$$
the dg-functor of external tensor product.
Since $\sB^+$ is Karoubi-generated by $1_{\sB^+}=(\delta_{s/S})_*\ccO_s$, it is important to compute the object $\ccE_S^n(1_{\sB^+},\dots,1_{\sB^+})$.
To this end, consider the morphism 
$$
\delta_{s/S}^{\times_Sn}:s^{\times_Sn}\to G^{\times_Sn}
$$
obtained as the product of $n$ diagonals $\delta_{s/S}$ over $S$.
It follows from \eqref{eqn:push of boxtimes} that
$$
  \ccE_S^n(1_{\sB^+},\dots,1_{\sB^+})
    \simeq 
  (\delta_{s/S}^{\times_Sn})_*\ccO_{s^{\times_Sn}}.
$$

\sssec*{Step 2: fully-faithfulness}

The morphism $\delta_{s/S}^{\times_Sn}:s^{\times_Sn}\to G^{\times_Sn}$ corresponds to the morphism of simplicial rings
$$
  A[\eps_1^{(1)},\eps_2^{(1)},\dots,\eps_1^{(n)},\eps_2^{(n)}] 
    \to 
  A[\eps^{(1)},\dots,\eps^{(n)}]
$$
$$
  \eps_i^{(k)}
    \mapsto 
  \eps^{(k)}, 
    \;\;\;\; 
  i=1,2, \;\; k=1,\dots,n.
$$
Observe that
\begin{align*}
  \Hom_{\sB^{+,\otimes_An}}\bigt{(1_{\sB^+},\dots,1_{\sB^+}),(1_{\sB^+},\dots,1_{\sB^+})} 
    & 
    \simeq 
  \Hom_{\sB^+}(1_{\sB^+},1_{\sB^+})\otimes_A\dots \otimes_A 
  \Hom_{\sB^+}(1_{\sB^+},1_{\sB^+})
    \\
    & 
    \simeq 
  k[u_1]\otimes_A \dots \otimes_A k[u_n],
\end{align*}
where each $u_i$ is a free variable in cohomological degree $2$.
We need to verify that the morphism
\begin{equation}\label{eqn : hom B^+otimesn to Coh(G^n)}
  k[u_1]\otimes_A \dots \otimes_A k[u_n] 
    \to 
  \Hom_{A[\eps_1^{(1)},\eps_2^{(1)},\dots,\eps_1^{(n)},\eps_2^{(n)}]}\bigt{A[\eps^{(1)},\dots,\eps^{(n)}],A[\eps^{(1)},\dots,\eps^{(n)}]}
\end{equation}
corresponding to 
$$
  \Hom_{\sB^{+,\otimes_An}}\bigt{(1_{\sB^+},\dots,1_{\sB^+}),(1_{\sB^+},\dots,1_{\sB^+})} 
    \to 
  \Hom_{\Coh(G^{\times_Sn})}\bigt{\ccE_S^n(1_{\sB^+},\dots,1_{\sB^+}),(1_{\sB^+},\dots,1_{\sB^+})}
$$
is a quasi-isomorphism.
Applying \cite[Theorem 1.2]{ben-zvifrancisnadler10} multiple times, we obtain that
$$
  \Hom_{A[\eps_1^{(1)},\eps_2^{(1)},\dots,\eps_1^{(n)},\eps_2^{(n)}]}\bigt{A[\eps^{(1)},\dots,\eps^{(n)}],A[\eps^{(1)},\dots,\eps^{(n)}]}
    \simeq
    \bigotimes_{j=1}^n
    \Hom_{A[\eps_1^{(j)},\eps_2^{(j)}]}\bigt{A[\eps^{(j)}],A[\eps^{(j)}]}.
$$
It follows that the map \eqref{eqn : hom B^+otimesn to Coh(G^n)} identifies with the tensor product of the map
$$
  k[u_j]\to\Hom_{A[\eps_1^{(j)},\eps_2^{(j)}]}\bigt{A[\eps^{(j)}],A[\eps^{(j)}]},
$$
which is a quasi-isomorphism, see for instance \cite[Lemma 2.39]{brtv18}.

\sssec*{Step 3: Karoubi generation}

By the previous step, we know that 
$$
  \ccE_S^n:
  \sB^{+,\otimes_An}
    \hto 
  \Coh(G^{\times_Sn}),
$$
is fully faithful.
Since $\sB^{+,\otimes_An}$ is Karoubi generated by the object $(1_{\sB^+},\dots,1_{\sB^+})$, it is then clear that the subcategory of $\Coh(G^{\times_Sn})$ corresponding to $\sB^{+,\otimes_An}$ is Karoubi-generated by the object 
$$
  \ccE_S^n(1_{\sB^+},\dots,1_{\sB^+})
    \simeq 
  (\delta_{s/S}^{\times_Sn})_*\ccO_{s^{\times_Sn}}.
$$
Such a dg-category is obviously equivalent to $\Cohext(G^{\times_Sn})$.
\end{proof}

\sssec{}

Fix $n\geq 1$ and, for any $1\leq i \leq n$, denote by 
$$
  \pr{\hat{i}}: 
  G^{\times_Sn}
    \to 
  G^{\times_Sn-1}
$$
the projection that omits the $i^{\textit{th}}$ factor. By definition, $\Perfext(G^{\times_Sn})$ is the full subcategory of $\QCoh(G^{\times_Sn })$ Karoubi-generated by the objects
$$
  \pr{\hat{i}}^*(\delta_{s/S}^{\times_Sn-1})_*\ccO_{s^{\times_Sn-1}}:i=1,\dots n.
$$
By Lemma \ref{lem:Cohext contained in Coh} we see that $\Perfext(G^{\times_Sn})\subseteq \Cohext(G^{\times_Sn})\subseteq  \Coh(G^{\times_Sn})$.  
We then set
$$
  \Singext(G^{\times_Sn})
    :=
  \frac{\Cohext(G^{\times_Sn})}{\Perfext(G^{\times_Sn})} 
    \in 
  \dgCat_A.
$$

\begin{cor}\label{cor: nodes Bar(B/A)}

The exterior product induces an equivalence 
$$
  \sB^{\otimes_An} 
    \xto{\simeq}
  \Singext(G^{\times_Sn})
$$
in $\dgCat_A$.
\end{cor}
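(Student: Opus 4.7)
The plan is to deduce the corollary from Lemma \ref{lem: nodes Bar(B^+/A)} by passing to Verdier quotients on both sides. The key input is that the tensor product in $\dgCat_A$ preserves localization sequences in each variable (since $\dgCat_A$ is symmetric monoidal presentable and $-\otimes_A \sT$ commutes with colimits for every $\sT$).

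\textbf{Step 1: Iterated localization sequence.} Starting from the localization sequence
$$
\sB^+_\pe \hto \sB^+ \to \sB,
$$
I would tensor iteratively with $\sB^+$ to obtain, for each $n \geq 1$, a localization sequence
$$
\scrP_n \hto \sB^{+,\otimes_A n} \to \sB^{\otimes_A n},
$$
where $\scrP_n \subseteq \sB^{+,\otimes_A n}$ is the full dg-subcategory Karoubi-generated by pure tensors $M_1 \otimes \cdots \otimes M_n$ with at least one factor $M_i \in \sB^+_\pe$. Concretely, this can be done by induction on $n$: assuming the statement for $n-1$, tensoring the localization sequence for $\sB^+ \to \sB$ on the left with $\sB^{+,\otimes_A (n-1)}$ and using the inductive hypothesis (together with the fact that Karoubi generation is preserved under tensoring on one side) gives the result.

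\textbf{Step 2: Transport along Lemma \ref{lem: nodes Bar(B^+/A)}.} Under the equivalence $\sB^{+,\otimes_A n} \xto{\simeq} \Cohext(G^{\times_S n})$ provided by that lemma, the Karoubi generators $M_1 \otimes \cdots \otimes M_n$ with at least one $M_i \in \sB^+_\pe = \Perf(G)$ correspond precisely to the external tensor products $M_1 \boxtimes_S \cdots \boxtimes_S M_n$ with at least one factor perfect. By definition (see Section \ref{ssec: categories external products}), these Karoubi-generate $\Perfext(G^{\times_S n}) \subseteq \Cohext(G^{\times_S n})$. Hence the equivalence of Lemma \ref{lem: nodes Bar(B^+/A)} identifies $\scrP_n$ with $\Perfext(G^{\times_S n})$.

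\textbf{Step 3: Conclusion.} Combining Steps 1 and 2, the map of localization sequences
\begin{equation*}
\begin{tikzcd}
\scrP_n \arrow[r,hook]\arrow[d,"\simeq"] & \sB^{+,\otimes_A n}\arrow[r]\arrow[d,"\simeq"] & \sB^{\otimes_A n}\arrow[d,dashed] \\
\Perfext(G^{\times_S n}) \arrow[r,hook] & \Cohext(G^{\times_S n}) \arrow[r] & \Singext(G^{\times_S n})
\end{tikzcd}
\end{equation*}
induces the desired equivalence $\sB^{\otimes_A n} \xto{\simeq} \Singext(G^{\times_S n})$ between the cofibers in $\dgCat_A$.

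\textbf{Expected main obstacle.} The only non-formal point is the inductive argument in Step 1, namely ensuring that tensoring a localization sequence of small dg-categories with a fixed small dg-category yields a localization sequence whose kernel is exactly the one described (as opposed to a merely larger Karoubi-generated subcategory). This is standard but worth verifying carefully: it follows from the fact that the Morita localization $\dgCat_A$ is a presentable symmetric monoidal $\infty$-category and that (ind-completed) Verdier quotients correspond to cofibers, which are preserved by $-\otimes_A \sT$. All subsequent identifications are then purely a matter of tracking Karoubi generators.
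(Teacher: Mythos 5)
Your proposal is correct and takes essentially the same route as the paper: the paper also presents $\sB^{\otimes_An}$ as the quotient of $\sB^{+,\otimes_An}$ by the full subcategory Karoubi-generated by pure tensors with one factor in $\sB^+_{\on{pe}}$ (citing \cite[Appendix A]{toenvezzosi22} for the compatibility of $\otimes_A$ with dg-quotients, which is exactly the colimit-preservation point you sketch in Step 1), identifies this subcategory with $\Perfext(G^{\times_Sn})$ by matching Karoubi generators, and concludes via Lemma \ref{lem: nodes Bar(B^+/A)} just as in your Steps 2 and 3.
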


\begin{proof}

Since $\sB\simeq \sB^+/\sB^+_{\on{pe}}$, it follows from \cite[Appendix A]{toenvezzosi22} that $\sB^{\otimes_An}$ is equivalent to the dg-quotient of $\sB^{+,\otimes_An}$ by the full subcategory Karoubi-generated by the objects in
$$
  \bigcup_{j=1}^n \sB^+\otimes_A \dots \sB^+\otimes_A \sB^+_{\on{pe}}\otimes_A\sB^+\otimes_A\dots \otimes_A\sB^+,
$$
with $\sB^+_{\on{pe}}$ appearing in the $j^{th}$ factor. 

Since $\sB^+_{\on{pe}}$ is Karoubi-generated by $\ccO_G$ and $\sB^+$ by $(\delta_{s/S})_*\ccO_s$, we see that 
$$
  \sB^+\otimes_A \dots \sB^+\otimes_A \underbrace{\sB^+_{\on{pe}}}_{j^{th} \text{ place}}\otimes_A\sB^+\otimes_A\dots \otimes_A\sB^+
    \simeq 
  \bigl \langle \pr{\hat{j}}^*(\delta_{s/S}^{\times_Sn-1})_*\ccO_{s^{\times_Sn-1}}\bigr \rangle,
$$
so that 
$$
  \Angles{\bigcup_{j=1}^n \sB^+\otimes_A \dots \sB^+\otimes_A \sB^+_{\on{pe}}\otimes_A\sB^+\otimes_A\dots \otimes_A\sB^+}
    \simeq 
  \Perfext(G^{\times_Sn}).
$$
Then, by the previous lemma we get
\begin{align*}
  \sB^{\otimes_An} 
    & 
    \simeq 
  \frac{\sB^{+,\otimes_An}}{\Angles{\bigcup_{j=1}^n \sB^+\otimes_A \dots \sB^+\otimes_A \sB^+_{\on{pe}}\otimes_A\sB^+\otimes_A\dots \otimes_A\sB^+}}
    \\
    &
    \simeq
  \frac{\Cohext(G^{\times_Sn})}{\Perfext(G^{\times_Sn})}
    =: 
  \Singext(G^{\times_Sn})
    \in 
  \dgCat_A.
\end{align*}
\end{proof}


\section{Combinatorial preliminaries} \label{sec: combinatorial preliminaries}

In the first two subsections, we introduce the derived schemes $s(n)$ and some natural morphisms involving them.
Then, in Section \ref{ssec: cofibrant models}, we carefully describe cofibrant simplicial $A$-algebras that model such derived schemes.
This is done because the results of Section \ref{ssec: the coherent homotopies} heavily rely on such explicit models.
In Section \ref{ssec: some combinatorics}, we collect some useful elementary results of combinatorial nature.

\subsection{Infinitesimal neighbourhoods of \texorpdfstring{$s\hto S$}{s hto S}}

\sssec{}

For any $n\geq 1$, we set
$$
  s(n)
    := 
  \Spec{k(n)}.
$$
Notice that
$$
  s(n) 
    =
  \Spec{A^{(n)}[\eps]},
$$
where $A^{(n)}[\eps]$ denotes the simplicial algebra freely generated by the degree $1$ element $\eps$, mapped to $\pi^n$ by the differential.

\sssec{}

For $n\geq m \geq 1$, there is an obvious closed embedding $j(m,n):s(m) \to s(n)$. At the level of simplicial algebras, $j(m,n)$ corresponds to the morphism
$$
  A^{(n)}[\eps]
    \to 
  A^{(m)}[\eps]
$$
determined by 
$$
  \eps 
    \mapsto 
  \pi^{n-m}\eps.
$$

\sssec{}

Let $p\geq 1$. Taking products over $S$, we have 
$$
  s(n)^{\times_Sp} 
    \simeq
  \on{Spec}
  \Bigt{A^{(n)}[\eps^{(i)}:i=1\dots,p]},
$$
where 
$$
  A^{(n)}[\eps^{(i)}:i=1\dots,p]
$$
is the simplicial algebra freely generated by  degree $1$ elements $\eps^{(i)}$ ($i=1,\dots,p$), all mapped to $\pi^n$ by the differential.

\sssec{}

The induced morphism
$$
j(m,n)^{\times_Sp}:s(m)^{\times_Sp}\to s(n)^{\times_Sp}
$$
corresponds to the following morphism of simplicial algebras:
$$
  A^{(n)}[\eps^{(i)}:i=1\dots,p] 
    \to 
  A^{(m)}[\eps^{(i)}:i=1\dots,p]
$$
$$
  \eps^{(i)} 
    \mapsto 
  \pi^{n-m}\eps^{(i)},
  i=1,\dots,p.
$$

\sssec{}

There is a natural action of the cyclic group $C_n = \langle \rho_n \rangle$ on $s^{\times_Sn}$, defined as follows: for $j=0,\dots,n-1$, the action of $\rho_n^j$ on $s^{\times_Sn}$ corresponds to the auto-equivalence 
$$
  A[\eps^{(i)}:i=1\dots,n]
    \to 
  A[\eps^{(i)}:i=1\dots,n]
$$
$$
  \eps^{(i)} 
    \mapsto 
  \eps^{(\rho_n^ji)}, 
  i=1,\dots,n.
$$

\subsection{Infinitesimal neighbourhoods of \texorpdfstring{$G\hto S$}{G hto S}}

\sssec{}

For any $n\geq 1$, we set
$$
  G(n)
    := 
  s(n)\times_Ss(n).
$$
and observe that $G(1) = G$. 
Notice that
$$
  G(n)
    =
  \Spec{A^{(n)}[\eps_1,\eps_2]},
$$
where 
$$
  A^{(n)}[\eps_1,\eps_2]
    =
  A^{(n)}[\eps]\otimes_AA^{(n)}[\eps]
$$
denotes the free simplicial algebra generated by two elements $\eps_1,\eps_2$ in degree $1$, mapped to $\pi^n$ by the differential.

\sssec{}

For any $n\geq m \geq 1$, the closed embedding $j(m,n):s(m) \to s(n)$ induces a closed embedding
$$
  j_G(m,n) :
  G(m)
    \to 
  G(n)
$$
of derived schemes. 
At the level of simplicial algebras, $j_G(m,n)$ corresponds to the morphism
$$
  A^{(n)}[\eps_1,\eps_2]
    \to 
  A^{(m)}[\eps_1,\eps_2]
$$
determined by 
$$
  \eps_1 \mapsto \pi^{n-m}\eps_1, 
    \;\;\;\; 
  \eps_2 \mapsto \pi^{n-m}\eps_2.
$$

\sssec{}

For any $p\geq 1$, we have
$$
  G(n)^{\times_Sp}
    \simeq
  \on{Spec}
           \Bigt{
                 A^{(n)}[\eps_1^{(i)},\eps_2^{(i)}:i=1\dots,p]
                 },
$$
where 
$$
  A^{(n)}[\eps_1^{(i)},\eps_2^{(i)}:i=1\dots,p]
  =
  \bigotimes_{i=1}^pA^{(n)}[\eps_1,\eps_2].
$$

\sssec{}

The natural morphism
$$
  j_G(m,n)^{\times_Sp}:
  G(m)^{\times_Sp}
    \to 
  G(n)^{\times_Sp}
$$
admits the following strict model at the level of simplicial $A$-algebras:
$$
  A^{(n)}[\eps_1^{(i)},\eps_2^{(i)}:i=1\dots,p] 
    \to 
  A^{(m)}[\eps_1^{(i)},\eps_2^{(i)}:i=1\dots,p]
$$
$$
  \eps_1^{(i)} 
    \mapsto 
  \pi^{n-m}\eps_1^{(i)}, 
    \;\;\;\; 
  \eps_2^{(i)} 
    \mapsto 
  \pi^{n-m}\eps_2^{(i)}, 
    \;\;\;\; 
  i=1,\dots,p.
$$

\sssec{}

We will consider the diagonal action of the cyclic group $C_n$ on 
$$
  G^{\times_Sn}
  \simeq
  s^{\times_Sn} \times_S s^{\times_Sn}.
$$
For each $j=0,\dots,n-1$, the action of $\rho_n^j$ on $G^{\times_Sn}$ then corresponds to the auto-equivalence 
$$
  A[\eps_1^{(i)},\eps_2^{(i)}:i=1\dots,n]
    \to 
  A[\eps_1^{(i)},\eps_2^{(i)}:i=1\dots,n]
$$
$$
  \eps_1^{(i)} 
    \mapsto 
  \eps_1^{(\rho_n^ji)}, 
    \;\;\;\; 
  \eps_2^{(i)} 
    \mapsto 
  \eps_2^{(\rho_n^ji)}, 
    \;\;\;\; 
  i=1,\dots,n.
$$

\subsection{Cofibrant models}\label{ssec: cofibrant models}

We now discuss cofibrant models for the simplicial $A$-algebras introduced above. 
These are particularly convenient for the homotopies we are going to define in Theorem
\ref{thm: the homotopy}.

\sssec{Cofibrant model of the \texorpdfstring{$A[t]$}{A[t]}-algebra \texorpdfstring{$A$}{A}}

Consider the $A[t]$-algebra $\phi_0:A[t]\to A$, uniquely determined by the requirement that $t\mapsto 0$. 
We can construct a cofibrant model for $\phi_0:A[t]\to A$ in $\sCAlg_{A[t]/}$ using the monad
$$
  \CAlg_{A/}
    \to 
  \CAlg_{A/}, 
  \;\;\;\; 
  B 
    \mapsto 
  B\otimes_A A[t]. 
$$
We obtain the simplicial $A[t]$-algebra 
$$
  \Bar_{A[t]}(A): 
  \Delta^\op 
    \to 
  \CAlg_{A[t]/}
$$
$$
  [p]
    \mapsto 
  \Bar_{A[t]}(A)_{p}
    :=
  \frac{A\bigq{t^{(j)}:j\in [p+1]}}{\langle t^{(p+1)} \rangle}  
    = 
  \frac{A[t^{(0)},\dots,t^{(p+1)}]}{\langle t^{(p+1)} \rangle},
$$
where the $A[t]$-algebra structure is induced by the embedding 
$$
  A[t]\simeq A[t^{(0)}]
    \subseteq 
  A[t^{(0)},\dots,t^{(p)}]
    \simeq 
  \frac{A[t^{(0)},\dots,t^{(p+1)}]}{\langle t^{(p+1)} \rangle}.
$$

The face maps (for $p\geq 0$ and $0\leq i\leq p+1$) are defined as follows: 
$$
  d_i^p:
  \Bar_{A[t]}(A)_{p+1}
    =
  \frac{A\bigq{t^{(j)}: j \in [p+2]}}{\langle t^{(p+2)} \rangle}
    \to 
  \Bar_{A[t]}(A)_{p}
    =
  \frac{A\bigq{t^{(j)}:j\in [p+1]}}{\langle t^{(p+1)} \rangle} 
$$
$$
  d_i^p(t^{(j)})
    := 
  \begin{cases}
  t^{(j)} & \text{if} \;\; j\leq i;
    \\
  t^{(j-1)} & \text{if} \;\; j\geq i+1.
\end{cases}
$$
    
For $0\leq i \leq p+1$, we have $d^p_i\bigt{t^{(p+2)}}=t^{(p+1)}$, hence the morphism is well-defined.

\medskip

The degeneracy maps (for $p\geq 1$ and $0\leq i\leq p-1$) are defined as follows:
$$
  s_i^p:
  \Bar_{A[t]}(A)_{p-1}
    =
  \frac{A\bigq{t^{(j)}: j \in [p]}}{\langle t^{(p)} \rangle }
    \to 
  \Bar_{A[t]}(A)_{p}
    =
  \frac{A\bigq{t^{(j)}: j \in [p+1]}}{\langle t^{(p+1)} \rangle }
$$
$$
  s^p_i(t^{(j)}) 
    :=
  \begin{cases}
    t^{(j)} & \text{if} \;\; j\leq i;
      \\
    t^{(j+1)} & \text{if} \;\; j\geq i+1.
  \end{cases}
$$

This is well-defined: for every $0\leq i \leq p-1 $, we have $s^p_i\bigt{t^{(p)}}= t^{(p+1)}=0 $.

\begin{rmk}
Notice that 
$$
  d^p_{p+1}(t^{(p+1)})
    =
  t^{(p+1)}
    =
  0,
$$ 
which means that $\on{Bar}_{A[t]}(A)$ is a simplicial commutative $A[t]$-algebra where $t$ is homotopic to zero.
\end{rmk}

By abuse of notation, let $A[t]$ denote the constant simplicial ring associated to $A[t]$.
There is a morphism of simplicial algebras
$$
  A[t]
    \subseteq 
  \Bar_{A[t]}(A),
$$
which is the inclusion 
$$
  A[t]
    \simeq 
  A[t^{(0)}]
    \subseteq 
  \frac{A\bigq{t^{(j)}:j\in [p+1]}}{\langle t^{(p+1)} \rangle }
$$ 
in all degrees.

\sssec{Cofibrant model for the \texorpdfstring{$A$}{A}-algebra \texorpdfstring{$k(m)$}{k(m)}}

Fix an integer $m\geq 1$. 
Let $\phi_{\pi^m}:A[t]\to A$ be the $A[t]$-algebra uniquely determined by the requirement that $t \mapsto \pi^m$.
We have
$$
  k(m)
    \simeq
  A\otimes^{\bbL}_{\phi_0,A[t],\phi_{\pi^m}}A.
$$
The cofibrant $A[t]$-replacement $\Bar_{A[t]}(A)$ yields a cofibrant replacement for the $A$-algebra $k(m)$, namely
$$
  k_A(m)
    :=
  \Bar_{A[t]}(A)\otimes_{A[t],\phi_{\pi^m}}A.
$$
Explicitly,
$$ 
  k_A(m): 
  \Delta^\op 
    \to 
  \CAlg_{A/}
$$
$$
  [p]
    \mapsto 
  \bigt{k_A(m)}_p
    =
  \frac{A\bigq{t^{(j)}:j\in [p+1]}}{I_p(m)}
    \simeq 
  A[t^{(1)},\dots,t^{(p)}],
$$
where 
$$
  I_p(m)
    := 
  \Angles{t^{(0)}-\pi^m, t^{(p+1)}}
    \subseteq 
  A\bigq{t^{(j)}:j\in [p+1]}.
$$
For $p\geq 0$ and $0\leq i\leq p+1$, the face maps 
$$
  d_i^p:\bigt{k_A(m)}_{p+1}
    =
  \frac{A\bigq{t^{(j)}:j\in [p+2]}}{I_{p+1}(m)}
    \to 
  \bigt{k_A(m)}_{p}
    =
  \frac{A\bigq{t^{(j)}:j\in [p+1]}}{I_{p}(m)}
$$
are defined on the generators $t^{(j)}$ ($j\in [p+2]$) by
$$
    d_i^p(t^{(j)})
      := 
    \begin{cases}
      t^{(j)} & \text{if} \;\; j\leq i;
        \\
      t^{(j-1)} & \text{if} \;\; j\geq i+1.
    \end{cases}
$$
These morphisms are well-defined since $d_i^p\bigt{I_{p+1}(m)}\subseteq I_{p}(m)$: indeed, 
$$
  d^p_i(t^{(0)}-\pi^m) 
    = 
  t^{(0)}-\pi^m,
  \;\;\;\;
  d^p_i(t^{(p+2)})
    = 
  t^{(p+1)}
$$
for all $0\leq i \leq p+1$.

Let $p\geq 1$ and $0\leq i\leq p-1$.
The degeneracy maps are defined as follows:
$$
  s_i^p:
  \bigt{k_A(m)}_{p-1}
    =
  \frac{A\bigq{t^{(j)}:j\in [p]}}{I_{p-1}(m)}
    \to 
  \bigt{k_A(m)}_{p}
    =
  \frac{A\bigq{t^{(j)}:j\in [p+1]}}{I_{p}(m)}
$$
$$
  s^p_i(t^{(j)})
    :=
   \begin{cases}
     t^{(j)} & \text{if} \;\; j\leq i;
       \\
     t^{(j+1)} & \text{if} \;\; j\geq i+1.
   \end{cases}
$$
Similarly to the above, one checks that the degeneracy maps are well-defined.

\begin{rmk}
Note that, $d^p_0(t^{(j)})=t^{(j-1)}$. In particular, 
$$
  d^p_0(t^{(1)})
    =
  t^{(0)}
    =
  \pi^m.
$$
On the other hand, notice that $d^p_{p+1}(t^{(j)})=t^{(j)}$. 
In particular, 
$$
  d^p_{p+1}(t^{(p+1)})
    =
  t^{(p+1)}
    =
  0.
$$
This means that, in the simplicial $A$-algebra $k_A(m)$, the element $\pi^m$ is homotopic to zero.
\end{rmk}

\sssec{Notation}

For each $n\geq 0$, we define 
$$
  \angles{n}
    := 
  \{1,\dots,n\} 
    \simeq 
  [n]\setminus \{0\}.
$$

\sssec{The simplicial algebra \texorpdfstring{$k_A^{\otimes_An}(m)$}{K_A^n(m)}}
\label{sssec: simplicial algebra K_A^(otimes_A n)(m)}

For $m,n \in \bbN$, we will need to consider the simplicial $A$-algebras
$$
  k_A^{\otimes_An}(m)
    := 
  \bigotimes_{l=1}^nk_A(m),
$$
where the tensor product is taken over $A$.

Notice that 
$$
  s(m)^{\times_Sn}
    = 
  \Spec{k_A^{\otimes_An}(m)}.
$$
One can verify that this simplicial $A$-algebra is as follows:
$$
  k_A^{\otimes_An}(m):
  \Delta^\op 
    \to 
  \CAlg_{A/}
$$
$$
  [p]
    \mapsto 
  \bigotimes_{l=1}^n\bigt{k_A(m)}_p 
    \simeq 
  \frac{A\bigq{t_a^{(j)}: j\in [p+1], a\in \angles{n}}}{I_p^{\otimes_A n}(m)},
$$
where 
$$
  I_p^{\otimes_A n}(m)
    =
  \Angles{t_a^{(0)}-\pi^m,t_a^{(p+1)}:a\in \angles{n}}
    \subseteq 
  A\bigq{t_a^{(j)}: j\in [p+1], a\in \angles{n}}.
$$
The face maps are as follows: 
for $p \in \bbN$ and $0\leq i \leq p,$ we have
$$
  d_i^p: 
  \frac{A\bigq{t_a^{(j)}: j\in [p+2], a\in \angles{n}}}{I_{p+1}^{\otimes_A n}(m)}
    \to 
  \frac{A\bigq{t_a^{(j)}: j\in [p+1], a\in \angles{n}}}{I_{p}^{\otimes_A n}(m)}
$$
$$
    d_i^p(t^{(j)}_a)
      := 
    \begin{cases}
      t^{(j)}_a & \text{if} \;\; j\leq i;
        \\
      t^{(j-1)}_a & \text{if} \;\; j\geq i+1.
    \end{cases}
$$
The degeneracy maps are as follows. For $0\leq i \leq p-1$, we have 
$$
  s_i^p:
  \frac{A\bigq{t_a^{(j)}: j\in [p], a\in \angles{n}}}{I_{p-1}^{\otimes_A n}(m)}
    \to 
  \frac{A\bigq{t_a^{(j)}: j\in [p+1], a\in \angles{n}}}{I_{p}^{\otimes_A n}(m)}
$$
$$
  s_i^p(t^{(j)}_a)
    := 
  \begin{cases}
    t^{(j)}_a & \text{if} \;\; j\leq i;
      \\
    t^{(j+1)}_a & \text{if} \;\; j\geq i+1.
  \end{cases}
$$
Notice that the face and degeneracy maps of $k_A^{\otimes_An}(m)$ are given by the tensor products of those of $k_A(m)$.
In particular, they are well-defined.

\sssec{Simplicial models for the canonical maps \texorpdfstring{$j(m,m')^{\times_Sn}$}{}}

For every $m\leq m'$ 
we provide a morphism of simplicial $A$-algebras $k_A^{\otimes_An}(m')\to k_A^{\otimes_An}(m)$ which models the morphisms $j(m,m')^{\times_Sn}$.

For each $p \geq 0$, consider the following morphism of $A$-algebras:
\begin{equation}\label{eqn: simplicial model j(m,m')^n}
    \bigt{k_A^{\otimes_An}(m')}_p
    = 
    \frac{A\bigq{t^{(j)}_a:j \in [p+1],a\in \angles{n}}}{I_{p}^{\otimes_A n}(m')}
    \to
    \bigt{k_A^{\otimes_An}(m)}_p
    = 
    \frac{A\bigq{t^{(j)}_a:j \in [p+1],a\in \angles{n}}}{J_{p}^{\otimes_A n}(m)}
\end{equation}
$$
   t^{(j)}_a \mapsto \pi^{m'-m} \; t^{(j)}_a.
$$

\begin{lem}

The above morphisms \eqref{eqn: simplicial model j(m,m')^n} define a morphism
$$
  \can(m',m)^{\otimes_A n}: 
  k_A^{\otimes_An}(m')
    \to 
  k_A^{\otimes_An}(m)
$$
of simplicial $A$-algebras.
\end{lem}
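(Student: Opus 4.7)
The proof is essentially a direct verification, and I would organize it into two routine checks.

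First, I would verify that each level map \eqref{eqn: simplicial model j(m,m')^n} is well-defined, i.e. that the rescaling by $\pi^{m'-m}$ sends $I_p^{\otimes_A n}(m')$ into $I_p^{\otimes_A n}(m)$. For the relation $t_a^{(p+1)} \in I_p^{\otimes_A n}(m')$ there is nothing to do: its image $\pi^{m'-m} t_a^{(p+1)}$ plainly lies in $I_p^{\otimes_A n}(m)$. For the relation $t_a^{(0)} - \pi^{m'}$, the key identity is
\[
  \pi^{m'-m} t_a^{(0)} - \pi^{m'}
    =
  \pi^{m'-m}\bigt{t_a^{(0)} - \pi^m},
\]
which shows the image lies in $I_p^{\otimes_A n}(m)$ as required.

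Second, I would check compatibility with faces and degeneracies. Since both $d_i^p$ and $s_i^p$ act on generators $t_a^{(j)}$ by a simple re-indexing of the upper index $j$ (with coefficient $1$), while the level map $\can(m',m)^{\otimes_A n}_p$ acts by a fixed scalar $\pi^{m'-m}\in A$ that is central and commutes past the $A$-algebra structure maps, both compositions $d_i \circ \can$ and $\can \circ d_i$ (respectively $s_i \circ \can$ and $\can \circ s_i$) send $t_a^{(j)}$ to $\pi^{m'-m} t_a^{(j')}$ where $j'$ is the appropriate shift dictated by Section \ref{sssec: simplicial algebra K_A^(otimes_A n)(m)}. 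This makes the required squares commute on generators, and hence as morphisms of $A$-algebras.

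No step here is a serious obstacle; the lemma is really a bookkeeping statement recording the strict model of $j(m,m')^{\times_S n}$ in the simplicial category. The only mild subtlety is checking well-definedness on the defining ideal, which is handled by the identity displayed above.
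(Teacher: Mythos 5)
Your proof is correct and follows essentially the same route as the paper: a direct check on the generators $t^{(j)}_a$ that both compositions through the face (resp. degeneracy) maps send $t^{(j)}_a$ to $\pi^{m'-m}t^{(j)}_a$ or $\pi^{m'-m}t^{(j\pm 1)}_a$ according to whether $j\leq i$ or $j\geq i+1$. Your additional verification that $t^{(0)}_a-\pi^{m'}$ maps into $I_p^{\otimes_A n}(m)$ via $\pi^{m'-m}\bigl(t^{(0)}_a-\pi^m\bigr)$ is a point the paper leaves implicit, and is a welcome (if routine) completion.
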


\begin{proof}
We need to verify that the morphisms \eqref{eqn: simplicial model j(m,m')^n} are compatible with the face and degeneracy maps.

\sssec*{Degeneracy maps} 
For $p\geq 0$ and $0\leq i \leq p$, we need to verify that the square
\begin{equation*}
    \begin{tikzcd}
    \bigt{k_A^{\otimes_An}(m')}_p=\frac{A\bigq{t^{(j)}_a:j \in [p+1],a\in \angles{n}}}{I_{p}^{\otimes_A n}(m')}
    \arrow[r,"s^{p+1}_i"]
    \arrow[d,"\eqref{eqn: simplicial model j(m,m')^n}"]
    & 
    \bigt{k_A^{\otimes_An}(m')}_{p+1}=\frac{A\bigq{t^{(j)}_a:j \in [p+2],a\in \angles{n}}}{I_{p+1}^{\otimes_A n}(m')}
    \arrow[d,"\eqref{eqn: simplicial model j(m,m')^n}"]
    \\
    \bigt{k_A^{\otimes_An}(m)}_p=
    \frac{A\bigq{t^{(j)}_a:j \in [p+1],a\in \angles{n}}}{I_{p}^{\otimes_A n}(m)}
    \arrow[r,"s^{p+1}_i"]
    &
    \bigt{k_A^{\otimes_An}(m)}_{p+1}=
    \frac{A\bigq{t^{(j)}_a:j \in [p+2],a\in \angles{n}}}{I_{p+1}^{\otimes_A n}(m)}
    \end{tikzcd}
\end{equation*}
commutes. Indeed, it is easy to check that both compositions send
$$
  t^{(j)}_a 
    \mapsto 
  \begin{cases}
    \pi^{m'-m} \; t^{(j)}_a & \text{if } j\leq i;
      \\
    \pi^{m'-m} \; t^{(j+1)}_a & \text{if } j\geq i+1.
  \end{cases}
$$

\sssec*{Face maps} 

For $p\geq 0$ and $0\leq i \leq p+1$, we need to veify that the square
\begin{equation*}
    \begin{tikzcd}
    \bigt{k_A^{\otimes_An}(m')}_{p+1}=\frac{A\bigq{t^{(j)}_a:j \in [p+2],a\in \angles{n}}}{I_{p+1}^{\otimes_A n}(m')}
    \arrow[r,"d^{p}_i"]
    \arrow[d,"\eqref{eqn: simplicial model j(m,m')^n}"]
    & 
    \bigt{k_A^{\otimes_An}(m')}_{p}=\frac{A\bigq{t^{(j)}_a:j \in [p+1],a\in \angles{n}}}{I_{p}^{\otimes_A n}(m')}
    \arrow[d,"\eqref{eqn: simplicial model j(m,m')^n}"]
    \\
    \bigt{k_A^{\otimes_An}(m)}_{p+1}=\frac{A\bigq{t^{(j)}_a:j \in [p+1],a\in \angles{n}}}{I_{p+1}^{\otimes_A n}(m)}
    \arrow[r,"d^{p}_i"]
    &
    \bigt{k_A^{\otimes_An}(m)}_{p}=\frac{A\bigq{t^{(j)}_a:j \in [p+1],a\in \angles{n}}}{I_{p}^{\otimes_A n}(m)}
    \end{tikzcd}
\end{equation*}
commutes. In this case, both compositions send 
$$
  t^{(j)}_a \mapsto 
  \begin{cases}
    \pi^{m'-m} \; t^{(j)}_a & \text{if } j\leq i;
      \\
    \pi^{m'-m} \; t^{(j-1)}_a & \text{if } j\geq i+1.
  \end{cases}
$$
\end{proof}

\begin{rmk}
Recall that the cyclic group $\angles{\rho_n}\simeq C_n$ acts on $s(m)^{\times_Sn}$.
At the level of simplicial $A$-algebras, the action of $\rho_n^b$ ($b\in [n-1]$) corresponds to the isomorphisms
$$
  \bigt{k_A^{\otimes_An}(m)}_{p}
    =
  \frac{A\bigq{t^{(j)}_a:j \in [p+1],a\in \angles{n}}}{I_{p}^{\otimes_A n}(m)}
    \to
  \bigt{k_A^{\otimes_An}(m)}_{p}
    =
  \frac{A\bigq{t^{(j)}_a:j \in [p+1],a\in \angles{n}}}{I_{p}^{\otimes_A n}(m)}
$$
$$
  t^{(j)}_a 
    \mapsto 
  t^{(j)}_{\rho_n^b(a)}.
$$
It will be clear later that the compositions 
$$
  \can(m',m)^{\otimes_A n}\circ \rho_n^b: 
  k_A^{\otimes_An}(m')
    \to 
  k_A^{\otimes_An}(m)
$$
play a crucial role in the main construction in Section \ref{sec: integrating differential forms on two-periodic complexes}. 
\end{rmk}

\sssec{The simplicial enrichment}
\label{sssec: the simplicial enrichment}

Recall that the category $\sCAlg_{A/}$ of simplicial commutative $A$-algebras is naturally tensored over the category of simplicial sets: for each $K \in \sSet$ and $R \in \sCAlg_{A/}$ one has a simplicial $A$-algebra $K\otimes R$.

We will be particularly interested in the case $K=\Delta^l$ and $R=k_A^{\otimes_An}(n)$, so we write down the explicit formulas in this case. The simplicial $A$-algebra 
$$
  \Delta^l \otimes k_A^{\otimes_An}(n): 
  \Delta^\op 
    \to 
  \CAlg_{A/}
$$
is defined on objects by
$$
  [p]
    \mapsto 
  \bigotimes_{\Delta^l_p}\bigt{k_A^{\otimes_An}(n)}_p 
    \simeq 
  \frac{A\bigq{t_a^{(j)}(\alpha):j \in [p+1], \alpha \in \Delta^l_p, a\in \angles{n}}}{I^{\otimes_An}_{\Delta^l,p}(n)},
$$
where 
\begin{align*}
  I^{\otimes_An}_{\Delta^l,p}(n)
  & 
  := 
  \Angles{t^{(0)}_a(\alpha)-\pi^n,t^{(p+1)}_a(\alpha): \alpha \in \Delta^l_p, a \in \angles{n}} \\
  & 
  \subseteq 
  A\bigq{t^{(j)}(\alpha):j \in [p+1], \alpha \in \Delta^l_p}.
\end{align*}
The face maps
$$
  \frac{A\bigq{t^{(j)}(\alpha):j \in [p+1], \alpha \in \Delta^l_p,a \in \angles{n}}}{I_{\Delta^l,p}^{\otimes_A n}(n)}
    \to
  \frac{A\bigq{t^{(j)}(\alpha):j \in [p], \alpha \in \Delta^l_{p-1},a\in \angles{n}}}{I^{\otimes_An}_{\Delta^l,p-1}(n)}
$$
are determined by the assignments
$$
  t^{(j)}_a(\alpha)
    \mapsto
  d^{p-1}_i(t^{(j)}_a)(\alpha \circ \delta^p_i),
$$
while the degeneracy maps
$$
  \frac{A\bigq{t^{(j)}_a(\alpha):j \in [p+1],\alpha \in \Delta^l_p, a\in \angles{n}}}{I^{\otimes_An}_{\Delta^l,p}(n)}
    \to
  \frac{A\bigq{t^{(j)}(\alpha):j \in [p+2],\alpha \in \Delta^l_{p+1},a\in \angles{n}}}{I^{\otimes_An}_{\Delta^l,p+1}(n)}
$$
are determined by the assignments
$$
  t^{(j)}_a(\alpha)
  \mapsto
  s^{p+1}_i(t^{(j)}_a)(\alpha \circ \sigma^p_i).
$$

\sssec{Cofibrant model for the \texorpdfstring{$A$}{A}-algebra \texorpdfstring{$\bigt{k\otimes_A^{\bbL}k}^{\otimes_An}$}{}}

Since we will use it later on, we also describe a cofibrant model for the simplicial $A$-algebra
$$
  \bigt{k\otimes_A^{\bbL}k}^{\otimes_An}
  \simeq 
  \bigt{
        k_A\otimes_Ak_A
        }^{\otimes_An}.
$$
Denote by $K_{A}$ the simplicial $A$-algebra
$$
  k_A \otimes_A k_A
    \simeq 
  k \otimes_A^{\bbL} k.
$$
Since $k_A$ is a cofibrant simplicial $A$-algebra, $K_A$ is cofibrant as well. Explicitly, we have
$$
  K_A: 
  \Delta^{\op}
    \to 
  \CAlg_{A/}
$$

$$
  [p]
    \mapsto 
  \bigt{K_A}_p
    =
  \frac{A\bigq{t^{(j)}:j\in [p+1]}}{I_p}\otimes_A\frac{A\bigq{t^{(j)}:j\in [p+1]}}{I_p}
    \simeq 
  \frac{A\bigq{t^{(j)},u^{(j)}:j \in [p+1]}}{J_{p}}
,
$$
where 
$
  J_{p}
    :=
  \Angles{t^{(0)}-\pi,t^{(p+1)},u^{(0)}-\pi,u^{(p+1)}}
    \subseteq 
  A\bigq{t^{(j)},u^{(j)}:j \in [p+1]}.
$
The face and degeneracy maps on $K_A$ are then given by the tensor products of the face and degeneracy maps on $k_A$.

\sssec{} 

Notice that 
$$
  G^{\times_Sn}
    = 
  \Spec{K_A^{\otimes_A n}},
$$
where
$$
  K_A^{\otimes_A n}
    := 
  \bigotimes_{l=1}^nK_A,
$$
with the tensor product taken over $A$.
Explicitly, this simplicial $A$-algebra looks as follows:
$$
  K_A^{\otimes_A n}: 
  \Delta^\op 
    \to 
  \CAlg_{A/}
$$
$$
  [p]
    \mapsto 
  \bigotimes_{l=1}^n\bigt{K_A^{\otimes_A n}}_p 
    \simeq 
  \frac{A\bigq{t_a^{(j)}, u_{a}^{(j)}: j\in [p+1], a\in \angles{n}}}{J_p^{\otimes_A n}},
$$
where 
$
  J_p^{\otimes_A n}
    =
  \Angles{t_a^{(0)}-\pi,t_a^{(p+1)},u_a^{(0)}-\pi,u_a^{(p+1)}:a\in \angles{n}}
    \subseteq 
  A\bigq{t_a^{(j)}, u_{a}^{(j)}: j\in [p+1], a\in \angles{n}}.
$
The face and degeneracy maps of $K_A^{\otimes_A n}$ are given by the tensor products of those of $K_A$.

The face maps are as follows: 
for $p \in \bbN$ and $0\leq i \leq p,$ we have
$$
  d_i^p: 
  \frac{A\bigq{t_a^{(j)}, u_{a}^{(j)}: j\in [p+2], a\in \angles{n}}}{J_{p+1}^{\otimes_A n}}
    \to 
  \frac{A\bigq{t_a^{(j)}, u_{a}^{(j)}: j\in [p+1], a\in \angles{n}}}{J_{p}^{\otimes_A n}}
$$
$$
    d_i^p(t^{(j)}_a):= 
    \begin{cases}
    t^{(j)}_a & \text{if} \;\; j\leq i;
    \\
    t^{(j-1)}_a & \text{if} \;\; j\geq i+1.
    \end{cases}
    \;\;\;\;
    d_i^p(u^{(j)}_a):= 
    \begin{cases}
    u^{(j)}_a & \text{if} \;\; j\leq i;
    \\
    u^{(j-1)}_a & \text{if} \;\; j\geq i+1.
    \end{cases}
$$
The degeneracy maps are as follows. For $0\leq i \leq p-1$, we have 
$$
  s_i^p:
  \frac{A\bigq{t_a^{(j)}, u_{a}^{(j)}: j\in [p], a\in \angles{n}}}{J_{p-1}^{\otimes_A n}}
    \to 
  \frac{A\bigq{t_a^{(j)}, u_{a}^{(j)}: j\in [p+1], a\in \angles{n}}}{J_{p}^{\otimes_A n}}
$$
$$
  s_i^p(t^{(j)}_a):= 
  \begin{cases}
    t^{(j)}_a & \text{if} \;\; j\leq i;
    \\
    t^{(j+1)}_a & \text{if} \;\; j\geq i+1.
    \end{cases}
      \;\;\;\;
  s_i^p(u^{(j)}_a):=
  \begin{cases}
    u^{(j)}_a & \text{if} \;\; j\leq i;
    \\
    u^{(j+1)}_a & \text{if} \;\; j\geq i+1.
    \end{cases}
$$

\sssec{Cofibrant model for the \texorpdfstring{$A$}{A}-algebra \texorpdfstring{$K_A^{\otimes_{k_A}m}$}{}}

The cofibrant simplicial $A$-algebra $K_A$ immediately provides us a cofibrant model for
$$
  K_A^{\otimes_{k_A}m}
    :=
  K_A\otimes_{i_u,k_A,i_t}K_A\otimes_{i_u,k_A,i_t} \dots \otimes_{i_u,k_A,i_t}K_A,
$$
where $i_t:k_A \to K_A$ (resp. $i_u:k_A \to K_A$) denotes the morphism induced by the embedding of the variables of type \virg{$t$} (resp. of type \virg{$u$}).

Indeed, we have that
$$
  K_A^{\otimes_{k_A}m}
    =
  \frac{K_A^{\otimes_An}}{\Angles{t^{(\star)}_{a+1}-u^{(\star)}_{a}:a=1,\dots,n-1}},
$$
where 
$$
  \Angles{t^{(\star)}_{a+1}-u^{(\star)}_{a}:a=1,\dots,n-1}
    \subseteq 
  K_A^{\otimes_A n}
$$
denotes the obvious simplicial ideal.

\begin{rmk}\label{rmk: can quot K_A^(otimes_k_A n) --> k_A^(otimes_A n)}

Notice that there is an isomorphims of simplicial $A$-algebra
$$
k_A^{\otimes_An}
\simeq
    \frac{K_A^{\otimes_{k_A}n}}{\Angles{t^{(\star)}_{1}-u^{(\star)}_{n}}}.
  $$
This gives a model for the morphism of $S$-schemes $s^{\times_Sn}\to G^{\times_sn}$ sitting in the following cartesian square:
  \begin{equation*}
      \begin{tikzcd}
        s^{\times_Sn}
        \rar
        \dar["\pr{1}"]
        &
        G^{\times_sn}
        \simeq s^{\times_S n+1}
        \dar["\pr{1,n+1}"]
        \\
        s
        \rar["\delta_{s/S}"]
        &
        s\times_Ss.
      \end{tikzcd}
  \end{equation*}
\end{rmk}

\subsection{A technical result}\label{ssec: some combinatorics}

To goal of this final section is to record the technical Corollary \ref{cor:lemmino}.
To state it, we first need to introduce some more notation and collect some elementary combinatorics.

\begin{defn}
Let $\alpha : [p]\to [n]$ be a morphism in $\Delta$. 
For $k\in [n]$, we set
$$
  m_{\alpha}(k)
    := 
  \# \alpha^{-1}(k)
$$
and
$$
  \ccS_{\alpha}(k)
    := 
  \sum_{i=0}^km_{\alpha}(i)
  = 
  \# \big \{ j\in [p]: \alpha(j)\leq k \big \}.
$$
We use the convention that $m_{\alpha}(-1)=\ccS_{\alpha}(-1)=0$.
\end{defn}

\begin{lem}
Let $\alpha : [p]\to [n]\in \Delta^{n}_p$ and $0\leq i \leq p$. We have:
\begin{enumerate}
    \item 
    $$
      m_{\alpha \circ \sigma^p_i}(k)
        = 
      \begin{cases}
      m_{\alpha}(k)+1 & \text{if }
        \;\;\;\; 
      \ccS_{\alpha}(k-1)\leq i \leq  \ccS_{\alpha}(k)-1,
        \\
      m_{\alpha}(k) & \text{else.}
    \end{cases}
    $$
    \item 
    $$
      m_{\alpha \circ \delta^p_i}(k)= 
      \begin{cases}
      m_{\alpha}(k)-1 & \text{if }
        \;\;\;\; 
      \ccS_{\alpha}(k-1)\leq i \leq \ccS_{\alpha}(k)-1,
        \\
      m_{\alpha}(k) & \text{else.}
    \end{cases}
    $$
\end{enumerate}
\end{lem}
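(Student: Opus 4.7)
The proof is purely combinatorial: the plan is to identify the fibers of $\alpha\circ\sigma^p_i$ and $\alpha\circ\delta^p_i$ directly from the definitions, using the fact that $\alpha$ is monotone.

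The first step is the observation that, because $\alpha : [p]\to [n]$ is order-preserving, each nonempty fiber $\alpha^{-1}(k)$ is a contiguous interval in $[p]$. In fact, unwinding the definitions,
$$
  \alpha^{-1}(k)
  \,=\,
  \bigl\{\ccS_\alpha(k-1),\,\ccS_\alpha(k-1)+1,\,\dots,\,\ccS_\alpha(k)-1\bigr\}
  \,\subseteq\, [p],
$$
which has cardinality $m_\alpha(k)$. In particular, the condition $\ccS_\alpha(k-1)\leq i\leq \ccS_\alpha(k)-1$ appearing in the statement is exactly the condition $i\in \alpha^{-1}(k)$.

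The second step is to record the preimages of the two standard simplicial maps. From the defining formulas, for any $l\in[p]$ one has
$$
  \bigl|(\sigma^p_i)^{-1}(l)\bigr|
    \,=\,
  \begin{cases}
    2 & \text{if } l=i,\\
    1 & \text{otherwise,}
  \end{cases}
  \qquad
  \bigl|(\delta^p_i)^{-1}(l)\bigr|
    \,=\,
  \begin{cases}
    0 & \text{if } l=i,\\
    1 & \text{otherwise.}
  \end{cases}
$$

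The final step assembles the two. Since $(\alpha\circ f)^{-1}(k)=f^{-1}\bigl(\alpha^{-1}(k)\bigr)$ for any map $f$, summing the fiber sizes over the elements of $\alpha^{-1}(k)$ gives
$$
  m_{\alpha\circ\sigma^p_i}(k)
    \,=\, m_\alpha(k) \,+\, \mathbf{1}_{\{i\in\alpha^{-1}(k)\}},
  \qquad
  m_{\alpha\circ\delta^p_i}(k)
    \,=\, m_\alpha(k) \,-\, \mathbf{1}_{\{i\in\alpha^{-1}(k)\}}.
$$
Combined with the interval description from the first step, these are precisely the two formulas of the lemma. There is no serious obstacle here: the only minor care needed is in the edge cases $k=0$ and $k=n$, which are handled uniformly by the conventions $m_\alpha(-1)=\ccS_\alpha(-1)=0$ already fixed in the definition.
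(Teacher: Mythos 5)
Your argument is correct and follows essentially the same route as the paper's proof: identify $\alpha^{-1}(k)$ with the interval $\{\ccS_\alpha(k-1),\dots,\ccS_\alpha(k)-1\}$ and use that $\sigma^p_i$ hits $i$ twice (everything else once) while $\delta^p_i$ misses $i$ (hitting everything else once). Your version merely makes the fiber-counting step $(\alpha\circ f)^{-1}(k)=f^{-1}\bigl(\alpha^{-1}(k)\bigr)$ explicit, which the paper leaves implicit.
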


\begin{proof}
By definition, $\alpha: [p]\to [n]$ sends $j \mapsto k$ if
$$
  \ccS_{\alpha}(k-1)\leq j \leq  \ccS_{\alpha}(k)-1.
$$
Since $\sigma^p_i:[p+1]\to [p]$ is the unique surjective morphism in $\Delta$ which hits the element $i\in[p]$ twice, it is clear that 
$$
    m_{\alpha \circ \sigma^p_i}(k)
      = 
    \begin{cases}
      m_{\alpha}(k)+1 & \text{if }\;\;\;\; i\in \alpha^{-1}(k)
        \\
      m_{\alpha}(k) & \text{if }\;\;\;\; i\notin \alpha^{-1}(k).
    \end{cases}
$$
The first claim follows since
$$
  \alpha^{-1}(k)
    =
  \big \{ j \in [n]: \ccS_{\alpha}(k-1)\leq j < \ccS_{\alpha}(k)\big \}.
$$
The second claim is treated similarly, as $\delta^p_i:[p-1]\to [p]$ is the unique injective morphism in $\Delta$ which misses the element $i \in [p]$.
\end{proof}

\begin{cor}\label{cor: S(alpha circ sigma/delta) vs S(alpha)}
Let $\alpha:[p]\to [n] \in \Delta^n_p$. Let $i\in [p]$ and let $k$ be the unique element such that
$$
\ccS_{\alpha}(k-1)\leq i \leq \ccS_{\alpha}(k)-1.
$$
\begin{enumerate}
    \item 
    $$
      \ccS_{\alpha \circ \sigma^p_i}(u)= 
      \begin{cases}
        \ccS_{\alpha}(u) & \text{if }
          \;\;\;\; 
        u\leq k-1,
          \\
        \ccS_{\alpha}(u)+1 & \text{if } \;\;\;\; u\geq k.
    \end{cases}
    $$
    \item 
    $$
      \ccS_{\alpha \circ \delta^p_i}(u)= 
      \begin{cases}
      \ccS_{\alpha}(u) & \text{if }
        \;\;\;\; 
      u\leq k-1,
        \\
      \ccS_{\alpha}(u)-1 & \text{if } 
        \;\;\;\; 
      u\geq k.
    \end{cases}
    $$
\end{enumerate}
\end{cor}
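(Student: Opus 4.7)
The plan is to deduce this corollary as a direct computational consequence of the preceding lemma, using only the definition $\ccS_{\alpha}(u) = \sum_{l=0}^u m_{\alpha}(l)$. Since each formula in the corollary concerns a sum of the $m_{\alpha}$-values, I will simply sum up the pointwise identities for $m_{\alpha \circ \sigma^p_i}$ and $m_{\alpha \circ \delta^p_i}$ already established.

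More precisely, the hypothesis $\ccS_{\alpha}(k-1) \leq i \leq \ccS_{\alpha}(k)-1$ means that $k$ is the \emph{unique} index for which the inequality of the previous lemma is satisfied; consequently $m_{\alpha \circ \sigma^p_i}(l) = m_{\alpha}(l)$ for every $l \neq k$, while $m_{\alpha \circ \sigma^p_i}(k) = m_{\alpha}(k) + 1$. Summing from $l = 0$ to $l = u$, if $u \leq k-1$ the index $k$ does not appear in the range and we get $\ccS_{\alpha \circ \sigma^p_i}(u) = \ccS_{\alpha}(u)$; if $u \geq k$ the modified term contributes an extra $+1$, giving $\ccS_{\alpha \circ \sigma^p_i}(u) = \ccS_{\alpha}(u)+1$. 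This handles (1), and (2) is obtained in exactly the same way by replacing $+1$ with $-1$.

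There is no genuine obstacle here: the previous lemma has already done the combinatorial work of identifying the unique index at which $m_{\alpha}$ gets modified, and the passage to $\ccS_{\alpha}$ is mere bookkeeping. The only mild care needed is to verify that the threshold between the two cases is indeed $k$ rather than, say, $k-1$ or $k+1$; this is read off directly from the fact that the alteration in $m_{\alpha}$ occurs precisely at index $k$, so the cumulative sum $\ccS_{\alpha}(u)$ picks up the alteration exactly when $u \geq k$.
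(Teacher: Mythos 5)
Your proof is correct and matches the paper's (implicit) argument: the paper states this corollary without proof as an immediate consequence of the preceding lemma, and summing the lemma's pointwise identities for $m_{\alpha\circ\sigma^p_i}$ and $m_{\alpha\circ\delta^p_i}$ over $l=0,\dots,u$, with the unique modified index $k$ determining the threshold, is exactly the intended bookkeeping.
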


\begin{cor} \label{cor:lemmino}
Let $\alpha:[p]\to [n] \in \Delta^n_p$. 
Let $i\in [p]$ and $r\in [n]$. Then the face and degeneracy maps of the simplicial algebra $\Delta^{n}\otimes k_A^{\otimes_Am'}(m)$ satisfy the equations ($a=1,\dots, m'$)
$$
  d^p_i \bigt{ t^{(\ccS_\alpha(r))}_a}
    = 
  t^{(\ccS_{\alpha \circ \delta_i}(r))}_a,
$$
$$
  s^p_i \bigt{ t^{(\ccS_\alpha(r))}_a}
    = 
  t^{(\ccS_{\alpha \circ \sigma_i}(r))}_a.
$$
\end{cor}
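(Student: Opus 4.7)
The plan is to unwind the definition of the face and degeneracy maps of the simplicial algebra $\Delta^n \otimes k_A^{\otimes_Am'}(m)$ (recalled in Section \ref{sssec: the simplicial enrichment}) and to reduce the claim to the combinatorial identities already established in Corollary \ref{cor: S(alpha circ sigma/delta) vs S(alpha)}. Throughout, I read $t^{(\ccS_\alpha(r))}_a$ as shorthand for the generator $t^{(\ccS_\alpha(r))}_a(\alpha)$ at simplicial level $p$: this interpretation is forced because $\sum_{j=0}^n m_\alpha(j) = p+1$ implies $\ccS_\alpha(r) \in [p+1]$, which is precisely the range of upper indices at level $p$.

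For the face-map identity, the description of the simplicial enrichment yields
$$
  d^p_i \bigt{ t^{(\ccS_\alpha(r))}_a(\alpha) }
    =
  d_i \bigt{ t^{(\ccS_\alpha(r))}_a } \bigt{ \alpha \circ \delta^p_i },
$$
where $d_i$ denotes the relevant face map of $k_A^{\otimes_Am'}(m)$. The explicit formula from Section \ref{sssec: simplicial algebra K_A^(otimes_A n)(m)} then gives
$$
  d_i \bigt{ t^{(\ccS_\alpha(r))}_a } =
  \begin{cases}
    t^{(\ccS_\alpha(r))}_a   & \text{if } \ccS_\alpha(r) \leq i, \\
    t^{(\ccS_\alpha(r)-1)}_a & \text{if } \ccS_\alpha(r) \geq i+1.
  \end{cases}
$$
Let $k \in [n]$ be the unique element with $\ccS_\alpha(k-1) \leq i \leq \ccS_\alpha(k)-1$, as in Corollary \ref{cor: S(alpha circ sigma/delta) vs S(alpha)}. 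Since $\ccS_\alpha$ is nondecreasing, $r \leq k-1$ forces $\ccS_\alpha(r) \leq \ccS_\alpha(k-1) \leq i$, while $r \geq k$ forces $\ccS_\alpha(r) \geq \ccS_\alpha(k) \geq i+1$. Matching these two cases against Corollary \ref{cor: S(alpha circ sigma/delta) vs S(alpha)}(2) produces exactly $t^{(\ccS_{\alpha \circ \delta^p_i}(r))}_a$, as required.

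The degeneracy case is entirely parallel: the simplicial enrichment gives
$$
  s^p_i \bigt{ t^{(\ccS_\alpha(r))}_a(\alpha) } = s_i \bigt{ t^{(\ccS_\alpha(r))}_a } \bigt{ \alpha \circ \sigma^p_i },
$$
the explicit formula for $s_i$ on $k_A^{\otimes_Am'}(m)$ from Section \ref{sssec: simplicial algebra K_A^(otimes_A n)(m)} splits according to whether $\ccS_\alpha(r) \leq i$ or $\ccS_\alpha(r) \geq i+1$, and the very same dichotomy against the unique $k$ matches Corollary \ref{cor: S(alpha circ sigma/delta) vs S(alpha)}(1).

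I do not anticipate any genuine obstacle: the corollary is a bookkeeping statement that combines the explicit simplicial formulas with the combinatorics already distilled in the preceding lemma and corollary. The only subtlety worth flagging is the implicit convention that the index $\alpha$ labelling the tensor factor and the $\alpha$ appearing in $\ccS_\alpha(r)$ coincide; as observed above, this identification is forced by the dimension count at simplicial level $p$.
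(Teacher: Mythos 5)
Your proof is correct and follows exactly the route the paper intends: the paper states this corollary without proof as an immediate consequence of Corollary \ref{cor: S(alpha circ sigma/delta) vs S(alpha)}, and your verification — combining the explicit formulas for the face and degeneracy maps with the dichotomy $r\leq k-1$ versus $r\geq k$ for the unique $k$ with $\ccS_{\alpha}(k-1)\leq i\leq \ccS_{\alpha}(k)-1$ — is precisely the omitted bookkeeping. Your reading of $t^{(\ccS_\alpha(r))}_a$ as the labelled generator $t^{(\ccS_\alpha(r))}_a(\alpha)$ is consistent with how the corollary is applied later (the computational content, namely how the upper index transforms, is the same under either reading), so there is nothing to fix.
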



\section{Action of the cyclic groups and higher homotopies}\label{sec: the E_2 structure on G}

This section is the very heart of the paper.
In a nutshell, we study the failure of the multiplicative structure on $G$ to be commutative.
The key observation, upon which several later constructions rely, is that the obstruction to the existence of such additional structure can be overcome by considering higher and higher powers of the uniformizing element.

\subsection{The coherent homotopies}\label{ssec: the coherent homotopies}

\sssec{}

Our goal now is to exhibit, for each $n \geq 1$, a map of simplicial $A$-algebras
$$
  \wt{\ffh}^{(n-1)}:
  \Delta^{n-1} \otimes k_A^{\otimes_An}(n)
    \longto
  k_A^{\otimes_A n}
    =
  \underbrace{k_A \otimes_A \cdots \otimes_A k_A}_{n\textup{ copies of } k_A}
$$
which witnesses the fact that the compositions ($0\leq j \leq n-1$)
$$
  k_A^{\otimes_An}(n)
    \xto{\can(n,1)^{\otimes_An}}
  k_A^{\otimes_A n}
    \xto{\rho_n^{j}}
  k_A^{\otimes_A n}
$$
are \emph{coherently} homotopic.

\sssec{} 

At the level of derived schemes, this means that for every $n\geq 1$ there exists a morphism of derived $S$-schemes
$$
  \wt{\ffh}^{(n-1)}:
  \Delta^{n-1}\otimes s^{\times_S n}
    \to
  s(n)
$$
such that the induced morphisms
$$
  \Delta^{\{k\}}\otimes s^{\times_Sn}
    \hto
  \Delta^{n-1}\otimes s^{\times_S n}
    \to
  s^{\times_Sn}(n)
$$
identify with
$$
  s^{\times_Sn}
    \xto{\rho_n^k}
  s^{\times_Sn}
    \xto{j(n)^{\times_Sn}}
  s^{\times_Sn}(n).
$$

\sssec{}

We are now ready to state the main theorem of this section.

\begin{thm}\label{thm: the homotopy}

For every $n\geq 1$, there is a morphism of simplicial commutative rings
$$
  \wt{\ffh}^{(n-1)}: 
  \Delta^{n-1}\otimes k_A^{\otimes_An}(n)
    \to 
  k_A^{\otimes_An}
$$
such that, for every $0\leq i \leq n-1$, the composition
$$
  \Delta^{\{i\}}\otimes k_A^{\otimes_An}(n) 
    \hto 
  \Delta^{n-1}\otimes k_A^{\otimes_An}(n)\to k_A^{\otimes_An}
$$
identifies with
$$
  k_A^{\otimes_An}(n)
    \xto{\on{can}(n,1)^{\otimes_An}}
  k_A^{\otimes_A n}
    \xto{\rho_n^i}
  k_A^{\otimes_A n}.
$$
\end{thm}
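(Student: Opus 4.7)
The strategy is to construct $\wt{\ffh}^{(n-1)}$ degree by degree as an explicit $A$-algebra homomorphism on generators, and then to verify the simplicial identities and the vertex condition separately. Since $(\Delta^{n-1}\otimes k_A^{\otimes_A n}(n))_p$ is presented as $A\bigq{t_a^{(j)}(\alpha):j\in[p+1],a\in\angles{n},\alpha\in\Delta^{n-1}_p}$ modulo the relations $t_a^{(0)}(\alpha)=\pi^n$ and $t_a^{(p+1)}(\alpha)=0$, and similarly for the target with $\pi^n$ replaced by $\pi$, for each $(j,a,\alpha)$ I need to assign a polynomial in the target generators $t_b^{(\ell)}$ subject to four constraints: (W) the source relations at $j=0,p+1$ are mapped to relations of the target; (F) compatibility with every face map $d^p_i$; (D) compatibility with every degeneracy $s^p_i$; (V) at $\alpha=\mathrm{const}_i$, the formula reduces to $\pi^{n-1}\, t_{\rho_n^i(a)}^{(j)}$.

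A first guess would be a single monomial $\pi^{n-1}\, t_{\rho_n^{z_\alpha(j)}(a)}^{(j)}$ for a suitable zone function $z_\alpha(j)\in[n-1]$. Inspecting what face-compatibility forces at the two transitions $j=i$ and $j=i+1$, however, shows that this would require $z_\alpha(i)=z_\alpha(i+1)$ for every $i$, which fails whenever $\alpha$ jumps at $i$. I therefore look for a product formula
$$
\wt{\ffh}^{(n-1)}_p\bigl(t_a^{(j)}(\alpha)\bigr)
\;=\;
\prod_{r=0}^{n-1} t_{\rho_n^r(a)}^{(f_r(j,\alpha))},
$$
where $f_r(j,\alpha)\in[p+1]$ tracks how deeply the index $j$ has moved into the $r$-th zone cut out by the breakpoints $\ccS_\alpha(r-1)\le\ccS_\alpha(r)$ of $\alpha$. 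The guiding idea is that $f_r$ should vanish when $j$ lies below zone $r$ (so that the $r$-th factor is $t_{\rho_n^r(a)}^{(0)}=\pi$), grow with $j$ inside zone $r$, and be designed so that at least one factor equals $t^{(p+1)}_{\bullet}=0$ when $j=p+1$, killing the product. At $\alpha=\mathrm{const}_i$ only the $r=i$ zone is nonempty, so every factor except the $i$-th collapses to $\pi$ and the product reduces to $\pi^{n-1}\, t_{\rho_n^i(a)}^{(j)}$, securing (V); at $j=0$ every $f_r$ vanishes and the product is $\pi^n$, securing (W) at that boundary.

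With this ansatz, properties (F) and (D) reduce to combinatorial identities relating $f_r(j,\alpha)$ with $f_r(\cdot,\alpha\circ\delta_i^p)$ or $f_r(\cdot,\alpha\circ\sigma_i^p)$. This is exactly where Corollary~\ref{cor:lemmino} and Corollary~\ref{cor: S(alpha circ sigma/delta) vs S(alpha)} intervene: the former says that target variables whose upper index is of the form $\ccS_\alpha(r)$ transform functorially under face and degeneracy maps, while the latter records how the breakpoints $\ccS_\alpha(\cdot)$ themselves shift when $\alpha$ is composed with $\delta_i^p$ or $\sigma_i^p$. Combined, they reduce each compatibility check to a finite case analysis on the relative position of $j$ with respect to the breakpoints of $\alpha$ and the distinguished index $i$.

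The main obstacle is calibrating the definition of $f_r$ so that all four conditions hold simultaneously. Arranging (W) at $j=p+1$ in a way that is simplicially coherent with both $\delta_i^p$ and $\sigma_i^p$ requires careful treatment of degenerate boundary cases, since a naive piecewise definition creates mismatches precisely at the transitions between zones; this is why the naive single-factor ansatz fails, and why the product structure above, phrased in terms of $\ccS_\alpha$-breakpoints rather than of $\alpha$ itself, is essential. Once the correct $f_r$ is fixed, all the remaining verifications amount to bookkeeping with the identities of Corollaries~\ref{cor:lemmino} and~\ref{cor: S(alpha circ sigma/delta) vs S(alpha)}, while the vertex condition (V) is immediate from the formula.
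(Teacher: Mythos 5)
Your overall verification scheme (work on generators $t^{(j)}_a(\alpha)$, use Corollary \ref{cor:lemmino} for the face/degeneracy compatibilities, evaluate at constant $\alpha$) is the same as the paper's, but the entire content of the theorem is the explicit formula for $\wt{\ffh}^{(n-1)}_p\bigl(t^{(j)}_a(\alpha)\bigr)$, and this is precisely what you never produce: the functions $f_r(j,\alpha)$ are deferred to a ``calibration'' which you assert can be carried out. It cannot: the monomial ansatz $\wt{\ffh}^{(n-1)}_p\bigl(t^{(j)}_a(\alpha)\bigr)=\prod_{r=0}^{n-1}t^{(f_r(j,\alpha))}_{\rho^r_n(a)}$ is incompatible with the simplicial identities already for $n=2$. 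Indeed, let $\iota=\id_{[1]}\in\Delta^1_1$ and consider the $2$-simplices $\alpha'=(0,0,1)$ and $\alpha=(0,1,1)$ of $\Delta^1$. Applying the face $d^1_2$ to $t^{(1)}_a(\alpha')$ lands on the vertex $\mathrm{const}_0$, so compatibility demands $d^1_2\bigl(t^{(x)}_a t^{(y)}_{\rho a}\bigr)=\pi t^{(1)}_a$ in $A[t^{(1)}_1,t^{(1)}_2]$; since $d^1_2$ sends each variable to $\pi$, $t^{(1)}$ or $0$, the only possibility is $x=1$, $y=0$. Applying $d^1_1$ to the same generator (whose $\Delta^1$-component is $\iota$) then forces $\wt{\ffh}^{(1)}_1\bigl(t^{(1)}_a(\iota)\bigr)=\pi t^{(1)}_a$. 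On the other hand, applying $d^1_0$ to $t^{(2)}_a(\alpha)$ lands on $\mathrm{const}_1$ and forces $f_1(2,\alpha)=2$ and $f_0(2,\alpha)\in\{0,1\}$, and then applying $d^1_1$ to $t^{(2)}_a(\alpha)$ (again with $\Delta^1$-component $\iota$) gives $\wt{\ffh}^{(1)}_1\bigl(t^{(1)}_a(\iota)\bigr)\in\{\pi t^{(1)}_{\rho a},\,t^{(1)}_a t^{(1)}_{\rho a}\}$, contradicting the previous value. So no choice of the $f_r$ exists, and the ``main obstacle'' you flag is not a matter of bookkeeping but a genuine impossibility of the proposed shape.

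The missing idea is that away from the vertices the homotopy must be a \emph{sum} of $n$ monomials weighted by a partition of unity built from the breakpoint variables: the paper sets $\wt{\ffh}^{(n-1)}_p\bigl(t^{(j)}_a(\alpha)\bigr)=\sum_{r=1}^{n}t^{(j)}_{\rho^{r-1}_n(a)}\bigl(\prod_{i=0}^{r-2}t^{(\ccS_\alpha(i))}_{\rho^i_n(a)}\bigr)\bigl(\pi-t^{(\ccS_\alpha(r-1))}_{\rho^{r-1}_n(a)}\bigr)\pi^{n-r-1}$, so that the dependence on $\alpha$ sits entirely in the weights $t^{(\ccS_\alpha(i))}$, which specialize to $\pi$ or $0$ at constant $\alpha$ and kill all but one summand; simpliciality is then exactly Corollary \ref{cor:lemmino} applied to these weights. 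In the $n=2$ test above the forced value on $\iota$ is $t^{(1)}_a\bigl(\pi-t^{(1)}_a\bigr)+t^{(1)}_{\rho a}t^{(1)}_a$, which is not a monomial --- consistent with the obstruction and showing that any correct construction must have this additive, telescoping structure rather than a zone-dependent choice of exponents.
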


\begin{proof}

We proceed in steps.

\sssec*{Definition of the morphism \texorpdfstring{$\wt{\ffh}^{(n-1)}_p$}{}}

To define the morphism
$$
  \wt{\ffh}^{(n-1)}:
  \Delta^{n-1} \otimes k_A^{\otimes_An}(n)
    \longto
  k_A^{\otimes_A n},
$$
we need to define a morphism of $A$-algebras
$$
  \wt{\ffh}^{(n-1)}_p:
  \Bigt{
        \Delta^{n-1} \otimes k_A^{\otimes_An}(n)
        }_p
    \longto
  \Bigt{
        k_A^{\otimes_A n}
        }_p
$$
for each $p\geq 0$. 
(In the next steps, we will verify the required compatibilities with the face and degeneracy maps.)

Since $\wt{\ffh}^{(n-1)}_p$ must be $A$-linear, it suffices to define its value on the generators (see Section \ref{sssec: the simplicial enrichment}): in view of the definition of the ideal $I^{\otimes_An}_{\Delta^{n-1},p}(n)$, we just need to specify the elements
$$
  \wt{\ffh}^{(n-1)}_p
  \bigt{ 
        t^{(j)}_a(\alpha)
        }
$$
for $j = 1, \ldots, p$, $a=1,\dots, n$ and $\alpha:[p]\to [n-1]$.

Recall that $\rho_n$ denotes the generator
$(1\mapsto 2 \mapsto \cdots \mapsto n \mapsto 1)$
of the cyclic group $C_n$.
We set
\begin{equation*} \label{eqn:mostro}
  \wt{\ffh}^{(n-1)}_p
  \bigt{
        t^{(j)}_a(\alpha)
       }
    =
  \sum_{r=1}^{n}
  \left[
        t^{(j)}_{\rho^{r-1}_n(a)}
        \left( 
              \prod_{i = 0}^{r-2}
              t_{\rho^{i}_n(a)}^{(\ccS_\alpha(i))}
              \right) 
   \Bigt{ 
         \pi - t_{\rho^{r-1}_n(a)}^{(\ccS_\alpha(r-1))}
         }
\pi^{n-r-1}
\right ],
\end{equation*}
The above displayed formulas are well-defined, provided we employ the following conventions: the empty product equals $1$; when $r=n$, the factor $\pi^{-1}$ appears but it is multiplied by
$$
  \Bigt{
        \pi-t^{(\ccS_{\alpha}(n-1))}_{\rho^{n-1}_{n}(a)}
        }
    =
  \pi,
$$
since by definition 
$$
  t^{(\ccS_{\alpha}(n-1))}_{\rho^{n-1}_{n}(a)}
    =
  t^{(p+1)}_{\rho^{n-1}_{n}(a)}
    =
  0.
$$
In particular, the precise definition is 
\begin{equation*} \label{eqn:mostro 2}
  \wt{\ffh}^{(n-1)}_p
  \bigt{
        t^{(j)}_a(\alpha)
        }
    =
  \sum_{r=1}^{n-1}
  \left[
        t^{(j)}_{\rho^{r-1}_n(a)}
        \left( 
              \prod_{i = 0}^{r-2}
              t_{\rho^{i}_n(i)}^{(\ccS_\alpha(i))}
              \right) 
        \Bigt{ 
              \pi - t_{\rho^{r-1}_n(a)}^{(\ccS_\alpha(r-1))}
              }
  \pi^{n-r-1}
  \right ]
    +
  \left[
        t^{(j)}_{\rho^{n}_n(a)}
        \left( 
              \prod_{i = 0}^{n-2}
              t_{\rho^{i-1}_n(a)}^{(\ccS_\alpha(i))}
              \right) 
  \right ]
.
\end{equation*}

\sssec*{Compatibility with the degeneracy maps}

Next, we verify that $\wt{\ffh}^{(n-1)}$ is compatible with the degeneracy maps. 
By the definition and Corollary \ref{cor:lemmino}, the value of $s^p_l \circ \wt{\ffh}^{(n-1)}_p$ on $t^{(j)}_a(\alpha)$ is given by the formula
$$
  \sum_{r=1}^{n}
  \left[
        s^p_l( t^{(j)}_{\rho^{r-1}_n(a)} )
        \left( 
              \prod_{i = 0}^{r-2}
              t_{\rho^{i}_n(a)}^{(\ccS_{\alpha \circ \sigma^p_l}(i))}
        \right) 
        \Bigt{ 
              \pi - t_{\rho^{r-1}_n(a)}^{(\ccS_{\alpha \circ \sigma^p_l}(r-1))}
              }
        \pi^{n-r-1}
  \right ]
    =
$$
$$
  \sum_{r=1}^{n}
  \left[
        \bigt{s^p_l( t^{(j)})}_{\rho^{r-1}_n(a)} 
        \left( 
              \prod_{i = 0}^{r-2}
              t_{\rho^{i}_n(a)}^{(\ccS_{\alpha \circ \sigma^p_l}(i))}
        \right) 
        \Bigt{ 
              \pi - t_{\rho^{r-1}_n(a)}^{(\ccS_{\alpha \circ \sigma^p_l}(r-1))}
              }
        \pi^{n-r-1}
  \right ].
$$
This is written more compactly as
$$
  \wt{\ffh}^{(n-1)}_{p+1}
  \Bigt{
        s^p_l(t^{(j)}_a)
         (\alpha \circ \sigma^p_l)
        }.
$$
It remains to observe that
$$
  \wt{\ffh}^{(n-1)}_{p+1} \circ s^p_l
  \bigt{
         t^{(j)}_a(\alpha)
        }
    = 
  \wt{\ffh}^{(n-1)}_{p+1}
  \Bigt{
        s^p_l(t^{(j)}_a)
        (\alpha \circ \sigma^p_l)
        }
$$
by the definition of the degeneracy maps of $\Delta^{n-1} \otimes k_A^{\otimes_An}(n)$.
In other words, we have proven that
$$
  s^p_l\circ \wt{\ffh}^{(n-1)}_p\bigt{t^{(j)}_a(\alpha)}
    =
  \wt{\ffh}^{(n-1)}_{p+1}\circ s^p_l\bigt{t^{(j)}_a(\alpha)}.
$$

\sssec*{Compatibility with the face maps}

Thanks to Corollary \ref{cor:lemmino} and the definitions, the value of $d^p_l \circ \wt{\ffh}^{(n-1)}_p$ on $t^{(j)}_a(\alpha)$ is given by the formula
$$
  \sum_{r=1}^{n}
  \left[
        d^p_l( t^{(j)}_{\rho^{r-1}_n(a)} )
        \left( 
              \prod_{i = 0}^{r-2}
              t_{\rho^{i}_n(a)}^{(\ccS_{\alpha \circ \delta^p_l}(i))}
        \right) 
        \Bigt{ 
              \pi - t_{\rho^{r-1}_n(a)}^{(\ccS_{\alpha \circ \delta^p_l}(r-1))}
             }
             \pi^{n-r-1}
  \right ]
    =
$$
$$
  \sum_{r=1}^{n}
  \left[
        \bigt{d^p_l( t^{(j)})}_{\rho^{r-1}_n(a)} 
        \left( 
              \prod_{i = 0}^{r-2}
              t_{\rho^{i}_n(a)}^{(\ccS_{\alpha \circ \delta^p_l}(i))}
        \right) 
        \Bigt{ 
              \pi - t_{\rho^{r-1}_n(a)}^{(\ccS_{\alpha \circ \delta^p_l}(r-1))}
              }
        \pi^{n-r-1}
  \right ],
$$
which equals
$$
  \wt{\ffh}^{(n-1)}_{p+1}
  \Bigt{
        d^p_l(t^{(j)}_a)
        (\alpha \circ \delta^p_l)
        }
    =
  \wt{\ffh}^{(n-1)}_{p+1} \circ d^p_l
  \bigt{
        t^{(j)}_a(\alpha)
        }.
$$

\sssec*{Values of \texorpdfstring{$\wt{\ffh}^{(n-1)}$}{} on the vertices}

Let $k \in [n-1]$ and $\alpha:[p]\to [n-1]$ be the constant map with value $k$. 
In this case, the above formula simplifies to give 
\begin{equation*}
\wt{\ffh}^{(n-1)}_p
     \bigt{
           t^{(j)}_a(\alpha)
          }
= 
\pi^{n-1}  t_{\rho^{k}_n(a)}^{(j)}.
\end{equation*}
Indeed, for such an $\alpha$, we have
$$
  \ccS_{\alpha}(i)
    =
  \begin{cases}
    0 & \text{if } i \leq k-1;
      \\
    p+1 & \text{if } i \geq k
\end{cases}
$$
and thus
$$
  t^{(\ccS_{\alpha}(i))}_a
    =
  \begin{cases}
    t^{(0)}=\pi & \text{if } i \leq k-1;
      \\
    t^{(p+1)}=0 & \text{if } i \geq k.
  \end{cases}
$$
Then one immediately sees that the addendum 
\begin{equation}\label{eqn: addendum 2}
  t^{(j)}_{\rho^{r-1}_n(a)}
  \left( 
        \prod_{i = 0}^{r-2}
        t_{\rho^{i}_n(a)}^{(\ccS_\alpha(i))}
  \right) 
  \Bigt{ 
        \pi - t_{\rho^{r-1}_n(a)}^{(\ccS_\alpha(r-1))}
        }
   \pi^{n-r-1}
\end{equation}
vanishes unless $r=k+1$, where it gives the required value.
Indeed:
\begin{itemize}
\item if $r\leq k$, then 
$$
  \pi - t_{\rho^{r-1}_n(a)}^{(\ccS_\alpha(r-1))}
    =
  \pi - t_{\rho^{r-1}_n(a)}^{(0)}=0
$$ 
and \eqref{eqn: addendum 2} vanishes.

\item if $r=k+1$, then
\begin{align*}
  \eqref{eqn: addendum 2}
  &
    =
  t^{(j)}_{\rho^{k+1}_n(a)}
    \left( 
      \prod_{i = 0}^{k-1}
  t_{\rho^{i}_n(a)}^{(\ccS_\alpha(i))}
    \right) 
  \Bigt{ 
        \pi - t_{\rho^{k+1}_n(a)}^{(\ccS_\alpha(k))}
        }
  \pi^{n-k-2}
  \\
  &
    = 
  t^{(j)}_{\rho^{k+1}_n(a)}
  \pi^{k}
  \Bigt{ 
        \pi - t_{\rho^{k+1}_n(a)}^{(p+1)}
        }
  \pi^{n-k-2}
  \\
  &
    =
  \pi^{n-1}  t_{\rho^{k+1}_n(a)}^{(j)}.
\end{align*}
\item if $r \geq k+2$, then the element 
$$
  t_{\rho^{k}_n(a)}^{(\ccS_{\alpha}(k))}
    =
  t_{\rho^{k}_n(a)}^{(p+1)}
    =
  0
$$
appears in the product and \eqref{eqn: addendum 2} vanishes.
\end{itemize}
It follows that the composition
$$
  \Delta^{\{k\}}\otimes k_A^{\otimes_An}(n)
    \hto 
  \Delta^{n-1}\otimes k_A^{\otimes_An}(n)
    \xto{\wt{\ffh}^{(n-1)}}
  k_A^{\otimes_A n}
$$
identifies with
the composition
$$
  k_A^{\otimes_An}(n)
   \xto{\on{can}(n,1)^{\otimes_An}}
  k_A^{\otimes_A n}
   \xto{\rho^k_n}
  k_A^{\otimes_A n}
$$
$$
t^{(j)}_a
\mapsto 
\pi^{n-1} t^{(j)}_{\rho^k_n(a)}.
$$
\end{proof}

\sssec{} 

Next, we are going to deduce that the morphisms
$$
  s^{\times_Sn}
  \xto{\pr{k}}
  s
  \xto{j(n)}
  s(n)
$$
are coherently homotopic.

\begin{cor}
For each $n\geq 1$ there is a morphism of simplicial $A$-algebras
$$
  \ffh^{(n-1)}:
  \Delta^{n-1}\otimes k_A(n)
    \to
  k_A^{\otimes_An}
$$
such that, for each $k\in [n-1]$, the composition
$$
  \Delta^{\{k\}}\otimes k_A(n)
  \hto
  \Delta^{n-1}\otimes k_A(n)
    \xto{\ffh^{(n-1)}}
  k_A^{\otimes_An}
$$
is given by
$$
  k_A(n)
  \xto{\on{can}(n,1)}
  k_A
  \xto{k^{th}-\on{inclusion}}
  k_A^{\otimes_An}.
$$
\end{cor}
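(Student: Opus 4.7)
The plan is to derive $\ffh^{(n-1)}$ from $\wt{\ffh}^{(n-1)}$ of Theorem \ref{thm: the homotopy} by precomposing with a single-factor inclusion. Geometrically, Theorem \ref{thm: the homotopy} produces a morphism $\Delta^{n-1}\otimes s^{\times_S n}\to s(n)^{\times_S n}$ whose vertex at $k$ is $j(n)^{\times_S n}\circ\rho_n^k$, whereas what we want is a morphism landing in a single copy of $s(n)$ with vertices $j(n)\circ\pr{k}$. So the natural operation is to post-compose with some projection $s(n)^{\times_S n}\to s(n)$; dually, at the level of simplicial $A$-algebras, this corresponds to precomposing with an inclusion $k_A(n)\hookrightarrow k_A^{\otimes_A n}(n)$ into one of the tensor factors.

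Concretely, I fix $a_0 = 1$ and let $\iota_{1}:k_A(n)\hookrightarrow k_A^{\otimes_A n}(n)$ denote the inclusion into the first tensor factor. I then define
\[
  \ffh^{(n-1)} \;:=\; \wt{\ffh}^{(n-1)}\circ\bigt{\id_{\Delta^{n-1}}\otimes\iota_{1}}
  :\Delta^{n-1}\otimes k_A(n)\longrightarrow k_A^{\otimes_A n},
\]
which is a morphism of simplicial $A$-algebras as a composition of such, using the functoriality of $\Delta^{n-1}\otimes -$ in $\sCAlg_{A/}$.

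To identify the restriction to each vertex $\Delta^{\{k\}}$ (for $k\in[n-1]$), I apply Theorem \ref{thm: the homotopy} to obtain
\[
  \ffh^{(n-1)}_{|\Delta^{\{k\}}}
    \;=\;
  \rho_n^{k}\circ \on{can}(n,1)^{\otimes_A n}\circ \iota_{1}.
\]
By the explicit description of $\on{can}(n,1)^{\otimes_A n}$ as the component-wise application of $\on{can}(n,1)$, the subcomposition $\on{can}(n,1)^{\otimes_A n}\circ \iota_{1}$ coincides with $\iota'_{1}\circ \on{can}(n,1)$, where $\iota'_{1}:k_A\hookrightarrow k_A^{\otimes_A n}$ is the inclusion into the first factor. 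Finally, since $\rho_n^{k}$ acts on $k_A^{\otimes_A n}$ by $t^{(j)}_{a}\mapsto t^{(j)}_{\rho_n^{k}(a)}$, the element $\iota'_{1}(x)$, whose nontrivial variables carry the index $a=1$, is mapped to an element whose nontrivial variables carry the index $\rho_n^{k}(1)$; that is, $\rho_n^{k}\circ\iota'_{1}=\iota'_{\rho_n^{k}(1)}$. Reindexing the $n$ tensor-factor inclusions by $k\in[n-1]$ via the bijection $k\mapsto \rho_n^{k}(1)$ yields exactly the claimed description.

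There is no substantial obstacle here, as the hard combinatorial work is entirely absorbed in Theorem \ref{thm: the homotopy}. The only care needed is bookkeeping: fixing once and for all which tensor factor plays the role of $a_0$ and which indexing convention identifies $\{0,\dots,n-1\}$ with the $n$ inclusions $k_A\hookrightarrow k_A^{\otimes_A n}$, so that the statement matches \emph{verbatim}.
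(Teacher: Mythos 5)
Your proof is correct and is exactly the intended deduction: the paper states this corollary without a separate proof, and precomposing $\wt{\ffh}^{(n-1)}$ with $\id_{\Delta^{n-1}}\otimes\iota_1$ for a single tensor-factor inclusion $\iota_1:k_A(n)\hto k_A^{\otimes_An}(n)$ (geometrically, projecting $s(n)^{\times_Sn}$ onto one factor) is the evident way Theorem \ref{thm: the homotopy} yields it. Your bookkeeping observation that $\rho_n^k\circ\iota'_1=\iota'_{\rho_n^k(1)}$ with $\rho_n^k(1)=k+1$ correctly resolves the mild indexing mismatch between the vertex set $[n-1]$ and the tensor factors $\angles{n}$ in the statement.
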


\subsection{The master homotopies}

The goal of this section is to provide the coherent homotopies necessary to construct a dg-functor out of the $n^{th}$-truncated cyclic bar complex of $\sB^+$.

\sssec{}

Let $m \geq 0$ and $k\geq 1$.
We fix a vector
$$
\ul d =(d_{m+1},\dots,d_{m+k}) \in \prod_{s=m+1}^{m+k}[s].
$$
For $c \geq 1$ and $r \in [c]$, we denote by 
$$
  \iota^{c-1}_{r}:
  \Delta^{c-1}
    \hto
  \Delta^{c}
$$
the morphism corresponding to $\delta^{c}_{r}:[c-1]\to [c]$ via the Yoneda embedding.

\sssec{Definition of the morphisms \texorpdfstring{$\ffi^{[a,b]}_{\ul d}$}{}}
\label{sssec: def of i[a,b}

Let $m\leq a \leq b \leq m+k$. We will write $\ffi^{[a,b]}_{\ul d}: \Delta^a \to \Delta^b$ to mean the
composition
$$
  \Delta^a
    \xto{\iota^a_{d_{a+1}}}
  \Delta^{a+1}
    \xto{\iota^{a+1}_{d_{a+2}}}
  \cdots
    \xto{\iota^{b-1}_{d_{b}}}
  \Delta^b.
$$
We use the convention that $\ffi^{[a,a]}_{\ul d}=\id$.
For $p\geq 0$ and $\alpha \in \Delta^a_p$, we will write 
$$
  \ffi^{[a,b]}_{\ul d,p}(\alpha)\in \Delta^{b}_p
$$
for the image of $\alpha$ along this map. 
Concretely we have that 
$$
  \ffi^{[a,b]}_{\ul d,p}(\alpha)
    =
  \delta^{b-1}_{d_{b}}
    \circ
  \cdots
    \circ
  \delta^{a}_{d_{a+1}}
    \circ
  \alpha:
  [p]
    \to
  [b].
$$

\sssec{Definition of the morphisms \texorpdfstring{$\wt{f}^{(c)}_r$}{} and \texorpdfstring{$f^{(c)}_r$}{}}
\label{sssec: def of f^(c)_r}

Let $c \geq 1$ and $r\in [c]$. Denote by $\varrho_c$ the map
$$
  [c]
    \to 
  \angles{c}
    =
  \{1,\dots,c\},
    \;\;\;\;
  i
    \mapsto
  \begin{cases}
    i+1 & \text{ if } i\leq c-1,
      \\
    1 & \text{ if } i=c. 
  \end{cases}
$$
We define morphisms 
$$
  \wt{f}^{(c)}_{r}:
  K_A^{\otimes_{k_A}c}
    \to
  K_A^{\otimes_{k_A}c+1}
$$
of simplicial $A$-algebras as follows.
For $q\geq 0$, we define
$$
  \bigt{\wt{f}^{(c)}_{r}}_q:
  (K_A^{\otimes_{k_A}c})_q
    \to
  (K_A^{\otimes_{k_A}c+1})_q
$$
on generators by
$$
  t^{(j)}_a 
    \mapsto
  \begin{cases}
  t^{(j)}_a & \text{ if } a\leq \varrho_c(r),
    \\
  t^{(j)}_{a+1} & \text{ if } a\geq \varrho_c(r)+1,
  \end{cases}
    \;\;\;\;
  u^{(j)}_a 
    \mapsto
  \begin{cases}
    u^{(j)}_a & \text{ if } a\leq \varrho_c(r)-1,
      \\
    u^{(j)}_{a+1} & \text{ if } a\geq \varrho_c(r).
  \end{cases}
$$
Recall that $a$ and $j$ range in $\{1, \ldots, c\}$.
It is easy to check that these morphisms of $A$-algebras $\bigl \{ \bigt{\wt{f}^{(c)}_{r}}_q \bigr \}_{q\geq 0}$ give rise to a morphism of simplicial $A$-algebras.
For instance, let us check they are compatible with the face and degeneracy maps: the face and degeneracy maps of $K_A^{\otimes_{k_A}c}$ and $K_A^{\otimes_{k_A}c+1}$ modify only the index $j$ of the generators $t^{(j)}_a$ and $u^{(j)}_a$, see Section \ref{sssec: simplicial algebra K_A^(otimes_A n)(m)}, while the morphisms $\bigt{\wt{f}^{(c)}_{r}}_q$ modify only the index $a$. Visibly, these operations commute.

Moreover, denote by
$$
  f^{(c)}_{r}:
  k_A^{\otimes_Ac}
    \to
  k_A^{\otimes_Ac+1}
$$
the morphism induced by $\wt{f}^{(c)}_r$ on the quotients
$$
  k_A^{\otimes_Ac}
    \simeq
  \frac{K_A^{\otimes_{k_A}c}}{\Angles{t_1^{(\star)}-u_c^{(\star)}}},
    \;\;\;\;
  k_A^{\otimes_Ac+1}
    \simeq
  \frac{K_A^{\otimes_{k_A}c}}{\Angles{t_1^{(\star)}-u_{c+1}^{(\star)}}}.
$$

\sssec{Explanation of the morphisms \texorpdfstring{$f^{(c)}_r$}{} for \texorpdfstring{$0\leq r \leq c-1$}{}
}
\label{rmk: morphims f^(c)_r}

For $0\leq r \leq c-1$, the morphism 
$$
\wt{f}^{(c)}_r:
K_A^{\otimes_{k_A}c}
\to
K_A^{\otimes_{k_A} c+1}
$$
\virg{skips} the generators $t^{(j)}_{r+2}=u^{(j)}_{r+1}$.
Therefore, it is easy to see that it sits in the commutative diagram of simplicial $A$-algebras

\begin{equation*}
    \begin{tikzcd}
    K_A^{\otimes_A c+1}
    \dar
    \rar
    &
    \frac{K_A^{\otimes_A c+1}}{\Angles{t^{(\star)}_{r+2}-u^{(\star)}_{r+1}}}
    \dar
    &
    \lar
    K_A^{\otimes_A c}
    \dar
    \\
    K_A^{\otimes_{k_A} c+1}
    \rar[swap,"\id"]
    \dar
    &
    K_A^{\otimes_{k_A} c+1}
    \dar
    &
    \lar["\wt{f}^{(c)}_r"]
    K_A^{\otimes_{k_A} c}
    \dar
    \\
    k_A^{\otimes_A c+1}
    \rar[swap,"\id"]
    &
    k_A^{\otimes_A c+1}
    &
    \lar["f^{(c)}_r"]
    k_A^{\otimes_A c}
    \end{tikzcd}
\end{equation*}
where:

\begin{itemize}
    \item $\Angles{t^{(\star)}_{r+2}-u^{(\star)}_{r+1}}\subseteq K_A^{\otimes_A c+1}$ denotes the simplicial ideal
    $$
    \Delta^{\op} 
    \to
    \Mod_A
    $$
    $$
    q
    \mapsto
    \Angles{t^{(j)}_{r+2}-u^{(j)}_{r+1}: j\in [q+1]};
    $$
    \item the vertical maps are the obvious quotients (see Remark \ref{rmk: can quot K_A^(otimes_k_A n) --> k_A^(otimes_A n)});
    \item the upper-left horizontal map is the obvious quotient;
    \item the upper-right horizontal map is defined by the same formulas as $\wt{f}^{(c)}_r$.
\end{itemize}
Notice that the right squares are cocartesian. Said geometrically, $\wt{f}^{(c)}_r$ and $f^{(c)}_r$ provide strict models for the lower right morphisms of derived schemes in the commutative diagram

\begin{equation*}
    \begin{tikzcd}[column sep = 1.5cm]
    G^{\times_S c+1}
    &
    \lar
    G^{\times_S r}\times_S G\times_s G \times_S G^{\times_S c-r-1}
    \rar["\id \times \pr{13} \times \id"]
    &
    G^{\times_S r}\times_S G \times_S G^{\times_S c-r-1}
    \simeq
    G^{\times_S c}
    \\
    \uar
    G^{\times_s c+1}
    &
    \uar
    \lar["\id"]
    G^{\times_s c+1}
    \rar["\wt{f}^{(c)}_r"]
    &
    \uar
    G^{\times_s c}
    \\
    \uar
    s^{\times_S c+1}
    &
    \uar
    \lar["\id"]
    s^{\times_S c+1}
    \rar["f^{(c)}_r"]
    &
    \uar
    s^{\times_S c},
    \end{tikzcd}
\end{equation*}
where the right squares are derived cartesian.

\sssec{Explanation of the morphisms \texorpdfstring{$f^{(c)}_c$}{}}

Since $\varrho_c(c)=\varrho_c(0)$,
we have
$$
\wt{f}^{(c)}_0
=
\wt{f}^{(c)}_c.
$$
For $r=c$ we want to consider the diagram of simplicial $A$-algebras
\begin{equation*}
    \begin{tikzcd}[column sep = 2cm]
    K_A^{\otimes_A c+1}
    \dar
    \rar
    &
    \frac{K_A^{\otimes_A c+1}}{\Angles{t^{(\star)}_{1}-u^{(\star)}_{c+1}}}
    \dar
    &
    \lar[swap,"\vartheta^{(c)}"]
    K_A^{\otimes_A c}
    \dar
    \\
    K_A^{\otimes_{k_A} c+1}
    \rar["\bigt{\varphi^{(c)}}^{-1}"]
    \dar[swap,"\id"]
    &
    \frac{K_A^{\otimes_{A} c+1}}{\Angles{t^{(\star)}_{1}-u^{(\star)}_{c+1}, u^{(\star)}_a-t^{(\star)}_a:a=2,\dots,c}}
    \dar["\varphi^{(c)}"]
    &
    \lar[swap,"\psi^{(c)}"]
    K_A^{\otimes_{k_A} c}
    \dar[swap,"\id"]
    \\
    K_A^{\otimes_{k_A} c+1}
    \rar["\id"]
    \dar
    &
    K_A^{\otimes_{k_A} c+1}
    \dar
    &
    \lar[swap,"\wt{f}^{(c)}_c"]
    K_A^{\otimes_{k_A} c}
    \dar
    \\
    k_A^{\otimes_A c+1}
    \rar["\id"]
    &
    k_A^{\otimes_A c+1}
    &
    \lar[swap,"f^{(c)}_c"]
    k_A^{\otimes_A c}
    \end{tikzcd}
\end{equation*}
where 

\begin{itemize}
    \item $\Angles{t^{(\star)}_{1}-u^{(\star)}_{c+1}}\subseteq K_A^{\otimes_A c+1}$ denotes the simplicial ideal
    $$
    \Delta^{\op} 
    \to
    \Mod_A
    $$
    $$
    q
    \mapsto
    \Angles{t^{(j)}_{1}-u^{(j)}_{c+1}: j\in [q+1]};
    $$
    \item $\Angles{t^{(\star)}_{1}-u^{(\star)}_{c+1}, u^{(\star)}_a-t^{(\star)}_a:a=2,\dots,c}$ denotes the simplicial ideal
    $$
    \Delta^{\op} 
    \to
    \Mod_A
    $$
    $$
    q
    \mapsto
    \Angles{t^{(j)}_{1}-u^{(j)}_{c+1}, u^{(j)}_a-t^{(j)}_a:a=2,\dots,c, j\in [q+1]};
    $$
    \item the vertical maps in the upper squares are the obvious quotients;
    \item the upper-left horizontal map is the obvious quotient;
    \item the upper-right horizontal map is the morphism of simplicial $A$-algebras such that, at level $q\geq 0$
    $$
    \vartheta^{(c)}_q:
    \bigt{K_A^{\otimes_A c}}_q
    \to
    \Biggt{\frac{K_A^{\otimes_A c+1}}{\Angles{t^{(\star)}_{1}-u^{(\star)}_{c+1}}}}_q
    $$
    is defined by ($j\in [q+1], a=1,\dots c$)
    \begin{equation}\label{eqn: definition varthetaq}
        t^{(j)}_a
        \mapsto
        u^{(j)}_a,
        \;\;\;\;
        u^{(j)}_a
        \mapsto 
        t^{(j)}_{a+1};
    \end{equation}
    \item the morphism $\psi^{(c)}$ is defined by the same formulas \eqref{eqn: definition varthetaq} as $\vartheta^{(c)}$;
    \item the map $\varphi^{(c)}$ is the morphism of simplicial $A$-algebras such that, at level $q\geq 0$
    $$
    \varphi^{(c)}_q:
    \Biggt{\frac{K_A^{\otimes_{A} c+1}}{\Angles{t^{(\star)}_{1}-u^{(\star)}_{c+1}, u^{(\star)}_a-t^{(\star)}_a:a=2,\dots,c}}}_q
    \to
    \bigt{K_A^{\otimes_{k_A} c+1}}_q
    $$
    is defined by ($j\in [q+1], a=1,\dots c+1$)
    \begin{equation}\label{eqn: definition varphiq}
        t^{(j)}_a
        \mapsto
        u^{(j)}_a,
        \;\;\;\;
        u^{(j)}_a
        \mapsto 
        \begin{cases}
        t^{(j)}_{1} & \text{ if } a=1,
        \\
        t^{(j)}_{a+1} & \text{ if } 2\leq a \leq c,
        \\
        t^{(j)}_2 & \text{ if } a= c+1.
        \end{cases}
    \end{equation}
    Notice that $\varphi^{(c)}$ is an isomorphism of simplicial $A$-algebras.
\end{itemize}
We also observe that the right squares are both cocartesian.
In particular, so is their composition.

Therefore, $\wt{f}^{(c)}_c$ provides a strict model for the lower-right morphism in the following commutative diagram of derived $S$-schemes
\begin{equation*}
    \begin{tikzcd}
    G^{\times_Sc+1}
    &
    \lar["\pr{G^{\times_Sc+1}}"]
    \overbrace{\bigt{s\times_SG^{\times_Sc}\times_Ss}}^{\simeq \; G^{\times_Sc+1}}\times_{s\times_Ss}s
    \rar["\pr{G^{\times_Sc}}"]
    &
    G^{\times_Sc}
    \\
    \uar
    G^{\times_sc+1}
    &
    \lar["\id"]
    \uar
    G^{\times_sc+1}
    \rar["\wt{f}^{(c)}_c"]
    &
    \uar
    G^{\times_sc}
    \\
    \uar
    s^{\times_sc+1}
    &
    \lar["\id"]
    \uar
    s^{\times_sc+1}
    \rar["f^{(c)}_c"]
    &
    \uar
    s^{\times_sc},
    \end{tikzcd}
\end{equation*}
where the vertical arrow in the middle is given by
$$
\frac{K_A^{\otimes_A c+1}}{\Angles{t^{(\star)}_{1}-u^{(\star)}_{c+1}}}
\xto{\on{can. \; quot.}}
 \frac{K_A^{\otimes_{A} c+1}}{\Angles{t^{(\star)}_{1}-u^{(\star)}_{c+1}, u^{(\star)}_a-t^{(\star)}_a:a=2,\dots,c}}
 \xto{\varphi^{(c)}}
 K_A^{\otimes_{k_A}c+1}
$$
and the right squares are cartesian.

\sssec{Definition of the morphisms \texorpdfstring{$\fff^{[a,b]}_{\ul d}$}{}}
\label{sssec: def of f[a,b]_d}

Let $m+1\leq a \leq b \leq m+k+1$.
We will write $\fff^{[a,b]}_{\ul d}: k_A^{\otimes_Aa} \to k_A^{\otimes_Ab}$ to mean the
composition
$$
  k_A^{\otimes_Aa}
    \xto{f^{(a)}_{d_{a}}}
  k_A^{\otimes_Aa+1}
    \xto{f^{(a+1)}_{d_{a+1}}}
  \cdots
    \xto{f^{(b-1)}_{d_{b-1}}}
  k_A^{\otimes_Ab}.
$$
We use the convention that $\fff^{[a,a]}_{\ul d}=\id$.

\begin{thm}\label{thm: master homotopies}

With the same notation as above, there exists a morphism of simplicial $A$-algebras
$$
  \scrH^{(k)}_{m}:
  \Delta^k \otimes \bigt{\Delta^m \otimes k_A(n+2)}
    \to
  k_A^{\otimes_Am+k+1}
$$
such that, for $l\in [k]$, the restriction to $\Delta^{\{l\}} \otimes \bigt{\Delta^m \otimes k_A(n+2)}$ is equal to the composition
\begin{align*}
  \Delta^m \otimes k_A(n+2)
    &
    \xto{\ffi^{[m,m+l]}_{\ul d}}
  \Delta^{m+l} \otimes k_A(n+2)
    \\
    &
    \xto{(\id \otimes \on{can}(n+2,m+l+1))}
  \Delta^{m+l} \otimes k_A(m+l+1)
    \\
    &
    \xto{\ffh^{(m+l)}}
  k_A^{\otimes_Am+l+1}
    \\
    &
    \xto{\fff^{[m+l+1,m+k+1]}_{\ul d}}
  k_A^{\otimes_Am+k+1}.
\end{align*}

Furthermore, the following diagram commutes:
\begin{equation}\label{eqn: H^(k-1)_m vs H^(k)_m}
    \begin{tikzcd}[column sep = 2cm]
      \Delta^{k-1} \otimes \bigt{\Delta^m \otimes k_A(n+2)}
      \rar["\id \otimes \on{can}(n+2\text{,}n+1)"]
      \dar["\iota^{k-1}_k\otimes \id"]
      &
      \Delta^{k-1} \otimes \bigt{\Delta^m \otimes k_A(n+1)}
      \rar["\scrH^{(k-1)}_m"]
      &
      k_A^{\otimes_Am+k}
      \dar["f^{(m+k)}_{d_{m+k}}"]
      \\
      \Delta^k \otimes \bigt{\Delta^m \otimes k_A(n+2)}
      \arrow[rr,"\scrH^{(k)}_m"]
      &
      &
      k_A^{\otimes_Am+k+1}.
    \end{tikzcd}
\end{equation}

\end{thm}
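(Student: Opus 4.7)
The plan is to construct $\scrH^{(k)}_m$ by induction on $k \geq 0$, producing an explicit formula on generators that generalizes the formula for $\wt{\ffh}^{(n-1)}$ given in Theorem \ref{thm: the homotopy}. For the base case $k = 0$, take $\scrH^{(0)}_m := \ffh^{(m)} \circ (\id \otimes \on{can}(n+2, m+1))$; since $\Delta^0$ has only one vertex, the boundary condition is the definition itself and the compatibility diagram is vacuous.

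For the inductive step, I would give an explicit formula for $(\scrH^{(k)}_m)_p$ on a generator $t^{(j)}(\alpha, \beta)$ with $\alpha \in \Delta^k_p$, $\beta \in \Delta^m_p$, $j \in [p+1]$. Guided by the sum over $r$ appearing in Theorem \ref{thm: the homotopy}, the target expression should be a sum of $m+k+1$ monomials, one per slot $a \in \angles{m+k+1}$ of $k_A^{\otimes_A m+k+1}$. Each monomial would be a product of factors of the form $t^{(\ccS_\alpha(i))}_a$, $t^{(\ccS_\beta(i))}_a$, $\pi - t^{(\ccS_\alpha(i))}_a$ and powers of $\pi$, indexed so that the cyclic rotations implicit in each $\ffh^{(m+l)}$ for $l \in [k]$ are correctly threaded through the slots determined by $\ul d$ via the face maps $\ffi^{[m, m+l]}_{\ul d}$ and $\fff^{[m+l+1, m+k+1]}_{\ul d}$.

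With the formula in hand, I would verify in turn: compatibility with face maps, compatibility with degeneracy maps, correctness of the restriction to each $\Delta^{\{l\}}$, and commutativity of the diagram \eqref{eqn: H^(k-1)_m vs H^(k)_m}. The first two checks are direct calculations modeled on the proof of Theorem \ref{thm: the homotopy}, invoking Corollary \ref{cor:lemmino} applied to both $\alpha$ and $\beta$. The third is arranged by construction: for $\alpha$ constantly equal to $l$, the factors $\pi - t^{(0)}_a = 0$ and $t^{(p+1)}_a = 0$ kill all terms except a single block, which recovers the composition $\fff^{[m+l+1, m+k+1]}_{\ul d} \circ \ffh^{(m+l)} \circ (\id \otimes \on{can}(n+2, m+l+1)) \circ \ffi^{[m, m+l]}_{\ul d}$. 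For the fourth, restriction along $\iota^{k-1}_k$ forces $\alpha$ to factor through $\delta^k_k$; under this constraint the $(m+k+1)$-term sum should decompose so that the first $m+k$ slots reproduce $\scrH^{(k-1)}_m$ composed with the reduction $\on{can}(n+2, n+1)$, while the last slot contributes precisely the factor $f^{(m+k)}_{d_{m+k}}$.

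The main obstacle is producing the correct formula: it must satisfy all $k+1$ boundary conditions, depend correctly on $\ul d$, and interact properly with the inductive compatibility \eqref{eqn: H^(k-1)_m vs H^(k)_m}. Once the formula is identified, the remaining verifications are lengthy but routine generalizations of the calculations already carried out in Theorem \ref{thm: the homotopy}.
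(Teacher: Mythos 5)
Your verification outline (face and degeneracy compatibility via Corollary \ref{cor:lemmino}, vertex evaluation using the vanishing of $\pi-t^{(0)}$ and of $t^{(p+1)}$, then the compatibility \eqref{eqn: H^(k-1)_m vs H^(k)_m}) is the right list of checks, and your base case $\scrH^{(0)}_m=\ffh^{(m)}\circ(\id\otimes\on{can}(n+2,m+1))$ is correct. But there is a genuine gap: you never produce the homotopy, and the explicit filler \emph{is} the content of the theorem — labelling the formula "the main obstacle" and deferring it leaves the proof empty. Moreover, the shape you guess for it (a sum of $m+k+1$ monomials, one per tensor slot $a\in\angles{m+k+1}$ of the target, mixing $\ccS_\alpha$- and $\ccS_\beta$-factors in each slot) does not match the structure that actually works. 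The paper's formula is a sum over the $k+1$ \emph{vertices} $r\in[k]$ of $\Delta^k$: each summand is the corresponding boundary composition of the statement applied to $t^{(j)}\bigt{\ffi^{[m,m+r]}_{\ul d,p}(\alpha)}$, weighted by a coefficient that depends only on the $\Delta^k$-coordinate $\beta$ and lives entirely in the \emph{first} slot (with the paper's convention $\alpha\in\Delta^m_p$, $\beta\in\Delta^k_p$):
\begin{equation*}
  (\scrH^{(k)}_m)_p\bigt{t^{(j)}(\alpha,\beta)}
    =
  \sum_{r=0}^{k}
  \fff^{[m+r+1,m+k+1]}_{\ul d,p}
    \circ
  \ffh^{(m+r)}_p\Bigt{t^{(j)}\bigt{\ffi^{[m,m+r]}_{\ul d,p}(\alpha)}}
    \cdot
  t^{(\ccS_{\beta}(r-1))}_{1}
  \Bigt{\prod_{i=r}^{k-1}\bigt{\pi-t^{(\ccS_{\beta}(i))}_{1}}}
  \pi^{n-m-k}.
\end{equation*}
This "partition of unity" in the variables $t^{(\ccS_\beta(\cdot))}_1$ — the same telescoping device as in Theorem \ref{thm: the homotopy}, now applied to the outer simplex direction and multiplied against the already-constructed lower homotopies — is what simultaneously satisfies all $k+1$ vertex conditions, carries the dependence on $\ul d$, and makes \eqref{eqn: H^(k-1)_m vs H^(k)_m} commute: restriction along $\iota^{k-1}_k$ kills the $r=k$ term (since $\ccS_{\delta^k_k\circ\beta}(k-1)=p+1$), and the weight is untouched by $f^{(m+k)}_{d_{m+k}}$ precisely because that map fixes the first-slot variables $t^{(\cdot)}_1$. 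When expanded, this is a double sum indexed by a vertex $r$ and a slot of $k_A^{\otimes_A m+r+1}$ transported by $\fff^{[m+r+1,m+k+1]}_{\ul d}$, not a single sum over slots of the final target, so reverse-engineering your guessed shape would not converge to it easily.

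The induction on $k$ also buys you nothing as stated: knowing $\scrH^{(k-1)}_m$ does not by itself yield $\scrH^{(k)}_m$, because you are constructing strict morphisms of simplicial $A$-algebras and invoke no extension or filling principle (no fibrancy/contractibility of a mapping space), and the theorem constrains $\scrH^{(k)}_m$ at all $k+1$ vertices of $\Delta^k$, not only on the single face $\iota^{k-1}_k$ where the inductive datum lives. The paper accordingly defines $\scrH^{(k)}_m$ directly for every $k$ and verifies \eqref{eqn: H^(k-1)_m vs H^(k)_m} afterwards as one more computation. You are right that, once the formula is written down, the remaining verifications are routine extensions of the computations in Theorem \ref{thm: the homotopy}; but the formula is exactly what the theorem asserts to exist, so omitting it is omitting the proof.
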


\begin{proof}

We proceed in steps.

\sssec*{Definition of the morphism $(\scrH^{(k)}_m)_p$}

To define $\scrH^{(k)}_m$, we need to define a morphism of $A$-algebras
$$
  (\scrH^{(k)}_m)_p:
  \Bigt{\Delta^k \otimes \bigt{\Delta^m \otimes k_A(n+2)}}_p
    \to
  \bigt{k_A^{\otimes_Am+k+1}}_p
$$
for each $p\geq 0$.
Later, we will check that these morphisms provide a morphism of simplicial $A$-algebras as wanted.
By definition,
$$
  \Bigt{\Delta^k \otimes \bigt{\Delta^m \otimes k_A(n+2)}}_p
    =
  \bigotimes_{\Delta^k_p}
  \Bigt{
        \bigotimes_{\Delta^m_p}\bigt{k_A(n+2)}_p
        },
$$
so that
$$
\Bigt{\Delta^k \otimes \bigt{\Delta^m \otimes k_A(n+2)}}_p
=
\frac{
      A\bigq{t^{(j)}(\alpha,\beta):j\in[p+1], \alpha \in \Delta^m_p, \beta \in \Delta^k_p}
     }
     { 
     \Angles{t^{(0)}(\alpha,\beta)-\pi,t^{(p+1)}(\alpha,\beta):\alpha \in \Delta^m_p, \beta \in \Delta^k_p}
     }
     .
$$
We define $(\scrH^{(k)}_m)_p$ on the generators by setting
$$
  (\scrH^{(k)}_m)_p\bigt{t^{(j)}(\alpha,\beta)} 
    :=
  \sum_{r=0}^k 
  \biggq{
         \fff^{[m+r+1,m+k+1]}_{\ul d,p}
           \circ 
         \ffh^{(m+r)}_p\Bigt{t^{(j)}\bigt{\ffi^{[m,m+r]}_{\ul d,p}(\alpha)}} 
           \cdot
         \fft(n,m,k,r,\beta)
         },
$$
where
$$
  \fft(n,m,k,r,\beta)
    :=
  t^{(\ccS_{\beta}(r-1))}_{1}
  \Bigt{
        \prod_{i=r}^{k-1}(\pi-t^{(\ccS_{\beta}(i))}_{1})
        }
  \pi^{n-m-k}.
$$

\sssec*{Compatibility with the face maps}

Let $l\in [p]$. 
We need to check that the following square commutes
\begin{equation}\label{eqn: scrH^(k)_m vs face maps}
    \begin{tikzcd}[column sep = 2cm]
      \Bigt{\Delta^k \otimes \bigt{\Delta^m \otimes k_A(n+2)}}_p
      \rar["(\scrH^{(k)}_m)_p"]
      \dar["d^p_l"]
      &
      \bigt{k_A^{\otimes_Am+k+1}}_p
      \dar["d^p_l"]
      \\
      \Bigt{\Delta^k \otimes \bigt{\Delta^m \otimes k_A(n+2)}}_{p-1}
      \rar["(\scrH^{(k)}_m)_{p-1}"]
      &
      \bigt{k_A^{\otimes_Am+k+1}}_{p-1}.
    \end{tikzcd}
\end{equation}
Let $1\leq j \leq p$, $\alpha \in \Delta^m_p$ and $\beta \in \Delta^k_p$.
Since by definition we have
$$
  d^p_l \bigt{t^{(j)}(\alpha,\beta)}
    =
  d^p_l(t^{(j)})(\alpha \circ \delta^p_{l},\beta \circ \delta^p_{l})
$$
we see that
\begin{align*}
    (\scrH^{(k)}_m)_{p-1}\bigt{t^{(j)}(\alpha \circ \delta^p_{l},\beta \circ \delta^p_{l})}
    &
    =
    \sum_{r=0}^k 
    \biggq{
    \fff^{[m+r+1,m+k+1]}_{\ul d,p-1}
    \circ 
    \ffh^{(m+r)}_{p-1}\Bigt{d^p_l(t^{(j)})\bigt{\ffi^{[m,m+r]}_{\ul d,p-1}(\alpha \circ \delta^p_{l})}} 
    \\
    &
    \;\;\;\;\;\;\;\;\;\;\;\;
    \fft(n,m,k,r,\beta \circ \delta^p_{l})
    },
\end{align*}
where
\begin{align*}
    \fft(n,m,k,r,\beta \circ \delta^p_{l})
    &
    =
    t^{(\ccS_{\beta \circ \delta^p_{l}}(r-1))}_{1}
    \Bigt{\prod_{i=r}^{k-1}(\pi-t^{(\ccS_{\beta \circ \delta^p_{l}}(i))}_{1})}
    \pi^{n-m-k}
    \\
    &
    =
    d^p_l \bigt{
                t^{(\ccS_{\beta}(r-1))}_{1}}
    \biggt{\prod_{i=r}^{k-1}\Bigt{\pi-d^p_l\bigt{t^{(\ccS_{\beta}(i))}_{1}}}}
    \pi^{n-m-k}.
\end{align*}
The last equality follows from Corollary \ref{cor:lemmino}.
On the other hand, 
\begin{align*}
    d^p_l \bigt{
               (\scrH^{(k)}_m)_p\bigt{t^{(j)}(\alpha,\beta)}
               }
    &
    =
    d^p_l \Biggt{
      \sum_{r=0}^k 
      \biggq{
      \fff^{[m+r+1,m+k+1]}_{\ul d,p}
      \circ 
      \ffh^{(m+r)}_p\Bigt{t^{(j)}\bigt{\ffi^{[m,m+r]}_{\ul d,p}(\alpha)}} 
      \cdot
      \fft(n,m,k,r,\beta)}
      }
    \\
    &
    =
    \sum_{r=0}^k 
      \biggq{
      d^p_l \biggt{
      \fff^{[m+r+1,m+k+1]}_{\ul d,p}
      \circ 
      \ffh^{(m+r)}_p\Bigt{t^{(j)}\bigt{\ffi^{[m,m+r]}_{\ul d,p}(\alpha)}}
      }
      \cdot
      d^p_l \bigt{
      \fft(n,m,k,r,\beta)
                   }
                   }.
\end{align*}
Since $\fff^{[m+r+1,m+k+1]}_{\ul d}$ and $\ffh^{(m+r)}$ are morphisms of simplicial $A$-algebras, we have that
\begin{align*}
d^p_l \biggt{
      \fff^{[m+r+1,m+k+1]}_{\ul d,p}
      \circ 
      \ffh^{(m+r)}_p\Bigt{t^{(j)}\bigt{\ffi^{[m,m+r]}_{\ul d,p}(\alpha)}}
      }
&
=
\fff^{[m+r+1,m+k+1]}_{\ul d,p-1}
      \circ 
      \ffh^{(m+r)}_{p-1}\Bigt{d^p_l\bigt{t^{(j)}\bigt{\ffi^{[m,m+r]}_{\ul d,p}(\alpha)}}}
\\
&
=
\fff^{[m+r+1,m+k+1]}_{\ul d,p-1}
      \circ 
      \ffh^{(m+r)}_{p-1}\Bigt{d^p_l(t^{(j)})\bigt{\ffi^{[m,m+r]}_{\ul d,p-1}(\alpha \circ \delta^p_l)}}.
\end{align*}
We also notice that
$$
d^p_l \bigt{
      \fft(n,m,k,r,\beta)
                   }
=
\fft(n,m,k,r,\beta \circ \delta^p_{l}).
$$
This shows that
$$
(\scrH^{(k)}_m)_{p-1}\circ d^p_l \bigt{t^{(j)}(\alpha,\beta)}
=
d^p_l \circ (\scrH^{(k)}_m)_{p}\bigt{t^{(j)}(\alpha,\beta)}.
$$

\sssec*{Compatibility with the degeneracy maps}

For $l \in [p]$, we need to check that the following square commutes:
\begin{equation}\label{eqn: scrH^(k)_m vs deegeneracy maps}
    \begin{tikzcd}[column sep = 2cm]
      \Bigt{\Delta^k \otimes \bigt{\Delta^m \otimes k_A(n+2)}}_p
      \rar["(\scrH^{(k)}_m)_p"]
      \dar["s^p_l"]
      &
      \bigt{k_A^{\otimes_Am+k+1}}_p
      \dar["s^p_l"]
      \\
      \Bigt{\Delta^k \otimes \bigt{\Delta^m \otimes k_A(n+2)}}_{p+1}
      \rar["(\scrH^{(k)}_m)_{p+1}"]
      &
      \bigt{k_A^{\otimes_Am+k+1}}_{p+1}.
    \end{tikzcd}
\end{equation}
This verification is similar to the proof of the compatibility with the face maps and is left to the reader.

\sssec*{Restriction of $\scrH^{(k)}_m$ to the vertices of $\Delta^k$}

So far, we have produced a morphism of simplicial $A$-algebras
$$
  \scrH^{(k)}_{m}:
  \Delta^k \otimes \bigt{\Delta^m \otimes k_A(n+2)}
    \to
  k_A^{\otimes_Am+k+1}.
$$
We now compute its restriction along the vertices
of $\Delta^{k}$, i.e. we compute the compositions
$$
  \Delta^{\{l\}} \otimes \bigt{\Delta^m \otimes k_A(n+2)}
    \hto
  \Delta^k \otimes \bigt{\Delta^m \otimes k_A(n+2)}
    \to
  k_A^{\otimes_Am+k+1}
$$
for $l\in [k]$. 
In other words, we compute
$$
  (\scrH^{(k)}_m)_p\bigt{t^{(j)}(\alpha,\beta_l)}
$$

$$
  (\scrH^{(k)}_m)_p\bigt{u^{(j)}(\alpha,\beta_l)}
$$
for $\beta_l:[p]\to [k]$ the constant map with value $l$. This boils down to the computation of $\fft(n,m,k,r,\beta_l)$ and $\ffu(n,m,k,r,\beta_l)$ for $r=0,\dots,k$.
Recall that 
$$
  \ccS_{\beta_l}(i)=
    \begin{cases}
      0 & \text{ if } i\leq l-1;
        \\
      p+1 & \text{ if } i \geq l.
    \end{cases}
$$
We consider all the possible cases:
\begin{itemize}
    \item If $r \leq l-1$, then the factor 
    $$
    (\pi - t^{(\ccS_{\beta_l}(r))}_{1})
      =
    (\pi - t^{(0)}_{1})
    $$
    appears in the product which defines $\fft(n,m,k,r,\beta_l)$, so that
    $$
    \fft(n,m,k,r,\beta_l)
      =
    0.
    $$
    \item If $r = l$, then we see that 
    \begin{align*}
        \fft(n,m,k,r,\beta_l)
        &
        =
        t^{(\ccS_{\beta_l}(l-1))}_{1}\Bigt{\prod_{i=l}^{k-1} \bigt{\pi-t^{(\ccS_{\beta_l}(i))}_{1}}} \pi^{n-m-k}
        \\
        &
        =
        \pi \cdot \pi^{k-l} \cdot \pi^{n-m-k} = \pi^{n-m-l+1}.
    \end{align*}
    \item If $r \geq l+1$, then we see that 
    $$
    \fft(n,m,k,r,\beta_l)
    =
    0
    $$
    since $t^{(\ccS_{\beta_l}(r-1))}_{1}=t^{(p+1)}_{1}=0$.
\end{itemize}

We thus obtain that
$$
  (\scrH^{(k)}_m)_p\bigt{t^{(j)}(\alpha,\beta_l)}
    =
  \fff^{[m+l+1,m+k+1]}_{\ul d,p}
    \circ 
  \ffh^{(m+l)}_p\Bigt{t^{(j)}\bigt{\ffi^{[m,m+l]}_{\ul d,p}(\alpha)}} \cdot \pi^{n-m-l+1},
$$
and consequently
$$
  \restr{\scrH^{(k)}_m}{\Delta^{\{l\}}\otimes \bigt{\Delta^m \otimes K_A(n+2)}}
    =
  \fff^{[m+l+1,m+k+1]}_{\ul d}
    \circ 
  \ffh^{(m+l)}
    \circ
  (\id \otimes \on{can}(n+2,m+l+1))
    \circ
  \ffi^{[m,m+l]}_{\ul d}.
$$

\sssec*{Commutativity of diagram \eqref{eqn: H^(k-1)_m vs H^(k)_m}}

We need to check that, for all $p\geq 0$, the identity
$$
  \Bigt{
        f^{(m+k)}_{d_{m+k}}\circ \scrH^{(k-1)}_m \circ  \id \otimes \on{can}(n+2,n+1)
       }_p
    =
  \Bigt{
        \scrH^{(k)}_m\circ \iota^{k-1}_k\otimes \id
        }_p
$$
holds.
We do so by inspecting the action of both terms on the generators 
$$
  t^{(j)}(\alpha, \beta),u^{(j)}(\alpha, \beta)
    \in 
  \Bigt{
        \Delta^k \otimes \bigt{
                               \Delta^m\otimes K_A(n+2)
                               }
        }_p,
$$
where $j\in \angles{p}$, $\alpha \in \Delta^m_p$ and $\beta \in \Delta^{k-1}_p$.
Unraveling the definitions, we see that 
$$
  \Bigt{
        f^{(m+k)}_{d_{m+k}}
          \circ 
        \scrH^{(k-1)}_m 
          \circ  
        \id 
        \otimes 
        \on{can}(n+2,n+1)
        }_p
  \bigt{
        t^{(j)}(\alpha,\beta)
        }
$$
is equal to 
$$
  f^{(m+k)}_{d_{m+k},p}
    \circ
  \sum_{r=0}^{k-1} 
  \biggt{
         \fff^{[m+r+1,m+k]}_{\ul d,p}
            \circ 
         \ffh^{(m+r)}_p\Bigt{\pi t^{(j)}\bigt{\ffi^{[m,m+r]}_{\ul d,p}(\alpha)}} 
            \cdot
         t^{(\ccS_{\beta}(r-1))}_{1}
        \Bigt{\prod_{i=r}^{k-2}(\pi-t^{(\ccS_{\beta}(i))}_{1})}
        \pi^{n-m-k}
}.
$$
Since
$$
  f^{(m+k)}_{d_{m+k},p}
    \circ 
  \fff^{[m+r+1,m+k]}_{\ul d,p}
    =
  \fff^{[m+r+1,m+k+1]}_{\ul d,p}
$$
this is in turn equal to
$$
  \sum_{r=0}^{k-1} 
  \biggt{
         \fff^{[m+r+1,m+k+1]}_{\ul d,p}
           \circ 
         \ffh^{(m+r)}_p\Bigt{t^{(j)}\bigt{\ffi^{[m,m+r]}_{\ul d,p}(\alpha)}} 
            \cdot
         t^{(\ccS_{\beta}(r-1))}_{1}
         \Bigt{\prod_{i=r}^{k-2}(\pi-t^{(\ccS_{\beta}(i))}_{1})}
         \pi^{n-m-k+1}
}.
$$
Notice that here we have used the fact that 
$$
  f^{(m+k)}_{d_{m+k},p}(t^{(i)}_1)
    =
  t^{(i)}_1,
$$ 
for all $i=1,\dots,p$, regardless of the value $d_{m+k}\in [m+k]$.

On the other hand, since
$$
  (\iota^{k-1}_k\otimes \id)_p \bigt{t^{(j)}(\alpha,\beta)}
    =
  t^{(j)}(\alpha,\delta^k_k\circ \beta),
$$
we see that 
$$
  \bigt{\scrH^{(k)}_m\circ \iota^{k-1}\otimes \id}_p\bigt{t^{(j)}(\alpha, \beta)}
$$
is equal to
$$
  \sum_{r=0}^k 
  \biggt{
         \fff^{[m+r+1,m+k+1]}_{\ul d,p}
            \circ 
            \ffh^{(m+r)}_p\Bigt{t^{(j)}\bigt{\ffi^{[m,m+r]}_{\ul d,p}(\alpha)}} 
          \cdot
            t^{(\ccS_{\delta^k_k\circ \beta}(r-1))}_{1}
             \Bigt{\prod_{i=r}^{k-1}(\pi-t^{(\ccS_{\delta^k_k\circ \beta}(i))}_{1})}
            \pi^{n-m-k}
        }.
$$
Since
$$
  \ccS_{\delta^k_k\circ \beta}(i)
    =
  \begin{cases}
    \ccS_{\beta}(i) & \text{ if } i\leq k-2;
      \\
    p+1 & \text{ if } i=k-1, k
  \end{cases}
$$
this in turn equals
$$
  \sum_{r=0}^{k-1} 
  \biggt{
        \fff^{[m+r+1,m+k+1]}_{\ul d,p}
          \circ 
        \ffh^{(m+r)}_p\Bigt{t^{(j)}\bigt{\ffi^{[m,m+r]}_{\ul d,p}(\alpha)}} 
          \cdot
        t^{(\ccS_{\delta^k_k\circ \beta}(r-1))}_{1}
        \Bigt{\prod_{i=r}^{k-2}(\pi-t^{(\ccS_{\delta^k_k\circ \beta}(i))}_{1})}
        \pi^{n-m-k+1}
        }.
$$
This shows that
$$
  \Bigt{
        f^{(m+k)}_{d_{m+k}}\circ \scrH^{(k-1)}_m \circ  \id \otimes \on{can}(n+2,n+1)
        }_p \bigt{t^{(j)}(\alpha, \beta)}
    =
  \Bigt{
        \scrH^{(k)}_m\circ \iota^{k-1}_k\otimes \id
        }_p \bigt{t^{(j)}(\alpha, \beta)}.
$$

\end{proof}

\subsection{The master homotopies - variant}\label{ssec: master homotopies - variant}

The task of this section is to provide the coherent homotopies which are necessary to construct a dg-functor out of the $n^{th}$-truncated cyclic bar complex of $\sT^+$.

\sssec{}

Let $m \geq 0$ and $k \geq 1$.
We fix a vector
$$
\ul d =(d_{m+1},\dots,d_{m+k}) \in \prod_{s=m+1}^{m+k}[s].
$$

\sssec{Definition of the morphisms \texorpdfstring{$g^{(c)}_d$}{}}

For each $e\geq 0$
let us use the following presentation of the simplicial algebra 
$
K_A\otimes_Ak_A^{\otimes_{A}e}
$
: for $q\geq 0$
$$
  (K_A\otimes_Ak_A^{\otimes_{A}e})_q
    =
  \frac{
        (K_A^{\otimes_Ae+1})_q
        }
        {
         \Angles{t^{(j)}_a-u^{(j)}_a:j\in [q+1],a=2,\dots,e+1}
        }.
$$

Similarly, we use the following presentation for the simplicial algebra $k_A^{\otimes_A e+1}$: for $q\geq 0$
$$
  (k_A^{\otimes_{A}e+1})_q
    =
  \frac{
        (K_A^{\otimes_Ae+1})_q
        }
        {
         \Angles{t^{(j)}_a-u^{(j)}_a:j\in [q+1],a=1,\dots,e+1}
        }.
$$
Let $c \ge 1$ and $r\in [c]$.
We consider the morphisms of simplicial $A$-algebras
$$
  \wt{g}^{(c)}_{r}:
  K_A\otimes_Ak_A^{\otimes_{A}c-1}
    \to
  K_A\otimes_Ak_A^{\otimes_{A}c}
$$
defined as follows.

Recall that $\varrho_c$ denotes the morphism
$$
  [c]
    \to 
  \angles{c}
    =
  \{1,\dots,c\},
    \;\;\;\;
  i
    \mapsto
  \begin{cases}
    i+1 & \text{ if } i\leq c-1,
      \\
    1 & \text{ if } i=c. 
  \end{cases}
$$
Then for each $q\geq 0$ we set $(1\leq j \leq c, 1\leq a \leq c)$
$$
  \bigt{\wt{g}^{(c)}_{r}}_q:
  (K_A\otimes_AK_A^{\otimes_{A}c-1})_q
    \to
  (K_A\otimes_AK_A^{\otimes_{A}c})_q
$$
$$
t^{(j)}_a 
\mapsto
\begin{cases}
t^{(j)}_a & \text{ if } a\leq \varrho_c(r),
\\
t^{(j)}_{a+1} & \text{ if } a\geq \varrho_c(r)+1,
\end{cases}
\;\;\;\;
u^{(j)}_a 
\mapsto
\begin{cases}
u^{(j)}_a & \text{ if } a\leq \varrho_c(r),
\\
u^{(j)}_{a+1} & \text{ if } a\geq \varrho_c(r)+1.
\end{cases}
$$
Since these morphisms of $A$-algebras act on the index $a$ of the generators of $t^{(j)}_a,u^{(j)}_a\in K_A\otimes_A k_A^{\otimes_Ac-1}$, while the face and degeneracy maps  act on the index $j$, the $\bigl \{ \bigt{\wt{g}^{(c)}_{r}}_q \bigr \}_{q\geq 0}$'s define a morphisms of simplicial $A$-algebras.

Moreover, we denote by
$$
  g^{(c)}_{r}:
  k_A^{\otimes_{A}c}
    \to
  k_A^{\otimes_{A}c+1}
$$
the induced morphism.

\sssec{Explanation of the morphisms \texorpdfstring{$g^{(c)}_r$}{} (\texorpdfstring{$0\leq r \leq c-1$}{})}
\label{rmk: morphims g^(c)_r}
Let $0\leq r \leq c-1$.
The morphism 
$$
  \wt{g}^{(c)}_r:
  K_A\otimes_Ak_A^{\otimes_{A}c-1}
    \to
  K_A\otimes_Ak_A^{\otimes_{A}c}
$$
\virg{skips} the generators $t^{(j)}_{r+2}=u^{(j)}_{r+2}$ and
thus sits in the commutative diagram of simplicial $A$-algebras
\begin{equation*}
    \begin{tikzcd}
    K_A\otimes_AK_A^{\otimes_{A}c}
    \dar
    \rar
    &
    \frac{K_A\otimes_AK_A^{\otimes_{A}c}}{\Angles{t^{(\star)}_{r+2}-u^{(\star)}_{r+2}}}
    \dar
    &
    \lar
    K_A\otimes_AK_A^{\otimes_{A}c-1}
    \dar
    \\
    K_A\otimes_Ak_A^{\otimes_{A}c}
    \rar[swap,"\id"]
    \dar
    &
    K_A\otimes_Ak_A^{\otimes_{A}c}
    \dar
    &
    \lar["\wt{g}^{(c)}_r"]
    K_A\otimes_Ak_A^{\otimes_{A}c-1}
    \dar
    \\
    k_A^{\otimes_{A}c+1}
    \rar[swap,"\id"]
    &
    k_A^{\otimes_{A}c+1}
    &
    \lar["g^{(c)}_r"]
    k_A^{\otimes_{A}c}.
    \end{tikzcd}
\end{equation*}
Here:
\begin{itemize}
    \item $\Angles{t^{(\star)}_{r+2}-u^{(\star)}_{r+2}}\subseteq K_A^{\otimes_A c+1}$ denotes the simplicial ideal
    $$
      \Delta^{\op} 
        \to
      \Mod_A
    $$
    $$
      q
        \mapsto
      \Angles{t^{(j)}_{r+2}-u^{(j)}_{r+2}: j\in [q+1]};
    $$
    \item the vertical maps are the canonical quotients;
    \item the upper-left horizontal map is the canonical quotient;
    \item the upper-right horizontal map is defined by the same formulas as $\wt{g}^{(c)}_r$ and $g^{(c)}_r$.
\end{itemize}
Also, note that the right squares in the above diagram are cocartesian.

Geometrically, $\wt{g}^{(c)}_r$ and $g^{(c)}_r$ sits in the following commutative diagram of derived schemes
\begin{equation*}
    \begin{tikzcd}
    G\times_SG^{\times_S c}
    &
    \lar
    G\times_SG^{\times_S r}\times_S s\times_S G^{\times_S c-r-1}
    \rar["\on{proj.}"]
    &
    G\times_SG^{\times_S r}\times_S G^{\times_S c-r-1}
    \simeq
    G\times_SG^{\times_S c-1}
    \\
    \uar
    G\times_Ss^{\times_S c}
    &
    \uar
    \lar["\id"]
    G\times_Ss^{\times_S c}
    \rar["\wt{g}^{(c)}_r"]
    &
    \uar
    G\times_Ss^{\times_S c-1}
    \\
    \uar
    s^{\times_S c+1}
    &
    \uar
    \lar["\id"]
    s^{\times_S c+1}
    \rar["g^{(c)}_r"]
    &
    \uar
    s^{\times_S c},
    \end{tikzcd}
\end{equation*}
where the right squares are derived cartesian.
The vertical maps are induced by the diagonal $\delta_{s/S}:s\to G$.

\sssec{Explanation of the morphisms \texorpdfstring{$g^{(c)}_c$}{}}
Since $\varrho(c)=\varrho(0)$, we have
$$
  \wt{g}^{(c)}_0
    =
  \wt{g}^{(c)}_c,
    \;\;\;\;
  g^{(c)}_0
    =
  g^{(c)}_c.
$$
As in the case of the morphisms $\wt{f}^{(c)}_r$ and $f^{(c)}_r$, this is something we want.
For $r=c$ we wish to consider the diagram of simplicial $A$-algebras
\begin{equation*}
    \begin{tikzcd}[column sep = 2cm]
    K_A^{\otimes_A c+1}
    \dar
    \rar
    &
    \frac{K_A^{\otimes_A c+1}}{\Angles{t^{(\star)}_{c+1}-u^{(\star)}_{c+1}}}
    \dar
    &
    \lar[swap,"\omega^{(c)}"]
    K_A^{\otimes_A c}
    \dar
    \\
    k_A^{\otimes_{A} c}\otimes_AK_A
    \rar["\tau^{(c)}"]
    \dar[swap,"\rho_{c+1}"]
    &
    \frac{K_A^{\otimes_{A} c+1}}{\Angles{t^{(\star)}_{c+1}-u^{(\star)}_{c+1}, u^{(\star)}_a-t^{(\star)}_a:a=1,\dots,c-1}}
    \dar["\upsilon^{(c)}"]
    &
    \lar[swap,"\gamma^{(c)}"]
    k_A^{\otimes_{A} c-1}\otimes_AK_A
    \dar[swap,"\rho_c"]
    \\
    K_A\otimes_Ak_A^{\otimes_{A} c}
    \rar["\id"]
    \dar
    &
    K_A\otimes_Ak_A^{\otimes_{A} c}
    \dar
    &
    \lar[swap,"\wt{g}^{(c)}_c"]
    K_A\otimes_Ak_A^{\otimes_{A} c-1}
    \dar
    \\
    k_A^{\otimes_{A} c+1}
    \rar["\id"]
    &
    k_A^{\otimes_{A} c+1}
    &
    \lar[swap,"g^{(c)}_c"]
    k_A^{\otimes_{A} c}
    \end{tikzcd}
\end{equation*}
where 
\begin{itemize}
    \item $\Angles{t^{(\star)}_{c+1}-u^{(\star)}_{c+1}}\subseteq K_A^{\otimes_A c+1}$ denotes the simplicial ideal
    $$
    \Delta^{\op} 
    \to
    \Mod_A
    $$
    $$
    q
    \mapsto
    \Angles{t^{(j)}_{c+1}-u^{(j)}_{c+1}: j\in [q+1]};
    $$
    \item $\Angles{t^{(\star)}_{c+1}-u^{(\star)}_{c+1}, u^{(\star)}_a-t^{(\star)}_a:a=1,\dots,c-1}$ denotes the simplicial ideal
    $$
    \Delta^{\op} 
    \to
    \Mod_A
    $$
    $$
    q
    \mapsto
    \Angles{t^{(j)}_{c+1}-u^{(j)}_{c+1}, u^{(j)}_a-t^{(j)}_a:a=1,\dots,c-1, j\in [q+1]};
    $$
    \item the vertical maps in the upper and lower squares are the obvious quotients;
    \item the upper-left horizontal map is the obvious quotient;
    \item the upper-right horizontal map is the morphism of simplicial $A$-algebras such that, at level $q\geq 0$
    $$
    \omega^{(c)}_q:
    \bigt{K_A^{\otimes_A c}}_q
    \to
    \Biggt{\frac{K_A^{\otimes_A c+1}}{\Angles{t^{(\star)}_{c+1}-u^{(\star)}_{c+1}}}}_q
    $$
    is defined by ($j\in [q+1], a=1,\dots c$)
    \begin{equation}\label{eqn: definition omegaq}
        t^{(j)}_a
        \mapsto
        t^{(j)}_a,
        \;\;\;\;
        u^{(j)}_a
        \mapsto 
        u^{(j)}_{a};
    \end{equation}
    \item the morphism $\gamma^{(c)}$ is defined by the same formulas \eqref{eqn: definition omegaq} as $\omega^{(c)}$;
    \item the map $\upsilon^{(c)}$ is the morphism of simplicial $A$-algebras such that, at level $q\geq 0$
    $$
    \upsilon^{(c)}_q:
    \Biggt{\frac{K_A^{\otimes_{A} c+1}}{\Angles{t^{(\star)}_{c+1}-u^{(\star)}_{c+1}, u^{(\star)}_a-t^{(\star)}_a:a=1,\dots,c-1}}}_q
    \to
    \bigt{K_A\otimes_Ak_A^{\otimes_{A} c}}_q
    $$
    is defined by ($j\in [q+1], a=1,\dots c+1$)
    \begin{equation}\label{eqn: definition upsilonq}
        t^{(j)}_a
        \mapsto
        t^{(j)}_{\rho^2_{c+1}(a)},
        \;\;\;\;
        u^{(j)}_a
        \mapsto 
        u^{(j)}_{\rho^2_{c+1}(a)}.
    \end{equation}
    Notice that $\upsilon^{(c)}$ is an isomorphism of simplicial $A$-algebras.
    \item the morphism 
    $$
    \rho_{c}:
    k_A^{\otimes_Ac-1}\otimes_AK_A
    \to
    K_A\otimes_Ak_A^{\otimes_Ac-1}
    \;\;\;\;
    \Bigt{\text{ resp. }
    \rho_{c+1}:
    k_A^{\otimes_Ac}\otimes_AK_A
    \to
    K_A\otimes_Ak_A^{\otimes_Ac}
    }
    $$
    is induced by the permutation
    $$
    t^{(j)}_a
    \mapsto
    t^{(j)}_{\rho_c(a)}
    \;\;\;\;
    \Bigt{\text{ resp. }
    t^{(j)}_a
    \mapsto
    t^{(j)}_{\rho_{c+1}(a)}
    },
    $$
    $$
    u^{(j)}_a
    \mapsto
    u^{(j)}_{\rho_c(a)}
    \;\;\;\;
    \Bigt{\text{ resp. }
    u^{(j)}_a
    \mapsto
    u^{(j)}_{\rho_{c+1}(a)}
    }.
    $$
    \item the morphism 
    $$
    \tau^{(c)}:
    k_A^{\otimes_Ac}\otimes_AK_A
    \to
    \frac{K_A^{\otimes_{A} c+1}}{\Angles{t^{(\star)}_{c+1}-u^{(\star)}_{c+1}, u^{(\star)}_a-t^{(\star)}_a:a=1,\dots,c-1}}
    $$
    is defined by
    $$
    \tau^{(c)}\bigt{t^{(j)}_a}
    =
    t^{(j)}_{\rho^{-1}_{c+1}(a)},
    $$
    $$
    \tau^{(c)}\bigt{u^{(j)}_a}
    =
    u^{(j)}_{\rho^{-1}_{c+1}(a)}.
    $$
\end{itemize}
Note that the right squares in the diagram above are cocartesian.

The geometric meaning of the discussion above is that $\wt{g}^{(c)}_c$ and $g^{(c)}_c$ sit in the following commutative diagram of derived $S$-schemes
\begin{equation*}
    \begin{tikzcd}
    G^{\times_Sc}\times_SG
    &
    \lar[]
    G^{\times_Sc-1}\times_Ss\times_SG
    \rar["\on{proj.}"]
    &
    G^{\times_Sc-1}\times_SG
    \\
    \uar
    s^{\times_Sc}\times_SG
    &
    \lar["\id"]
    \uar
    s^{\times_Sc}\times_SG
    \rar["\wt{g}^{(c)}_c"]
    &
    \uar
    s^{\times_Sc-1}\times_SG
    \\
    \uar
    s^{\times_Sc+1}
    &
    \lar["\id"]
    \uar
    s^{\times_sc+1}
    \rar["g^{(c)}_c"]
    &
    \uar
    s^{\times_Sc},
    \end{tikzcd}
\end{equation*}
where the vertical arrow in the middle of the upper half of the diagram is given by
$$
  \frac{
        K_A^{\otimes_A c+1}
        }
        {
        \Angles{t^{(\star)}_{c+1}-u^{(\star)}_{c+1}}
        }
    \xto{\on{can. \; quot.}}
  \frac{
        K_A^{\otimes_{A} c+1}
        }
        {
        \Angles{t^{(\star)}_{c+1}-u^{(\star)}_{c+1}, u^{(\star)}_a-t^{(\star)}_a:a=1,\dots,c-1}
        }
    \xto{\upsilon^{(c)}}
  K_A\otimes_Ak_A{\otimes_{A}c}
$$
and the right squares are cartesian.

\sssec{}

We now establish the following connection between the morphisms $f^{(c)}_r$ defined in Section \ref{sssec: def of f^(c)_r} and the morphisms $g^{(c)}_r$.
Define $\zeta_{n} \in S_{2n}=\on{Aut}(\angles{2n})$ to be the cyclic permutation $(2 \mapsto 4 \dots \mapsto 2n-2 \mapsto 2n \mapsto 2)^{-1}$ of the even elements.
For $n\geq 1$, denote by the same symbol 
$$
  \zeta_n:
  K_A^{\otimes_An}
    \to
  K_A^{\otimes_An}
$$
the induced automorphism of $K_A^{\otimes_An}$. It is easy to see that $\zeta_n$  descends the quotients:
\begin{equation*}
    \begin{tikzcd}
      K_A^{\otimes_{k_A}n}
      \rar["\zeta_n"]
      \dar
       &
      K_A\otimes_Ak_A^{\otimes_An-1}
      \dar
      \\
      \frac{K_A^{\otimes_{k_A}n}}{\Angles{t^{(\star)}_1-u^{(\star)}_n}}
      \rar["\zeta_n"]
      &
      k_A^{\otimes_An}.
    \end{tikzcd}
\end{equation*}
The following lemma is a direct consequence of the definitions.

\begin{lem}\label{lem: f^(c)_r vs g^(c)_r}
With the above notation, the square
\begin{equation}\label{eqn: f^(c)_r vs g^(c)_r}
  \begin{tikzcd}
    k_A^{\otimes_Ac}
    \rar["g^{(c)}_r"]
    &
    k_A^{\otimes_Ac+1}
    \\
    \uar["\zeta_c"]
    \frac{K_A^{\otimes_{k_A}c}}{\Angles{t^{(\star)}_1-u^{(\star)}_c}}
    \rar["f^{(c)}_r"]
    &
    \uar["\zeta_{c+1}"]
    \frac{K_A^{\otimes_{k_A}c+1}}{\Angles{t^{(\star)}_1-u^{(\star)}_{c+1}}}.
  \end{tikzcd}
\end{equation}
commutes in $\sCAlg_{A/}$.
\end{lem}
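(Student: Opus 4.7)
The plan is to verify the commutativity of the square by a direct calculation on generators, at the level of maps $K_A^{\otimes_A c} \to K_A^{\otimes_A c+1}$, and then observe that the identity descends to the required quotients. The core observation is that $\wt{f}^{(c)}_r$ and $\wt{g}^{(c)}_r$ agree on all $t$-generators, while on $u$-generators they differ by a one-index threshold: $\wt{f}^{(c)}_r$ fixes $u^{(j)}_a$ for $a \leq \varrho_c(r) - 1$, whereas $\wt{g}^{(c)}_r$ fixes $u^{(j)}_a$ for $a \leq \varrho_c(r)$. This is precisely the discrepancy that $\zeta$ is designed to absorb.

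First I would decode the action of $\zeta_n \in S_{2n}$ on $K_A^{\otimes_A n}$ by identifying the $t$-generator $t^{(j)}_a$ with the odd index $2a-1$ and the $u$-generator $u^{(j)}_a$ with the even index $2a$. Under the appropriate convention for the $S_{2n}$-action on tensor products of algebras, the inverse cycle defining $\zeta_n$ fixes every $t$-generator and shifts $u$-generators cyclically: $u^{(j)}_a \mapsto u^{(j)}_{a+1}$ for $a < n$, and $u^{(j)}_n \mapsto u^{(j)}_1$. The descent claim preceding the lemma then amounts to the check that each convolution relation $u_a - t_{a+1}$ is sent to $u_{a+1} - t_{a+1}$, which lies in the diagonal ideal, and that $t_1 - u_n$ is sent to $t_1 - u_1$, likewise in the diagonal ideal; this bookkeeping I would dispatch once.

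Next I would compare the two compositions on $u$-generators, the $t$-case being immediate. The verification splits into three elementary sub-cases, each a substitution using the formulas from Sections~\ref{sssec: def of f^(c)_r} and~\ref{ssec: master homotopies - variant}:
\begin{itemize}
\item if $a \leq \varrho_c(r) - 1$, both compositions yield $u^{(j)}_{a+1}$;
\item if $\varrho_c(r) \leq a \leq c-1$, both compositions yield $u^{(j)}_{a+2}$;
\item if $a = c$, the cyclic wrap-around in $\zeta_c$ and in $\zeta_{c+1}$ align so that both compositions yield $u^{(j)}_1$.
\end{itemize}
The equalities hold uniformly in $r \in [c]$, using only that $\varrho_c(r) \in \{1, \ldots, c\}$; the boundary case $r = c$ is automatic since $\varrho_c(0) = \varrho_c(c) = 1$, so $f^{(c)}_0 = f^{(c)}_c$ and $g^{(c)}_0 = g^{(c)}_c$.

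The main (minor) obstacle is merely pinning down the correct $S_{2n}$-action convention so that $\zeta_n$ descends properly; once this is settled, the remainder is a mechanical verification. The resulting equality at the level of $K_A^{\otimes_A}$ then descends to the quotients displayed in the statement, completing the proof.
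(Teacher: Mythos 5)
Your verification is correct: the convention you fix for $\zeta_n$ (fixing every $t$-generator and shifting $u^{(j)}_a \mapsto u^{(j)}_{a+1}$ for $a<n$, $u^{(j)}_n \mapsto u^{(j)}_1$) is exactly the one forced by the descent diagram preceding the lemma, and your three-case computation on $u$-generators (together with the trivial $t$-case) agrees with the definitions of $f^{(c)}_r$ and $g^{(c)}_r$, so the square commutes. The paper records this lemma as a direct consequence of the definitions, and your argument is precisely that check spelled out, so it is essentially the same approach.
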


\sssec{Definition of the morphisms \texorpdfstring{$\ffg^{[a,b]}_{\ul d}$}{}}
\label{sssec: def of g[a,b]_d}

For $m+1\leq a \leq b \leq m+k+1$, we will write 
$$
  \ffg^{[a,b]}_{\ul d}:
  k_A^{\otimes_Aa}
  \to 
  k_A^{\otimes_Ab}
$$
to mean the
composition
$$
  k_A^{\otimes_Aa}
    \xto{g^{(a)}_{d_{a}}}
  k_A^{\otimes_Aa+1}
    \xto{g^{(a+1)}_{d_{a+1}}}
  \cdots
    \xto{g^{(b-1)}_{d_{b-1}}}
  k_A^{\otimes_Ab}.
$$
We use the convention that $\ffg^{[a,a]}_{\ul d}=\id$.

\sssec{}

As an immediate consequence of Lemma \ref{lem: f^(c)_r vs g^(c)_r} above, we have the following
\begin{cor}\label{cor: fff^[a,b] vs ffg^[a,b]}
With the same notation as above, the following square in $\sCAlg_{A/}$ is commutative:
\begin{equation}\label{eqn: fff^(c)_r vs ffg^(c)_r}
  \begin{tikzcd}
    k_A^{\otimes_Aa}
    \rar["\ffg^{[a,b]}_{\ul d}"]
    &
    k_A^{\otimes_Ab}
    \\
    \uar["\zeta_a"]
    \frac{K_A^{\otimes_{k_A}a}}{\Angles{t^{(\star)}_1-u^{(\star)}_a}}
    \rar["\fff^{[a,b]}_{\ul d}"]
    &
    \uar["\zeta_{b}"]
    \frac{K_A^{\otimes_{k_A}b}}{\Angles{t^{(\star)}_1-u^{(\star)}_b}}.
  \end{tikzcd}
\end{equation}
\end{cor}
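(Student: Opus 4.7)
The plan is to deduce this from Lemma \ref{lem: f^(c)_r vs g^(c)_r} by an elementary vertical-stacking argument. Recall from Sections \ref{sssec: def of f[a,b]_d} and \ref{sssec: def of g[a,b]_d} that
$$
  \fff^{[a,b]}_{\ul d}
    =
  f^{(b-1)}_{d_{b-1}}
  \circ
  \cdots
  \circ
  f^{(a)}_{d_{a}},
    \quad
  \ffg^{[a,b]}_{\ul d}
    =
  g^{(b-1)}_{d_{b-1}}
  \circ
  \cdots
  \circ
  g^{(a)}_{d_{a}},
$$
so the square \eqref{eqn: fff^(c)_r vs ffg^(c)_r} decomposes as a vertical pasting of $b-a$ squares, the $c^{\text{th}}$ of which is exactly the square of Lemma \ref{lem: f^(c)_r vs g^(c)_r} with $r = d_c$. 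Since each such intermediate square commutes in $\sCAlg_{A/}$, their pasting commutes too; this gives the claim.

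A clean way to write this is by induction on $b - a \geq 0$. The base case $b = a$ is trivial since both horizontal maps are the identity and the vertical maps are $\zeta_a$ on both sides. For the inductive step, assume the square commutes for the pair $(a, b)$ and consider the pair $(a, b+1)$. Write
$$
  \ffg^{[a,b+1]}_{\ul d}
    \circ
  \zeta_a
    =
  g^{(b)}_{d_b}
  \circ
  \ffg^{[a,b]}_{\ul d}
  \circ
  \zeta_a
    =
  g^{(b)}_{d_b}
  \circ
  \zeta_{b}
  \circ
  \fff^{[a,b]}_{\ul d}
$$
by the inductive hypothesis, then apply Lemma \ref{lem: f^(c)_r vs g^(c)_r} at level $c = b$ with $r = d_b$ to rewrite
$$
  g^{(b)}_{d_b}
  \circ
  \zeta_b
    =
  \zeta_{b+1}
  \circ
  f^{(b)}_{d_b},
$$
which yields $\zeta_{b+1} \circ \fff^{[a,b+1]}_{\ul d}$ as desired.

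There is essentially no obstacle here: the key content is entirely contained in Lemma \ref{lem: f^(c)_r vs g^(c)_r}, which already handles the single-step case, and the compatibility of the even-index cyclic permutation $\zeta_c$ with increasing $c$. The only thing worth briefly checking is that the quotients $K_A^{\otimes_{k_A} c}/\Angles{t^{(\star)}_1 - u^{(\star)}_c}$ and $K_A^{\otimes_{k_A} c+1}/\Angles{t^{(\star)}_1 - u^{(\star)}_{c+1}}$ appearing in successive squares match up correctly so that the vertical composition makes sense; this is immediate from the fact that both $f^{(c)}_{d_c}$ and $g^{(c)}_{d_c}$ are defined on precisely these quotients.
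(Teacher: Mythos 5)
Your argument is correct and is exactly what the paper intends: the corollary is stated there as an immediate consequence of Lemma \ref{lem: f^(c)_r vs g^(c)_r}, and your vertical pasting/induction on $b-a$ is just the explicit form of that deduction, with the intermediate quotients matching because each $f^{(c)}_{d_c}$ and $g^{(c)}_{d_c}$ is defined precisely between the objects appearing in consecutive squares.
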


\sssec{}

For each $n\geq 1$
$$
  \ffk^{(n-1)}:
  \Delta^{n-1}\otimes k_A(n)
    \to
  k_A^{\otimes_An}
$$
denote the
$$
  \Delta^{n-1}\otimes k_A(n)
    \xto{\ffh^{(n-1)}}
  \frac{K_A^{\otimes_{k_A}n}}{\Angles{t^{(\star)}_1-u^{(\star)}_n}}
    \xto{\zeta_n}
  k_A^{\otimes_An},
$$

We are finally ready to state the following
\begin{thm}\label{thm: master homotopies - variant}

With the same notation as above, there exists a morphism of simplicial $A$-algebras
$$
  \scrK^{(k)}_{m}:
  \Delta^k \otimes \bigt{\Delta^m \otimes k_A(n+2)}
    \to
  k_A^{\otimes_Am+k+1}
$$
such that, for $l\in [k]$, the restriction to $\Delta^{\{l\}} \otimes \bigt{\Delta^m \otimes K_A(n+2)}$ is equal to the composition
\begin{align*}
  \Delta^m \otimes k_A(n+2)
    &
    \xto{\ffi^{[m,m+l]}_{\ul d}}
  \Delta^{m+l} \otimes k_A(n+2)
    \\
    &
    \xto{(\id \otimes \on{can}(n+2,m+l+1))}
  \Delta^{m+l} \otimes k_A(m+l+1)
    \\
    &
    \xto{\ffh^{(m+l)}}
    k_A^{\otimes_Am+l+1}
    \\
    &
    \xto{\ffg^{[m+l+1,m+k+1]}_{\ul d}}
    k_A^{\otimes_Am+k+1}.
\end{align*}
\end{thm}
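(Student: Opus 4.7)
My plan is to deduce Theorem \ref{thm: master homotopies - variant} directly from Theorem \ref{thm: master homotopies} by transporting the construction along the permutations $\zeta_n$ introduced in Section \ref{ssec: master homotopies - variant}. Concretely, I would define
$$
  \scrK^{(k)}_{m}
    :=
  \zeta_{m+k+1} \circ \scrH^{(k)}_{m},
$$
where $\scrH^{(k)}_m$ is viewed as landing in $\frac{K_A^{\otimes_{k_A}m+k+1}}{\Angles{t^{(\star)}_1-u^{(\star)}_{m+k+1}}}$ via the canonical isomorphism with $k_A^{\otimes_A m+k+1}$ (the same identification implicit in the definition of $\ffk^{(n-1)}$ given at the end of Section \ref{ssec: master homotopies - variant}). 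Since both $\scrH^{(k)}_m$ and $\zeta_{m+k+1}$ are morphisms of simplicial $A$-algebras, so is $\scrK^{(k)}_m$, and it has the correct source and target.

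For the restriction property, fix $l \in [k]$. By Theorem \ref{thm: master homotopies}, the restriction of $\scrH^{(k)}_m$ to $\Delta^{\{l\}}\otimes(\Delta^m\otimes k_A(n+2))$ equals
$$
  \fff^{[m+l+1,m+k+1]}_{\ul d}
    \circ
  \ffh^{(m+l)}
    \circ
  (\id\otimes \on{can}(n+2,m+l+1))
    \circ
  \ffi^{[m,m+l]}_{\ul d}.
$$
Post-composing with $\zeta_{m+k+1}$ and applying Corollary \ref{cor: fff^[a,b] vs ffg^[a,b]} with $a=m+l+1$, $b=m+k+1$, the leading factor rewrites as $\ffg^{[m+l+1,m+k+1]}_{\ul d}\circ \zeta_{m+l+1}$. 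The subsequent composition $\zeta_{m+l+1}\circ \ffh^{(m+l)}$ is exactly $\ffk^{(m+l)}$ by the definition recalled at the end of Section \ref{ssec: master homotopies - variant}. Putting these two identifications together produces the required formula
$$
  \ffg^{[m+l+1,m+k+1]}_{\ul d}
    \circ
  \ffk^{(m+l)}
    \circ
  (\id\otimes \on{can}(n+2,m+l+1))
    \circ
  \ffi^{[m,m+l]}_{\ul d}.
$$

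There is no real obstacle: the key combinatorial content has already been packaged in Theorem \ref{thm: master homotopies} (producing the higher homotopies together with the explicit restrictions) and in Corollary \ref{cor: fff^[a,b] vs ffg^[a,b]} (intertwining the $\fff$- and $\ffg$-families through the permutations $\zeta$). The only point requiring care is the bookkeeping for the two presentations $k_A^{\otimes_A n} \simeq \frac{K_A^{\otimes_{k_A}n}}{\Angles{t^{(\star)}_1-u^{(\star)}_n}}$ of the target: $\scrH^{(k)}_m$ is stated with the first presentation, whereas $\zeta$ is naturally defined on the second. This matching is canonical and already used implicitly throughout Section \ref{ssec: master homotopies - variant}. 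As a byproduct, the analogue for $\scrK$ of the compatibility diagram \eqref{eqn: H^(k-1)_m vs H^(k)_m} (should it be needed in later sections) follows at once from the same two inputs, since $g^{(m+k)}_{d_{m+k}} \circ \zeta_{m+k} = \zeta_{m+k+1} \circ f^{(m+k)}_{d_{m+k}}$ by the $(a,b)=(m+k,m+k+1)$ case of Corollary \ref{cor: fff^[a,b] vs ffg^[a,b]}.
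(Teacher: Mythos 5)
Your proposal is correct and coincides with the paper's own proof: the paper likewise defines $\scrK^{(k)}_m$ as $\scrH^{(k)}_m$ followed by $\zeta_{m+k+1}$ (through the presentation $k_A^{\otimes_A m+k+1}\simeq K_A^{\otimes_{k_A}m+k+1}/\Angles{t^{(\star)}_1-u^{(\star)}_{m+k+1}}$) and deduces the restriction property from Theorem \ref{thm: master homotopies} together with Corollary \ref{cor: fff^[a,b] vs ffg^[a,b]}, exactly as you do. Your spelled-out vertex computation, identifying $\zeta_{m+l+1}\circ\ffh^{(m+l)}$ with $\ffk^{(m+l)}$ and commuting $\zeta$ past the $\fff$-family to get the $\ffg$-family, is just the expansion of the paper's ``follows immediately,'' and your remark on the compatibility square is the same argument the paper uses for \eqref{eqn: K^(k-1)_m vs K^(k)_m}.
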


Furthermore, the following diagram commutes:
\begin{equation}\label{eqn: K^(k-1)_m vs K^(k)_m}
    \begin{tikzcd}[column sep = 2cm]
      \Delta^{k-1} \otimes \bigt{\Delta^m \otimes k_A(n+2)}
      \rar["\id \otimes \on{can}(n+2\text{,}n+1)"]
      \dar["\iota^{k-1}_k\otimes \id"]
      &
      \Delta^{k-1} \otimes \bigt{\Delta^m \otimes k_A(n+1)}
      \rar["\scrK^{(k-1)}_m"]
      &
      k_A^{\otimes_Am+k+1}
      \dar["g^{(m+k)}_{d_{m+k}}"]
      \\
      \Delta^k \otimes \bigt{\Delta^m \otimes k_A(n+2)}
      \arrow[rr,"\scrK^{(k)}_m"]
      &
      &
      k_A^{\otimes_Am+k+2}.
    \end{tikzcd}
\end{equation}

\begin{proof}

We define
$$
  \scrK^{(k)}_{m}:
  \Delta^k \otimes \bigt{\Delta^m \otimes k_A(n+2)}
    \to
  k_A^{\otimes_Am+k+1}
$$
as the composition
$$
  \Delta^k \otimes \bigt{\Delta^m \otimes k_A(n+2)}
    \xto{\scrH^{(k)}_m}
  \frac{K_A^{\otimes_{k_A}m+k+1}}{\Angles{t^{(\star)}_1-u^{(\star)}_{m+k+1}}}
    \xto{\zeta_{m+k+1}}
  k_A^{\otimes_Am+k+1}.
$$
Since
$$
  \ffk^{(c)}:
  \Delta^{c-1}\otimes k_A(c)
    \to
  k_A^{\otimes_Ac}
$$
is by definition the composition
$$
  \Delta^{c}\otimes k_A(c+1)
    \xto{\ffh^{(c)}}
  \frac{K_A^{\otimes_{k_A}c+1}}{\Angles{t^{(\star)}_1-u^{(\star)}_{c+1}}}
    \xto{\zeta_c}
  k_A^{\otimes_Ac+1},
$$
the theorem follows immediately from Theorem \ref{thm: master homotopies} and Corollary \ref{cor: fff^[a,b] vs ffg^[a,b]}.
\end{proof}


\section{Integrating categorical differential forms}\label{sec: integrating differential forms on two-periodic complexes}

In this section, we construct a dg-functor
\begin{equation}\label{eqn:oint}
  \oint : \HH(\sB/A)
    \to 
  \Sing(\bbA^1_S[-1])_s
\end{equation}
that \virg{integrates categorical differential forms} on the singularity category of the loop groupoid $G$.
The construction is performed in Section \ref{ssec:oint-construction I} and Section \ref{ssec:oint-construction II}.
As an application, in Section \ref{ssec:realization of HH}, we show that $\uH^0_{\et}\bigt{S,\rl_S\bigt{\HH(\sB/A)}}$, the $\Qell$-vector space where \eqref{eqn: categorical gBCF} takes place, contains a copy of $\Qell$ as a canonical retract.

\subsection{The construction of the functor \texorpdfstring{$\oint^+$}{oint+}} \label{ssec:oint-construction I}

In this section, we define the coherent version 
$$
  \oint^+: \HH(\sB^+/A)
    \to
  \Coh(\bbA^1_S[-1])_s
$$
of the integration dg-functor.

This construction is performed in three steps: we first define an intermediate dg-category $\sB^+(\infty)$, and then we define two dg-functors
\begin{equation} \label{eqn:first-arrow}
\xi^+: \HH(\sB^+/A)
\to \sB^+(\infty)
\end{equation}
\begin{equation} \label{eqn:second-arrow}
r(\infty)_*:
\sB^+(\infty)
\to 
\Coh(\bbA^1_S[-1])_s
\end{equation}
whose composition gives the desired $\oint^+$.

\sssec{} \label{sssec:notation on B+(n) etc}

For $n\geq 1$, we consider the derived $S$-scheme
$$ 
  \bbA^1_{s(n)}[-1]
    :=
  s(n)\times_{0,\bbA^1_{s(n)},0} s(n).
$$
Notice that the structure map $\bbA^1_{s(n)}[-1] \to S$ is a quasi-smooth closed embedding. 
As $n$ varies, we obtain natural functors
$$
  \bbA^1_{s(\bullet)}[-1]:
  \bbN_{\geq 1} 
    \to 
  \dSch_{/S}
$$
\begin{equation} \label{eqn:functor Bplus(n)}
  \Coh(\bbA^1_{s(\bullet)}[-1]):
  \bbN_{\geq 1} 
    \to 
  \dgCat_A,
\end{equation}
the latter induced by the push-forwards along $\bbA^1_{s(n)}[-1] \hto \bbA^1_{s(m)}[-1]$ for $n \leq m$.
We set 
$$
  \sB^+(n):
    = 
  \Coh\bigt{\bbA^1_{s(n)}[-1]},
$$
$$
  \sB^+(\infty)
    :=
  \varinjlim_{\bbN} \; \sB^+(n+1).
$$

\sssec{} 

Recall that 
$$
  \bbA^1_S[-1]
    :=
  S\times_{0,\bbA^1_S,0}S
$$
and notice that $\bbA^1_{s(n)}[-1]$ sits in the fiber product

\begin{equation}
  \label{diag:fiber-prod-for-G(n)}
  \begin{tikzpicture}[scale=1.5]
    \node (00) at (0,0) {$s(n)$};
    \node (10) at (2,0) {$S.$ };
    \node (01) at (0,1) {$\bbA^1_{s(n)}[-1]$};
    \node (11) at (2,1) {$ \bbA^1_S[-1] $}; 
    \path[->,font=\scriptsize,>=angle 90]
    (00.east) edge node[above] {${}$}  (10.west); 
    \path[->,font=\scriptsize,>=angle 90]
    (01.east) edge node[above] {$r(n)$} (11.west); 
    \path[->,font=\scriptsize,>=angle 90]
    (01.south) edge node[right] {${}$} (00.north);
    \path[->,font=\scriptsize,>=angle 90]
    (11.south) edge node[right] {${}$} (10.north);
    \end{tikzpicture}
\end{equation}

Pushing forward along $r(n)$, we obtain a dg-functor
\begin{equation} \label{eqn:push from G(n) to shifted A1}
  \sB^+(n)
    =
  \Coh \bigt{\bbA^1_{s(n)}[-1]}
    \to 
  \Coh (\bbA^1_S[-1]),    
\end{equation}
which lands in the full subcategory $\Coh (\bbA^1_S[-1])_s \subseteq \Coh (\bbA^1_S[-1])$. 
Indeed, $\Coh \bigt{\bbA^1_{s(n)}[-1]}$ is Karoubi-generated by $\bigt{s\to \bbA^1_{s(n)}[-1]}_*\ccO_s$ and this object is sent to $(s\to \bbA^1_S[-1])_*\ccO_s$, which belongs to (and in fact is a Karoubi-generator of) $\Coh(\bbA^1_S[-1])_s$.

\sssec{}

The functoriality of \eqref{eqn:push from G(n) to shifted A1} in $n$ yields the dg-functor
$$
  r(\oo)_*
    :=
  \varinjlim_{\bbN}r(n+1)_*:
  \sB^+(\infty) 
    = 
  \varinjlim_{\bbN} \sB^+(n+1) 
    \to
  \Coh (\bbA^1_S[-1])_s,
$$
which is the second functor used in defining $\oint$.

\sssec{}

We now proceed with the construction of \eqref{eqn:first-arrow}, that is, a dg-functor $\HH(\sB^+/A) \to \sB^+(\infty)$. 
Thanks to the equivalence
$$
  \HH(\sB^+/A)
    \simeq 
  \varinjlim_{\bbN} 
  F_n\HH(\sB^+/A)
$$
and the definition
$$
  \sB^+(\infty)
    =
  \varinjlim_{\bbN} \sB^+(n+1),
$$
it suffices to construct a compatible family of dg-functors
$$
  \xi^+_n : 
  F_n\HH(\sB^+/A)
    \to
  \sB^+(n+2).
$$

\begin{example}
    
To illustrate the main ideas without clutter, we treat the example of $\xi^+_1$ explicitly. This amounts to defining a dg-functor
\begin{equation*}
\begin{tikzcd}
 \colim \Big(
    \sB^+ 
    \otimes_A
    \sB^+
    \arrow[r,swap,shift right=2, "\on m^{\rev}"]
    \arrow[r,shift left=2, "\on m" ]
    &
    \sB^+ \Big)
     \arrow[r, "\xi_1^+"]
    &
    \sB^+(3).
\end{tikzcd}
\end{equation*}
In this special case we do not need Theorem \ref{thm: master homotopies}, hence we are able to land in $\sB^+(2)$. 
Concretely, then, we will define a dg-functor $\xi_{1,0}^+: \sB^+ \to \sB^+(2)$, together with a homotopy between $\xi_{1,0}^+ \circ \on{m}$ and $\xi_{1,0}^+ \circ \on{m}^{\rev}$.

\sssec*{The dg-functor $\xi^+_{1,0}$}

In a nutshell, this is induced by pull-push along the correspondence $G \xleftarrow{\delta_{s/S}} s \xto{j(2)} s(2)$. Let us explain this in detail. Pullback along $\delta_{s/S} : s \to G$ yields a dg-functor
$$
\delta_{s/S}^{*,\enh}:
\sB^+ \to \Mod_E(\Cohminus(s)),
$$
where 
$$
  E:= 
  \End_G(k)
    := 
  \pr{1*}
  \ul\Hom_G(k,k)
    \simeq
  \pr{2*}
  \ul\Hom_G(k,k),
$$
the equivalence on the right being a consequence of the projection formula (after having written a natural algebra map between the two).
In fact, $E \simeq \ccO_s[u]$ with $u$ a free variable in degree $2$.
The pushforward along $j(2)$ induces a dg-functor
$$
  j(2)_{*}^{\enh}:
  \Mod_{\ccO_s[u]}\bigt{ \Cohminus(s)}
    \to 
  \Mod_{\ccO_{s(2)}[u]}\bigt{ \Cohminus(s(2)) }.
$$
Composing the two dg-functors above, we obtain 
$$
  \sB^+
    \to  
  \Mod_{\ccO_{s(2)}[u]}\bigt{ \Cohminus(s(2)) }
$$
which is easily seen to land in the full subcategory
$$
\Mod^{\etp}_{\ccO_{s(2)}[u]}\bigt{ \Cohminus(s(2)) }.
$$
By Lemma \ref{lem: Coh(A1[-1]) vs Cohminus etp}, the latter is equivalent to $\sB^+(2)$. This completes the construction of $\xi^+_{1,0}$.

\sssec*{The homotopy, part 1}

Our next task is to provide a homotopy $\xi_{1,0}^+ \circ \on{m} \simeq \xi_{1,0}^+ \circ \on{m}^{\rev}$. To this end, consider the following commutative diagram, with cartesian squares:
\begin{equation*}
    \begin{tikzcd}
     G
      \rar["\sigma"]
      \arrow[d]
      &
      G
      \rar["\pr{1}"]
      \arrow[d]
        &
      s
      \rar["j(n+2)"]
\dar["\delta_{s/S}"]
        &
      s(n+2)
      \\
         G \times_{\pr1, s, \pr2} G
     \dar["\can"]
      \rar
      &
      G \times_{\pr2, s, \pr1} G
      \dar["\can"]
      \rar["\pr{1,3}"]
        &
      G
        &
      \\
      G \times_S G
      \rar["\rho"]
      &
      G \times_S G.
        &
        &
    \end{tikzcd}
\end{equation*}
There are two compositions $G \to G \times_S G$, the left vertical one and the central one. We denote them by $\tdelta_\sigma$ and $\tdelta$, respectively.\footnote{We observe that these maps are the same, both equivalent to
$$
  G 
    = 
  s \times_S s
    \xto{\pr1 \times \pr2 \times \pr2 \times \pr1}
  s \times_S
  s \times_S
  s \times_S
  s.
$$
We keep different notations for a reason that will become clear momentarily.}
Arguing as above, we see that pull-push along 
$
  G \times_S G 
    \xleftarrow{\tdelta}
  G 
  \xto{\pr1} s
$
induces a dg-functor
$$
  \Cohext(G \times_S G)
    \xto{\tdelta^{*,\enh}} 
  \Mod_{\pr1^*(E)}(\Cohminus(G))
    \xto{\pr{1*}^{\enh}}
  \Mod_E(\Cohminus(s)).
$$
To be totally explicit, $\pr{1*}^{\enh}$ is the composition 
$$
  \Mod_{\pr1^*(E)}(\Cohminus(G))
    \xto{\pr{1*}}
  \Mod_{\pr{1*}\pr1^*(E)}(\Cohminus(s))
    \xto{\mathit{forget}}
  \Mod_E(\Cohminus(s)).
$$
Symmetrically, using the fact that $\pr1 \circ \sigma = \pr2$, the correspondence 
$
  G \times_S G 
    \xleftarrow{\tdelta_\sigma}
  G 
    \xto{\pr2}
  s
$ induces a dg-functor
$$
  \Cohext(G \times_S G)
    \xto{(\tdelta_\sigma)^{*,\enh}} 
  \Mod_{\pr2^*(E)}(\Cohminus(G))
    \xto{\pr{2*}^{\enh}}
  \Mod_E(\Cohminus(s)).
$$
Chasing along the above diagram yields natural transformations
$$
  \xi_{1,0}^+ \circ \on{m}
    \simeq 
  j(2)_*^{\enh}
    \circ 
  \pr{1*}^{\enh}
    \circ 
  (\tdelta)^{*,\enh}
$$
$$
  \xi_{1,0}^+ \circ \on{m}^{\rev}
    \simeq 
  j(2)_*^{\enh}
    \circ 
  \pr{1*}^{\enh}
    \circ 
  (\tdelta)^{*,\enh}
    \circ 
  \rho_*.
$$

\sssec*{The homotopy, part 2}

It follows from base-change that 
$$
  (\tdelta)^{*,\enh}
    \circ 
  \rho_*
    \simeq 
  (\tdelta_\sigma)^{*,\enh}.
$$
This is the reason why we kept distinct notations for $\tdelta_\sigma$ and $\tdelta$: their enhanced pullbacks are a priori distinct. However, we now show that $(\tdelta_\sigma)^{*,\enh}
\simeq 
(\tdelta)^{*,\enh}$.
For this, it suffices to prove that the algebras $\pr1^*(E)$ and $\pr2^*(E)$ are equivalent. Consider the cartesian diagrams
\begin{equation} 
\label{eqn:base-change-example for B(2)}
    \begin{tikzcd}
      G
      \rar["\pr{1}"]
      \arrow[d]
        &
      s
      \rar["i"]
\dar["\delta_{s/S}"]
        &
      S \arrow[d]
      \\
      G \times_{\pr2, s, \pr1} G
      \rar["\pr{1,3}"]
        &
      G \rar
        &
        \bbA^1_S[-1].
    \end{tikzcd}
\end{equation}
The fiber square on the right shows that $E \simeq i^*(E_{\comm})$ where 
$$
E_{\comm} := \End_{\bbA^1_S[-1]}(\ccO_S).
$$
Hence,
$$
  \pr1^*(E) 
    \simeq 
  \pr1^* i^*(E_{\comm})
    \simeq 
  \pr2^* i^*(E_{\comm})
    \simeq 
  \pr2^*(E).
$$
Under the induced equivalence
$$
  \tau: 
  \Mod_{\pr1^*(E)}(\Cohminus(G))
    \xto{\simeq} 
  \Mod_{\pr2^*(E)}(\Cohminus(G)),
$$
the two functors $(\tdelta)^{*,\enh}$ and $(\tdelta_\sigma)^{*,\enh}$ are identified.

\sssec*{The homotopy, part 3}

Combining everything together, we obtain 
$$
  \xi_{1,0}^+ \circ \on{m}^{\rev}
    \simeq 
  j(2)_*^{\enh}
     \circ 
  \pr{1*}^{\enh}
    \circ 
  (\tdelta)^{*,\enh}
    \circ 
  \rho_*
    \simeq 
  j(2)_*^{\enh}
    \circ 
  \pr{1*}^{\enh}
    \circ
  (\tdelta)^{*,\enh}
    \simeq 
  \xi_{1,0}^+ \circ \on{m}.
$$

\end{example}

\sssec{}

Now we treat the general case, which is not much different from the previous case. 
To begin with, we construct a dg-functor
$$
\xi^+_{n,m}:
  \sB^{+,\otimes_Am+1}
    \to
  \sB^+(n+2)
$$
for each $0\leq m \leq n$.
Consider the commutative diagram
\begin{equation*}
    \begin{tikzcd}
      s^{\times_Sm+1}
      \rar["\pr{1}"]
      \arrow[dd, swap, bend right = 3.5cm, "\tdelta"]
      \arrow[d]
        &
      s
      \rar["j(n+2)"]
      \dar["\delta_{s/S}"]
        &
      s(n+2)
      \\
      G^{\times_s m+1}
        \simeq
      s^{\times_S m+2}
      \dar["\can"]
      \rar["\pr{1,m+2}"]
        &
      G
        \simeq
      s\times_Ss
        &
      \\
      G^{\times_S m+1}
        &
        &
    \end{tikzcd}
\end{equation*}
with cartesian square and set
$$
  \xi^{+}_{n,m}
    = 
  j(n+2)_*^{\enh}
    \circ 
  \pr{1*}^{\enh}
    \circ 
  (\tdelta)^{*,\enh}.
$$

\sssec{}

The similar diagram for $s(n+2)$ shows that $\xi^{+,'}_{n,m}$ factors as
$$
  (\sB^+)^{\otimes_A m+1}
    \simeq
  \Cohext(G^{\times_S m+1})
    \xto{j(n+2)_*\circ \pr{1*} \circ \tdelta^*}
  \Mod_{\ccO_{s(n+2)}[u]}
  \Bigt{
        \Cohminus \bigt{s(n+2)}
        }.
$$
Moreover, it is immediate to see that its essential image lands in
$$
 \Mod^{\etp}_{\ccO_{s(n+2)}[u]}
  \Bigt{
  \Cohminus \bigt{s(n+2)}
 }
 \subseteq 
\Mod_{\ccO_{s(n+2)}[u]}
    \Bigt{
   \Cohminus \bigt{s(n+2)}
    }.
$$
Finally, using Lemma \ref{lem: Coh(A1[-1]) vs Cohminus etp}, we obtain a dg-functor
$$
\xi^{+}_{n,m}:
  (\sB^+)^{\otimes_A m+1}
    \simeq
  \Cohext(G^{\times_S m+1})
    \to
  \sB^+(n+2)
    =
  \Coh(\bbA^1_{s(n+2)}[-1]).
$$

\sssec{}

To make the proof of the following crucial lemma as clear as possible, we explicitly write the dg-functors that appear in the diagram
$$
  \on{Bar}_n(\sB^+/A):
  (\Delta^{\leq n}_{\on{inj}})^{\op}
    \to 
  \dgCat_A.
$$
Let $0\leq m \leq n$ and $k\leq n-m$.
Then there are exactly $\prod_{i=1}^{k}(m+i+1)!$ dg-functors
$$
  \sB^{+,\otimes_A m+k+1}
    \to
  \sB^{+,\otimes_A m+1}
$$
to consider, namely the compositions
\begin{equation*}
    \begin{tikzcd}
    \sB^{+,\otimes_A m+k+1}
    \arrow[r,shift right=3]
    \arrow[r, draw=none, "\raisebox{+0.5ex}{\dots}" description]
    \arrow[r,shift left=3]
    &
    \sB^{+,\otimes_A m+k}
    \arrow[r,shift right=3]
    \arrow[r, draw=none, "\raisebox{+0.5ex}{\dots}" description]
    \arrow[r,shift left=3]
    &
    \cdots
    \arrow[r,shift right=3]
    \arrow[r, draw=none, "\raisebox{+0.5ex}{\dots}" description]
    \arrow[r,shift left=3]
    &
    \sB^{+,\otimes_A m+2}
    \arrow[r,shift right=3]
    \arrow[r, draw=none, "\raisebox{+0.5ex}{\dots}" description]
    \arrow[r,shift left=3]
    &
    \sB^{+,\otimes_A m+1}.
    \end{tikzcd}
\end{equation*}
Indeed, for each $\ul d = (d_{m+1},\dots,d_{m+k})\in \prod_{s=m+1}^{m+k}[s]$ we have a composition
$$
  \sB^{+,\otimes_A m+k+1}
    \xto{\on{m}_{d_{m+k}}}
  \sB^{+,\otimes_A m+k}
    \xto{\on{m}_{d_{m+k-1}}}
  \dots
    \xto{\on{m}_{d_{m+2}}}
  \sB^{+,\otimes_A m+2}
    \xto{\on{m}_{d_{m+1}}}
  \sB^{+,\otimes_A m+1}.
$$
Here, for $l\leq k$ and $d\in [m+l]$, the dg-functor
$$
  \sB^{+,\otimes_A m+l+1}
    =
  \Cohext(G^{\times_Sm+l+1})
    \xto{\on{m}_{d}}
  \sB^{+,\otimes_A m+l}
    =
  \Cohext(G^{\times_Sm+l})
$$
is given by pull-push along the following correspondence:
$$
  G^{\times_Sm+l+1}
    \leftarrow
  V(u_{d+1}^{(\star)}-t_{\rho_{m+l+1}(d+1)}^{(\star)})
    \to
  G^{\times_S m+l}.
$$
If $0\leq d\leq m+l-1$,
the second arrow
corresponds to 
$$
  \underbrace{G^{\times_Sd}\times_S (s\times_Ss\times_Ss)\times_S G^{\times_Sm+l-d-1}}_{
    \simeq 
    \; 
  V(u_d^{(\star)}-t_{\rho_{m+l+1}(d)}^{(\star)})
  }
    \xto{\id \times_S \pr{13}\times_S \id}
  \underbrace{G^{\times_Sd}\times_S (s\times_Ss)\times_S G^{\times_Sm+l-d-1}}_{
    \simeq 
    \;
  G^{\times_S m+l}}.
$$
If $d=m+l$, it corresponds to
$$
V(u_{m+l+1}^{(\star)}-t_{1}^{(\star)})
\xto{\vartheta^{(m+l)}}
G^{\times_Sm+l},
$$
where $\vartheta^{(m+l)}$ is defined in Remark \ref{rmk: morphims f^(c)_r}.

\begin{lem} \label{lem:defn of xi-n-plus}
For any $n \geq 0$, the dg-functors $\xi^+_{n,m}$ (with $0 \leq m\leq n$) induce a natural transformation
\begin{equation}\label{eqn: xi_n^+}
\xi_n^+:\on{Bar}_n(\sB^+/A) \to \sB^+(n+2)
\end{equation}
of functors $(\Delta^{\leq n}_{\on{inj}})^{\op}\to \dgCat_A$.
These natural transformations are functorial in $n$, hence they yield a dg-functor 
$
  \xi^+ 
    := 
  \colim_{\bbN} \xi_n^+: 
  \HH(\sB^+/A) 
    \to 
  \sB^+(\infty)$.
\end{lem}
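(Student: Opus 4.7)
The plan is to translate the strict simplicial homotopies constructed in Section \ref{sec: the E_2 structure on G} into a homotopy-coherent natural transformation in $\dgCat_A$. First, I would rewrite each $\xi^+_{n,m}$ explicitly as pull-push along the correspondence
\[
  G^{\times_S m+1}
    \xleftarrow{\tdelta}
  s^{\times_S m+1}
    \xto{\pr{1}}
  s
    \xto{j(n+2)}
  s(n+2),
\]
post-composed with the equivalence of Lemma \ref{lem: Coh(A1[-1]) vs Cohminus etp}. This presentation makes transparent that the obstruction to assembling the $\xi^+_{n,m}$ into a cone on $\sB^+(n+2)$ is the failure of the various compositions $s^{\times_S m+k+1} \to s$ arising from the iterated face maps $\on{m}_{d_{m+1}} \circ \cdots \circ \on{m}_{d_{m+k}}$ in $\on{Bar}_n(\sB^+/A)$ to agree on the nose; they only coincide after post-composition with $j(n+2)$.

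Theorem \ref{thm: master homotopies} supplies precisely the needed higher homotopies. For each tuple $\ul d$, the simplicial morphism $\scrH^{(k)}_m : \Delta^k \otimes (\Delta^m \otimes k_A(n+2)) \to k_A^{\otimes_A m+k+1}$ encodes a $(k+m)$-parameter family of $S$-morphisms $s^{\times_S m+k+1} \to s(n+2)$, where the $\Delta^k$-direction interpolates between the $k+1$ stages of the iterated multiplication along $\ul d$, and the $\Delta^m$-direction supplies the cyclic-rotation coherence already encoded by Theorem \ref{thm: the homotopy}. Applying $j(n+2)_*^{\enh} \circ \pr{1*}^{\enh} \circ \tdelta^{*,\enh}$ to these geometric homotopies and invoking base-change and the projection formula, following the pattern spelled out in the model calculation of $\xi^+_{1,0}$, produces for each $\ul d$ an equivalence $\xi^+_{n,m+k} \simeq \xi^+_{n,m} \circ \on{m}_{d_{m+1}} \circ \cdots \circ \on{m}_{d_{m+k}}$ in $\dgCat_A$.

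To upgrade these individual equivalences into a genuine natural transformation $\xi_n^+$ of $(\Delta^{\leq n}_{\on{inj}})^{\op}$-diagrams, I would use the compatibility between homotopies at levels $k$ and $k-1$ encoded by the commuting diagram \eqref{eqn: H^(k-1)_m vs H^(k)_m}: it states exactly that the master homotopy at level $k$, restricted to the top face of $\Delta^k$, factors as the master homotopy at level $k-1$ followed by $f^{(m+k)}_{d_{m+k}}$, which is the cone-coherence required for the face maps in $\on{Bar}_n(\sB^+/A)$. Functoriality in $n$ proceeds by the same logic: the canonical dg-functor $\sB^+(n+2) \to \sB^+(n+3)$ is induced by $\bbA^1_{s(n+2)}[-1] \hookrightarrow \bbA^1_{s(n+3)}[-1]$, and the compatibility of $\xi_n^+$ with $\xi_{n+1}^+$ reduces to the observation that the definition of $\scrH^{(k)}_m$ depends on $n$ only through the explicit factor $\pi^{n-m-k}$, so the compatibility is automatic. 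Passing to the colimit along $\bbN$ then yields $\xi^+ : \HH(\sB^+/A) \to \sB^+(\infty)$.

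The principal obstacle I anticipate is the homotopy-coherent bookkeeping needed to promote the strict simplicial data of Theorem \ref{thm: master homotopies} into natural transformations in the $\infty$-category $\dgCat_A$: one must carefully track the enhanced $E$-module structures on the pullbacks and the base-change equivalences invoked in each pull-push step, and verify that the higher simplices of $\scrH^{(k)}_m$ produce the required higher coherences. The three-step construction explicitly carried out for $\xi^+_{1,0}$ in the excerpt is the template; the general argument amounts to a careful iteration of that pattern, with the combinatorics of the face-map compatibilities packaged by \eqref{eqn: H^(k-1)_m vs H^(k)_m}.
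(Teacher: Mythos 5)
Your proposal is correct and follows essentially the same route as the paper: present each $\xi^+_{n,m}$ as enhanced pull-push along $G^{\times_S m+1}\leftarrow s^{\times_S m+1}\to s\to s(n+2)$, feed the strict simplicial homotopies of Theorem \ref{thm: master homotopies} (together with the compatibility square \eqref{eqn: H^(k-1)_m vs H^(k)_m}) through $\Cohminus(-)$ to obtain the coherent cone on $\on{Bar}_n(\sB^+/A)$, and match the $\ccO_{s(n+2)}[u]$-actions by the base-change argument modelled on the $\xi^+_{1,0}$ computation and \eqref{eqn:base-change-example for B(2)}. Your observation that functoriality in $n$ comes from the $\pi^{n-m-k}$ factor in $\fft(n,m,k,r,\beta)$ is consistent with, and essentially a reformulation of, the compatibility \eqref{eqn: H^(k-1)_m vs H^(k)_m} that the paper invokes for the same purpose.
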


\begin{proof}

For each $\ul d \in \prod_{i=1}^{n}[i]$ we need to supply a coherently homotopic diagram

\begin{equation*}
    \begin{tikzcd}[column sep = 1cm, row sep = 1cm]
    \sB^{+,\otimes_A n+1}
    \rar["\on{m}_{d_{n+1}}"]
    \arrow[rrd]
    &
    \sB^{+,\otimes_A n}
    \rar["\on{m}_{d_{n}}"]
    \arrow[rd]
    &
    \cdots
    \rar["\on{m}_{d_{2}}"]
    &
    \sB^{+,\otimes_A 2}
    \rar["\on{m}_{d_{1}}"]
    \arrow[ld]
    &
    \sB^+
    \arrow[lld]
    \\
    &
    &
    \sB^+(n+2),
    &
    &
    \end{tikzcd}
\end{equation*}
where the downward pointing arrows are the previously defined $\xi^+_{n,m}$.
Theorem \ref{thm: master homotopies} yields the following diagram, commutative up to coherent homotopy:
\begin{equation*}
    \begin{tikzcd}[column sep = 0.5cm]
      G^{\times_S n+1}
      &
      \lar[]
      V(t^{(\star)}_{\rho(d_{n+1})}-u^{(\star)}_{d_{n+1}})
      \ar[dr, phantom, "\square"]
      \rar
      &
      G^{\times_S n}
      &
      \lar
      \cdots
      \rar
      &
      G^{\times_S 2}
      &
      \lar
      V(t^{(\star)}_{\rho(d_{1})}-u^{(\star)}_{d_{1}})
      \rar
      \ar[dr, phantom, "\square"]
      &
      G
      \\
      \uar
      G^{\times_s n+1}
      &
      \uar
      \lar["\id"]
      G^{\times_s n+1}
      \rar
      \ar[dr, phantom, "\square"]
      &
      \uar
      G^{\times_s n}
      &
      \lar
      \cdots
      \rar
      &
      \uar
      G^{\times_s 2}
      &
      \uar
      \lar["\id"]
      G^{\times_s 2}
      \rar
      \ar[dr, phantom, "\square"]
      &
      \uar
      G
      \\
      \uar
      s^{\times_S n+1}
      \dar
      &
      \uar
      \lar["\id"]
      s^{\times_S n+1}
      \rar
      \dar
      &
      \uar
      s^{\times_S n}
      \dar
      &
      \lar
      \cdots
      \rar
      &
      \uar
      s^{\times_S 2}
      \dar
      &
      \uar
      \lar["\id"]
      s^{\times_S 2}
      \rar
      \dar
      &
      \uar
      s
      \dar
      \\
      s(n+2)
      &
      \lar["\id"]
      s(n+2)
      \rar
      &
      s(n+2)
      &
      \lar
      \cdots
      \rar
      &
      s(n+2)
      &
      \lar["\id"]
      s(n+2)
      \rar
      &
      s(n+2)
      \end{tikzcd}
\end{equation*}
where the squares with the symbol $\square$ in the middle are derived cartesian. Applying $\Cohminus(-)$, we immediately obtain a commutative diagram
\begin{equation*}
    \begin{tikzcd}[column sep = 1cm, row sep = 1cm]
    \sB^{+,\otimes_A n+1}
    \rar["\on{m}_{d_{n+1}}"]
    \arrow[rrd,swap,"\xi^+_{n,n}"]
    &
    \sB^{+,\otimes_A n}
    \rar["\on{m}_{d_{n}}"]
    \arrow[rd,"\xi^+_{n,n-1}"]
    &
    \cdots
    \rar["\on{m}_{d_{2}}"]
    &
    \sB^{+,\otimes_A 2}
    \rar["\on{m}_{d_{1}}"]
    \arrow[ld,swap,"\xi^+_{n,1}"]
    &
    \sB^+
    \arrow[lld,"\xi^+_{n,0}"]
    \\
    &
    &
    \Cohminus \bigt{s(n+2)},
    &
    &
    \end{tikzcd}
\end{equation*}
where the downward pointing arrows are the $\xi^{+}_{n,m}$ with the action of $\ccO_{s(n+2)}[u]$ forgotten. 
To see that such actions match, recall thier origin: they come from pullback along the middle vertical arrows, and the fact that such arrows are base-changes of $s \to G$.
Then the assertion follows from the fact that the latter map is a base-change of $S \to \bbA^1_S[-1]$ as in \eqref{eqn:base-change-example for B(2)}.

\end{proof}

\begin{rmk}\label{rmk: composition B^+-->HH(B^+/A)-->Coh(A^1_S[-1])}

By construction and Lemma \ref{lem: Coh(A1[-1]) vs Cohminus etp}, the dg-functor
  $$
    \xi^+_0:
    F_0\HH(\sB^*/A)
      =
    \sB^+
      \to
    \sB^+(2)
  $$
  identifies with
  $$
    (\bbA^1_s[-1]\to \bbA^1_{s(2)}[-1])_*:
    \Coh(G)
        \simeq
    \Coh(\bbA^1_{s}[-1])
        \to
    \Coh(\bbA^1_{s(2)}[-1]).
  $$
  In particular, the composition
  $$
    \sB^+
      \xto{\xi^+_0}
    \sB^+(2)
      \xto{r(2)_*}
    \Coh(\bbA^1_S[-1])
  $$
  is just the pushforward along $G\simeq \bbA^1_s[-1]\to \bbA^1_S[-1]$.
\end{rmk}

\subsection{The construction of the functor \texorpdfstring{$\oint$}{oint}} \label{ssec:oint-construction II}

We use the above construction of $\oint^+$ to define the dg-functor
$$
  \oint: 
  \HH(\sB/A) 
    \to 
  \Sing(\bbA^1_S[-1])_s
$$
as the composition
$$
  \HH(\sB/A)
    \xto{\;\;\xi\;\;} 
  \sB(\infty)
    \xto{\;\;r(\infty)_*\;\;}
  \Sing(\bbA^1_S[-1])_s.
$$

\sssec{}

Let $n\geq 1$ and denote by $\sB^+_{\on{pe}}(n)$ the full subcategory $\Angles{\pr{}^*\Coh(\bbA^1_{s(n)})}$ of 
$
  \sB^+(n)
    =
  \Coh(\bbA^1_{s(n)}[-1])
$ 
(see Section \ref{sssec: pr^*Coh}).
A routine computation shows that \eqref{eqn:functor Bplus(n)} restricts to a functor
$
  n 
    \mapsto 
  \sB^+_{\on{pe}}(n)
$.
Hence, we can consider the colimit dg-category 
$$
  \sB^+_{\on{pe}}(\infty) 
    := 
  \varinjlim_{\bbN} 
    \;
  \sB^+_{\on{pe}}(n+1),
$$
which is a full subcategory of $\sB^+(\infty)$.

\sssec{} 

Define $\sB(n)$ as the quotient
$$
  \sB(n)
    := 
  \frac{\sB^+(n)}{\sB^+_{\on{pe}}(n)}
    \in 
  \dgCat_A.
$$
We obtain a functor 
$
  \sB(\bullet): 
  \bbN_{\geq 1} 
    \mapsto 
  \dgCat_A
$ 
and set
$$
  \sB(\infty)
    :=
  \varinjlim_{\bbN} 
    \;
  \sB(n+1).
$$

\begin{lem}\label{lem: B(oo)=B^+(oo)/B_pe(oo)}
The naturally defined dg-functor
$$
  \sB(\infty)
    \to 
  \frac
       {
        \sB^+(\infty)
       }
       {
        \sB^+_{\on{pe}}(\infty)
       }
$$
is an equivalence. 
\end{lem}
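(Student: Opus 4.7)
The plan is to realize the dg-quotient as a pushout in $\dgCat_A$ and then exchange the two colimits.

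First, I would recall from Toen's treatment of dg-localizations (\cite{toen07}) that for any full inclusion $\ccT_0 \hto \ccT$ in $\dgCat_A$, the dg-quotient $\ccT/\ccT_0$ fits into a pushout square
\begin{equation*}
    \begin{tikzcd}
    \ccT_0 \rar \dar & \ccT \dar \\
    0 \rar & \ccT/\ccT_0
    \end{tikzcd}
\end{equation*}
where $0$ denotes the terminal (and initial) dg-category. Indeed, a dg-functor $\ccT \to \ccU$ sends every morphism in $W_{\ccT_0}$ to an equivalence if and only if it sends every object of $\ccT_0$ to a zero object of $\ccU$: in one direction, if $X \in \ccT_0$, the morphism $0 \to X$ lies in $W_{\ccT_0}$ (its cone being $X$ itself), so must become an equivalence, forcing the image of $X$ to be zero; in the other direction, if objects of $\ccT_0$ go to zero, any morphism whose cone lies in $\ccT_0$ acquires a zero cone and hence becomes an equivalence.

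Applying this principle levelwise to the inclusions $\sB^+_{\on{pe}}(n+1) \hto \sB^+(n+1)$, I obtain a natural $\bbN$-indexed diagram of pushout squares whose right-lower vertex computes $\sB(n+1)$. Since colimits commute with colimits in $\dgCat_A$, passing to the filtered colimit over $\bbN$ yields the pushout square
\begin{equation*}
    \begin{tikzcd}
    \sB^+_{\on{pe}}(\infty) \rar \dar & \sB^+(\infty) \dar \\
    0 \rar & \sB(\infty),
    \end{tikzcd}
\end{equation*}
which exhibits $\sB(\infty)$ as the dg-quotient of $\sB^+(\infty)$ by $\sB^+_{\on{pe}}(\infty)$, as required.

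The argument is essentially formal; no serious obstacle arises. The only mild point to verify is that $\sB^+_{\on{pe}}(\infty) \to \sB^+(\infty)$ is genuinely a full embedding (so that it makes sense to speak of the dg-quotient on the right-hand side of the statement), which follows from the preservation of fully faithful dg-functors under filtered colimits in $\dgCat_A$.
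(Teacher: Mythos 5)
Your proof is correct and takes essentially the same route as the paper: the paper notes that the quotient functor $\Fun(\Delta^1,\dgCat_A)\to\dgCat_A$, $(\sU'\to\sU)\mapsto\sU/\sU'$, is left adjoint to $\sU\mapsto(0\to\sU)$ and hence commutes with colimits, which is precisely your identification of the dg-quotient with the pushout along $\ccT_0\to 0$ followed by commuting the filtered colimit with this pushout. The extra details you supply (that inverting $W_{\ccT_0}$ amounts to killing the objects of $\ccT_0$, and that fully faithful embeddings persist in the filtered colimit) are fine and are implicit in the paper's adjunction argument.
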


\begin{proof}
Unraveling the definitions, we need to show that the dg-functor
$$
  \varinjlim_{\bbN}
    \biggt{
           \frac{\sB^+(n+1)}{\sB^+_{\on{pe}}(n+1)}
           }
    \to 
  \frac{
        \Bigt{\varinjlim_{\bbN}\sB^+(n+1)}
        }
       {
        \Bigt{\varinjlim_{\bbN}\sB^+_{\on{pe}}(n+1)}
        }
$$
is an equivalence.
Since the quotient functor
$$
  \Fun (\Delta^1,\dgCat_A)
    \to 
  \dgCat_A
    \;\;\;\; 
  (\sU' \to \sU) 
    \mapsto 
  \sU/\sU'
$$
is left adjoint to
$$
  \dgCat_A 
    \to 
  \Fun (\Delta^1,\dgCat_A), 
    \;\;\;\; 
  \sU 
    \mapsto 
  (0\to \sU),
$$
it commutes with colimits.
Thus, the claim follows.
\end{proof}

\sssec{}

Having defined $\sB(n)$ and $\sB(\infty)$, we show that 
$$
r(\oo)_*
:
\sB^+(\infty) 
\to
\Coh (\bbA^1_S[-1])_s
$$
descends to an analogous dg-functor (denoted in the same way) at the level of singularity categories. 

\begin{lem}
The following square commutes:
\begin{equation*}
    \begin{tikzcd}
      \sB^+_{\on{pe}}(\infty)
      \rar["r(\oo)_*"]
      \dar[hook]
      &
      \Perf(\bbA^1_S[-1])_s
      \dar[hook]
      \\
      \sB^+(\infty)
      \rar["r(\oo)_*"]
      &
      \Coh(\bbA^1_S[-1])_s.
    \end{tikzcd}
\end{equation*}
In particular, $r(\oo)_*:\sB^+(\oo)\to \Coh(\bbA^1_S[-1])_s$ induces a dg-functor
$$
r(\oo)_*:
\sB(\oo)
\to 
\Sing(\bbA^1_S[-1])_s.
$$
\end{lem}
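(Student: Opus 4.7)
The plan is to check the square at each finite level $n$ and then pass to the colimit, and the key geometric input will be a proper-base-change identity combined with the fact that $A/\pi^n$ is $A$-perfect.

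First I would fix $n \geq 1$ and observe that there is a cartesian square
\begin{equation*}
\begin{tikzcd}
\bbA^1_{s(n)}[-1] \rar["r(n)"] \dar["\pi_n"] & \bbA^1_S[-1] \dar["\pi_S"] \\
s(n) \rar["j_n"] & S,
\end{tikzcd}
\end{equation*}
because $\bbA^1_{s(n)}[-1] \simeq s(n) \times_S \bbA^1_S[-1]$. Since $j_n$ is a closed embedding, base change yields an equivalence $r(n)_* \pi_n^* \simeq \pi_S^* j_{n*}$. Recall now that by definition (Section \ref{sssec: pr^*Coh}) the dg-category $\sB^+_{\on{pe}}(n) = \langle \pi_n^* \Coh(s(n)) \rangle$ is Karoubi-generated by objects of the form $\pi_n^* E$ with $E \in \Coh(s(n))$, so it suffices to show that $\pi_S^* j_{n*}(E)$ belongs to $\Perf(\bbA^1_S[-1])_s$ for every such $E$.

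Next I would verify that $j_{n*}\colon \Coh(s(n)) \to \Coh(S)$ factors through $\Perf(S)_s$. Indeed, $s(n) = \Spec(A/\pi^n)$ is a classical affine scheme whose structure sheaf admits the two-term $A$-free resolution $A \xrightarrow{\pi^n} A$, so every finitely generated $A/\pi^n$-module (and hence every object of $\Coh(s(n))$) has a finite resolution by finite free $A$-modules; at the level of topological spaces $s(n)$ coincides with $s$, so the support condition is automatic. Applying the exact and symmetric-monoidal functor $\pi_S^*$ then lands in $\Perf(\bbA^1_S[-1])_s$ (support is preserved because $\pi_S^{-1}(s) = \bbA^1_s[-1]$ has underlying space $s$). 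Since $r(n)_*$ is a dg-functor, the essential image of the Karoubi closure $\sB^+_{\on{pe}}(n)$ lies inside the Karoubi-closed full subcategory $\Perf(\bbA^1_S[-1])_s \subseteq \Coh(\bbA^1_S[-1])_s$, which proves the square commutes at finite level.

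Finally I would pass to colimits: the functoriality of the whole construction in $n$ and the compatibility of Karoubi closures with filtered colimits in $\dgCat_A$ give the commutative square as stated. For the second assertion, the commutativity of the square means that $r(\infty)_*$ respects the localizing pair $\sB^+_{\on{pe}}(\infty) \subseteq \sB^+(\infty)$ with its image pair $\Perf(\bbA^1_S[-1])_s \subseteq \Coh(\bbA^1_S[-1])_s$, hence descends to the dg-quotients; combining this with Lemma \ref{lem: B(oo)=B^+(oo)/B_pe(oo)}, which identifies $\sB(\infty)$ with $\sB^+(\infty)/\sB^+_{\on{pe}}(\infty)$, and with the definition $\Sing(\bbA^1_S[-1])_s := \Coh(\bbA^1_S[-1])_s/\Perf(\bbA^1_S[-1])_s$, produces the desired dg-functor $r(\infty)_*\colon \sB(\infty) \to \Sing(\bbA^1_S[-1])_s$. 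I do not expect a real obstacle here: the whole argument is a one-line base-change computation together with the elementary fact that $A/\pi^n$ is $A$-perfect; the only point where one must pay attention is the Karoubi-closure of $\Perf(\bbA^1_S[-1])_s$, which is automatic, and the commutation of dg-quotients with filtered colimits, already recorded in the proof of Lemma \ref{lem: B(oo)=B^+(oo)/B_pe(oo)}.
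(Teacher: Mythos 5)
Your proof is correct and follows essentially the same route as the paper: base change along the cartesian square $\bbA^1_{s(n)}[-1] \simeq s(n)\times_S \bbA^1_S[-1]$ identifies $r(n)_*$ applied to $\sB^+_{\on{pe}}(n)$ with $\pr{}^*$ of pushforwards from $s(n)$ to $S$, which are perfect (the paper simply invokes regularity of $S$, which is also the honest justification for your claim that every object of $\Coh(s(n))$ is $A$-perfect — the two-term resolution of $A/\pi^n$ alone does not literally imply it), and then one passes to the colimit and descends to the quotients via Lemma \ref{lem: B(oo)=B^+(oo)/B_pe(oo)}.
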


\begin{proof}
By Lemma \ref{lem: B(oo)=B^+(oo)/B_pe(oo)} it suffices to check that \eqref{eqn:push from G(n) to shifted A1} sends $\sB^+_{\on{pe}}(n)$ to $\Perf(\bbA^1_S[-1])$. 
This follows immediately from the definitions and the fiber square
\begin{equation*}
    \begin{tikzcd}
      \bbA^1_{s(n)}[-1] 
        \arrow[d,"r(n)"] 
        \arrow[r,"\pr{}"] 
        & 
      s(n)  
        \arrow[d,"i(n)"] 
        \\
      \bbA^1_S[-1]
        \arrow[r,"\pr{}"] 
        &  
      S.
    \end{tikzcd}
\end{equation*}
Then $r(n)_*\pr{1}^*E \simeq \pr{1}^*i(n)_*E$, which is perfect since $S$ is regular. The second statement follows readily from the first and the previous lemma.
\end{proof}

\sssec{}

We now proceed with the construction of $\xi: \HH(\sB/A) \to \sB(\infty)$. 
As in the case of $\xi^+$, it suffices to construct a compatible family of dg-functors
$$
  \xi_n : 
  F_n\HH(\sB/A)
    \to
  \sB(n+2).
$$
Equivalently, we will construct a family of compatible natural transformations
\begin{equation}\label{eqn: xi_n}
  \xi_n:\on{Bar}_n(\sB/A) 
    \to 
  \sB(n+2),
\end{equation}
where $\sB(n+2)$ is regarded as a (constant) functor 
$
  (\Delta^{\leq n}_{\on{inj}})^{\op}
    \to 
  \dgCat_A.
$

\begin{cor}

The family of compatible natural transformations \eqref{eqn: xi_n^+} induces a family of compatible natural transformations \eqref{eqn: xi_n}.
\end{cor}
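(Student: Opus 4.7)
The plan is to invoke the universal property of dg-quotients to descend each $\xi^+_{n,m}$ to a dg-functor $\xi_{n,m}: \sB^{\otimes_A m+1} \to \sB(n+2)$, from which the required natural transformation $\xi_n$ assembles automatically. First, I would observe that, by Corollary \ref{cor: nodes Bar(B/A)} and the definition of $\sB(n+2)$, the canonical dg-functors
$$
\sB^{+,\otimes_A m+1} \longto \sB^{\otimes_A m+1}, \qquad \sB^+(n+2) \longto \sB(n+2)
$$
are the dg-quotients by the full subcategories $\Perfext(G^{\times_S m+1})$ and $\sB^+_{\on{pe}}(n+2)$, respectively. Hence, to produce the desired $\xi_{n,m}$, it suffices to verify that each $\xi^+_{n,m}$ sends $\Perfext(G^{\times_S m+1})$ into $\sB^+_{\on{pe}}(n+2)$. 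Once this containment is established, the compatibility of the $\xi_{n,m}$'s with the face and degeneracy maps of $\on{Bar}_n(\sB/A)$, and with the canonical maps $F_n \HH(\sB/A) \to F_{n+1}\HH(\sB/A)$, follows automatically from the analogous compatibilities proved for the $\xi^+_{n,m}$'s in Lemma \ref{lem:defn of xi-n-plus}, by descending the homotopy coherent diagrams to the quotients.

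Next, I would verify the containment on Karoubi-generators. The dg-category $\Perfext(G^{\times_S m+1})$ is Karoubi-generated by the objects $\pr{\hat i}^*(\delta_{s/S}^{\times_S m})_* \ccO_{s^{\times_S m}}$, for $i = 1, \ldots, m+1$. Unwinding $\xi^+_{n,m}$ as pull-push along
$$
G^{\times_S m+1} \xleftarrow{\tdelta} s^{\times_S m+1} \xto{\pr{1}} s \xto{j(n+2)} s(n+2),
$$
and applying base change to the cartesian square
\begin{equation*}
    \begin{tikzcd}
      s^{\times_S i-1} \times_S G \times_S s^{\times_S m-i+1}
      \rar
      \dar
      &
      G^{\times_S m+1}
      \dar["\pr{\hat i}"]
      \\
      s^{\times_S m}
      \rar["\delta_{s/S}^{\times_S m}"]
      &
      G^{\times_S m},
    \end{tikzcd}
\end{equation*}
we find that $\xi^+_{n,m}$ applied to the generator corresponds, under the equivalence of Lemma \ref{lem: Coh(A1[-1]) vs Cohminus etp}, to an $\ccO_{s(n+2)}[u]$-module on which $u$ acts nilpotently. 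Heuristically, the $\ccO_G$-factor in the $i$-th position contributes a trivial $u$-action (since $\ccO_G \simeq \pr{}^* \ccO_s$ under $G = \bbA^1_s[-1]$, placing it in $\sB^+_{\on{pe}}(1)$), and this property is propagated through pullback along $\tdelta$, pushforward along $\pr{1}$, and enhanced pushforward along $j(n+2)$. By Lemma \ref{lem: pr^* vs etp u-tors}, such a module lies in $\sB^+_{\on{pe}}(n+2)$, as desired.

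The hard part will be tracking the $\ccO_{s(n+2)}[u]$-module enhancement precisely through the base change and pushforward operations, to confirm that the $u$-nilpotency coming from the $\ccO_G$-factor survives all the way to the output. This reduces to the functorial statement that the inclusion $\sB^+_{\on{pe}}(\bullet) \hookrightarrow \sB^+(\bullet)$ is stable under the correspondence $G^{\times_S m+1} \leftarrow s^{\times_S m+1} \to s(n+2)$ in a manner compatible with the $u$-action, a verification that becomes routine once one fixes a cofibrant model for the enhanced pullback and pushforward, but which must be carried out carefully since the $\ccO[u]$-structure is the only obstruction to the argument being trivial.
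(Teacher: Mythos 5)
Your reduction is the paper's: descend each $\xi^+_{n,m}$ through the dg-quotients by checking that it sends $\Perfext(G^{\times_S m+1})$ into $\sB^+_{\on{pe}}(n+2)$, with the coherences inherited from Lemma \ref{lem:defn of xi-n-plus} via the universal property of the quotient. Where you diverge is in how the containment is verified, and this is exactly where your write-up stops at a heuristic: you attack the Karoubi-generators $\pr{\hat i}^*(\delta_{s/S}^{\times_S m})_*\ccO_{s^{\times_S m}}$ directly and then concede that tracking the $\ccO_{s(n+2)}[u]$-enhancement through $\tdelta^*$, $\pr{1*}$ and $j(n+2)_*^{\enh}$ is ``the hard part.'' The paper sidesteps that bookkeeping with a factorization trick: each functor $(\sB^+)^{\otimes_A m+1}\to\sB^+(n+2)$ in the truncated bar diagram factors as the iterated multiplication $(\sB^+)^{\otimes_A m+1}\to\sB^+$ followed by $\xi^+_{n,0}:\sB^+\to\sB^+(n+2)$. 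Since $\sB^+$ is Karoubi-generated by its monoidal unit, convolving an object of $\sB^+_{\on{pe}}$ with units stays in $\sB^+_{\on{pe}}$, so $\Perfext(G^{\times_S m+1})$ already lands in $\sB^+_{\on{pe}}$ after the first arrow; one is then left with the single case $m=0$, where a base change along the square comparing $G\simeq\bbA^1_s[-1]\to s$ with $\bbA^1_{s(n+2)}[-1]\to s(n+2)$ identifies the pushforward of $\ccO_G=\pr{}^*\ccO_s$ with $\pr{}^*$ of a coherent sheaf on $s(n+2)$, hence an object of $\sB^+_{\on{pe}}(n+2)$ — no explicit $u$-torsion analysis needed (Lemma \ref{lem: pr^* vs etp u-tors} then plays the role you assign it, but only in this one clean instance). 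Your generator-by-generator route can be completed, but if you want a finished proof you should either carry out the $u$-action tracking you postponed or, more economically, insert this factorization, which closes precisely the step you left open.
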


\begin{proof}

It suffices to show that each of the dg-functors
\begin{equation} \label{eqn:mult-to-Bplus(n)}
  \Cohext(G^{\times_Sm})
    \simeq 
  (\sB^+)^{\otimes_Am}
    \to 
  \sB^+(n+2)
\end{equation}
of the above Lemma \ref{lem:defn of xi-n-plus} induces a dg-functor
$$
  \Perfext(G^{\times_Sm})
    \to 
  \sB^+_{\on{pe}}(n+2).
$$
To see this, recall that \eqref{eqn:mult-to-Bplus(n)} can be decomposed as
$$
  (\sB^+)^{\otimes_Am}
    \to 
  \sB^+ 
    \to 
  \sB^+(n+2).
$$
Since $\sB^+$ is generated by its monoidal unit, it follows that $\Perfext(G^{\times_Sm})$ gets mapped to $\sB^+_{\on{pe}}$ under the first arrow. 
It remains to verify that $\sB^+ \to \sB^+(n+2)$ sends  $\sB^+_{\on{pe}}$ to $\sB^+_{\on{pe}}(n+2)$: this readily follows from the definitions and from base change along the cartesian square
\begin{equation*}
    \begin{tikzcd}
      G
        \arrow[d,"r(n+2)"] 
        \arrow[r,"\pr{1}"] 
        & 
      s
        \arrow[d,"i(n+2)"] 
        \\
      \bbA^1_{s(n+2)}[-1]
        \arrow[r,"\pr{}"] 
        &  
      s(n+2).
    \end{tikzcd}
\end{equation*}
Indeed, under the equivalence of Lemma \ref{lem: Coh(A1[-1]) vs Cohminus etp}, the pushforward along $s \to s(n+2)$ corresponds to the pushforward along $G\simeq \bbA^1_s[-1] \to \bbA^1_{s(n+2)}[-1]$
\end{proof}

\sssec{}

Let 
$
  \xi_n: 
  F_n\HH(\sB/A)
  \to 
  \sB(n+2)
$
denote the dg-functor corresponding to \eqref{eqn: xi_n}.
To summarize, we have defined
$$
  \oint: 
  \HH(\sB/A)
    \to 
  \Sing(\bbA^1_S[-1])_s
$$
as the composition
$$
  \HH(\sB/A)
    \simeq
  \varinjlim_{\bbN}
  F_n \HH(\sB/A)
    \xto{
         \xi:=\varinjlim_{\bbN} \xi_n
         }
  \varinjlim_{\bbN} \sB(n+1)
    =
  \sB(\oo)
    \xto{r(\oo)_*
         }
  \Sing(\bbA^1_S[-1])_s.
$$

\begin{rmk}\label{rmk: composition B-->HH(B/A)-->Sing(A^1_S[-1])}
  It follows immediately from Remark \ref{rmk: composition B^+-->HH(B^+/A)-->Coh(A^1_S[-1])} that the composition
  $$
    \sB
      \to
    \HH(\sB/A)
      \xto{\oint}
    \Sing(\bbA^1_S[-1])_s
  $$
  identifies with the functor induced by the pushforward along 
  $$
  G
    \simeq
  \bbA^1_s[-1]
    \to
  \bbA^1_S[-1].
  $$
\end{rmk}

\subsection{Motivic realization of the Drinfeld cocenter} \label{ssec:realization of HH}

We conclude this section with the following result, which may be regarded as the first main result of this paper.

\begin{thm}\label{thm: M(B) is a retract of M(HH(B/A))}

The morphism of motivic spectra
$$
  \Mv_S(\sB)
    \to 
  \Mv_S \bigt{\HH(\sB/A)}
$$
induced by the canonical dg-functor $\sB \to \HH(\sB/A)$
admits a natural retraction.
In particular, there are natural retractions 
\begin{equation*}
  \begin{tikzcd}
    \bbZ 
      \arrow[r,hook]
      \arrow[d,hook]
      &
    \HK_0\bigt{\HH(\sB/A)}
      \arrow[d,"\chern"]
      \arrow[r,->>] 
      &
    \bbZ
      \arrow[d,hook]
      \\
    \Qell 
      \arrow[r,hook]
      &
      \uH^0_{\et}\Bigt{S,\rl_S\bigt{\HH(\sB/A)}} 
      \arrow[r,->>]
      &
    \Qell.
  \end{tikzcd}
\end{equation*}
\end{thm}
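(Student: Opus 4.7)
The plan is to use the dg-functor $\oint: \HH(\sB/A) \to \Sing(\bbA^1_S[-1])_s$ constructed in Section \ref{ssec:oint-construction II}. Applying the localizing invariant $\Mv_S$ yields a map of motivic spectra
\[
\Mv_S(\oint): \Mv_S\bigt{\HH(\sB/A)} \longto \Mv_S\bigt{\Sing(\bbA^1_S[-1])_s}.
\]
By Remark \ref{rmk: composition B-->HH(B/A)-->Sing(A^1_S[-1])}, the composition $\sB \to \HH(\sB/A) \xto{\oint} \Sing(\bbA^1_S[-1])_s$ identifies with the pushforward
\[
r(1)_*: \sB \simeq \Sing(\bbA^1_s[-1]) \longto \Sing(\bbA^1_S[-1])_s
\]
along the derived closed embedding $r(1): \bbA^1_s[-1] \hto \bbA^1_S[-1]$ that arises by base change from $s \hto S$.

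The crux of the argument is to prove that $\Mv_S(r(1)_*)$ is an equivalence. Granted this, the retraction is defined as $\Mv_S(r(1)_*)^{-1}\circ \Mv_S(\oint)$, and its precomposition with the unit $\Mv_S(\sB) \to \Mv_S(\HH(\sB/A))$ is $\Mv_S(r(1)_*)^{-1}\circ \Mv_S(r(1)_*) = \id$ by construction. To establish the key equivalence, I would invoke \cite[Proposition 4.27]{brtv18}, which on the one hand identifies $\Mv_S\bigt{\Sing(\bbA^1_S[-1])_s} \simeq i_*\BU_s[-1](\beta)$ where $i: s \hto S$. On the other hand, applying the analogous computation over the base $s$ and restricting scalars along $i$ identifies $\Mv_S(\sB) = \Mv_S\bigt{\Sing(\bbA^1_s[-1])}$ with the same motivic spectrum $i_*\BU_s[-1](\beta)$. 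The functoriality of the BRTV identification with respect to the cartesian square $\bbA^1_s[-1] = \bbA^1_S[-1] \times_S s$ then guarantees that $\Mv_S(r(1)_*)$ realizes the canonical equivalence between these two presentations.

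The two corollaries follow by applying exact functors to the retraction of motivic spectra. For the lower row, apply $\pi_0\Hom_{\Shv(S)}(\Qell, \rho^{\ell}(-))$ and use $\uH^0_{\et}(S, \rl_S(\sB)) \simeq \Qell$ (again a consequence of \cite[Proposition 4.27]{brtv18}). For the upper row, apply $\pi_0\Hom_{\SH_S}(\BU_S, -)$: the inclusion $\bbZ \hto \HK_0(\HH(\sB/A))$ is then the composition of $\bbZ \to \HK_0(\sB)$ (class of the monoidal unit of $\sB$) with the canonical $\HK_0(\sB) \to \HK_0(\HH(\sB/A))$, and the retraction arises from $\Mv_S(r(1)_*)^{-1}\circ \Mv_S(\oint)$. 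Compatibility of the two retractions via the non-commutative Chern character $\chern$ is automatic by naturality. The main obstacle is the equivalence $\Mv_S(r(1)_*)$: while it is conceptually clear that both motivic spectra capture ``the same singularity over $s$'', the rigorous verification requires carefully tracking BRTV's explicit identifications as the base varies from $S$ to $s$.
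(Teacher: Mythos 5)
Your proposal is correct and follows essentially the same route as the paper: reduce via the cited Remark to the pushforward $\Sing(\bbA^1_s[-1]) \to \Sing(\bbA^1_S[-1])_s$ and show that its motivic realization is an equivalence, then deduce the two rows by applying $\HK_0$ and $\uH^0_{\et}(S,\rl_S(-))$. The step you flag as delicate is settled in the paper by combining \cite[Proposition 3.30]{brtv18} (rather than Proposition 4.27, which is the $\ell$-adic statement) with the localization sequence $\Sing(\bbA^1_S[-1])_s \hto \Sing(\bbA^1_S[-1]) \to \Sing(\bbA^1_{\eta}[-1])$ and the purity equivalence $i^!\BU_S \simeq \BU_s$, which identifies both source and target with $i_*\BU_s \oplus i_*\BU_s[1]$ so that the map is the identity.
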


\begin{proof}
By Remark \ref{rmk: composition B-->HH(B/A)-->Sing(A^1_S[-1])}, the composition
\begin{equation} \label{eqn:composition in a lemma}
  \sB 
    \to
  \HH(\sB/A)
    \xto{\oint}
  \Sing(\bbA^1_S[-1])_s   
\end{equation}
is homotopic to the dg-functor $\Sing(G) \to \Sing(\bbA^1_S[-1])_s$ induced by pushforward along 
\begin{equation} \label{eqn:from G to shifted A1}
  G
    =
  s\times_Ss
    \to 
  S\times_{\bbA^1_S}S
    =
  \bbA^1_S[-1].
\end{equation}
Let us show that this dg-functor induces the identity map in $\SH_S$.
Thanks to \cite[Proposition 3.30]{brtv18}, we know that 
$
  \Mv_T\bigt{\Sing(\bbA^1_T[-1])}
    \simeq 
  \BU_T \oplus \BU_T[1]
$
whenever $T$ a regular scheme of Krull dimension $\leq 1$.
Let $i:s\hto S$. 
Combining the localization sequence
$$
  \Sing(\bbA^1_S[-1])_s 
    \hto 
  \Sing(\bbA^1_S[-1]) 
    \to 
  \Sing(\bbA^1_{\eta}[-1])
$$
with the purity equivalence $i^!\BU_S \simeq \BU_s$,
we deduce that
$$
  \Mv_S\bigt{\Sing(\bbA^1_S[-1])_s} 
    \simeq 
  i_*\BU_s \oplus i_* \BU_s[1].
$$
On the other hand, the derived $S$-scheme $G$ is equivalent to the derived $S$-scheme $\bbA^1_s[-1]$ and, under this identification, the morphism \eqref{eqn:from G to shifted A1} goes over to the closed embedding $\bbA^1_s[-1] \to \bbA^1_S[-1]$ induced by $i$.
It is then clear that the image of the dg-functor \eqref{eqn:composition in a lemma} under $\Mv_S$ is the identity.
The latter statement of the theorem is an immediate consequence of the former.
\end{proof}

\begin{rmk}
When $S$ is of pure characteristic, there is a dg-functor
$$
  \Sing(\bbA^1_S[-1])_s
    \to 
  \Sing(G)
$$
induced by pushforward along the morphisms $\bbA^1_S[-1]\to G$ induced by the morphism $S\to s$.
Then the composition
$$
  \HH(\sB/A)
    \xto{\oint} 
  \Sing(\bbA^1_S[-1])_s
    \to 
  \Sing(G) 
    = 
  \sB
$$
recovers the dg-functor $\on{m}:\HH(\sB/A)\to \sB$ of Section \ref{sssec: retraction B to HH(B/A) in pure char}.
\end{rmk}

\begin{rmk}
While the above Theorem does not fully confirm the expectations of \TV \footnote{They conjectured that the canonical dg-functor $\sB \to \HH(\sB/A)$ is an $\bbA^1$-homotopy equivalence, see \cite[Remark 5.2.3]{toenvezzosi22}.}, it is enough to make their approach to intersection theory on arithmetic schemes successful. We will see how this works in Sections \ref{sec: categorical intersection theory} and \ref{sec: proof of gBCC}.
\end{rmk}


\section{Categorical intersection theory}\label{sec: categorical intersection theory}

In this section, we use the dg-functor $\oint: \HH(\sB/A)\to \Sing(\bbA^1_S[-1])_s$ to study the intersection theory of arithmetic schemes\footnote{Recall that an \emph{arithmetic scheme} is an $S$-scheme $X$ as in Section \ref{hypothesis gBCC}.} as follows.
In \cite{toenvezzosi22},
\TV{} introduced a dg-functor
$$
\ev^{\HH}:
\Sing(X\times_SX)
\to
\HH(\sB/A)
$$
which can be regarded as a 
\emph{categorical intersection product with the diagonal}.
As observed there, the fact that the motivic and $\ell$-adic realization of $\HH(\sB/A)$ are unknown (at least in mixed characteristic) makes it hard to extract numerical invariants out of $\ev^{\HH}$.
To bypass this issue, we consider the \emph{integration dg-functor}, defined as the composition
$$
\int_{X/S}:
\Sing(X\times_SX)
\xto{\ev^{\HH}}
\HH(\sB/A)
\xto{\oint}
\Sing(\bbA^1_S[-1])_s.
$$
In this case, the realizations of $\Sing(\bbA^1_S[-1])_s$ are known after \cite{brtv18} and it remains to identify the numerical invariants captured by $\int_{X/S}$. To this end, we construct a commutative diagram
\begin{equation}\label{diagram: alternative expressions integration functor}
    \begin{tikzcd}
      \Sing(X\times_SX)
      \arrow[rr,bend left = 0.5cm,"\Upsilon"]
      \arrow[d,"\ev^{\HH}"]
      \arrow[r,"\Xi"]
      &
      \varinjlim_{\bbN} 
      \MFcoh(X_{s(n+2)},0)_{\red{Z}}
      \arrow[d,"\ev_\infty"]
      \arrow[r,"R(\oo)_*"]
      &
      \Sing(\bbA^1_X[-1])_{\red{Z}}
      \arrow[d,"p_*"]
      \\
      \HH(\sB/A)
      \arrow[r,"\xi"]
      \arrow[rr,swap,bend right = 0.5cm, "\oint"]
      &
      \sB(\oo)
      \arrow[r,"r(\oo)_*"]
      &
      \Sing(\bbA^1_S[-1])_s,
    \end{tikzcd}
\end{equation}
thus obtaining an alternative expression for the integration dg-functor
 $$
 \int_{X/S}:=
\oint \circ \ev^{\HH} 
\simeq 
p_* \circ \Upsilon.
$$
As shown later in Section \ref{sec: proof of gBCC},
this new expression connects to Kato--Saito's localized intersection product and will eventually let us prove Conjecture \ref{conj:gBCC}.

\subsection{The \texorpdfstring{$\sB$}{B}-action on \texorpdfstring{$\sT$}{T} and the categorical Thom--Sebastiani theorem}

In this preliminary subsection, we first review the definition of the convolution actions of $\sB^+$ on $\Coh(X_s)$ and of $\sB$ on $\Sing(X_s)$. 
Afterwards, we recall the statement and the proof of the Thom--Sebastiani theorem for coherent and singularity categories. The latter is due to {\TV{}}. 
The reader may consult \cite{toenvezzosi22, beraldopippi24} for more details.

\sssec{}

Let $p:X\to S$ be as in Section \ref{hypothesis gBCC}.
The special fiber $X_s$ is Zariski locally of the form $\Spec{R[\eps]}$, where $\Spec{R}\hto X$ is an open subscheme and $R[\eps]$ is the simplicial commutative $R$-algebra freely generated by an element $\eps$ in degree $1$ mapped to $\pi$ by the differential.

\sssec{}

Consider $\dgMod(R[\eps])$, the dg-category of $R[\eps]$-dg-modules endowed with the projective model structure, and let $\cofdgMod(R[\eps])$ be the subcategory of cofibrant $R[\eps]$-dg-modules whose underlying $R$-module is perfect.
Recall now the Hopf algebroid $A[\eps_1, \eps_2]$ defined in Section \ref{sssec:Hopf-algebroid}.
We consider the dg-functor
$$
  -\odot-:
  \cofdgMod(A[\eps_1,\eps_2])\otimes_A \cofdgMod(R[\eps]) 
    \to
  \cofdgMod(R[\eps]) 
$$
$$
  (M,E)
    \mapsto
  M\odot E:= \tau_*M\otimes_{A[\eps]}E,
$$
which is well-defined since the model and tensor structures on $\dgMod(R[\eps])$ are compatible.

\sssec{}

Denote by $W_{\on{qi}}$ the class of quasi-isomorphisms and by $W_{\on{pe}}$ the class of morphisms whose homotopy cone is a perfect $R[\eps]$-dg-module. 
It is easy to verify that $-\odot-$ is compatible with both $W_{\on{qi}}$ and $W_{\on{pe}}$. 
In view of
$$
  L_{W_{\on{qi}}}\bigt{\cofdgMod(R[\eps])} 
    \simeq
  \Coh \bigt{\Spec{R[\eps]}}
$$
$$
  L_{W_{\on{pe}}}\bigt{\cofdgMod(R[\eps])}
    \simeq
  \Sing \bigt{\Spec{R[\eps]}},
$$
we see that $\Coh \bigt{\Spec{R[\eps]}}$ is endowed with a left $\sB^+$-module structure and $\Sing \bigt{\Spec{R[\eps]}}$ with a left $\sB$-module structure. 
These structures are natural in $R$.

\sssec{}

The $\infty$-categories of left $\sB^+$-modules and of left $\sB$-modules both admit limits, which commute with the forgetful functors to $\dgCat_A$.
In view of the equivalences
$$
  \Coh(X_s)
    \simeq 
  \varprojlim_{\Spec{R}\hto X}\Coh\bigt{\Spec{R[\eps]}}
$$
$$
  \Sing(X_s)
    \simeq 
  \varprojlim_{\Spec{R}\hto X}\Sing \bigt{\Spec{R[\eps]}},
$$
we see that $\Coh(X_s)$ can be endowed with the structure of a left $\sB^+$-module and $\Sing(X_s)$ with that of a left $\sB$-module.

Passing to the opposite dg-categories, we obtain a right $\sB^+$-action on $\Coh(X_s)^{\op}$ and a right $\sB$-action on $\Sing(X_s)^{\op}$.

\begin{rmk}

Notice that the action of $\sB^+$ on $\Coh(X_s)$ and $\Perf(X_s)$ is compatible with imposing support conditions:
if $K \subseteq X_s$ is a closed subset, then $\Coh(X_s)_K$ and $\Perf(X_s)_K$ are left $\sB^+$-modules.
The same is true when passing to the quotient: the left $\sB$-module structure  on $\Sing(X_s)$ induces a left $\sB$-modules structure on $\Sing(X_s)_K$.
\end{rmk}

\sssec{}

We are now ready to recall the following fundamental theorem, due to \TV.

\begin{thm}[{\cite[Theorem 4.2.1]{toenvezzosi22}}]\label{thm: Thom--Sebastiani for MF}
Let $X$ and $Y$ be two $S$-schemes as in Section \ref{hypothesis gBCC} and denote by $Z$ and $W$ the singular loci of $X/S$ and $Y/S$, respectively.
Denote by $\red{Z}$ and $\red{W}$ the associated reduced schemes.
Then there are canonical equivalences
$$
  \ffF^+: 
  \Coh(X_s)_{\red{Z}}^{\op}
  \underset{\sB^+}\otimes
  \Coh(Y_s)_{\red{W}}
    \xto{\simeq}
  \Coh(X\times_S Y)_{\red{Z} \times_s\red{ W}}
$$
$$
  \ffF: 
  \Sing(X_s)^{\op}
  \underset{\sB}\otimes
  \Sing(Y_s)
    \xto{\simeq}
  \Sing(X\times_S Y)
$$
in
$\dgCat_A$.
\end{thm}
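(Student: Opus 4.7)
The plan is to construct $\ffF^+$ as a categorical Künneth map using external tensor product, verify it is $\sB^+$-balanced (hence descends to the relative tensor product), and establish the equivalence via a two-sided bar-complex argument; the equivalence $\ffF$ for singularity categories will then follow by passing to Verdier quotients.

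\textbf{Construction.} Working with strict models $\cofdgMod(R[\eps])$ and $\cofdgMod(R'[\eps])$ for $\Coh(X_s)$ and $\Coh(Y_s)$ locally on open affines $\Spec R \hookrightarrow X$ and $\Spec R' \hookrightarrow Y$, I would define a bilinear dg-functor
$$
  \Coh(X_s)^{\op}_{\red Z}
    \otimes_A
  \Coh(Y_s)_{\red W}
    \longrightarrow
  \Coh(X \times_S Y)_{\red Z \times_s \red W},
  \quad
  (E, F) \longmapsto q_* (E \boxtimes_S F),
$$
where $q \colon X_s \times_S Y_s \to X \times_S Y$ is the canonical map of derived $S$-schemes. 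Using the Hopf algebroid structure on $A[\eps_1, \eps_2]$ together with the explicit formulas for the $\sB^+$-actions recalled above, a direct check confirms that this functor is $\sB^+$-balanced and therefore descends to the relative tensor product, producing $\ffF^+$.

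\textbf{Equivalence of $\ffF^+$.} Express the relative tensor product as the colimit of its two-sided bar complex; by a bimodule variant of Lemma~\ref{lem: nodes Bar(B^+/A)}, the $n$-th node identifies (after support restriction) with $\Cohext\bigl(X_s \times_S G^{\times_S n} \times_S Y_s\bigr)$. The face and degeneracy maps of this simplicial dg-category are induced by push-pull along the structure morphisms of the groupoid $G \rightrightarrows s$ acting on $X_s$ and $Y_s$. Taking the colimit in $\dgCat_A$ and invoking that $X \times_S Y$ realizes the ``quotient'' of this groupoid action on $X_s \times_S Y_s$ (reflecting the derived Čech nerve of $s \hookrightarrow S$) then yields the equivalence, once the support conditions on $\red Z$ and $\red W$ are matched with $\red Z \times_s \red W$.

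\textbf{Passage to $\ffF$, and main obstacle.} For $\ffF$, note that $\otimes_{\sB}$ is obtained from $\otimes_{\sB^+}$ as a Verdier quotient by objects where at least one tensor factor is perfect; under $\ffF^+$, these are sent exactly to $\Perfext(X \times_S Y)_{\red Z \times_s \red W}$ by \eqref{eqn:push of boxtimes} and the regularity of $X$ away from $Z$ and of $Y$ away from $W$, and conversely every generator of $\Perfext$ on the target arises this way. Passing to Verdier quotients thus yields an equivalence $\ffF$. The main obstacle is the descent step in the equivalence of $\ffF^+$: in mixed characteristic, the Hopf algebroid $A[\eps_1, \eps_2]$ is not a Hopf algebra and the $\sB^+$-action cannot be simplified to a symmetric monoidal structure (in contrast to Corollary~\ref{cor: retraction B to HH(B/A) in pure char}). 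Accordingly, one must track the combinatorics of source, target, and convolution throughout, verifying both that the bar complex genuinely computes $\Coh(X \times_S Y)$ and that the support conditions propagate correctly from $X_s$ and $Y_s$ to the product.
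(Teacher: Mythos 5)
There are two genuine gaps. First, your Künneth functor is not the right one: the formula $(E,F)\mapsto q_*(E\boxtimes_S F)$ with $q\colon X_s\times_S Y_s\to X\times_S Y$ both omits the Grothendieck duality needed to treat the first factor (a dg-functor out of $\Coh(X_s)^{\op}\otimes_A\Coh(Y_s)$ cannot literally be $E\boxtimes F$, which is covariant in $E$; the paper uses $(E,F)\mapsto (X_s\times_s Y_s\to X\times_SY)_*(E^\vee\boxtimes_s F)$) and, more seriously, takes the external product over $S$ instead of over $s$. With $\boxtimes_S$ the two copies of $s$ are kept distinct, so the left and right $\sB^+$-actions act on different factors and the functor is not naturally balanced; and even granting a balancing, it sends the Karoubi-generator $(\ccO_{X_s},\ccO_{Y_s})$ to the pushforward of $\ccO$ from $(X\times_SY)\times_S G$, whose endomorphism algebra is of the form $k[u_1]\otimes_A k[u_2]$ (two degree-$2$ generators, as in Lemma \ref{lem: nodes Bar(B^+/A)}), whereas the relative tensor product forces a single $u$; the correct image is $\ccO_{(X\times_SY)_s}$ pushed forward, using $X_s\times_s Y_s\simeq (X\times_SY)_s$. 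So full faithfulness would fail for your functor on generators.

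Second, your argument that the induced functor is an equivalence is not a proof. The claim that the colimit of the bar complex, with nodes $\Cohext(X_s\times_S G^{\times_S n}\times_S Y_s)$, computes $\Coh(X\times_SY)_{\red{Z}\times_s\red{W}}$ "because $X\times_SY$ realizes the quotient of the groupoid action" is exactly the content of the theorem and cannot be invoked: $s\hookrightarrow S$ is a closed immersion, not a cover, and the geometric realization of its Čech nerve is not $S$ (at best a formal completion), so no general descent principle applies. The paper's proof instead establishes full faithfulness of $\ffF^+$ by a direct Hom computation (citing \cite[Lemma 4.2.3]{toenvezzosi22}) and gets Karoubi-essential surjectivity from the fact that $-\boxtimes_s-$ and the pushforward along $(X\times_SY)_s\to X\times_SY$ are Karoubi-essentially surjective; the passage to $\ffF$ then also needs the identifications $\Sing(X_s)_{\red{Z}}\simeq\Sing(X_s)$ and $\Sing(X\times_SY)_{\red{Z}\times_s\red{W}}\simeq\Sing(X\times_SY)$ coming from generic smoothness, which your Verdier-quotient step only gestures at. Your overall skeleton (Künneth functor, balancing, quotient to singularity categories) matches the paper, but these two steps are where the actual mathematical work lies and they are missing or incorrect as written.
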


In view of the importance of this theorem, we sketch the proof given in \cite{toenvezzosi22}.

\begin{proof}[Sketch of the proof]
Let $E\mapsto E^{\vee}$ denote Grothendieck duality $\Coh(X_s)\xto{\sim} \Coh(X_s)^{\op}$ and consider the dg-functor
$$
  \ol \ffF:
  \Coh(X_s)^{\op}\otimes_A \Coh(Y_s)
    \to
  \Coh(X\times_S Y)_{X_s \times_s Y_s}
$$
$$
  (E,F)
    \mapsto 
  (X_s\times_s Y_s \to X\times_SY)_*(E^\vee \boxtimes_s F)
$$
and observe that is it $(\sB^+)^\env$-invariant. Hence, we obtain a dg-functor
$$
  \ffF^+:
  \Coh(X_s)^{\op}
  \otimes_{\sB^+} 
  \Coh(Y_s)
    \to
  \Coh(X\times_S Y)_{X_s \times_s Y_s},
$$
which is shown to be fully faithful in \cite[Lemma 4.2.3]{toenvezzosi22}. 
Since the external tensor product 
$$
  -\boxtimes_s-: 
  \Coh(X_s)\otimes_A \Coh(Y_s)
    \to 
  \Coh(X_s \times_s Y_s)
$$ 
and the pushforward
$$
  (X_s \times_s Y_s \to X\times_S Y)_*:
  \Coh(X_s\times_s Y_s)
    \to
  \Coh(X\times_S Y)_{X_s\times_s Y_s}
$$
are both Karoubi-essentially surjective, so is $\ffF^+$.
It follows that $\ffF^+$ is an equivalence in $\dgCat_A$.
Taking supports into account, it is clear that $\ffF^+$ restricts to an equivalence
$$
  \ffF^+:
  \Coh(X_s)^{\op}_{\red{Z}}
  \otimes_{\sB^+} 
  \Coh(Y_s)_{\red{W}}
    \to
  \Coh(X\times_S Y)_{\red{Z} \times_s\red{ W}}.
$$
Next, one checks that $\ol\ffF$ induces equivalences
$$
  \Angles{
          \Bigt{
                \Coh(X_s)^{\op}
                \otimes_{\sB^+} 
                \Perf(Y_s) 
                }
  \cup
  \Bigt{
        \Perf(X_s)^{\op}
        \otimes_{\sB^+} 
        \Coh(Y_s)
        }
         }
    \simeq
    \Perf(X\times_S Y)_{X_s \times_s Y_s}
$$
$$
  \Angles{
  \Bigt{
        \Coh(X_s)^{\op}_{\red{Z}}
        \otimes_{\sB^+} 
        \Perf(Y_s)_{\red{W}}
        }
    \cup
  \Bigt{
        \Perf(X_s)^{\op}_{\red{Z}}
        \otimes_{\sB^+} 
        \Coh(Y_s)_{\red{W}}
        }
         }
    \simeq
  \Perf(X\times_S Y)_{\red{Z} \times_s\red{ W}}.
$$
Since $X\setminus Z \to S$ and $Y \setminus W \to S$ are smooth morphisms, we have
$$
  \Sing(X_s)_{\red{Z}} 
    \simeq
  \Sing(X_s),
    \;\;\;\;
  \Sing(Y_s)_{\red{W}} 
    \simeq
  \Sing(Y_s).
$$
Taking quotients, we thus obtain the equivalence
$$
  \ffF: 
  \Sing(X_s)^{\op} \otimes_{\sB} \Sing(Y_s)
    \xto{\simeq }
  \frac{\Coh(X\times_SX)_{\red{Z}\times_s\red{W}}}{\Perf(X\times_SX)_{\red{Z}\times_s\red{W}}}
    \simeq 
  \Sing(X\times_SX),
$$
where the last step follows from the fact that $X\times_S Y \setminus (\red{Z} \times_s \red{W})$ is smooth over $S$.
\end{proof}

\sssec{}

For the proof of Conjecture \ref{conj:gBCC}, we just need to consider the above theorem in the special case $X = Y$. 
We adopt the following notations for the dg-categories
$$
  \sT^+ 
    := 
  \Coh(X_s)_{\red{Z}},
    \hspace{.4cm}
  \sT^+_{\on{pe}} 
    := 
  \Perf(X_s)_{\red{Z}},
    \hspace{.4cm}
  \sT 
    := 
  \Sing(X_s)\simeq \Sing(X_s)_{\red{Z}}.
$$
The first two will be regarded as left modules over $\sB^+$, 
while the third one as a left module over $\sB$.

\sssec{}

The relative tensor product
$
  (\sT^+)^{\op}
    \otimes_{\sB^+}
  \sT^+
$
is computed as the colimit of the following semi-simplicial diagram
$$
  \on{Bar}(\sT^+):
  (\Delta_{\inj})^{\op}
    \to
  \dgCat_A
$$
\begin{equation*}
\begin{tikzcd}
    \cdots
    \;\;
    \arrow[r,shift right=3]
    \arrow[r,shift right=1]
    \arrow[r,shift left=1]
    \arrow[r,shift left=3]
    &
    (\sT^+)^{\op} \otimes_A \sB\otimes_A \sB \otimes_A \sT^{+}
    \arrow[r,shift right=2]
    \arrow[r]
    \arrow[r,shift left=2]
    &
    (\sT^+)^{\op} \otimes_A \sB \otimes_A \sT^{+}
    \arrow[r,shift right=1]
    \arrow[r,shift left=1]
    &
    (\sT^+)^{\op}\otimes_A\sT^{+}.
\end{tikzcd}
\end{equation*}

We will denote 
$$
  \on{Bar}_{n}(\sT^+)
    :=
  \restr{\on{Bar}(\sT^+)}{(\Delta_{\inj}^{\leq n})^{\op}}:
  (\Delta_{\inj}^{\leq n})^{\op}
    \to
  \dgCat_A
$$
its restriction to 
$
  (\Delta_{\inj}^{\leq n})^{\op}
    \subseteq 
  (\Delta_{\inj})^{\op}
$
and 
$$
  F_n\bigt{(\sT^+)^{\op}\otimes_{\sB^+}\sT^+}
    :=
  \varinjlim \on{Bar}_n(\sT^+)
    \in
  \dgCat_A.
$$

We will use an analogous notation for the $\sB$-module $\sT$:
we let
$$
  \on{Bar}(\sT):
  (\Delta_{\inj})^{\op}
    \to
  \dgCat_A
$$
denote the bar complex of $\sT$ relative to $\sB$,
$$
  \on{Bar}_{n}(\sT)
    :=
  \restr{\on{Bar}(\sT)}{(\Delta_{\inj}^{\leq n})^{\op}}:
  (\Delta_{\inj}^{\leq n})^{\op}
    \to
  \dgCat_A
$$
its $n^{th}$ truncation and
$$
  F_n\bigt{\sT^{\op}\otimes_{\sB}\sT}
    :=
  \varinjlim \on{Bar}_n(\sT)
    \in
  \dgCat_A
$$
the corresponding colimit.

\begin{rmk}
  We believe that, using the coherent homotopies of Section \ref{sec: the E_2 structure on G} it is possible to enhance Theorem \ref{thm: Thom--Sebastiani for MF} as follows: for each $n \geq 0$ there are Morita equivalences
  $$
  \ffF^+_n: 
  F_n\bigt{\Coh(X_s)_{\red{Z}}^{\op}
  \underset{\sB^+}\otimes
  \Coh(Y_s)_{\red{W}}}
    \xto{\simeq}
  \Coh(X\times_S s(n+1)\times_S Y)_{\red{Z} \times_s\red{ W}}
$$
$$
  \ffF_n: 
  F_n \bigt{\Sing(X_s)^{\op}
  \underset{\sB}\otimes
  \Sing(Y_s)}
    \xto{\simeq}
  \Sing(X\times_Ss(n+1)\times_S Y)
$$
such that
$$
  \varinjlim_n \ffF^+_n \simeq \ffF^+,
  \;\;\;\;
  \varinjlim_n \ffF_n \simeq \ffF.
$$

\end{rmk}

\subsection{A functor out of the coherent Thom--Sebastiani category} \label{ssec:functor from TBT-plus}

Our current goal is to construct a dg-functor
\begin{equation} \label{eqn: from TBTplus to colim Coh(A^1[-1])}
  \Xi^+:
  (\sT^+)^{\op}
  \underset{\sB^+}\otimes
  \sT^+
    \to 
  \colim_{\bbN} 
  \Coh(\bbA^1_{X_s(n)}[-1])_{\red{Z}}.
\end{equation}

Very much in the spirit of this paper, the construction exploits the realization of the tensor product on the left as a colimit of truncated bar diagrams.

Namely, we plan to construct \eqref{eqn: from TBTplus to colim Coh(A^1[-1])} as the composition 
$$
  (\sT^+)^\op \otimes_{\sB^+}\sT^+
    \xto{\on{can}^{-1}}
  \colim_n
  F_n 
  \bigt{
        (\sT^+)^\op \otimes_{\sB^+}\sT^+
        }
    \xto{\colim_n \Xi^+_n}
  \colim_n
  \Coh(\bbA^1_{X_s(n+2)}[-1])_{\red{Z}}.
$$
In this section, we define the $\Xi_n^+$'s (for $n \geq 0$) and assemble them into the colimit.

\sssec{}

Defining $\Xi_n^+$ is equivalent to exhibiting a natural transformation 
$$
  \on{Bar}_n(\sT^+)
    \to 
  \Coh(\bbA^1_{X_s(n+2)}[-1])_{\red{Z}},
$$
where 
$
  \Coh(\bbA^1_{X_{s(n+2)}}[-1])_{\red{Z}}
$
is considered as a constant diagram $(\Delta_{\on{inj}}^{\leq n})^{\op}\to \dgCat_A$.

\sssec{}

We first need to find geometric descriptions of the nodes of the bar complex $\on{Bar}(\sT^+)$.
This is done once again via the dg-categories introduced in Section \ref{ssec: categories external products}.

\begin{lem}\label{lem: description T+otimes B+otimesn otimes T+}
For $n\geq 0$, there is a canonical equivalence
$$
  (\sT^+)^\op 
    \otimes_A 
  (\sB^+)^{\otimes_An}
    \otimes_A 
  \sT^+
    \simeq 
  \Cohext(X_s \times_S G^{\times_S n}\times_S X_s)_{\red{Z}\times_s\red{Z}},
$$
with the convention that $G^{\times_S 0} = S$.

The dg-category 
$$
  \Cohext(X_s \times_S G^{\times_S n}\times_S X_s)_{\red{Z}\times_s\red{Z}}
    \subseteq 
  \Coh(X_s \times_S G^{\times_S n}\times_S X_s)
$$
is the full subcategory Karoubi-generated by external tensor products (over $S$) of objects 
$$
  E 
    \in 
  \Coh(X_s)_{\red{Z}},
    \;\;\;\;
  M_1
    \in 
  \Coh(G),
    \dots, 
  M_n 
    \in 
  \Coh(G), 
    \;\;\;\;
  F
    \in 
  \Coh(X_s)_{\red{Z}}.
$$
\end{lem}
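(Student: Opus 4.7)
The plan is to mimic the strategy of Lemma \ref{lem: nodes Bar(B^+/A)} and Corollary \ref{cor: nodes Bar(B/A)}, replacing the monoidal unit of $\sB^+$ at the outer slots by an arbitrary element of $\sT^+$. First, I would define the external product dg-functor
$$
\ccE^n_{S,\sT^+}:
(\sT^+)^\op\otimes_A(\sB^+)^{\otimes_An}\otimes_A\sT^+
\longto
\Coh(X_s\times_SG^{\times_Sn}\times_SX_s)_{\red{Z}\times_s\red{Z}},
$$
which sends a tuple $(E,M_1,\dots,M_n,F)$ to the Grothendieck dual of $E$ externally tensored with $M_1\boxtimes_S\cdots\boxtimes_SM_n\boxtimes_SF$. (For $n=0$ one uses only the dual of $E$ and $F$.) That this lands in the subcategory with support $\red Z\times_s\red Z$ and in fact in $\Cohext$ follows from Lemma \ref{lem:Cohext contained in Coh} applied to the quasi-smooth closed embedding $X_s\times_S G^{\times_Sn}\times_SX_s\hto X\times_SG^{\times_Sn}\times_SX$.

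The second step is fully-faithfulness. Since both source and target are Karoubi-completions, it suffices to check that $\ccE^n_{S,\sT^+}$ induces a quasi-isomorphism on the hom complexes between generators. The source $(\sT^+)^\op\otimes_A(\sB^+)^{\otimes_An}\otimes_A\sT^+$ has hom complexes which, by the very definition of the $A$-linear tensor product of dg-categories, are tensor products over $A$ of the hom complexes of the factors. The target hom complexes decompose via iterated base-change along the cartesian squares
$$
\begin{tikzcd}[column sep=small, row sep=small]
X_s\times_SG^{\times_Sn}\times_SX_s\arrow[r]\arrow[d] & X_s\times_S G^{\times_S n}\times_S X_s\arrow[d]\\
X_s\times_SX_s\arrow[r] & X\times_S X
\end{tikzcd}
$$
combined with the K\"unneth formula of \cite[Theorem 1.2]{ben-zvifrancisnadler10}, exactly as in Step 2 of the proof of Lemma \ref{lem: nodes Bar(B^+/A)}. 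At each inner slot this reproduces the quasi-isomorphism $k[u_i]\xto{\sim}\Hom_{A[\eps_1^{(i)},\eps_2^{(i)}]}(A[\eps^{(i)}],A[\eps^{(i)}])$ used there; at the two outer slots it pairs $E^\vee$ with $F$ via the Grothendieck duality equivalence between $\Coh(X_s)_{\red Z}^\op$ and $\Coh(X_s)_{\red Z}^\op$. Thus $\ccE^n_{S,\sT^+}$ is fully faithful.

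Finally, Karoubi-essential surjectivity onto $\Cohext(X_s\times_SG^{\times_Sn}\times_SX_s)_{\red Z\times_s\red Z}$ is built into the definition of the target: that dg-category is by construction the Karoubi-completion of the full subcategory generated by external tensor products $E\boxtimes_SM_1\boxtimes_S\cdots\boxtimes_SM_n\boxtimes_SF$ with the prescribed support condition, which is precisely the essential image of $\ccE^n_{S,\sT^+}$ on objects. The main obstacle is bookkeeping the support conditions and verifying that the K\"unneth formula survives passage to the coherent subcategories; this follows from the fact that each factor $X_s\hto X$ and $s\hto S$ is a quasi-smooth closed embedding in a regular scheme, so that $\Cohext$ genuinely sits inside $\Coh$ (Lemma \ref{lem:Cohext contained in Coh}). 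Once fully-faithfulness and the matching of Karoubi generators are in place, the equivalence follows, naturally in $n$ via the simplicial structure of $\on{Bar}(\sT^+)$.
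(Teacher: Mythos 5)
Your proposal is correct and follows essentially the same route as the paper's proof: the external product dg-functor $(E,M_1,\dots,M_n,F)\mapsto E^{\vee}\boxtimes_S M_1\boxtimes_S\cdots\boxtimes_S M_n\boxtimes_S F$ with Grothendieck duality in the first slot, containment in $\Cohext(X_s\times_S G^{\times_Sn}\times_S X_s)_{\red{Z}\times_s\red{Z}}$ with support via Lemma \ref{lem:Cohext contained in Coh}, Karoubi-essential surjectivity by the very definition of the target, and fully-faithfulness from \cite[Theorem 1.2]{ben-zvifrancisnadler10}. The only inaccuracy is cosmetic: since the inner slots carry arbitrary objects $M_i\in\Coh(G)$ rather than the monoidal unit, the hom-complex identification comes directly from the K\"unneth statement of \emph{loc.\ cit.} (after embedding $\Coh$ into $\QCoh$), not from re-running the $k[u_i]$-computation of Lemma \ref{lem: nodes Bar(B^+/A)}.
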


\begin{proof}
From the definitions, we have
$$
  (\sT^+)^\op 
    \otimes_A 
  (\sB^+)^{\otimes_An}
    \otimes_A 
  \sT^+
    =
  \Coh(X_s)^{\op}_{\red{Z}}\otimes_A \bigt{\Coh(G)}^{\otimes_An}\otimes_A\Coh(X_s)_{\red{Z}}.
$$
Lemma \ref{lem:Cohext contained in Coh} guarantees that 
$\Cohext(X_s \times_S G^{\times_Sn}\times_SX_s)_{\red{Z}\times_s\red{Z}}$
is a full subcategory of $\Coh(X_s \times_S G^{\times_S n}\times_S X_s)_{\red{Z}\times_s\red{Z}}$:
use for instance the closed embeddings $X_s \hto X$ and $G \hto S$ into regular $S$-schemes. 

Consider the external tensor product
$$
  \Coh(X_s)^{\op}_{\red{Z}}
    \otimes_A 
  \bigt{\Coh(G)}^{\otimes_An}
    \otimes_A
  \Coh(X_s)_{\red{Z}}
    \to
  \Coh(X_s \times_S G^{\times_S n}\times_S X_s)_{\red{Z}\times_s\red{Z}}
$$
$$
  (E,M_1,\dots,M_n,F)
    \mapsto 
  E^{\vee}
    \boxtimes_S
  M_1 
    \boxtimes_S 
  \dots 
    \boxtimes_S 
  M_n 
    \boxtimes_S 
  F.
$$
This dg-functor obviously lands in $\Cohext(X_s \times_S G^{\times_Sn}\times_SX_s)_{\red{Z}\times_s\red{Z}}$ and Karoubi-generates it. 
It remains to show fully faithfulness: this follows from \cite[Theorem 1.2]{ben-zvifrancisnadler10} upon embedding $\Coh$ inside $\QCoh$.
\end{proof}

\sssec{}
We now make explicit the dg-functors that appear in the diagram 
$$
\on{Bar}_n(\sT^+):
(\Delta_{\inj}^{\leq n})
\to 
\dgCat_A.
$$

Let $0\leq m \leq n$ and $k\leq n-m$.
Then there are exactly $\prod_{i=1}^{k}(m+i+1)!$ dg-functors
$$
(\sT^+)^{\op}\otimes_A\sB^{+,\otimes_A m+k} \otimes \sT^+
\to
(\sT^+)^{\op}\otimes_A\sB^{+,\otimes_A m} \otimes \sT^+
$$
which appear in the diagram $\on{Bar}_n(\sT^+)$:
\begin{equation*}
    \begin{tikzcd}
    (\sT^+)^{\op}\otimes_A\sB^{+,\otimes_A m+k} \otimes \sT^+
    \arrow[r,shift right=3]
    \arrow[r, draw=none, "\raisebox{+0.5ex}{\dots}" description]
    \arrow[r,shift left=3]
    &
    \!\!
    \cdots
    \!\!
    \arrow[r,shift right=3]
    \arrow[r, draw=none, "\raisebox{+0.5ex}{\dots}" description]
    \arrow[r,shift left=3]
    &
    (\sT^+)^{\op}\otimes_A\sB^{+,\otimes_A m+1} \otimes \sT^+
    \arrow[r,shift right=3]
    \arrow[r, draw=none, "\raisebox{+0.5ex}{\dots}" description]
    \arrow[r,shift left=3]
    &
    (\sT^+)^{\op}\otimes_A\sB^{+,\otimes_A m} \otimes \sT^+.
    \end{tikzcd}
\end{equation*}
Indeed, for each $\ul d = (d_{m+1},\dots,d_{m+k})\in \prod_{s=m+1}^{m+k}[s]$ we have a composition
$$
(\sT^+)^{\op}\otimes_A\sB^{+,\otimes_A m+k} \otimes \sT^+
\xto{\on{m}_{d_{m+k}}}
\dots
\xto{\on{m}_{d_{m+2}}}
(\sT^+)^{\op}\otimes_A\sB^{+,\otimes_A m+1} \otimes \sT^+
\xto{\on{m}_{d_{m+1}}}
(\sT^+)^{\op}\otimes_A\sB^{+,\otimes_A m} \otimes \sT^+.
$$
Here, for $1\leq l\leq k$ and $d\in [m+l]$, the dg-functor
$$
\underbrace{(\sT^+)^{\op}\otimes_A\sB^{+,\otimes_A m+l} \otimes \sT^+}_{
\simeq 
\;
\Cohext(X_s\times_SG^{\times_Sm+l}\times_SX_s)
}
\xto{\on{m}_{d}}
\underbrace{(\sT^+)^{\op}\otimes_A\sB^{+,\otimes_A m+l-1} \otimes \sT^+}_{
\simeq
\;
\Cohext(X_s\times_SG^{\times_Sm+l-1}\times_SX_s)
}
$$
is given by pull-push along the following correspondence
\begin{equation*}
        \begin{tikzcd}[column sep = 1.7cm]
          \overbrace{X\times_Ss\times_S G^{\times_Sd}\times_s G^{\times_S (m+l-d)}\times_S s\times_S X}^{
          X\times_Ss\times_S G^{\times_Sd-1}\times_S(G\times_sG)\times_S G^{\times_S (m+l-d-1)}\times_S s\times_S X}
          \dar["\id \times \on{can} \times \id"]
          \rar["\id \times \pr{13}\times \id"]
          &
          \overbrace{X\times_Ss \times_S G^{\times_S (m+l-1)}\times_Ss\times_SX}^{
          X\times_Ss\times_S G^{\times_Sd-1}\times_S (G)\times_S G^{\times_S (m+l-d-1)}\times_Ss\times_SX}
          \\
          \underbrace{X\times_Ss\times_S G^{\times_Sd}\times_S G^{\times_S (m+l-d)}\times_S s\times_S X}_{
          X\times_Ss \times_S G^{\times_S (m+l)}\times_Ss\times_SX}.
          &
        \end{tikzcd}
    \end{equation*}

\sssec{}

For each $0\leq m \leq n$ we construct a dg-functor
$$
\Xi^+_{n,m}:
  (\sT^+)^{\op}\otimes_A \sB^{+,\otimes_Am} \otimes_A \sT^+
    \to
  \Coh (\bbA^1_{X_s(n+2)}[-1])_{\red{Z}}
$$
as follows.
Consider the commutative diagram
\begin{equation*}
    \begin{tikzcd}[column sep = 2cm]
      X\times_Ss^{\times_Sm+1}
      \rar["\id_X \times \pr{1}"]
      \arrow[dd, swap, bend right = 3.5cm, "\tdelta"]
      \arrow[d, "\delta_{X/S}\times \delta_{s/S}\times \id_{s^{\times_Sm}}"]
        &
      X\times_Ss
      \rar["j(n+2)"]
      \dar["\delta_{X/S}\times \delta_{s/S}"]
        &
      X\times_Ss(n+2)
      \\
      \overbrace{X_s\times_S G^{\times_s m} \times_s X_s}^{
        \simeq
      X\times_S X\times_Ss^{\times_S m+2}}
      \dar["\can"]
      \rar["\id_{X\times_SX}\times \pr{1,2} "]
        &
      \overbrace{X_s\times_SX_s}^{
        \simeq
      X\times_SX\times_S s\times_Ss}
        &
      \\
      X_s\times_S G^{\times_S m} \times_S X_s
        &
        &
    \end{tikzcd}
\end{equation*}
with cartesian square.
Notice that the morphism $\delta_{X/S}\times \delta_{s/S}$ can be decomposed as
$$
  X\times_Ss
    \xto{\id_X \times \delta_{s/S}}
  X\times_S s\times_Ss
    \xto{\delta_{X/S} \times \id}
  X\times_SX \times_S s\times_Ss.
$$
Then, it is easy to see that the dg-functor
$$
  \bigt{\id_X \times_S \bigt{j(n+2)\circ \pr{1}}_*} \circ \tdelta^*:
  \Cohext(X_s\times_S G^{\times_Sm}\times_S X_s)_{\red{Z}\times_s \red{Z}}
    \to
  \Cohminus(X_s(n+2))
$$
enhances to
$$
  \Cohext(X_s\times_S G^{\times_Sm}\times_S X_s)_{\red{Z}\times_s \red{Z}}
    \to
  \Mod_{\ccO_{X_s(n+2)}[u]}^{\etp}\bigt{\Cohminus(X_s(n+2))}.
$$

By Lemma \ref{lem: Coh(A1[-1]) vs Cohminus etp}, we obtain a dg-functor
$$
\Xi^{+}_{n,m}:
  (\sT^+)^{\op}\otimes_A (\sB^+)^{\otimes_A m}\otimes_A \sT^+
    \simeq
  \Cohext(X_s\times_S G^{\times_Sm}\times_S X_s)_{\red{Z}\times_s \red{Z}}
    \to
  \Coh(\bbA^1_{X_s(n+2)}[-1])_{\red{Z}}.
$$

\begin{lem}\label{lem: functor Barn(T)--> Coh(A^1[-1])}

For each $n \geq 0$, the dg-functors $\{\Xi^+_{n,m}\}_{0\leq m \leq n}$ define a natural transformation
\begin{equation}\label{eqn: ol Xi_n^+}
  \Xi_n^+:\on{Bar}_n(\sT^+) 
    \to 
  \Coh \bigt{\bbA^1_{X_s(n+2)}[-1]}_{\red{Z}}
\end{equation}
of functors $(\Delta^{\leq n}_{\on{inj}})^{\op}\to \dgCat_A$.
These natural transformations are functorial in $n$.
\end{lem}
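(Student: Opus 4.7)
The plan is to mimic the proof of Lemma \ref{lem:defn of xi-n-plus} verbatim, with $X_s$ glued to both ends of the geometric diagrams supplied by Theorem \ref{thm: master homotopies}. More precisely, for each $n \geq 0$ and each $\ul d = (d_1, \ldots, d_n) \in \prod_{i=1}^{n}[i]$, I need to exhibit a coherently commutative diagram
\begin{equation*}
    \begin{tikzcd}[column sep = 0.7cm, row sep = 0.9cm]
    (\sT^+)^\op\otimes_A \sB^{+,\otimes_A n}\otimes_A\sT^+
    \rar["\on{m}_{d_{n}}"]
    \arrow[rrd,swap,"\Xi^+_{n,n}"]
    &
    \cdots
    \rar["\on{m}_{d_2}"]
    &
    (\sT^+)^\op\otimes_A \sB^{+}\otimes_A\sT^+
    \rar["\on{m}_{d_1}"]
    \arrow[d,"\Xi^+_{n,1}"]
    &
    (\sT^+)^\op\otimes_A\sT^+
    \arrow[ld,"\Xi^+_{n,0}"]
    \\
    &
    &
    \Coh\bigt{\bbA^1_{X_{s(n+2)}}[-1]}_{\red{Z}}.
    &
    \end{tikzcd}
\end{equation*}

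To construct such a diagram, I would apply Theorem \ref{thm: master homotopies} precisely as in Lemma \ref{lem:defn of xi-n-plus}, but multiply the whole geometric diagram by $X_s \times_S (-) \times_S X_s$ on the outside and insert the pullback-then-pushforward along $X_s \to X_{s(n+2)}$ at the end. Concretely, the sequence of correspondences produced by Theorem \ref{thm: master homotopies} will have each node of the form $X_s \times_S G^{\times_S m+l} \times_S X_s$ (or its partially collapsed versions $X_s \times_S s^{\times_S m+l} \times_S X_s$, $X_s \times_S s(n+2) \times_S X_s$) and the downward-pointing vertical maps will land in the common apex $X_{s(n+2)}$, obtained by composing $\pr{12}$ with $\delta_{X/S}\times \delta_{s/S}$ and then with $j(n+2)$. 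Applying $\Cohminus(-)$ to this enlarged diagram produces the underlying coherent dg-functors; the enhancement to an $\ccO_{X_{s(n+2)}}[u]$-module structure matches across the homotopy because the pullback actions of $u$ all originate, via base-change, from the single algebra $\End_{\bbA^1_X[-1]}(\ccO_X)$, exactly as in the verification of the $E$-action compatibility in Lemma \ref{lem:defn of xi-n-plus}. The eventually two-periodic condition is preserved since it is already preserved by the composition $j(n+2)_* \circ \pr{1*} \circ \tdelta^*$.

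The functoriality in $n$ is the content of diagram \eqref{eqn: H^(k-1)_m vs H^(k)_m}: this produces, upon applying $\Coh(-)$, a commutative square relating $\Xi^+_{n-1}$ (followed by the structural map $\Coh(\bbA^1_{X_{s(n+1)}}[-1]) \to \Coh(\bbA^1_{X_{s(n+2)}}[-1])$) with $\Xi^+_n$ (precomposed with the inclusion of truncated bar complexes). The only point to check is that the \emph{external} tensor factors $X_s, X_s$ play no role in the combinatorics, which is clear since they are simply dragged along unchanged through every step of the construction of the homotopies in Theorem \ref{thm: master homotopies}.

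The main obstacle, and the point I would check most carefully, is keeping track of the etp enhancement throughout the homotopy. The homotopy lives at the level of underlying $\Cohminus$-objects, but the dg-functor $\Xi^+_{n,m}$ is valued in $\Mod^{\etp}_{\ccO_{X_{s(n+2)}}[u]}$. One must verify that the $u$-action interpolates coherently along the simplicial directions of the $\Delta^k$ in Theorem \ref{thm: master homotopies}; this follows, as indicated, from the cartesian square \eqref{eqn:base-change-example for B(2)} combined with its $X$-relative analog $\bbA^1_X[-1] \to \bbA^1_S[-1]$, which produces a single algebra on $X_s$ pulled back to every intermediate derived scheme appearing in the homotopy. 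Once this is settled, Lemma \ref{lem: Coh(A1[-1]) vs Cohminus etp} applied with $Y = X_{s(n+2)}$ and support $W = \red{Z}$ yields the target $\Coh(\bbA^1_{X_{s(n+2)}}[-1])_{\red{Z}}$, completing the construction.
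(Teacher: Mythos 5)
There is a genuine gap, and it is exactly the point the paper's variant subsection (Section \ref{ssec: master homotopies - variant}) was written to address: you invoke Theorem \ref{thm: master homotopies}, whereas the paper's proof of this lemma runs along the lines of Lemma \ref{lem:defn of xi-n-plus} with the geometric input replaced by Theorem \ref{thm: master homotopies - variant}. The difference is not cosmetic. Your justification — that the two outer factors $X_s$ ``play no role in the combinatorics'' because they are ``simply dragged along unchanged'' — is false for the two extreme face maps of $\on{Bar}_n(\sT^+)$: these are the left and right $\sB^+$-module actions, and their correspondences actively glue the adjacent copy of $G$ onto the $s$-leg of the boundary factor $X_s$ (only the $X$-direction is a spectator). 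If you take the diagram of Lemma \ref{lem:defn of xi-n-plus} and multiply it by $X_s\times_S(-)\times_SX_s$, the face map corresponding to $d=m+l$ is the cyclic wrap-around multiplication (modeled by $f^{(c)}_c$, i.e.\ the map $\vartheta^{(c)}$ composing the last $G$ with the first $G$), which is \emph{not} a face map of the two-sided bar complex at all. Concretely: after restricting along the $\tdelta$ used in the definition of $\Xi^+_{n,m}$ (whose source is $X\times_S s^{\times_S m+1}$, not $X_s\times_S s^{\times_S m}\times_S X_s$ as in your sketch), the face maps of $\on{Bar}_n(\sT^+)$ are modeled by the morphisms $g^{(c)}_r$ of Section \ref{ssec: master homotopies - variant}, not by the $f^{(c)}_r$ of Section \ref{sssec: def of f^(c)_r}; these two families differ by the nontrivial permutations $\zeta_c$ (Lemma \ref{lem: f^(c)_r vs g^(c)_r}), because the identification of the relevant $s$-coordinates with $k_A^{\otimes_A\bullet}$ comes from a linear chain capped by the two $X_s$'s rather than from the wrap-around quotient $K_A^{\otimes_{k_A}c}/\langle t_1^{(\star)}-u_c^{(\star)}\rangle$. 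Hence the coherences supplied by $\scrH^{(k)}_m$ interpolate composites of the $\fff^{[a,b]}_{\ul d}$'s, which are not the restrictions of the face maps you need to coherently identify; the homotopies that do the job are the $\scrK^{(k)}_m$'s, interpolating composites of $\ffk^{(m+l)}$ with the $\ffg^{[a,b]}_{\ul d}$'s, and the functoriality in $n$ uses \eqref{eqn: K^(k-1)_m vs K^(k)_m} rather than \eqref{eqn: H^(k-1)_m vs H^(k)_m}.

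The gap is repairable, and in fact the repair is cheap — Theorem \ref{thm: master homotopies - variant} is deduced from Theorem \ref{thm: master homotopies} by conjugating with $\zeta$ via Corollary \ref{cor: fff^[a,b] vs ffg^[a,b]} — but as written your proposal feeds the wrong coherence data into the construction and rests on an incorrect claim about the boundary factors. The remaining points of your sketch (carrying the $\ccO_{X_{s(n+2)}}[u]$-action through the homotopy by base change from a single algebra, preservation of the etp condition, and Lemma \ref{lem: Coh(A1[-1]) vs Cohminus etp} with support $\red{Z}$) are in line with the paper's argument once the correct homotopies are used.
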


\begin{proof}

The proof follows the same lines as Lemma \ref{lem:defn of xi-n-plus}, where now the main geometric input is provided by Theorem \ref{thm: master homotopies - variant}.
\end{proof}

\sssec{}
Notice that the two lemmas above immediately give us the desired dg-functors
$$
  \Xi^+_n: 
  F_n \bigt{(\sT^+)^{\op}\otimes_{\sB^+}\sT^+}
    \to
  \Coh \bigt{\bbA^1_{X_s(n+2)}[-1]}_{\red{Z}}
$$
which are moreover functorial in $n$.
Thus, passing to the colimit over $\bbN$ yields the desired functor \eqref{eqn: from TBTplus to colim Coh(A^1[-1])}.

\subsection{A functor out of the Thom--Sebastiani category}

Here we adapt the contents of Section \ref{ssec:functor from TBT-plus} to the singularity categories. 
Recall the definition of the dg-category of coherent matrix factorization, see Section \ref{sssec: defn MFcoh}.
The goal is to write down a dg-functor
\begin{equation} \label{eqn:from TBT to Singext-weird}
  \Xi:
  \sT^\op \otimes_{\sB} \sT 
    \to 
  \colim_{\bbN} 
  \MFcoh \bigt{X_s(n+1),0}_{\red{Z}}
  .
\end{equation}
The treatment is parallel to the one of Section \ref{ssec:functor from TBT-plus}.

\sssec{} 

The canonically defined dg-functor 
$$
  \on{can}:
  \varinjlim_{\bbN} 
  F_n \bigt{\sT^\op \otimes_{\sB}\sT}
    \to 
  \sT^\op \otimes_{\sB}\sT
$$
is an equivalence in $\dgCat_A$. Hence, as before, our dg-functor \eqref{eqn:from TBT to Singext-weird} will be the composition 
$$
  \sT^\op \otimes_{\sB}\sT
    \xto{\on{can}^{-1}}
  \varinjlim_n 
  F_n \bigt{\sT^\op \otimes_{\sB}\sT}
    \xto{\colim_n \Xi_n}
  \colim_n 
  \MFcoh \bigt{X_s(n+1),0}_{\red{Z}}.
$$
The definition of the $\Xi_n$'s is obtained from that of the $\Xi_n^+$'s. 
We first need the following

\begin{lem}
For $n \geq 0$, we have
$$
  \sT^\op \otimes_A \sB^{\otimes_An}\otimes_A \sT
    \simeq 
  \Singext(X_s \times_S G^{\times_S n}\times_S X_s)_{\red{Z}\times_s\red{Z}},
$$
with the convention that $G^{\times_S 0 } = S$.
\end{lem}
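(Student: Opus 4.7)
The plan is to deduce this statement from the previous lemma by mimicking the proof of Corollary \ref{cor: nodes Bar(B/A)}. Since $\sT \simeq \sT^+/\sT^+_{\on{pe}}$ and $\sB \simeq \sB^+/\sB^+_{\on{pe}}$, iterated application of the compatibility of dg-quotients with tensor products of dg-categories (see \cite[Appendix A]{toenvezzosi22}) will identify $\sT^\op \otimes_A \sB^{\otimes_An}\otimes_A \sT$ with the dg-quotient of $(\sT^+)^\op \otimes_A (\sB^+)^{\otimes_An}\otimes_A \sT^+$ by the full subcategory $\ccN$ Karoubi-generated by the union of all subcategories in which one of the $n+2$ tensor factors has been replaced by its \emph{perfect} counterpart. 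Explicitly, $\ccN$ is Karoubi-generated by
$$
  (\sT^+)^\op \otimes_A (\sB^+)^{\otimes_A j-1} \otimes_A \sB^+_{\on{pe}} \otimes_A (\sB^+)^{\otimes_A n-j} \otimes_A \sT^+ \quad (j = 1, \ldots, n),
$$
together with the two extra subcategories $(\sT^+_{\on{pe}})^\op \otimes_A (\sB^+)^{\otimes_An}\otimes_A \sT^+$ and $(\sT^+)^\op \otimes_A (\sB^+)^{\otimes_An}\otimes_A \sT^+_{\on{pe}}$.

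Next, under the equivalence of the previous lemma, I would identify each constituent subcategory of $\ccN$ with the Karoubi closure in $\Coh(X_s \times_S G^{\times_Sn}\times_S X_s)_{\red{Z}\times_s \red{Z}}$ of external products $E^\vee \boxtimes_S M_1 \boxtimes_S \cdots \boxtimes_S M_n \boxtimes_S F$ in which the corresponding factor is perfect. Taking the Karoubi closure of the union of all these would then give precisely $\Perfext(X_s \times_S G^{\times_Sn}\times_S X_s)_{\red{Z}\times_s \red{Z}}$, and passing to the dg-quotient would conclude, as by definition
$$
  \Singext(X_s \times_S G^{\times_Sn}\times_S X_s)_{\red{Z}\times_s \red{Z}}
    =
  \frac{\Cohext(X_s \times_S G^{\times_Sn}\times_S X_s)_{\red{Z}\times_s \red{Z}}}{\Perfext(X_s \times_S G^{\times_Sn}\times_S X_s)_{\red{Z}\times_s \red{Z}}}.
$$

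The main obstacle will be verifying the identification $\ccN \simeq \Perfext(\cdots)_{\red{Z}\times_s \red{Z}}$, which is really a matter of tracking Karoubi generators: $\sB^+_{\on{pe}} = \Perf(G)$ is Karoubi-generated by $\ccO_G$, while $\sT^+_{\on{pe}} = \Perf(X_s)_{\red{Z}}$ consists of perfect coherent complexes supported on $\red{Z}$. The images of these generators under the external tensor product should then coincide with the Karoubi generators prescribed by the definition of $\Perfext(\cdots)_{\red{Z}\times_s \red{Z}}$. No new combinatorial or homotopical input is required beyond what already appears in the previous lemma.
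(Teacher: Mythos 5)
Your proposal is correct and follows essentially the same route as the paper's own proof: invoke \cite[Appendix A]{toenvezzosi22} to write $\sT^\op \otimes_A \sB^{\otimes_An}\otimes_A \sT$ as the dg-quotient of $(\sT^+)^\op \otimes_A (\sB^+)^{\otimes_An}\otimes_A \sT^+$ by the full subcategory Karoubi-generated by the factors-with-one-perfect-slot, and then identify that subcategory with $\Perfext(X_s \times_S G^{\times_Sn}\times_S X_s)_{\red{Z}\times_s\red{Z}}$ under the equivalence of the previous lemma. Your extra bookkeeping of Karoubi generators ($\ccO_G$ for $\sB^+_{\on{pe}}$, perfect complexes supported on $\red{Z}$ for $\sT^+_{\on{pe}}$) is exactly the check the paper leaves implicit, so nothing is missing.
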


\begin{proof}
Thanks to \cite[Appendix A]{toenvezzosi22}, we know that  
$$
  \sT^\op \otimes_A \sB^{\otimes_An}\otimes_A \sT
$$
is the dg-quotient of 
$$
  (\sT^+)^\op \otimes_A (\sB^+)^{\otimes_An}\otimes_A \sT^+
$$
by its full subcategory Karoubi-generated by
$$
  \bigt{(\sT^+_{\on{pe}})^\op \otimes_A (\sB^+)^{\otimes_An}\otimes_A \sT^+}
    \cup 
  \bigt{(\sT^+)^\op \otimes_A\sB^+_{\on{pe}}\otimes_A (\sB^+)^{\otimes_An-1}\otimes_A \sT^+}
    \cup 
  \dots
$$
$$
  \dots 
    \cup  
  \bigt{(\sT^+)^\op \otimes_A (\sB^+)^{\otimes_An-1}\otimes_A\sB^+_{\on{pe}}\otimes_A \sT^+}
    \cup 
  \bigt{(\sT^+)^\op \otimes_A (\sB^+)^{\otimes_An}\otimes_A \sT^+_{\on{pe}}},
$$
It is immediate to observe that, under the equivalence of Lemma \ref{lem: description T+otimes B+otimesn otimes T+}, this subcategory corresponds to 
$$
  \Perfext(X_s \times_S G^{\times_Sn}\times_SX_s)_{\red{Z}\times_s\red{Z}}
    \subseteq 
  \Cohext(X_s \times_S G^{\times_Sn}\times_SX_s)_{\red{Z}\times_s\red{Z}}.
$$
\end{proof}

\sssec{}

To construct
\begin{equation}\label{eqn: Xi_n}
\nonumber
    \Xi_n: 
    F_n \bigt{\sT^\op \otimes_{\sB}\sT} 
      \to 
    \MFcoh \bigt{X_s(n+2),0}_{\red{Z}}
\end{equation}
we just need to verify that each functor 
$$
  \Cohext(X_s \times_S G^{\times_S m}\times_S X_s)_{\red{Z}\times_s\red{Z}}
    \to
  \Coh \bigt{\bbA^1_{X_s(n+2)}[-1]}_{\red{Z}}
$$
as in Lemma \ref{lem: functor Barn(T)--> Coh(A^1[-1])} induces a dg-functor
$$
  \Perfext(X_s \times_S G^{\times_S m}\times_S X_s)_{\red{Z}\times_s\red{Z}}
    \to
  \Angles{\pr{}^*\Coh \bigt{X_s(n+2)}_{\red{Z}}}.
$$
By definition, each functor
$$
  \Perfext(X_s \times_S G^{\times_S m}\times_S X_s)_{\red{Z}\times_s\red{Z}}
    \to
  \Coh \bigt{\bbA^1_{X_s(n+2)}[-1]}_{\red{Z}}
$$
can be written as the composition below 
$$
  \Perfext(X_s \times_S G^{\times_S m}\times_S X_s)_{\red{Z}\times_s\red{Z}}
    \to
  \Perfext(X_s \times_S X_s)_{\red{Z}\times_s\red{Z}}
    \to
  \Coh \bigt{\bbA^1_{X_s(n+2)}[-1]}_{\red{Z}}.
$$
Therefore, we just need to verify that the latter functor factors through
$$
  \Perfext(X_s  \times_S X_s)_{\red{Z}\times_s\red{Z}}
    \to
  \Angles{\pr{}^*\Coh \bigt{X_s(n+2)}_{\red{Z}}}.
$$
This follows from the definitions, from base change along the cartesian square
\begin{equation*}
    \begin{tikzcd}
      X_s
      \rar
      \dar
        &
      X_s(n+2)
      \dar
        \\
      X_s\times_SX_s
      \rar
        &
      \bbA^1_{X_s(n+2)}[-1]
    \end{tikzcd}
\end{equation*}
and from Lemma \ref{lem: pr^* vs etp u-tors}.

\begin{rmk}
Notice that the functors $\Xi_n$ above are clearly functorial in $n$:
for each $n\geq 1$ it follows from the constructions that the square below commutes
\begin{equation*}
    \begin{tikzcd}
      \Bar_n(\sT) \simeq \restr{\Bar_{n+1}(\sT)}{(\Delta_{\on{inj}}^{\leq n})^{\op}}
      \arrow[r]
      \arrow[d]
      &
      \MFcoh\bigt{X_s(n+2),0}_{\red{Z}}
      \arrow[d]
      \\
      \Bar_{n+1}(\sT)
      \arrow[r]
      &
      \MFcoh\bigt{X_s(n+3),0}_{\red{Z}}.
    \end{tikzcd}
\end{equation*}
In particular, by passing to the colimits of these diagrams, we get a functor
$$
  (\bbN, \leq)
    \to
  \Fun \bigt{\Delta^1,\dgCat_A}.
$$
Loosely, this sends the morphism $n \leq n+m$ to the commutative square
\begin{equation*}
    \begin{tikzcd}
      F_n \bigt{\sT^{\op}\otimes_{\sB}\sT}
      \arrow[r,"\Xi_n"]
      \arrow[d]
      &
      \MFcoh\bigt{X_s(n+2),0}_{\red{Z}}
      \arrow[d]
      \\
      F_{n+m} \bigt{\sT^{\op}\otimes_{\sB}\sT}
      \arrow[r,"\Xi_{n+m}"]
      &
      \MFcoh\bigt{X_s(n+m+2),0}_{\red{Z}}.
    \end{tikzcd}
\end{equation*}
Therefore, passing to the colimit over $\bbN$ we get the desired functor \eqref{eqn:from TBT to Singext-weird}.
\end{rmk}

\subsection{The integration dg-functor}

In this section, we begin the study of the \emph{integration dg-functor} $\int_{X/S}$ defined as the composition
$$
  \int_{X/S}:
  \Sing(X\times_SX)
    \simeq 
  \sT^\op \otimes_{\sB}\sT
    \xto{\;\;\ev^{\HH}\;}
  \HH(\sB/A)
    \xto{\oint} 
  \Sing (\bbA^1_S[-1])_s.
$$
Upon taking realizations, this improves the categorical intersection product introduced by \TV{} in \cite{toenvezzosi22}. 
We will exploit this later to recover Bloch's intersection number.

\sssec{} 

The concrete goal of this section is to rewrite $\int_{X/S}$ so as to avoid the passage through $\HH(\sB/A)$. We will do so as follows: recalling the definition of $\oint$, the dg-functor $\int_{X/S}$ can be written as the composition
\begin{equation*}
\begin{tikzcd}
    \sT^\op\otimes_{\sB}\sT
    \arrow[r,"\ev^{\HH}"]
    &
    \HH(\sB/A)
    \arrow[r,"\xi"]
    \arrow[rr,bend right = 0.5cm,swap,"\oint"]
    &
    \sB(\infty)
    \arrow[r] 
    &
    \Sing(\bbA^1_S[-1])_s.
\end{tikzcd}
\end{equation*}
We will construct a commutative diagram
\begin{equation}\label{eqn: rewriting integration functor part I}
\begin{tikzpicture}[scale=1.5]
\node (00) at (0,0) {$\HH(\sB/A) $};
\node (10) at (4,0) {$\sB(\infty)$};
\node (01) at (0,1) {$
\sT^{\op} \otimes_{\sB} \sT$};
\node (11) at (4,1) {$
\colim_{\bbN} \MFcoh \bigt{X_s(n+1),0}_{\red{Z}}$}; 
\path[->,font=\scriptsize,>=angle 90]
(00.east) edge node[above] {$\xi$}  (10.west); 
\path[->,font=\scriptsize,>=angle 90]
(01.east) edge node[above] {$\Xi$} (11.west); 
\path[->,font=\scriptsize,>=angle 90]
(01.south) edge node[right] {$\ev^{\HH}$} (00.north);
\path[->,font=\scriptsize,>=angle 90]
(11.south) edge node[right] {$ $} (10.north);
\end{tikzpicture}
\end{equation}
where the right vertical map is induced by pushforward along the morphisms
$$
  \bbA^1_{X_s(n)}[-1]
    \to
  \bbA^1_{s(n)}[-1].  
$$

\sssec{} 

As usual, before dealing with the singularity categories, we focus on the coherent categories. 

\sssec{}\label{sssec: defn evHH}

We begin by recalling the definition of the left vertical arrow in the above diagram. 
Since $\sT^+$ is a proper left $\sB^+$-module, there is a $\sB^+$-bilinear evaluation
$$
  \ev: 
  \sT^+\otimes_A(\sT^+)^{\op} 
    \to
  \sB^+.
$$
Following \cite[Section 5.2]{toenvezzosi22}, one considers the dg-functor
$$
  \ev^{\HH,+}: (\sT^+)^{\op}\otimes_{\sB^+}\sT^+ 
    = 
  \bigt{
        \sT^+ \otimes_A(\sT^+)^{\op}
        }
        \otimes_{\sB^{+,\env}}\sB^+ 
          \xto{
               \ev \otimes_{\sB^{+,\env}}\sB^+
               } 
        \sB^+ \otimes_{\sB^{+,\env}}\sB^+
          =
        \HH(\sB^+/A).
$$
A similar construction for $\sT$ and $\sB$ yields the dg-functor 
$$
  \ev^{\HH}:
  \Sing(X\times_S X)
    \simeq 
  \sT^{\op}\otimes_{\sB}\sT 
    = 
  \bigt{\sT\otimes_A\sT^{\op}}\otimes_{\sB^{\env}}\sB  
    \xto{\ev \otimes_{\sB^{\env}}\sB} 
  \sB \otimes_{\sB^{\env}}\sB
    =
  \HH(\sB/A).
$$

\sssec{} \label{sssec:evHH as natural trans}

By construction, $\ev^{\HH,+}$ is induced by the natural transformation 
$$
  \on{Bar}(\sT^+)
    \to 
  \on{Bar}(\sB^+/A): 
  (\Delta_{\on{inj}})^{\op}
    \to 
  \dgCat_A
$$
given value-wise by the dg-functor
$$
  \ev^{\HH,+}_n:
  (\sT^+)^\op \otimes_A (\sB^+)^{\otimes_An}\otimes_A \sT^+
    \xto{\on{swap}}
  (\sT^+)^\op \otimes_A \sT^+ \otimes_A (\sB^+)^{\otimes_An}
    \xto{\ev^+ \otimes \id^{\otimes n}}
  (\sB^+)^{\otimes_A n+1}.
$$
The dg-functor $\ev_n^{\HH}$ is defined analogously. Here are geometric descriptions of $\ev^{\HH,+}_n$ and $\ev_n^{\HH}$.

\begin{lem}
Under the equivalences
$$
  (\sT^+)^\op \otimes_A (\sB^+)^{\otimes_An}\otimes_A \sT^+
    \simeq 
  \Cohext(X_s \times_S G^{\times_Sn}\times_S X_s)_{\red{Z}\times_s\red{Z}}
$$
$$
  (\sB^+)^{\otimes_A n+1}
    \simeq 
  \Cohext(G^{\times_Sn+1}),
$$
the dg-functor 
$$
  \ev^{\HH,+}_n
    :
  (\sT^+)^\op \otimes_A (\sB^+)^{\otimes_An}\otimes_A \sT^+
    \to
  (\sB^+)^{\otimes_A n+1}
$$
is induced by pull-push along the natural correspondence
$$
  X_s\times_S G^{\times_Sn}\times_S X_s 
    \xleftarrow{\on{swap}}
  (X_s\times_S X_s) \times_S G^{\times_Sn}
    \leftarrow
  (X_s\times_X X_s) \times G^{\times_Sn}
    \to
  G\times_S G^{\times_Sn}=G^{\times_S n+1}.
$$
\end{lem}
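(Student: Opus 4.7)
The plan is to reduce to the case $n = 0$ and then to describe $\ev^+$ geometrically. By Section \ref{sssec:evHH as natural trans}, the dg-functor $\ev^{\HH,+}_n$ factors as $\on{swap}$ followed by $\ev^+ \otimes \id^{\otimes n}$. Under the equivalences of Lemma \ref{lem: description T+otimes B+otimesn otimes T+}, the swap corresponds to the canonical geometric isomorphism
$X_s \times_S G^{\times_S n} \times_S X_s \simeq (X_s \times_S X_s) \times_S G^{\times_S n}$,
while $\ev^+ \otimes \id^{\otimes n}$ is the external product of the $n = 0$ correspondence (encoding $\ev^+$) with the trivial correspondence $G^{\times_S n} \xleftarrow{=} G^{\times_S n} \xrightarrow{=} G^{\times_S n}$ on the remaining factors. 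Composing these correspondences yields exactly the one in the statement, so it suffices to prove the $n = 0$ case: that $\ev^+: (\sT^+)^{\op} \otimes_A \sT^+ \to \sB^+$ is pull-push along
$$
X_s \times_S X_s \longleftarrow X_s \times_X X_s \longrightarrow G,
$$
the left arrow base-changed from the diagonal $X \to X \times_S X$ and the right arrow arising from the structure map $X \to S$ via the identification $X_s \times_X X_s \simeq X \times_S G$.

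For the $n = 0$ case, I would invoke the universal property of the evaluation. The dg-category $\sT^+ = \Coh(X_s)_{\red Z}$ is proper (hence dualizable) as a left $\sB^+$-module, with dual $(\sT^+)^{\op}$. Consequently, $\ev^+$ is uniquely characterized, up to a contractible space of choices, by being $\sB^+$-bilinear and satisfying the triangle identities paired with the coevaluation $\coev^+: \sB^+ \to (\sT^+)^{\op} \otimes_{\sB^+} \sT^+$. I would first construct a candidate $\sB^+$-bilinear dg-functor via pull-push along the stated correspondence, and then check that it agrees with $\ev^+$ by inspection on generators. Concretely, on the Karoubi-generators $E^\vee \boxtimes_S F$ of $\Cohext(X_s \times_S X_s)_{\red Z \times_s \red Z}$ (for $E, F \in \Coh(X_s)_{\red Z}$, with $E^\vee$ denoting Grothendieck duality), the pull-push returns
$\pi_* \bigl( \Delta^*(E^\vee \boxtimes_S F) \bigr) \simeq \pi_*\bigl( E^\vee \otimes_{\ccO_X}^{\bbL} F \bigr) \in \Coh(G)$,
where $\pi: X_s \times_X X_s \simeq X \times_S G \to G$ is the projection and $\Delta$ is the base-change of the diagonal. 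This recovers the standard description of $\ev^+$ via the inner Hom in the $\sB^+$-module category.

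The main obstacle is the verification of the $\sB^+$-bilinearity of the pull-push functor, which requires tracking the $\sB^+$-actions, induced by the Hopf algebroid structure on $A[\eps_1,\eps_2]$ recalled in Section \ref{sssec:Hopf-algebroid}, through base changes along the correspondence and invoking the projection formula. This verification can be performed directly on cofibrant models in the spirit of Section \ref{ssec: cofibrant models}, or more abstractly using the higher base-change results for ind-coherent sheaves of \cite{ben-zvifrancisnadler10}. Once bilinearity is established, matching the induced coevaluations together with the triangle identities identifies the constructed functor with $\ev^+$, completing the argument.
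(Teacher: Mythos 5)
Your core argument is the paper's: reduce to $n=0$ via the factorization of $\ev^{\HH,+}_n$ as $\on{swap}$ followed by $\ev^+\otimes\id^{\otimes n}$, and then identify $\ev^+$ with pull-push along $X_s\times_SX_s\leftarrow X_s\times_XX_s\to G$ by computing on the Karoubi-generators $E^\vee\boxtimes_SF$. The paper's proof is exactly this one-line comparison: it simply records that $\ev^+$ is given by $(E,F)\mapsto (X_s\times_XX_s\to G)_*(E^\vee\boxtimes_XF)$, which is visibly the stated pull-push; so your "inspection on generators" route is the intended argument (note only the small abuse in writing the pullback along the base-changed diagonal as $E^\vee\otimes^{\bbL}_{\ccO_X}F$ — it is $E^\vee\boxtimes_XF$ on $X_s\times_XX_s\simeq X\times_SG$, and one must keep the $G$-direction in order to retain the object of $\Coh(G)$ rather than its pushforward to $X$ or $S$).

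The one piece I would warn you about is the alternative identification mechanism via "uniqueness of duality data and the triangle identities." At the coherent level the paper only asserts that $\sT^+=\Coh(X_s)_{\red{Z}}$ is a \emph{proper} left $\sB^+$-module, which is what produces the bilinear evaluation $\ev^+$; it does not claim (and the Thom--Sebastiani theorem of \cite{toenvezzosi22} does not prove) that $\sT^+$ is dualizable over $\sB^+$ with dual $(\sT^+)^{\op}$ — that duality statement is established only for the singularity categories $\sT$ and $\sB$. So characterizing $\ev^+$ by triangle identities against a coevaluation presupposes a duality datum that is not available here, and the accompanying bilinearity check is also unnecessary work: $\ev^+$ comes with its explicit formula, and matching the pull-push against that formula (your generator computation) already finishes the proof, exactly as in the paper.
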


\begin{proof}
This readily follows from the fact that 
$$
  \ev^+:
  (\sT^+)^\op \otimes_A \sT^+
    \to
  \sB^+
$$
$$
  (E,F)
    \mapsto 
  (X_s\times_X X_s\to G)_*(E^{\vee}\boxtimes_XF)
$$
corresponds to pull-push along the correspondence
$
  X_s\times_S X_s 
    \leftarrow 
  X_s\times_X X_s 
  \to s\times_Ss
    =
  G.
$
\end{proof}

\begin{cor}
Under the equivalences
$$
  \sT^\op \otimes_A \sB^{\otimes_An}\otimes_A \sT
    \simeq 
  \Singext(X_s \times_S G^{\times_Sn}\times_S X_s)_{\red{Z}\times_s\red{Z}}
$$
$$
  \sB^{\otimes_A n+1}
    \simeq 
  \Singext(G^{\times_Sn+1}),
$$
the dg-functor 
$$
  \ev^{\HH}_n:
  \sT^\op \otimes_A \sB^{\otimes_An}\otimes_A \sT
    \to
  \sB^{\otimes_A n+1}
$$
is induced by pull-push along the natural correspondence
$$
  X_s\times_S G^{\times_Sn}\times_S X_s 
    \xleftarrow{\on{swap}}
  (X_s\times_S X_s) \times_S G^{\times_Sn}
    \leftarrow
  (X_s\times_X X_s) \times G^{\times_Sn}
    \to
  G\times_S G^{\times_Sn}=G^{\times_S n+1}.
$$
\end{cor}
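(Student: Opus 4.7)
My strategy is to descend the previous lemma from the coherent categories to the singularity quotients. First, I would invoke the argument of Corollary \ref{cor: nodes Bar(B/A)}, applied on both sides: the target identification $\sB^{\otimes_An+1}\simeq \Singext(G^{\times_Sn+1})$ realizes $\sB^{\otimes_An+1}$ as the dg-quotient of $(\sB^+)^{\otimes_An+1}\simeq \Cohext(G^{\times_Sn+1})$ by the subcategory $\Perfext(G^{\times_Sn+1})$, and a parallel argument (using Appendix A of \cite{toenvezzosi22} and Lemma \ref{lem: description T+otimes B+otimesn otimes T+}) identifies $\sT^\op \otimes_A \sB^{\otimes_An}\otimes_A \sT$ with the dg-quotient of $\Cohext(X_s\times_S G^{\times_Sn}\times_S X_s)_{\red{Z}\times_s\red{Z}}$ by $\Perfext(X_s\times_S G^{\times_Sn}\times_S X_s)_{\red{Z}\times_s\red{Z}}$. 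Next, by its very construction (see Section \ref{sssec:evHH as natural trans}: $\ev^{\HH}_n$ is defined analogously to $\ev^{\HH,+}_n$ by the formula $(\ev\otimes \id^{\otimes n})\circ \on{swap}$), the dg-functor $\ev^{\HH}_n$ is precisely the dg-functor induced on these dg-quotients by $\ev^{\HH,+}_n$.

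The entire problem therefore reduces to checking that the pull-push dg-functor of the previous lemma restricts to a dg-functor
$$
\Perfext(X_s\times_S G^{\times_Sn}\times_S X_s)_{\red{Z}\times_s\red{Z}}
\to
\Perfext(G^{\times_Sn+1}).
$$
This is a direct verification on the Karoubi generators. Indeed, a generator of the source is an external tensor product $E^\vee \boxtimes_S M_1 \boxtimes_S \cdots \boxtimes_S M_n \boxtimes_S F$ with at least one tensor factor perfect; an application of the projection formula \eqref{eqn:push of boxtimes} along the correspondence transforms this, up to the swap, into $\ev^+(E,F)\boxtimes_S M_1 \boxtimes_S \cdots \boxtimes_S M_n$. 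If the perfect factor is some $M_i$, the result is tautologically in $\Perfext(G^{\times_Sn+1})$; if instead the perfect factor is $E$ or $F$, the same conclusion follows from the fact that $\sT^+_{\on{pe}}$ is a sub-$\sB^+$-bimodule, so $\ev^+(E,F) \in \sB^+_{\on{pe}}$.

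I do not expect any serious obstacle here: this corollary is a formal consequence of the lemma combined with the quotient descriptions of the singularity categories already established in Corollaries \ref{cor: nodes Bar(B/A)} and (the proof of) \ref{cor: MFcoh vs Cohminus etp/Cohb}. The only mild subtlety is tracking the support conditions through the pushforward, which is harmless because the singular locus $Z$ is proper over $S$ by hypothesis, so coherence is preserved.
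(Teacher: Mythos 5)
Your proposal is correct and follows essentially the same route the paper intends: the corollary is stated without proof precisely because it is the coherent lemma descended to the dg-quotients via the identifications $\sB^{\otimes_An+1}\simeq \Cohext(G^{\times_Sn+1})/\Perfext(G^{\times_Sn+1})$ and $\sT^\op\otimes_A\sB^{\otimes_An}\otimes_A\sT \simeq \Cohext(X_s\times_SG^{\times_Sn}\times_SX_s)_{\red{Z}\times_s\red{Z}}/\Perfext(\cdots)$, plus the check that the pull-push preserves the perfect subcategories. The one justification I would tighten is the case where the perfect factor is $E$ or $F$: the fact that $\ev^+(E,F)\in\sB^+_{\on{pe}}$ is not literally a consequence of "$\sT^+_{\on{pe}}$ being a sub-bimodule," but rather of the statement (from \TV{}, and implicitly used in Section \ref{sssec: defn evHH} to define $\ev$ on $\sT$) that the evaluation of the dualizable bimodule sends pairs with a perfect factor into $\Perf(G)$, which is exactly the fact you need.
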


\sssec{}

Recall from Section \ref{sssec:evHH as natural trans} that $\ev^{\HH}$ arises as the colimit $\colim_{\bbN} \ev^{\HH}_{\leq n}$ of the naturally defined dg-functors
$$
  \ev^{\HH}_n: 
  F_n \bigt{\sT^\op \otimes_{\sB}\sT } 
    \to 
  F_n \HH(\sB/A).
$$
It follows that $\int_{X/S}$ is the colimit of the dg-functors
$$
  \int_{X/S,n}:
  F_n(\sT^\op\otimes_{\sB}\sT)
    \xto{\ev^{\HH}_{n}}
  F_n\bigt{\HH(\sB/A)}
    \xto{\xi_n}
  \sB(n+2)
    \xto{r(n+2)_*} 
  \Sing(\bbA^1_S[-1])_s.
$$
We now describe the composition of the first two arrows in a different way.

\sssec{}

Consider the dg-functor
\begin{equation} \label{eqn:from Cohext to Bplus}
  \Coh \bigt{\bbA^1_{X_s(n+2)}[-1]}_{\red{Z}} 
    \to 
  \sB^+(n+2)    
    =
  \Coh (\bbA^1_{s(n+2)}[-1])
\end{equation}
induced by pushforward along 
$
  \bbA^1_{X_s(n+2)}
    \to
  \bbA^1_{s(n+2)}.
$

\begin{lem}
The diagram
\begin{equation} 
\nonumber
\begin{tikzpicture}[scale=1.5]
\node (00) at (0,0) {$F_n \bigt{ \HH(\sB^+/A) } $};
\node (10) at (4,0) {$\sB^+(n+2) $ };
\node (01) at (0,1) {$F_n \bigt{
(\sT^+)^{\op} \otimes_{\sB^+} \sT^+
}$};
\node (11) at (4,1) {$\Coh \bigt{\bbA^1_{X_s(n+2)}[-1]}_{\red{Z}}$}; 
\path[->,font=\scriptsize,>=angle 90]
(00.east) edge node[above] {$\xi_n^+$}  (10.west); 
\path[->,font=\scriptsize,>=angle 90]
(01.east) edge node[above] {$\Xi_n^+ $} (11.west); 
\path[->,font=\scriptsize,>=angle 90]
(01.south) edge node[right] {$\ev^{\HH,+}_{n}$} (00.north);
\path[->,font=\scriptsize,>=angle 90]
(11.south) edge node[right] {$ $} (10.north);
\end{tikzpicture}
\end{equation}
is commutative.
\end{lem}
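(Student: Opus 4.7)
The plan is to establish level-wise commutativity and then use the functoriality of both $\Xi_n^+$ and $\xi_n^+ \circ \ev^{\HH,+}_n$ as natural transformations out of the $n$-th truncated bar complex. Concretely, since $\Xi_n^+$ arises (Lemma \ref{lem: functor Barn(T)--> Coh(A^1[-1])}) as a natural transformation $\on{Bar}_n(\sT^+) \to \Coh(\bbA^1_{X_{s(n+2)}}[-1])_{\red Z}$ assembled from the $\Xi^+_{n,m}$, and $\ev^{\HH,+}_n$ is analogously assembled from the $\ev^{\HH,+}_{n,m}$ of Section \ref{sssec:evHH as natural trans}, it suffices to check, for each $0\le m \le n$, the commutativity of the square
\begin{equation*}
\begin{tikzcd}
(\sT^+)^{\op}\otimes_A (\sB^+)^{\otimes_A m}\otimes_A \sT^+ \rar["\ev^{\HH,+}_{n,m}"] \dar["\Xi^+_{n,m}"] & (\sB^+)^{\otimes_A m+1} \dar["\xi^+_{n,m}"] \\
\Coh(\bbA^1_{X_{s(n+2)}}[-1])_{\red Z} \rar["(\pi_X')_*"] & \sB^+(n+2),
\end{tikzcd}
\end{equation*}
where $\pi_X' : \bbA^1_{X_{s(n+2)}}[-1] \to \bbA^1_{s(n+2)}[-1]$ is the map induced by the structure morphism $\pi_X: X \to S$.

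For the level-wise check, the strategy is to realize both compositions as pull-push functors along canonical spans of derived $S$-schemes. Unraveling the definitions, $\xi^+_{n,m}\circ \ev^{\HH,+}_{n,m}$ is the composition of the span for $\ev^{\HH,+}_{n,m}$, namely $X_s \times_S G^{\times_S m} \times_S X_s \leftarrow X \times_S G^{\times_S m+1} \xto{\pi_X \times \id} G^{\times_S m+1}$, with the span for $\xi^+_{n,m}$, namely $G^{\times_S m+1} \xleftarrow{\tdelta} s^{\times_S m+1} \xto{j(n+2)\circ \pr{1}} s(n+2) \to \bbA^1_{s(n+2)}[-1]$. Computing the fiber product $X \times_S G^{\times_S m+1} \times_{G^{\times_S m+1}} s^{\times_S m+1}$ along these two maps yields $X \times_S s^{\times_S m+1}$, and the composed span becomes
\begin{equation*}
X_s \times_S G^{\times_S m} \times_S X_s \xleftarrow{\tdelta_X} X \times_S s^{\times_S m+1} \to s(n+2) \to \bbA^1_{s(n+2)}[-1],
\end{equation*}
where $\tdelta_X$ is precisely the map appearing in the definition of $\Xi^+_{n,m}$. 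On the other hand, post-composing $\Xi^+_{n,m}$ with $(\pi_X')_*$ gives the span
\begin{equation*}
X_s \times_S G^{\times_S m} \times_S X_s \xleftarrow{\tdelta_X} X \times_S s^{\times_S m+1} \xto{\on{can}} X_{s(n+2)} \to \bbA^1_{X_{s(n+2)}}[-1] \xto{\pi_X'} \bbA^1_{s(n+2)}[-1].
\end{equation*}
Using the cartesian square $\bbA^1_{X_{s(n+2)}}[-1] \simeq X_{s(n+2)} \times_{s(n+2)} \bbA^1_{s(n+2)}[-1]$, the composition of the last three arrows factors as $X_{s(n+2)} \xto{\pi_X} s(n+2) \to \bbA^1_{s(n+2)}[-1]$, so the two spans agree.

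The main obstacle is to verify that the two pull-push functors agree not merely at the level of $\Coh(\bbA^1_{s(n+2)}[-1])$, but also at the level of the enhancement of Lemma \ref{lem: Coh(A1[-1]) vs Cohminus etp}, that is, as eventually two-periodic $\ccO_{s(n+2)}[u]$-modules. Since $X_{s(n+2)}\to s(n+2)$ is the base change of $\bbA^1_{X_{s(n+2)}}[-1]\to \bbA^1_{s(n+2)}[-1]$, the $u$-action on $(\pi_X')_*$ is induced by pushforward of the $u$-action on $\Coh(\bbA^1_{X_{s(n+2)}}[-1])$, and the relevant enhancement for $\xi^+_{n,m}$ comes from the same geometric origin: the cartesian square ensures, via the projection formula, that the $\ccO_{s(n+2)}[u]$-module structure obtained from either pathway coincides. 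The compatibility of the level-wise commutativities with the bar differentials is automatic from the geometric constructions of Lemma \ref{lem:defn of xi-n-plus} and Lemma \ref{lem: functor Barn(T)--> Coh(A^1[-1])}, both of which rely on the same higher coherent homotopies of Theorems \ref{thm: master homotopies} and \ref{thm: master homotopies - variant}.
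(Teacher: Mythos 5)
Your proposal is correct and follows essentially the same route as the paper: reduce to the level-wise squares indexed by $0\le m\le n$ (using that all maps come from natural transformations of truncated bar diagrams), identify the two composites geometrically by base change along the cartesian squares relating $X$-level and $S$-level correspondences, and verify the $\ccO_{s(n+2)}[u]$-module (etp) enhancement via the same base-change/projection-formula observation coming from $\bbA^1_{X_{s(n+2)}}[-1]\simeq X_{s(n+2)}\times_{s(n+2)}\bbA^1_{s(n+2)}[-1]$. The only cosmetic difference is that you compose the correspondences explicitly, while the paper writes each horizontal functor as ``multiply, then $\delta^*$, then $j(n+2)_*$'' and invokes the displayed cartesian squares; the content is the same.
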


\begin{proof}
Since this square is induced by natural transformations of diagrams $(\Delta_{\on{inj}})^\op \to \dgCat_A$, it suffices to verify that for each $0\leq m \leq n$ the square
\begin{equation*}
    \begin{tikzcd}
      (\sT^+)^{\op}\otimes_A(\sB^+)^{\otimes_Am}\otimes_A\sT^+ 
      \arrow[d] 
      \arrow[r,"\Xi^+_{n,m}"]
      &
      \Coh \bigt{\bbA^1_{X_s(n+2)}[-1]}_{\red{Z}}
      \arrow[d] 
      \\
      (\sB^+)^{\otimes_A m+1} 
      \arrow[r,"\xi^+_{n,m}"]
      & 
      \sB^+(n+2)
    \end{tikzcd}
\end{equation*}
commutes.
Under the equivalences of Lemmas \ref{lem: nodes Bar(B^+/A)} and \ref{lem: description T+otimes B+otimesn otimes T+}, the horizontal arrows correspond to
\begin{align*}
  \Cohext(G^{\times_S m+1})
    &
    \to
  \Coh(G)
    \\
    &
    \xto{\delta_{s/S}^*}
  \Mod_{\ccO_{s}[u]}^{\etp}\bigt{\Cohminus(s)}
    \\
    &
    \xto{j(n+2)_*}
  \Mod_{\ccO_{s(n+2)}[u]}^{\etp}\bigt{\Cohminus(s(n+2))}
    \\
    &
    \simeq 
  \Coh \bigt{\bbA^1_{s(n+2)}[-1]},    
\end{align*}
\begin{align*}
  \Cohext(X_s\times_S G^{\times_S m}\times_S X_s)_{\red{Z}\times_s\red{Z}}
    &
    \to
  \Cohext(X_s\times_SX_s)_{\red{Z}\times_s \red{Z}}
    \\
    &
    \xto{\delta_{X_s/S}^*}
  \Mod_{\ccO_{X_s}[u]}^{\etp}\bigt{\Cohminus(X_s)}_{\red{Z}}
    \\
    &
    \xto{j(n+2)_*}
  \Mod_{\ccO_{X_s(n+2)}[u]}^{\etp}\bigt{\Cohminus(X_s(n+2))}_{\red{Z}}
    \\
    &
    \simeq 
  \Coh \bigt{\bbA^1_{X_s(n+2)}[-1]}_{\red{Z}}.
\end{align*}
The claim then follows from base-change along the cartesian squares
\begin{equation*}
    \begin{tikzcd}
      X_s(n+2)
      \dar
      &
      \lar["j(n+2)"]
      X_s
      \rar["\delta_{X_s/S}"]
      \dar
        &
      X_s\times_SX_s
      \dar
      \\
      s(n+2)
      &
      \lar["j(n+2)"]
      s
      \rar["\delta_{s/S}"]
        &
      G.
    \end{tikzcd}
\end{equation*}
\end{proof}

\begin{cor}
The dg-functor defined in \eqref{eqn:from Cohext to Bplus} induces an analogous dg-functor
\begin{equation} \label{eqn:fromSingext-weird to B}
  \MFcoh(X_{s(n+2)},0)_{\red{Z}}
    \to 
  \sB(n+2)
\end{equation}
that fits in the commutative diagram
\begin{equation} 
\nonumber
\begin{tikzpicture}[scale=1.5]
\node (00) at (0,0) {$F_n \bigt{ \HH(\sB/A) } $};
\node (10) at (4,0) {$\sB(n+2)$. };
\node (01) at (0,1) {$F_n \bigt{
\sT^{\op} \otimes_{\sB} \sT
}$};
\node (11) at (4,1) {$\MFcoh(X_{s(n+2)},0)_{\red{Z}}$}; 
\path[->,font=\scriptsize,>=angle 90]
(00.east) edge node[above] {$\xi_n$}  (10.west); 
\path[->,font=\scriptsize,>=angle 90]
(01.east) edge node[above] {$\Xi_n $} (11.west); 
\path[->,font=\scriptsize,>=angle 90]
(01.south) edge node[right] {$\ev^{\HH}_{n}$} (00.north);
\path[->,font=\scriptsize,>=angle 90]
(11.south) edge node[right] {${}$} (10.north);
\end{tikzpicture}
\end{equation}

Taking the colimit, we obtain the commutative diagram \eqref{eqn: rewriting integration functor part I}
that we were looking for.
\end{cor}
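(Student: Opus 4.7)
The plan is to show that everything in the previously established $(+)$-version commutative square passes to the respective dg-quotients, and the claim will then follow from the universal properties of localization sequences together with the fact that colimits in $\dgCat_A$ commute with quotients.

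First, I will verify that the pushforward
$$
  (\bbA^1_{X_{s(n+2)}}[-1]\to \bbA^1_{s(n+2)}[-1])_*:
  \Coh(\bbA^1_{X_{s(n+2)}}[-1])_{\red{Z}}
    \to
  \Coh(\bbA^1_{s(n+2)}[-1])
$$
carries the full subcategory $\Angles{\pr{}^*\Coh(X_{s(n+2)})_{\red{Z}}}$ into $\sB^+_{\on{pe}}(n+2)=\Angles{\pr{}^*\Coh(s(n+2))}$. For this I would apply base change to the cartesian square
\begin{equation*}
    \begin{tikzcd}
      \bbA^1_{X_{s(n+2)}}[-1] \rar["\pr{}"] \dar & X_{s(n+2)} \dar \\
      \bbA^1_{s(n+2)}[-1] \rar["\pr{}"] & s(n+2)
    \end{tikzcd}
\end{equation*}
to rewrite the pushforward of $\pr{}^*F$ as $\pr{}^*(X_{s(n+2)}\to s(n+2))_*F$. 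Then the hypothesis that $Z$ (and hence $\red{Z}$) is proper over $S$ guarantees that $(X_{s(n+2)}\to s(n+2))_*$ restricted to sheaves supported on $\red{Z}$ lands in $\Coh(s(n+2))$, which is exactly what is needed. Applying the equivalences of Lemma \ref{lem: pr^* vs etp u-tors} and Corollary \ref{cor: MFcoh vs Cohminus etp/Cohb} (and Lemma \ref{lem: B(oo)=B^+(oo)/B_pe(oo)} after taking the colimit), the dg-functor \eqref{eqn:from Cohext to Bplus} descends to the promised \eqref{eqn:fromSingext-weird to B}.

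Next, commutativity of the resulting square is inherited from the $(+)$-version proved in the previous lemma. Indeed, by construction $\Xi_n$, $\xi_n$ and $\ev^{\HH}_n$ are obtained from $\Xi_n^+$, $\xi_n^+$ and $\ev^{\HH,+}_n$ respectively by passing to appropriate dg-quotients: $\Xi_n$ is defined from $\Xi_n^+$ by quotienting out the perfect parts (as in Section \ref{ssec:functor from TBT-plus}), $\xi_n$ is defined from $\xi_n^+$ thanks to the analysis in Section \ref{ssec:oint-construction II} after Lemma \ref{lem: B(oo)=B^+(oo)/B_pe(oo)}, and $\ev^{\HH}_n$ is defined from $\ev^{\HH,+}_n$ because the evaluation is compatible with the perfect submodule structure (\cite[Appendix A]{toenvezzosi22}). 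Since the $(+)$-version commutes and since dg-quotients are unique with their universal property, the induced square commutes as well.

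Finally, for the last sentence, one takes the colimit over $n\in\bbN$ of the square just obtained. On the left-hand column the colimit recovers $\sT^{\op}\otimes_{\sB}\sT$ (upper row) and $\HH(\sB/A)$ (lower row), by the previously established equivalences; on the right-hand column it yields by definition $\colim_\bbN\MFcoh(X_{s(n+1)},0)_{\red{Z}}$ and $\sB(\infty)$; the horizontal arrows become $\Xi$ and $\xi$, and the vertical arrows become $\ev^{\HH}$ and the colimit map of \eqref{eqn:fromSingext-weird to B}. This is precisely the square \eqref{eqn: rewriting integration functor part I}, so nothing beyond commuting the colimit with the four legs of the square is required. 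The only mildly delicate point I foresee is checking carefully that the support and properness hypotheses propagate correctly to identify the image of $\pr{}^*\Coh(X_{s(n+2)})_{\red{Z}}$ with (a subcategory Karoubi-generated inside) $\sB^+_{\on{pe}}(n+2)$; everything else is a formal consequence of the universal property of localization sequences and of the earlier lemma.
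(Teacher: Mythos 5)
Your proposal is correct and follows essentially the same route as the paper: the paper's proof consists exactly of checking, via base change along the same cartesian square relating $\bbA^1_{X_{s(n+2)}}[-1]\to X_{s(n+2)}$ and $\bbA^1_{s(n+2)}[-1]\to s(n+2)$, that $\Angles{\pr{}^*\Coh(X_{s(n+2)})_{\red{Z}}}$ is sent into $\sB^+_{\on{pe}}(n+2)$, the commutativity of the induced square and the passage to the colimit being treated as formal consequences of the $(+)$-version and the universal property of dg-quotients, just as you argue. Your only addition is to spell out the role of the properness of $Z$ over $S$ in ensuring the pushforward to $s(n+2)$ stays coherent, a point the paper leaves implicit in "this follows immediately by base change."
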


\begin{proof}
We just need to verify that our dg-functor 
$
  \Coh \bigt{\bbA^1_{X_s(n+2)}[-1]}_{\red{Z}}
    \to 
  \sB^+(n+2)
$ 
induces a dg-functor
$$
  \MFcoh (X_{s(n+2)},0)_{\red{Z}}
    \to 
  \sB(n+2)
$$
Thus, we need to check that the dg-functor restricted to 
$
  \Angles{\pr{}^*\Coh(X_{s(n+2)})_{\red{Z}}}
    \subseteq 
  \Coh \bigt{\bbA^1_{X_s(n+2)}[-1]}_{\red{Z}}
$ 
factors through $\sB^+_{\on{pe}}(n+1)$.
This follows immediately base change along
\begin{equation*}
    \begin{tikzcd}
      \bbA^1_{X_s(n+2)}[-1]
      \rar["\pr{}"]
      \dar
        &
      X_{s(n+2)}
      \dar
      \\
      \bbA^1_{s(n+2)}[-1]
      \rar["\pr{}"]
        &
      s(n+2).
    \end{tikzcd}
\end{equation*}
\end{proof}

\sssec{}

So far, we have constructed the left  commutative square in diagram \eqref{diagram: alternative expressions integration functor}.
To construct the right half of the diagram, it suffices to notice that there is a commutative diagram
\begin{equation*}
    \begin{tikzcd}
      X_s\times_SX_s
      \rar
      \dar
        &
      \bbA^1_{X_s(2)}[-1]
      \rar
      \dar
        &
      \cdots
      \rar
        &
      \bbA^1_{X_s(n)}[-1]
      \dar
      \rar
      \arrow[rr,bend left = 0.5cm,"R(n)"]
        &
      \cdots
        &
      \bbA^1_{X}[-1]
      \dar
      \\
      G
      \rar
        &
      \bbA^1_{s(2)}[-1]
      \rar
        &
      \cdots
      \rar
        &
      \bbA^1_{s(n)}[-1]
      \arrow[rr,swap,bend right = 0.5cm,"r(n)"]
      \rar
        &
      \cdots
        &
      \bbA^1_{S}[-1].
    \end{tikzcd}
\end{equation*}
Passing to the colimits, we get the following commutative squares in $\dgCat_A$:
\begin{equation}\label{eqn: R(oo) vs r(oo)}
    \begin{tikzcd}
      \varinjlim_n \Coh(\bbA^1_{X_s(n)}[-1])_{\red{Z}}
      \rar["R(\oo)_*"]
      \dar
      &
      \Coh(\bbA^1_{X}[-1])_{\red{Z}}
      \dar
      \\
      \varinjlim_n \Coh(\bbA^1_{s(n)}[-1])
      \rar["r(\oo)_*"]
      &
      \Coh(\bbA^1_{S}[-1])_{s}
    \end{tikzcd}
    \;\;\;\;
    \begin{tikzcd}
      \varinjlim_n \MFcoh(X_s(n),0)_{\red{Z}}
      \rar["R(\oo)_*"]
      \dar
      &
      \MFcoh(X,0)_{\red{Z}}
      \dar
      \\
      \varinjlim_n \MFcoh(s(n),0)
      \rar["r(\oo)_*"]
      &
      \MFcoh(S,0)_s.
    \end{tikzcd}
\end{equation}

\begin{rmk}\label{rmk: Upsilon^+ vs delta^*}
Denote by $t:X\to \bbA^1_X[-1]$ the closed embedding of the classical truncation. 
The composition 
\begin{equation*}
  \begin{tikzcd}
    \Coh(X\times_SX)_{\red{Z}\times_s\red{Z}}
      \arrow[rr,bend left = 0.5cm,"\Upsilon^+"]
      \rar["\Xi^+"]
      &
    \varinjlim_n \Coh(\bbA^1_{X_s(n)}[-1])_{\red{Z}}
      \rar["R(\oo)_*"]
      &
    \Coh(\bbA^1_X[-1])_{\red{Z}}
      \rar["t^*"]
      &
    \QCoh(X)_{\red{Z}}
  \end{tikzcd}
\end{equation*}
is equivalent to the pullback
$$
  \delta_{X/S}^*: \Coh(X\times_SX)_{\red{Z}\times_s\red{Z}}
    \to 
  \QCoh(X)_{\red{Z}}
$$
along the diagonal $\delta_{X/S}$.
This follows immediately from the definition of $\Xi^+$, from the sequence of cartesian squares \eqref{eqn: R(oo) vs r(oo)} and from the observation that
$$
  t^*:
  \Coh(\bbA^1_X[-1])_{\red{Z}}
    \to
  \QCoh(X)_{\red{Z}}
$$
corresponds, under the equivalence 
$
  \Coh(\bbA^1_X[-1])_{\red{Z}}
    \simeq
  \Mod_{\ccO_X[u]}^{\etp}\bigt{\Cohminus(X)_{\red{Z}}},
$
to the forgetful functor
$$
  \Mod_{\ccO_X[u]}^{\etp}\bigt{\Cohminus(X)_{\red{Z}}}
    \to
  \Cohminus(X)_{\red{Z}}
    \subseteq
  \QCoh(X)_{\red{Z}}.
$$
\end{rmk}


\section{Proof of the generalized Bloch conductor conjecture}\label{sec: proof of gBCC}

In this section, we explain how the categorical intersection product defined by {\TV} (\cite[Definition 5.2.1]{toenvezzosi22}) relates to the localized intersection product defined by Kato--Saito in \cite[Definition 5.1.5]{katosaito04}.
This, combined with our earlier work \cite{beraldopippi24}, leads to the proof of the generalized Bloch conductor conjecture (Conjecture \ref{conj:gBCC}).

\subsection{A commutative diagram}

In this section, we construct the commutative diagram below:
\begin{equation}\label{diag: key diag}
    \begin{tikzcd}[column sep = 1.5cm]
      \Mod^{\etp}_{\ccO_X[u]}\bigt{\Cohminus(X)_{\eZ}}
      \arrow[r,"\on{forget}"]
      \arrow[d]
      &
      \Cohminus(X)_{\eZ}^{\Getp}
      \arrow[d]
      \\
      \frac{\Mod^{\etp}_{\ccO_X[u]}\bigt{\Cohminus(X)_{\eZ}}}{\Mod_{\ccO_X[u]}\bigt{\Coh(X)}}
      \arrow[r,"\on{forget}"]
      &
\frac{\Cohminus(X)_{\eZ}^{\Getp}}{\Coh(X)}.
    \end{tikzcd}
\end{equation}
We now proceed to define all the categories and functors involved.

\sssec{}

We say that an object $E \in \Cohminus(X)$ is \emph{eventually supported on $\red{Z}$} if there exists an integer $n_0 \in \bbN$ such that $\ccH^{-n}(E)$ is a coherent $\ccO_X$-module set theoretically supported on $Z$ for all $n \geq n_0$.
Denote by  $\Cohminus(X)_{\eZ}$ the full subcategory of objects that are eventually supported on $\red{Z}$.
This is an idempotent-complete full subcategory that is closed under finite limits (and finite colimits).
There is an obvious fully-faithful embedding $\Coh(X)\subseteq \Cohminus(X)_{\eZ}$.

\sssec{} 

In particular, consider $\Mod_{\ccO_X[u]}\bigt{\Cohminus(X)_{\eZ}}$
and its full subcategory
$$
  \Mod^{\etp}_{\ccO_X[u]}\bigt{\Cohminus(X)_{\eZ}}
    \subseteq 
  \Mod_{\ccO_X[u]}\bigt{\Cohminus(X)_{\eZ}}
$$
spanned by those objects that are eventually two-periodic and eventually supported on $\red{Z}$.
There is an obvious fully faithful embedding
$$
 \Mod_{\ccO_X[u]}\bigt{\Coh(X)}
 \simeq 
  \Mod^{\etp}_{\ccO_X[u]}\bigt{\Coh(X)}
    \subseteq
  \Mod^{\etp}_{\ccO_X[u]}\bigt{\Cohminus(X)_{\eZ}}.
$$

\sssec{}

We denote by $\Cohminus(X)_{\eZ}^{\Getp}$ the full subcategory of $\Cohminus(X)_{\eZ}$ spanned by those objects $M \in \Cohminus(X)_{\eZ}$ with the following property: there exists $n_0 \in \bbN$ such that
$$
  [\ccH^{-n}(M)]=[\ccH^{-n-2}(M)] 
    \in 
  \G_0(X)_{\red{Z}}
$$
for all $n \geq n_0$.
There is an inclusion 
$$
  \Coh(X)
    \subseteq 
  \Cohminus(X)^{\Getp}_{\eZ}
$$ and, obviously, the forgetful functor $\Mod_{\ccO_X[u]}\to \QCoh(X)$ restricts to
\begin{equation}\label{eqn: Mod^etp_eZ --> Cohminus^Getp_eZ}
  \Mod^{\etp}_{\ccO_X[u]}\bigt{\Cohminus(X)_{\eZ}}
    \xto{\on{forget}}
  \Cohminus(X)_{\eZ}^{\Getp}.
\end{equation}

\sssec{}

In \eqref{diag: key diag}, the two vertical arrows are just the canonical quotient dg-functors and the bottom horizontal dg-functor is induced by the top one upon passing to the quotients.

\subsection{Rewriting the forgetful functor} 
Our next goal is to rewrite the dg-functor
$$
\frac{\Mod^{\etp}_{\ccO_X[u]}\bigt{\Cohminus(X)_{\eZ}}}{\Mod_{\ccO_X[u]}\bigt{\Coh(X)}}
\xto{\on{forget}}
\frac{\Cohminus(X)_{\eZ}^{\Getp}}{\Coh(X)}
$$
that appears in \eqref{diag: key diag}. Namely, we will place it in a commutative diagram as follows:
\begin{equation}\label{diag: key diag2}
    \begin{tikzcd}
\frac{\Mod^{\etp}_{\ccO_X[u]}\bigt{\Cohminus(X)_{\eZ}}}{\Mod_{\ccO_X[u]}\bigt{\Coh(X)}}
      \arrow[r,"\on{forget}"]
      \arrow[d,"\ffL"]
      &
      \frac{\Cohminus(X)_{\eZ}^{\Getp}}{\Coh(X)}
      \\
      \Mod_{\ccO_X[u,u^{-1}]}\bigt{\Cohinfty(X)_{\red{Z}}}
      \arrow[r,"\on{forget}"]
      &
      \Cohinfty(X)_{\red{Z}}^{\tp}.
      \arrow[u,"\ffT"]
    \end{tikzcd}
\end{equation}

\sssec{}
We begin by defining the bottom right corner.
Denote by $\Cohinfty(X)_{\red{Z}}$ the full subcategory of $\QCoh(X)$ spanned by those complexes of $\ccO_X$-modules $M$ such that, for any $n \in \bbZ$, the cohomology sheaf $\ccH^n(M)$ is a coherent $\ccO_X$-module set theoretically supported on $\red{Z}$. 
Let $\Cohinfty(X)^{\tp}_{\red{Z}}$ the full subcategory of $\Cohinfty(X)_{\red{Z}}$ consisting of objects $M$ with two-periodic (\emph{tp}) cohomology sheaves: for every $n \in \bbZ$, there is an isomorphism
$$
\ccH^n(M)\simeq \ccH^{n-2}(M).
$$
We stress that these isomorphisms are not part of the data.

\sssec{} 
We now look at the bottom left corner.
Let $\ccO_{X}[u,u^{-1}]$ be the obvious localization of $\ccO_X[u]$, that is,
$$
  \ccO_{X}[u,u^{-1}]
    :=
  \varinjlim \bigt{\ccO_X[u]\xto{u}\ccO_X[u][2]\xto{u} \cdots}
$$
and $\Mod_{\ccO_X[u,u^{-1}]}$ the dg-category of $\ccO_X[u,u^{-1}]$-modules in $\QCoh(X)$.
There is an adjunction
$$
  \ccO_X[u,u^{-1}] \otimes_{\ccO_X} -:
  \QCoh(X)
    \leftrightarrows
  \Mod_{\ccO_X[u,u^{-1}]}
  :
  \on{forget}.
$$
As before, we set
$$
  \Mod_{\ccO_X[u,u^{-1}]}\bigt{\Cohinfty(X)_{\red{Z}}}
    :=
  \varprojlim \Bigt{\Mod_{\ccO_X[u,u^{-1}]}\xto{\on{forget}}\QCoh(X)\hookleftarrow \Cohinfty(X)_{\red{Z}}}.
$$
\sssec{} 

For $M \in \Mod_{\ccO_X[u,u^{-1}]}\bigt{\Cohinfty(X)_{\red{Z}}}$, the arrow $u: M \to M[2]$ obviously induces an isomorphism
$$
\ccH^n(M)\simeq \ccH^{n+2}(M)
$$
of coherent $\ccO_X$-modules (supported on $Z$).
This yields the bottom arrow 
$$
  \on{forget}:
  \Mod_{\ccO_X[u,u^{-1}]}\bigt{\Cohinfty(X)_{\red{Z}}}
    \to
  \Cohinfty(X)^{\tp}_{\red{Z}}
$$
appearing in \eqref{diag: key diag2}.

\sssec{}
Next, we define the left vertical arrow. Let $M\in \Mod^{\etp}_{\ccO_X[u]}\bigt{\Cohminus(X)_{\eZ}}$.
We denote by $(M,u)$ the diagram
$$
\cdots  \xto{u} M[-4] \xto{u}M[-2] \xto{u} M
$$
and consider the dg-functor
$$
  \Mod^{\etp}_{\ccO_X[u]}\bigt{\Cohminus(X)_{\eZ}}
    \to
  \Cohinfty(X)
$$
$$
  M 
    \mapsto 
  \varprojlim (M,u)
    :=
  \varprojlim \bigt{ \cdots \xto{u}M[-2] \xto{u} M}.
$$
Clearly, this limit carries an action of $\ccO_X[u]$, so that we have a dg-functor
\begin{equation}\label{eqn: dg-functor lim (M,u)}
    \Mod^{\etp}_{\ccO_X[u]}\bigt{\Cohminus(X)_{\eZ}}
    \to
    \Mod_{\ccO_X[u]}\bigt{\Cohinfty(X)}.
\end{equation}
Our next task will be to show that this dg-functor factors through $\Mod_{\ccO_X[u,u^{-1}]}\bigt{\Cohinfty(X)_{\red{Z}}}$.
\sssec{}
For $a \in \bbZ$, let $\Cohlevel{a}(X)\subseteq \Cohinfty(X)$ (resp. $\Cohlevelplus{a}(X)\subseteq \Cohinfty(X)$) denote the full subcategory spanned by those complexes of $\ccO_X$-modules cohomologically concentrated in degrees $\leq a$ (resp. $\geq a$).
Recall that for every $a\in \bbZ$ the t-structure on $\Cohinfty(X)$ induces a co-localization
$$
  \iota^{\leq a}:
  \Cohlevel{a}(X)
    \leftrightarrows
  \Cohinfty(X)
  : \tau^{\leq a}
$$
and a localization
$$
  \tau^{\geq a}:
  \Cohinfty(X)
    \leftrightarrows
  \Cohlevelplus{a}(X)
  : \iota^{\geq a}.
$$
We start with the following observation, which will help us to compute the cohomology groups of $\varprojlim (M,u)$.

\begin{lem}\label{lem: etp implies ileqa commutes with directed limits}
Let $M \in \Mod^{\etp}_{\ccO_X[u]}\bigt{\Cohminus(X)_{\eZ}}$.
There is a canonical equivalence
$$
  \iota^{\leq a} \varprojlim \tau^{\leq a} (M,u)
    \xto{\simeq}
  \varprojlim \iota^{\leq a} \tau^{\leq a} (M,u).
$$
\end{lem}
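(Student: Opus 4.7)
The plan is to reduce this to a direct cohomological computation showing that the inverse limit $\varprojlim \tau^{\leq a}(M,u)$, computed in the ambient category $\QCoh(X)$, is automatically cohomologically bounded above in degree $a$ and has coherent cohomology sheaves, hence lies in $\Cohlevel{a}(X)$. Once this is established, the comparison map of the statement is formally an equivalence: the fully faithful inclusion $\iota^{\leq a}:\Cohlevel{a}(X)\hookrightarrow \Cohinfty(X)$ preserves any limit whose value lies in the subcategory, by the usual universal property.

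Concretely, I would start by computing cohomology sheaves of the truncated tower: $\ccH^i\bigt{\tau^{\leq a}(M[-2n])}$ equals $\ccH^{i-2n}(M)$ when $i \leq a$ and vanishes otherwise, with transition maps induced by the $u$-action. The Milnor short exact sequence
$$
  0 \to {\varprojlim}^{1} \ccH^{i-1}\bigt{\tau^{\leq a}(M[-2n])} \to \ccH^i \Bigt{\varprojlim \tau^{\leq a}(M,u)} \to \varprojlim \ccH^i\bigt{\tau^{\leq a}(M[-2n])} \to 0
$$
in $\QCoh(X)$ then reduces everything to analysing the tower $\bigl\{\ccH^{i-2n}(M)\bigr\}_n$ of cohomology sheaves under multiplication by $u$. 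The eventually two-periodic hypothesis, via the long exact sequence for $\Fib(u)\to M \xto{u} M[2]$ and the fact that $\Fib(u)\in \Coh(X)$ is in particular bounded below, forces $u:\ccH^j(M)\to \ccH^{j+2}(M)$ to be an isomorphism for $j\ll 0$. Therefore, for each fixed $i\leq a$ the tower $\bigl\{\ccH^{i-2n}(M)\bigr\}_n$ is essentially constant: its $\varprojlim^1$ vanishes and its limit equals $\ccH^{i-2n_i}(M)$ for any $n_i$ large enough, which is coherent and set-theoretically supported on $Z$ by the \virg{eventually supported on $\red{Z}$} hypothesis on $M\in \Cohminus(X)_{\eZ}$.

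Combining these observations, $\varprojlim \tau^{\leq a}(M,u)$ has coherent cohomology concentrated in degrees $\leq a$, so it represents the limit both in $\Cohlevel{a}(X)$ and, via $\iota^{\leq a}$, in $\Cohinfty(X)$; this yields the desired canonical equivalence. The only delicate point is the distinction between the $\infty$-categorical limit in $\Cohinfty(X)$ and the a priori larger limit in $\QCoh(X)$, together with the availability of the Milnor exact sequence in this derived setting; the remedy is to compute everything first in $\QCoh(X)$, where the Milnor sequence is standard for $\bbN$-indexed towers in a presentable stable $\infty$-category with a $t$-structure compatible with filtered colimits, and only then verify that the outcome lands in the desired coherent subcategory.
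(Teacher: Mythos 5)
Your proposal is correct, and its engine is the same as the paper's: the etp hypothesis, via the long exact sequence for $\Fib(u)\to M\xto{u}M[2]$ with $\Fib(u)\in\Coh(X)$ bounded, forces $u:\ccH^{j}(M)\to\ccH^{j+2}(M)$ to be an isomorphism for $j\ll 0$, hence the truncated tower $\tau^{\leq a}(M,u)$ stabilizes. Where you differ is in the packaging: the paper simply observes that the transition maps $\tau^{\leq a}(M[-2n])\xto{u}\tau^{\leq a}(M[-2n+2])$ are themselves equivalences for $n\gg 0$, so both sides of the comparison map (which is the unit of the adjunction $\iota^{\leq a}\dashv\tau^{\leq a}$) are computed by the single object $\tau^{\leq a}(M[-2n])$ and the map is the identity; you instead compute $\varprojlim$ in $\QCoh(X)$ via the Milnor sequence and then invoke the (correct) principle that a full subcategory reflects an ambient limit that happens to land in it. Two remarks on your route. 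First, the justification you give for the Milnor sequence is off: compatibility of the $t$-structure with filtered \emph{colimits} is irrelevant; what one needs is $t$-exactness of countable products, which holds in the affine case but for a general scheme products in $\QCoh(X)$ only have bounded cohomological amplitude, so the naive sequence requires an argument. Second, this detour is unnecessary: your own computation shows the truncated tower is eventually constant at the level of objects (all transition maps are equivalences for $n\geq n_0$), so its limit is the eventual term by a purely formal cofinality argument, with no $\varprojlim^1$ analysis and no exactness hypotheses — which is precisely the paper's shortcut. Also note that coherence of the cohomology of the limit (not the support condition, which is irrelevant here) is what you need to land in $\Cohlevel{a}(X)$, and it comes for free from $M\in\Cohminus(X)$; with that adjustment your formal conclusion via reflection of limits is sound.
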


\begin{proof}
We consider the morphism
$$
  \iota^{\leq a} \varprojlim \tau^{\leq a} (M,u)
    \to
  \varprojlim \iota^{\leq a} \tau^{\leq a} (M,u)
$$
which corresponds under adjuntion to
$$
  \id:
  \varprojlim \tau^{\leq a} (M,u)
    \to
  \tau^{\leq a}
  \varprojlim  \iota^{\leq a} \tau^{\leq a} (M,u)
    \simeq 
  \varprojlim \tau^{\leq a} \iota^{\leq a} \tau^{\leq a} (M,u)
    \simeq
  \varprojlim \tau^{\leq a} (M,u).
$$
Since $M \in \Mod^{\etp}_{\ccO_X[u]}\bigt{\Cohminus(X)_{\eZ}}$, we have that $\tau^{\leq a - n}(M[-2]\xto{u}M)$ is an equivalence for $n\gg 0$.
This means that the diagram
$$
  \cdots 
    \xto{u}
  \tau^{\leq a}(M[-2n])
    \xto{u}
  \tau^{\leq a}(M[-2n+2])
    \xto{u}
  \cdots
    \xto{u}
  \tau^{\leq a}(M)
$$
stabilizes, that is, that the arrow
$$
  \tau^{\leq a}(M[-2n])
    \xto{u}
  \tau^{\leq a}(M[-2n+2])
$$
is a quasi-isomorphism for all $n\gg 0$.
In particular, we obtain that
$$
  \varprojlim \tau^{\leq a} (M,u)
    \simeq 
  \tau^{\leq a}(M[-2n])
$$
for $n \gg 0$.
Similarly, one sees that 
$$
  \varprojlim \iota^{\leq a} \tau^{\leq a} (M,u)
    \simeq
  \iota^{\leq a} \tau^{\leq a}(M[-2n'])
$$
for $n'\gg 0$.

Under these equivalences, choosing $n=n'\gg0$, the morphism
$$
  \iota^{\leq a} \varprojlim \tau^{\leq a} (M,u)
    \to
  \varprojlim \iota^{\leq a} \tau^{\leq a} (M,u)
$$
corresponds to the identity of $\iota^{\leq a} \tau^{\leq a}(M[-2n])$.
\end{proof}

\begin{cor}\label{cor: cohomology groups lim (M,u)}
Let $M \in \Mod^{\etp}_{\ccO_X[u]}\bigt{\Cohminus(X)_{\eZ}}$.
There is a canonical equivalence
$$
  \iota^{\geq a}  \tau^{\geq a} \varprojlim(M,u)
    \simeq 
  \varprojlim \iota^{\geq a} \tau^{\geq a} (M,u).
$$
In particular,
$$
  \ccH^{a}\bigt{\varprojlim (M,u)}
    \simeq
  \ccH^{a-2n}(M)
$$
for $n\gg 0$.
\end{cor}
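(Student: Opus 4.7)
The plan is to reduce the first equivalence to Lemma~\ref{lem: etp implies ileqa commutes with directed limits} by playing it off against the truncation fiber sequence, and then extract the cohomology formula by a direct stabilization argument for the inverse system of cohomology sheaves.

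For the first statement, I invoke the canonical fiber sequence
$$
\iota^{\leq a-1}\tau^{\leq a-1}(N) \to N \to \iota^{\geq a}\tau^{\geq a}(N)
$$
natural in $N \in \Cohinfty(X)$. Applying it termwise to the tower $(M, u)$ and then taking $\varprojlim$, which preserves fiber sequences, yields one fiber sequence; applying it directly to $N = \varprojlim(M, u)$ yields a second. The canonical comparison produces a morphism of fiber sequences whose middle arrow is the identity of $\varprojlim(M, u)$ and whose left arrow is exactly the comparison map appearing in Lemma~\ref{lem: etp implies ileqa commutes with directed limits} (with $a$ replaced by $a-1$), after observing that $\tau^{\leq a-1}$ commutes with limits as a right adjoint. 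The Lemma tells us the left arrow is an equivalence, so two-out-of-three for fiber sequences produces the desired equivalence
$$
\iota^{\geq a}\tau^{\geq a}\varprojlim(M,u) \xto{\simeq} \varprojlim \iota^{\geq a}\tau^{\geq a}(M,u).
$$

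For the cohomology statement, I would chain this first equivalence with the Milnor-type short exact sequence for sequential limits in $\QCoh(X)$:
$$
0 \to {\lim}^1 \ccH^{a-1}\bigt{\iota^{\geq a}\tau^{\geq a}(M[-2n])} \to \ccH^a \varprojlim \iota^{\geq a}\tau^{\geq a}(M,u) \to \lim \ccH^a\bigt{\iota^{\geq a}\tau^{\geq a}(M[-2n])} \to 0.
$$
The $\lim^1$ term vanishes because each term in the tower is cohomologically supported in degrees $\geq a$. The ordinary limit is the inverse limit of $\ccH^{a-2n}(M)$ along the transition maps $u\colon \ccH^{a-2n}(M) \to \ccH^{a-2n+2}(M)$; the eventually two-periodic hypothesis on $M$ forces these transitions to be isomorphisms once $a - 2n$ is sufficiently negative (since the fiber of $u\colon M \to M[2]$ is coherent, hence cohomologically bounded below), so the inverse system stabilizes at the common value $\ccH^{a-2n}(M)$ for any $n\gg 0$.

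I do not anticipate a genuine obstacle. The main thing to watch is the direction of the canonical comparison maps between $\varprojlim$ and the two truncation functors (recalling that $\iota^{\geq a}$ is a right adjoint and $\tau^{\leq a-1}$ is a right adjoint, while their partners $\tau^{\geq a}$ and $\iota^{\leq a-1}$ are left adjoints), together with the routine invocation of the Milnor sequence inside the $t$-structure on $\QCoh(X)$.
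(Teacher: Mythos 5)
Your proof of the first equivalence is exactly the paper's argument: compare the two truncation fiber sequences (one obtained termwise and then passed to the limit, one applied to $\varprojlim(M,u)$ directly), note the middle map is the identity and the left map is the comparison map of Lemma \ref{lem: etp implies ileqa commutes with directed limits} after commuting $\tau^{\leq a-1}$ past the limit, and conclude by two-out-of-three. For the cohomology formula your route is a mild variant of the paper's: the paper pushes the limit all the way inside, using that both $\iota^{\geq a}$ and $\tau^{\leq a}$ are right adjoints, to get $\ccH^{a}\bigt{\varprojlim(M,u)} \simeq \varprojlim_n \ccH^{a}(M[-2n])$, and then concludes because the eventually two-periodic hypothesis makes this tower stabilize, so no ${\lim}^1$ discussion is needed at all. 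Your Milnor-sequence argument reaches the same conclusion, but one caveat is worth recording: for non-affine $X$, products in $\QCoh(X)$ need not be $t$-exact, so the naive ${\lim}^1$ short exact sequence is not automatic; it is rescued precisely by the observation you make, namely that the terms $\iota^{\geq a}\tau^{\geq a}(M[-2n])$ lie in degrees $\geq a$ and the coconnective part is closed under products and limits, whence $\ccH^{a}$ of the limit is the kernel computing the heart-limit of the $\ccH^{a}$'s, which stabilizes to $\ccH^{a-2n}(M)$ for $n\gg 0$ since the fiber of $u$ is coherent, hence bounded. Both routes prove the statement; the paper's adjunction chain is marginally cleaner because stabilization of the cohomology tower removes every derived-limit subtlety from the picture.
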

\begin{proof}
The commutative diagram 
\begin{equation*}
    \begin{tikzcd}
      \iota^{\leq a-1}\tau^{\leq a-1} \varprojlim (M,u)
      \arrow[r]
      \arrow[d," \simeq \;\; \on{Lemma} \ref{lem: etp implies ileqa commutes with directed limits}"]
      &
      \varprojlim (M,u)
      \arrow[r]
      \arrow[d,"\id"]
      &
      \iota^{\geq a}\tau^{\geq a}\varprojlim (M,u)
      \arrow[d]
      \\
      \varprojlim \iota^{\leq a-1}\tau^{\leq a-1} (M,u)
      \arrow[r]
      &
      \varprojlim (M,u)
      \arrow[r]
      &
      \varprojlim \iota^{\geq a}\tau^{\geq a} (M,u),
    \end{tikzcd}
\end{equation*}
immediately implies the first claim.
As for the second one, we consider the following chain of isomorphisms:
\begin{align*}
    \ccH^{a}\bigt{\varprojlim (M,u)} & := \tau^{\leq a} \tau^{\geq a}\varprojlim (M,u)
                                     \\
                                     & \simeq \tau^{\leq a} \tau^{\geq a} \iota^{\geq a} \tau^{\geq a}\varprojlim (M,u) && \tau^{\geq a} \iota^{\geq a}\simeq \id
                                     \\
                                     & \simeq \tau^{\leq a} \tau^{\geq a}  \varprojlim \iota^{\geq a} \tau^{\geq a} (M,u) && \text{first claim}
                                     \\
                                     & \simeq \tau^{\leq a} \tau^{\geq a} \iota^{\geq a} \varprojlim \tau^{\geq a} (M,u) && \iota^{\geq a}\; \text{is a right adjoint}
                                     \\
                                     & \simeq \tau^{\leq a} \varprojlim \tau^{\geq a} (M,u) && \tau^{\geq a} \iota^{\geq a}\simeq \id
                                     \\
                                     & \simeq \varprojlim \tau^{\leq a} \tau^{\geq a} (M,u) && \tau^{\leq a} \; \text{is a right adjoint}
                                     \\
                                     & \simeq \varprojlim_n \ccH^{a}(M[-2n]).
\end{align*}
However, since $M \in \Mod^{\etp}_{\ccO_X[u]}\bigt{\Cohminus(X)_{\eZ}}$, the inverse system
$$
  \cdots 
    \xto{u}
  \ccH^{a}(M[-2n])
    \xto{u}
  \ccH^{a}(M[-2n+2])
    \xto{u}
  \cdots
    \xto{u}
  \ccH^{a}(M)
$$
stabilizes to $\ccH^{a-2n}(M)$ for $n\gg0$.
\end{proof}

\begin{cor}\label{cor: u invertible on lim(M,u)}
The dg-functor
$$
    \Mod^{\etp}_{\ccO_X[u]}\bigt{\Cohminus(X)_{\eZ}}
    \to
    \Mod_{\ccO_X[u]}\bigt{\Cohinfty(X)}
$$
appearing in \eqref{eqn: dg-functor lim (M,u)} factors through 
$$
  \Mod_{\ccO_X[u,u^{-1}]}\bigt{\Cohinfty(X)_{\red{Z}}}
    \to
  \Mod_{\ccO_X[u]}\bigt{\Cohinfty(X)}.
$$
In particular, we obtain a dg-functor
$$
  \ol \ffL:
  \Mod^{\etp}_{\ccO_X[u]}\bigt{\Cohminus(X)_{\eZ}}
    \to
  \Mod_{\ccO_X[u,u^{-1}]}\bigt{\Cohinfty(X)_{\red{Z}}}.
$$
\end{cor}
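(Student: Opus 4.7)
\medskip

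\noindent\textbf{Proof proposal.} The plan is to verify separately the two assertions packaged into the corollary: first, that the object $\varprojlim(M,u)$ lies in $\Cohinfty(X)_{\red{Z}}$; and second, that the natural $\ccO_X[u]$-action on $\varprojlim(M,u)$ extends to an $\ccO_X[u,u^{-1}]$-action, i.e.\ that $u$ acts invertibly.

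For the first point, I would simply invoke Corollary~\ref{cor: cohomology groups lim (M,u)}: for every $a \in \bbZ$ there is a canonical isomorphism
$$
  \ccH^{a}\bigt{\varprojlim(M,u)}
    \simeq
  \ccH^{a-2n}(M)
$$
for $n \gg 0$. By hypothesis $M \in \Cohminus(X)_{\eZ}$, so for $n$ sufficiently large $\ccH^{a-2n}(M)$ is a coherent $\ccO_X$-module set-theoretically supported on $Z$. Since this holds for every single $a$, the limit $\varprojlim(M,u)$ belongs to $\Cohinfty(X)_{\red{Z}}$.

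For the second point, I would argue that multiplication by $u$ on $\varprojlim(M,u)$ is, up to re-indexing, the shift in the defining pro-system. Concretely, the map $u: \varprojlim(M,u) \to \varprojlim(M,u)[2]$ is obtained as the limit of the family of maps $u: M[-2n] \to M[-2n+2]$, which is precisely the transition morphism in the diagram $(M,u)$ shifted by one index. Since shifting the index of a cofinal $\bbN^{\op}$-indexed system does not change the limit, this morphism is an equivalence. Equivalently, one can check this on cohomology via Corollary~\ref{cor: cohomology groups lim (M,u)}: the induced map $\ccH^a\bigt{\varprojlim(M,u)} \to \ccH^{a+2}\bigt{\varprojlim(M,u)}$ is identified, for $n$ large enough, with the isomorphism $\ccH^{a-2n}(M) \xto{u,\simeq} \ccH^{a-2n+2}(M)$ provided by the eventual two-periodicity of $M$.

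Combining these two observations yields the asserted factorization through $\Mod_{\ccO_X[u,u^{-1}]}\bigt{\Cohinfty(X)_{\red{Z}}}$, and hence the dg-functor $\ol\ffL$. The only mildly subtle step is the interchange of $\varprojlim$ with the shift/$u$-action, but this is controlled by the eventually two-periodic hypothesis exactly as in Lemma~\ref{lem: etp implies ileqa commutes with directed limits}, so no genuine obstacle arises.
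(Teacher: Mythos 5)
Your proposal is correct and follows the paper's proof: membership of $\varprojlim(M,u)$ in $\Cohinfty(X)_{\red{Z}}$ is deduced from Corollary \ref{cor: cohomology groups lim (M,u)} together with the eventual-support hypothesis, and invertibility of $u$ is checked on cohomology sheaves, where for $n\gg 0$ it becomes the isomorphism $\ccH^{a-2n}(M)\xto{u}\ccH^{a-2n+2}(M)$ supplied by the etp condition — exactly the argument in the paper. Your alternative re-indexing remark is also sound (indeed it shows that $u$ acts invertibly on $\varprojlim(M,u)$ for purely formal reasons, since $\varprojlim(M,u)\simeq \ul\Hom_{\ccO_X[u]}\bigt{\ccO_X[u,u^{-1}],M}$, with etp only needed for coherence and support of the cohomology), but it is not required given the cohomological check.
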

\begin{proof}
Let $M \in \Mod^{\etp}_{\ccO_X[u]}\bigt{\Cohminus(X)_{\eZ}}$. 
Since $M$ is eventually supported on $\red{Z}$, Corollary \ref{cor: cohomology groups lim (M,u)} immediately implies that $\varprojlim (M,u)\in \Mod_{\ccO_X[u]}\bigt{\Cohinfty(X)_{\red{Z}}}$. It remains to show that the action of $u$ on $\varprojlim (M,u)$ is invertible. Equivalently, we need to verify that
$$
  \varprojlim (M,u)
    \xto{u}
  \varprojlim (M,u)[2]
$$
is a quasi-isomorphism.
By Corollary \ref{cor: cohomology groups lim (M,u)}, there exists some $n \gg 0$ such that
$$
  \ccH^{a}\bigt{\varprojlim (M,u)}
    \simeq 
  \ccH^{a-2n}(M).
$$
Then the morphism
$$
  \ccH^{a}\bigt{\varprojlim (M,u)}
    \xto{u}
  \ccH^{a+2}\bigt{\varprojlim (M,u)}
$$
corresponds to 
$$
  \ccH^{a-2n}(M)
    \xto{u}
  \ccH^{a-2n+2}(M)
$$
and the assertion follows from the \emph{etp} assumption on $M$.
\end{proof}

\sssec{}
Suppose now that $M \in \Mod_{\ccO_X[u]}\bigt{\Coh(X)}$. Then $\ul \Hom_X(M,M)$ is a bounded complex (indeed, $X$ is regular, hence $M$ is perfect). 
It follows that $u^n: M[-2n] \to M$ is homotopic to zero for $n \gg 0$ and thus the composition
$$
  \Mod_{\ccO_X[u]}\bigt{\Coh(X)}
    \subseteq 
  \Mod^{\etp}_{\ccO_X[u]}\bigt{\Cohminus(X)_{\eZ}}
    \xto{\ol \ffL}
  \Mod_{\ccO_X[u,u^{-1}]}\bigt{\Coh(X)_{\red{Z}}}
$$
is the zero functor. This gives the dg-functor
$$
  \ffL:
  \frac{\Mod^{\etp}_{\ccO_X[u]}\bigt{\Cohminus(X)_{\eZ}}}{\Mod_{\ccO_X[u]}\bigt{\Coh(X)}}
    \to
  \Mod_{\ccO_X[u,u^{-1}]}\bigt{\Coh(X)_{\red{Z}}}
$$
we wanted. 
\sssec{}
It remains to define the dg-functor $\ffT: \Cohinfty(X)^{\tp}_{\red{Z}}\to \frac{\Cohminus(X)_{\eZ}^{\Getp}}{\Coh(X)}$ on the right of diagram \eqref{diag: key diag2}.
Consider the right adjoint of the embedding $\Cohlevel{n}(X)\subseteq \Cohinfty(X)$:
$$
  \tau^{\leq n}: 
  \Cohinfty(X)
    \to
  \Cohlevel{n}(X).
$$
We define $\ffT: \Cohinfty(X)^{\tp}_{\red{Z}}\to \frac{\Cohminus(X)_{\eZ}^{\Getp}}{\Coh(X)}$ simply as the composition
$$ 
  \Cohinfty(X)^{\tp}_{\red{Z}}
    \xto{\tau^{\leq 0}}
  \Cohminus(X)_{\eZ}^{\Getp}
    \to
  \frac{\Cohminus(X)_{\eZ}^{\Getp}}{\Coh(X)}.
$$

\begin{rmk}
The above dg-functor $\ffT$ is canonically equivalent to the composition
$$
  \Cohinfty(X)^{\tp}_{\red{Z}}
    \xto{\tau^{\leq n}}
  \Cohminus(X)_{\eZ}^{\Getp}
    \to
  \frac{\Cohminus(X)_{\eZ}^{\Getp}}{\Coh(X)}
$$
for every $n \in \bbZ$.
Indeed, suppose that $n \geq 0$ (the case $n \leq 0$ is analogous). 
Then there is a natural transformation
$$
  \tau^{\leq 0}\to \tau^{\leq n}: 
  \Cohinfty(X)^{\tp}_{\red{Z}}
    \to
  \Cohminus(X)_{\eZ}^{\Getp}
$$
and 
$$
  \Fib \bigt{\tau^{\leq 0}(M) 
    \to 
  \tau^{\leq n}(M)} 
    \in 
  \Coh(X)
$$
for any $M \in \Cohinfty(X)^{\tp}_{\red{Z}}$.
Hence, the above natural transformation becomes an equivalence when post-composed with the quotient functor
$$
  \Cohminus(X)_{\eZ}^{\Getp}
    \to
  \frac{\Cohminus(X)_{\eZ}^{\Getp}}{\Coh(X)}.
$$
\end{rmk}

\begin{lem}
The diagram \eqref{diag: key diag2} is commutative.
\end{lem}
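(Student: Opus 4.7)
The plan is to produce a natural equivalence between the two compositions in the diagram by exhibiting a single natural transformation
$$
  \gamma_M : \tau^{\leq 0}\varprojlim(M,u) \longto M
$$
whose fiber and cofiber both lie in $\Coh(X)$. Since $\Coh(X)$ is precisely the subcategory killed in the quotient $\frac{\Cohminus(X)_{\eZ}^{\Getp}}{\Coh(X)}$, this yields the desired commutativity. I would define $\gamma_M$ as the composition
$$
  \tau^{\leq 0}\varprojlim(M,u)
    \xto{\tau^{\leq 0}(p_0)}
  \tau^{\leq 0} M
    \longto
  M,
$$
where $p_0 : \varprojlim(M,u) \to M$ is the projection to the $0$-th term of the tower and the right arrow is the co-unit of the adjunction $\tau^{\leq 0} \dashv \iota^{\leq 0}$; both factors are manifestly natural in $M$. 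Corollary \ref{cor: cohomology groups lim (M,u)}, together with the fact that $u$ is stably an isomorphism on low-degree cohomology of $M$ (a consequence of $\Fib(u : M \to M[2]) \in \Coh(X)$), ensures that $\tau^{\leq 0}\varprojlim(M,u)$ indeed lies in $\Cohminus(X)_{\eZ}^{\Getp}$.

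The heart of the argument is the coherence of the fiber and cofiber of $\gamma_M$. The co-unit $\tau^{\leq 0} M \to M$ has cofiber $\tau^{\geq 1} M$, which is bounded with coherent cohomology since $M \in \Cohminus(X)$, hence lies in $\Coh(X)$. For the first factor, I would first show that $p_0$ itself has fiber in $\Coh(X)$: on $\ccH^a$ it induces, under Corollary \ref{cor: cohomology groups lim (M,u)}, the map $u^{k_0} : \ccH^{a-2k_0}(M) \to \ccH^a(M)$, which is an isomorphism for $a$ outside a bounded window controlled by the cohomological support of $\Fib(u)$; the long exact sequence then confines $\ccH^a(\Fib(p_0))$ to that bounded window of coherent sheaves, so $\Fib(p_0) \in \Coh(X)$. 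Since $\tau^{\leq 0}$ is a right adjoint and therefore preserves fibers, $\Fib(\tau^{\leq 0}(p_0)) \simeq \tau^{\leq 0}\Fib(p_0) \in \Coh(X)$. Applying the octahedral axiom to the two-step composition defining $\gamma_M$ places both $\Fib(\gamma_M)$ and $\coFib(\gamma_M)$ in fiber sequences between objects of $\Coh(X)$, so they themselves lie in $\Coh(X)$.

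Finally, the natural equivalence descends to the quotient by $\Mod_{\ccO_X[u]}\bigt{\Coh(X)}$ on the source: for such an $M$, the top route produces $M$ itself, which is zero in the quotient since $M \in \Coh(X)$; and the bottom route is also zero, as already observed in the construction of $\ffL$ (for coherent $\ccO_X[u]$-modules on the regular scheme $X$, the $u$-action is nilpotent up to homotopy, forcing $\varprojlim(M,u) = 0$). The main technical point is the compatibility of $\tau^{\leq 0}$ with the inverse limit defining $\varprojlim(M,u)$, but this is handled cleanly by the fact that both operations are right adjoints and so commute with fibers; the verification that $\tau^{\leq 0}\varprojlim(M,u)$ satisfies the $\Getp$ condition is a routine consequence of eventual two-periodicity, as $u$ induces an isomorphism $\ccH^{-n}(\varprojlim(M,u)) \simeq \ccH^{-n-2}(\varprojlim(M,u))$ for $n \gg 0$.
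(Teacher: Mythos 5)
Your strategy is the same as the paper's: the paper also produces a natural transformation from $\tau^{\leq 0}\circ\on{forget}\circ\ffL$ to $\on{forget}$ — which, unwinding the definitions, is exactly your $\gamma_M$ (projection onto the last term of the tower followed by the counit $\tau^{\leq 0}M\to M$) — and then shows that its fiber lies in $\Coh(X)$ using Corollary \ref{cor: cohomology groups lim (M,u)}, so that it becomes an equivalence after post-composing with the quotient by $\Coh(X)$; descent to the quotient source is automatic, as you note.

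Two of your intermediate assertions are false as stated, though harmlessly so. First, $\Fib(p_0)$ does \emph{not} lie in $\Coh(X)$ in general: $\varprojlim(M,u)$ is two-periodic, hence unbounded above, while $M\in\Cohminus(X)$ is bounded above, so for $a\gg 0$ the long exact sequence gives $\ccH^a\bigt{\Fib(p_0)}\simeq\ccH^a\bigt{\varprojlim(M,u)}$, which need not vanish; the comparison map $\ccH^a\bigt{\varprojlim(M,u)}\to\ccH^a(M)$ is an isomorphism only for $a\ll 0$, not \virg{outside a bounded window}. Second, $\tau^{\leq 0}$ does not preserve fibers computed in the ambient category (the aisle $\Cohlevel{0}(X)$ is closed under cofibers, not fibers; e.g.\ $\Fib(0\to\ccO_X)=\ccO_X[-1]$ is killed by $\tau^{\leq 0}$, while the fiber of the truncated map is not), so the identification $\Fib\bigt{\tau^{\leq 0}(p_0)}\simeq\tau^{\leq 0}\Fib(p_0)$ is unjustified. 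Neither error is fatal: running the long exact sequence directly on $\tau^{\leq 0}\varprojlim(M,u)\to\tau^{\leq 0}M$, its fiber has vanishing cohomology in degrees $\geq 2$, coherent cohomology in the finitely many remaining degrees, and vanishing cohomology in degrees $\ll 0$ by Corollary \ref{cor: cohomology groups lim (M,u)} together with the etp hypothesis on $M$; hence this fiber does lie in $\Coh(X)$, and your octahedron step then closes the argument. This direct degreewise comparison in low degrees is precisely how the paper's proof proceeds.
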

\begin{proof}
Consider the (non-commutative) diagram
\begin{equation*}
    \begin{tikzcd}
      \Mod^{\etp}_{\ccO_X[u]}\bigt{\Cohminus(X)_{\eZ}}
      \arrow[r,"\on{forget}"]
      \arrow[d,"\ffL"]
      &
      \Cohminus(X)_{\eZ}^{\Getp}
      \\
      \Mod_{\ccO_X[u,u^{-1}]}\bigt{\Cohinfty(X)_{\red{Z}}}
      \arrow[r,"\on{forget}"]
      & 
      \Cohinfty(X)^{\tp}_{\red{Z}}.
      \arrow[u,"\tau^{\leq 0}"]
    \end{tikzcd}
\end{equation*}
We first exhibit a natural transformation
$$
  \alpha:
  \tau^{\leq 0} \circ \on{forget}\circ \ffL 
    \to 
  \on{forget}
$$
between the two circuits
$$
  \Mod^{\etp}_{\ccO_X[u]}\bigt{\Cohminus(X)_{\eZ}}
    \to
  \Cohminus(X)_{\eZ}^{\Getp}
$$
of the above diagram. Since $\tau^{\leq 0}$ commutes with limits, we have arrows
$$
  \tau^{\leq 0}\circ \on{forget}\circ \ffL (M)
    \simeq
  \varprojlim \bigt{\cdots \xto{\tau^{\leq 0}u}\tau^{\leq 0}(M[-2])\xto{\tau^{\leq 0}u} \tau^{\leq 0}M }
    \to
  \tau^{\leq 0} \circ \on{forget}(M)
    \to
  \on{forget}(M)
$$
that are natural in $M \in \Mod^{\etp}_{\ccO_X[u]}\bigt{\Cohminus(X)_{\eZ}}$. This defines $\alpha$.

To prove the lemma, it remains to show that $\alpha$ induces a natural equivalence when post-composed with 
$$
  \Cohminus(X)_{\eZ}^{\Getp}
    \to 
  \frac{\Cohminus(X)_{\eZ}^{\Getp}}{\Coh(X)}.
$$
For $M \in \Mod^{\etp}_{\ccO_X[u]}\bigt{\Cohminus(X)_{\eZ}}$, choose $n_0 \in \bbN$ such that $u: M[-2] \to M$ induces equivalences
$$
  \ccH^{-n-2}(M)
    \xto{\simeq}
  \ccH^{-n}(M)
$$
for all $n \geq n_0$.
It follows Corollary \ref{cor: cohomology groups lim (M,u)} that
$$
  \ccH^{-n} \bigt{\tau^{\leq 0} \circ \on{forget}\circ \ffL (M)}   
    \simeq
  \ccH^{-n}\bigt{\on{forget}(M)}
$$
for all $n \geq n_0$. Moreover, it is clear that
$$
\Fib \bigt{\alpha_M: \tau^{\leq 0} \circ \on{forget}\circ \ffL (M) \to \on{forget}(M)} \in \Coh(X)
$$
as desired.
\end{proof}

\subsection{Localized intersection product with the diagonal} \label{ssec:KS localized product with diag}

We are now ready to prove the following result.

\begin{thm}\label{thm: intersection with the diagonal}
The dg-functor $\Upsilon: \Sing(X\times_SX)\to \Sing(\bbA_X^{1}[-1])_{\red{Z}}$ induces the following morphism of abelian groups:
$$
  \G_0(X\times_SX)
    \to
  \HK_0^{\on{sg}}(X\times_SX)
    \xto{\Upsilon}
  \HK_0^{\on{sg}}(\bbA_X^{1}[-1])_{\red{Z}}
    \simeq  
  \G_0(X)_{\red{Z}}
$$
$$
  [M]
    \mapsto
  [\![\Delta_X,M]\!]_{X\times_SX}=
  \sum_{i=0,1}(-1)^i[\ul \sTor^{X\times_SX}_i(\Delta_X,M)].
$$
\end{thm}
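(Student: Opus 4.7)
The plan is to reduce the statement to the coherent level via $\Upsilon^+$ and then extract the G-theory class of the resulting eventually two-periodic complex.

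Since $\Upsilon$ is induced from the coherent version $\Upsilon^+ = R(\infty)_* \circ \Xi^+\colon \Coh(X\times_SX)_{\red{Z}\times_s\red{Z}} \to \Coh(\bbA^1_X[-1])_{\red{Z}}$ by passing to singularity categories, for any coherent representative $M$ of a G-theory class the image under $\Upsilon$ in $\HK^{\on{sg}}_0(\bbA^1_X[-1])_{\red{Z}}$ is the class of $\Upsilon^+(M)$. By Remark \ref{rmk: Upsilon^+ vs delta^*}, the composition $t^* \circ \Upsilon^+$ agrees with $\delta_{X/S}^*$, where $t\colon X \to \bbA^1_X[-1]$ is the classical truncation. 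Via Lemma \ref{lem: Coh(A1[-1]) vs Cohminus etp}, $t^*$ corresponds to forgetting the $\ccO_X[u]$-action, so the underlying complex of $\Upsilon^+(M) \in \Mod^{\etp}_{\ccO_X[u]}\bigl(\Cohminus(X)_{\red{Z}}\bigr)$ is $\delta_{X/S}^* M \simeq \Delta_X \otimes^{\bbL}_{X \times_S X} M$. By \cite[Theorem 3.2.1]{katosaito04}, the cohomology sheaves $\ccH^{-n}(\delta_{X/S}^* M)$ stabilize to coherent $\ccO_Z$-modules for $n \gg 0$, so $\Upsilon^+(M)$ in fact lies in $\Mod^{\etp}_{\ccO_X[u]}\bigl(\Cohminus(X)_{\eZ}\bigr)$.

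Next, the commutative diagrams \eqref{diag: key diag} and \eqref{diag: key diag2} give the recipe for computing the image of this class in $\G_0(X)_{\red{Z}} \simeq \HK^{\on{sg}}_0(\bbA^1_X[-1])_{\red{Z}}$. Concretely, the composition
\begin{equation*}
\Mod^{\etp}_{\ccO_X[u]}\bigl(\Cohminus(X)_{\eZ}\bigr) \longrightarrow \Cohminus(X)_{\eZ}^{\Getp} \longrightarrow \frac{\Cohminus(X)_{\eZ}^{\Getp}}{\Coh(X)}
\end{equation*}
sends an object $N$ to a class whose image in $\G_0(X)_{\red{Z}}$ equals $\sum_{i=0,1}(-1)^i[\ccH^{-2n-i}(N)]$ for any $n \gg 0$; the well-definedness is precisely what the $\Getp$-condition and the quotient by $\Coh(X)$ encode (the alternating sum is independent of $n$ modulo $\Coh(X)$, and the two-periodicity of the eventual cohomology means only the even/odd parity matters). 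Applying this recipe to $N = \Upsilon^+(M)$ yields
\begin{equation*}
\sum_{i=0,1}(-1)^i\bigl[\ccH^{-2n-i}(\delta_{X/S}^* M)\bigr] \;=\; \sum_{i=0,1}(-1)^i\bigl[\ul\sTor^{X\times_SX}_i(\Delta_X,M)\bigr],
\end{equation*}
matching the claimed formula.

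The delicate step is pinning down the $\ccO_X[u]$-structure on $\Upsilon^+(M)$ coming from the construction of $\Xi^+$ and verifying that it is the structure that makes the formula above valid. One must verify (i) that the $u$-action produced by the construction coincides (up to homotopy) with the shift map that stabilizes the Tor sheaves, and (ii) that the isomorphism $\HK^{\on{sg}}_0(\bbA^1_X[-1])_{\red{Z}} \simeq \G_0(X)_{\red{Z}}$ (whose existence at the motivic level is guaranteed by \cite[Proposition 3.30]{brtv18}) indeed factors through the right-hand route of \eqref{diag: key diag2}, picking out the alternating sum of eventual cohomology. Given the engineered structure of diagrams \eqref{diag: key diag} and \eqref{diag: key diag2}, each of these verifications reduces to a diagram chase, with the content concentrated in matching the explicit $u$-action on $\delta_{X/S}^* M$ with the Bott-type stabilization governing the stable Tor sheaves.
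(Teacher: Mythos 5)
Your proposal follows essentially the same route as the paper: identify the image of $M$ with $\delta_{X/S}^*M$ together with its $u$-action via Remark \ref{rmk: Upsilon^+ vs delta^*}, invoke Kato--Saito's stabilization of the Tor sheaves, and read off the class through the diagrams \eqref{diag: key diag} and \eqref{diag: key diag2} and the identification of $\HK_0^{\on{sg}}(\bbA^1_X[-1])_{\red{Z}}$ with $\G_0(X)_{\red{Z}}$ by stable cohomology sheaves from \cite{brtv18}. The ``delicate step'' you defer is precisely where the paper does its remaining work --- the functor $\ffL$ inverting $u$ via $\varprojlim(M,u)$, the truncation $\ffT$, and the computation that $\ccH^{2n+i}$ of the resulting object recovers $s\ccH^i$ of the matrix factorization --- and the paper also treats arbitrary $M$ (not necessarily supported on $\red{Z}\times_s\red{Z}$) via the Karoubi-essential-surjectivity argument for $\wt{\Upsilon}$, a bookkeeping point your reduction glosses over.
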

\begin{proof}
Recall from \cite{brtv18} the equivalence between $\Sing(\bbA^1_X[-1])_{\red{Z}}$ and the dg-category of matrix factorizations $\MF(X,0)_{\red{Z}}$. 
In particular, every object $E \in \Sing(\bbA^1_X[-1])_{\red{Z}}$ yields two coherent $\ccO_X$-modules supported on $\red{Z}$, to be denoted $s\ccH^0(E)$ and $s\ccH^1(E)$. Recall also that the isomorphism
$$
  \HK_0^{\on{sg}} (\bbA^1_X[-1])_{\red{Z}}
    \xto{\sim}
  \G_0(X)_{\red{Z}}
$$
is given by the map
$$
  [E]
    \mapsto
  [s\ccH^0(E)]-[s\ccH^1(E)].
$$
Thus, for $M \in \Coh(X\times_SX)$, we need to compute the G-theory classes
$$
  [s\ccH^i(\Upsilon(M))]
    \in 
  \G_0(X)_{\red{Z}}
    \;\;\;\;
  i=0,1.
$$
By \cite[Theorem 3.2.1]{katosaito04}, pullback along $\delta_{X/S}$ restricts to a dg-functor
$$
  \Coh(X\times_SX)
    \xto{\delta^*_{X/S}}
  \Cohminus(X)^{\Getp}_{\eZ}.
$$
Consider the following commutative diagram
\begin{equation*}
    \begin{tikzcd}
    \Perf(X \times_S X)_{\red{Z} \times_s \red{Z}}
    \arrow[r,hook]
    \arrow[d]
    &
   \Perf(X \times_S X)
    \arrow[r, "\delta_{X/S}^*"]
    \arrow[d]
    &
    \Perf(X) \simeq \Coh(X)
    \arrow[d]
    \\
    \Coh(X \times_S X)_{\red{Z} \times_s \red{Z}}
    \arrow[r,hook]
    \arrow[d ]
    &
   \Coh(X \times_S X)
     \arrow[r, "\delta_{X/S}^*"]
    \arrow[d]
    &
    \Cohminus(X)_{\eZ}^{\Getp}
    \arrow[d]
    \\
    \Sing(X \times_S X)_{\red{Z} \times_s \red{Z}}
    \arrow[r,hook]
    &
    \Sing(X \times_S X)
    \arrow[r, "\delta_{X/S}^*"]
    &
     \frac{\Cohminus(X)_{\eZ}^{\Getp}}{\Coh(X)}.
    \end{tikzcd}
\end{equation*}
Using the fact that $\Sing(X\times_SX)_{\red{Z}\times_s \red{Z}}\hto \Sing(X\times_SX)$ is Karoubi-essentially surjective, and thanks to Remark \ref{rmk: Upsilon^+ vs delta^*}, we see that the following diagram commutes:
\begin{equation*}
    \begin{tikzcd}[column sep = 2.5cm]
      \Coh(X\times_SX)
      \arrow[r,"\delta_{X/S}^*"]
      \arrow[d]
      &
      \Cohminus(X)^{\Getp}_{\eZ}
      \arrow[d]
      \\
      \Sing(X\times_SX)
      \arrow[r,"\wt{\Upsilon}"]
      &
      \frac{\Cohminus(X)_{\eZ}^{\Getp}}{\Coh(X)}.
    \end{tikzcd}
\end{equation*}

More precisely, the map at the bottom is the composition
\begin{align*}
  \Sing(X\times_SX)
    &
    \xto{\Upsilon}
  \MF(X,0)_{\red{Z}}
    \\
    &
    \simeq
  \frac{
        \Mod_{\ccO_X[u]}^{\etp}\bigt{\Cohminus(X)_{\red{Z}}}
        }
        {
        \Mod_{\ccO_X[u]}\bigt{\Coh(X)_{\red{Z}}}
        }
      && \text{Lemma } \ref{cor: MFcoh vs Cohminus etp/Cohb}
      \\
      &
      \xto{\iota}
  \frac{
        \Mod_{\ccO_X[u]}^{\etp}\bigt{\Cohminus(X)_{\eZ}}
        }
        {
        \Mod_{\ccO_X[u]}\bigt{\Coh(X)}
        }
      && \text{induced by the inclusion}
      \\
      &
      \xto{\eqref{eqn: Mod^etp_eZ --> Cohminus^Getp_eZ}}
    \frac{
          \Cohminus(X)_{\eZ}^{\Getp}
          }
          {
           \Coh(X)
           }.
\end{align*}
Thus, for any $M \in \Coh(X\times_SX)$, we have a canonical equivalence
\begin{equation}\label{eqn: Psi = delta^*}
\wt{\Upsilon}(M)
\simeq 
\delta_{X/S}^*(M)
\in
\frac{\Cohminus(X)_{\eZ}^{\Getp}}{\Coh(X)}.
\end{equation}
By the commutativity of diagram \eqref{diag: key diag2}, we obtain
$$
\wt{\Upsilon}(M)
\simeq
\ffT \circ \on{forget} \circ \ffL \circ \iota \circ \Upsilon(M).
$$

One computes that the composition
\begin{align*}
    \Sing(\bbA^1_X[-1])_{\red{Z}}
      &
      \simeq
    \frac{
        \Mod^{\etp}_{\ccO_X[u]}\bigt{\Cohminus(X)_{\red{Z}}}
        }
        {
         \Mod_{\ccO_X[u]}\bigt{\Coh(X)_{\red{Z}}}
         }
      \\
      &
      \xto{\iota}
    \frac{
          \Mod^{\etp}_{\ccO_X[u]}\bigt{\Cohminus(X)_{\eZ}}
          }
          { 
           \Mod_{\ccO_X[u]}\bigt{\Coh(X)}
           }
      \\ 
      &
      \xto{\ffL} 
    \Mod_{\ccO_X[u,u^{-1}]}\bigt{\Cohinfty(X)_{\red{Z}}}
      \\
      &
      \xto{\on{forget}}
    \Cohinfty(X)^{\tp}_{\red{Z}}
\end{align*}
sends an object $E \in \Sing(\bbA^1_X[-1])_{\red{Z}}$ to an object $E'$ in $\Cohinfty(X)^{\tp}_{\red{Z}}$ such that
$$
\ccH^{2n+i}(E')\simeq s\ccH^i(E)
$$
for all $n \in \bbZ$ and $i=0,1$.

Using \eqref{eqn: Psi = delta^*}, we conclude that 
$$
  [s\ccH^i\bigt{\Upsilon(M)}]
    \simeq
  \bigq{\ul \Tor^{X\times_SX}_{2n+i}(\Delta_X,M)}
    \in 
  \G_0(X)_{\red{Z}}.
$$
for every $M \in \Coh(X\times_SX)$, provided that $n \gg 0$.

\end{proof}

\subsection{The generalized Bloch conductor formula}

As a consequence of the above theorem, we obtain a categorification of the localized intersection product defined by Kato--Saito. As we explain presently, this leads to the proof of the generalized Bloch conductor conjecture.

\begin{cor}\label{cor: intersection with the diagonal}
The dg-functor $\int_{X/S}:\Sing(X\times_SX)\to \Sing(\bbA_S^{1}[-1])_s$ induces the following morphism of abelian groups:
$$
  \G_0(X\times_SX)
    \to
  \HK_0^{\on{sg}}(X\times_SX)
    \xto{\int_{X/S}}
  \HK_0^{\on{sg}}(\bbA_S^{1}[-1])_s
    \simeq 
  \bbZ
$$
$$
  [M]
    \mapsto
  [\![\Delta_X,M]\!]_S=
  \sum_{i=0,1}(-1)^i\deg 
    \Big[ 
        \ul \sTor^{X\times_SX}_i(\Delta_X,M)
    \Big].
$$
In particular, we have
$$
\int_{X/S}[\Delta_X]= \Bl(X/S).
$$
\end{cor}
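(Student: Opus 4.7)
The plan is to deduce this from Theorem \ref{thm: intersection with the diagonal} by exploiting the factorization $\int_{X/S} \simeq p_* \circ \Upsilon$ provided by diagram \eqref{diagram: alternative expressions integration functor}. Applying $\HK_0^{\sg}$ and precomposing with the non-commutative Chern character, the induced homomorphism decomposes as
$$
\G_0(X\times_SX) \xto{\chern} \HK_0^{\sg}(X\times_SX) \xto{\Upsilon} \HK_0^{\sg}(\bbA^1_X[-1])_{\red{Z}} \xto{p_*} \HK_0^{\sg}(\bbA^1_S[-1])_s.
$$
By Theorem \ref{thm: intersection with the diagonal}, the composition of the first three arrows sends $[M]$ to the class $\sum_{i=0,1} (-1)^i \bigq{\ul\sTor_i^{X\times_SX}(\Delta_X, M)} \in \G_0(X)_{\red{Z}} \simeq \HK_0^{\sg}(\bbA^1_X[-1])_{\red{Z}}$.

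The main remaining task is to identify the map $p_*$ with the degree map under the isomorphisms $\HK_0^{\sg}(\bbA^1_X[-1])_{\red{Z}} \simeq \G_0(X)_{\red{Z}}$ and $\HK_0^{\sg}(\bbA^1_S[-1])_s \simeq \G_0(S)_s \simeq \bbZ$ of \cite[Proposition 3.30]{brtv18}. To this end, I would invoke the naturality of the motivic splitting $\Mv_T\bigt{\Sing(\bbA^1_T[-1])} \simeq \BU_T \oplus \BU_T[1]$ with respect to the closed embedding $\bbA^1_X[-1] \hto \bbA^1_S[-1]$, which reduces to the naturality of the $\BU$-factor under proper pushforward. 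Since $Z$ is by hypothesis $S$-proper and set-theoretically contained in the special fiber, the restriction of $p$ to $Z$ is proper and factors through $s$, so that the induced map on $\G_0$-groups with support becomes the degree map $\G_0(X)_{\red{Z}} \to \G_0(s) \simeq \bbZ$.

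Combining the two identifications yields
$$
\int_{X/S}[M] = \deg\Biggq{\sum_{i=0,1} (-1)^i \bigq{\ul\sTor_i^{X\times_SX}(\Delta_X, M)}} = [\![\Delta_X, M]\!]_S,
$$
which is the first claim. The special case $M = \Delta_X$, together with the Kato--Saito identity $\Bl(X/S) = \sum_{i=0,1}(-1)^i \deg\bigq{\ul\sTor_i^{X\times_SX}(\Delta_X,\Delta_X)}$ recalled in the introduction (\cite[Corollary 3.4.5]{katosaito04}), then gives $\int_{X/S}[\Delta_X] = \Bl(X/S)$. I expect the only non-formal step to be the compatibility of the $\HK^{\sg} \simeq \G$ identification of \cite{brtv18} with the pushforward $p_*$ along the closed embedding $\bbA^1_X[-1] \hto \bbA^1_S[-1]$; once this naturality is in place, the rest is a diagram chase built on Theorem \ref{thm: intersection with the diagonal}.
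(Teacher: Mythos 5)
Your proposal is correct and follows essentially the same route as the paper: factor $\int_{X/S}$ as $p_*\circ\Upsilon$ via diagram \eqref{diagram: alternative expressions integration functor}, apply Theorem \ref{thm: intersection with the diagonal} to compute the effect of $\Upsilon$ on $\G_0$, identify the pushforward to $\HK_0^{\on{sg}}(\bbA^1_S[-1])_s\simeq\bbZ$ with the degree map, and conclude the last claim from \cite[Corollary 3.4.5]{katosaito04}. The only (cosmetic) quibble is that the map $\G_0(X\times_SX)\to\HK_0^{\on{sg}}(X\times_SX)$ is the canonical map induced by $\Coh\to\Sing$ rather than the non-commutative Chern character, and the compatibility of the \cite{brtv18} identification with the pushforward along $\bbA^1_X[-1]\to\bbA^1_S[-1]$, which you correctly flag, is exactly the step the paper treats as immediate.
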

\begin{proof}
By diagram \eqref{diagram: alternative expressions integration functor} the integration dg-functor
$\int_{X/S}:\Sing(X\times_SX) \to \Sing(\bbA^1_S[-1])_s$ factors as
$$
  \Sing(X\times_SX) 
    \xto{\Upsilon} 
  \Sing(\bbA^1_X[-1])_{\red{Z}}
    \to
  \Sing(\bbA^1_S[-1])_s
$$
where the rightmost dg-functor is induced by the pushforward along $\bbA^1_X[-1] \to \bbA^1_S[-1]$.
Then the first claim follows immediately from Theorem \ref{thm: intersection with the diagonal}.
The last claim then follows readily from \cite[Corollary 3.4.5]{katosaito04}
\end{proof}

\sssec{}

In \cite{beraldopippi24}, we proved the following formula (see \eqref{eqn: categorical gBCF}):
\begin{equation}\label{eqn: categorical gBCC}
\Bl^{\cat}(X/S)= - \dimtot^{\cat}(\Phi) \in \uH^0_\et\Bigt{S,\rl_S\bigt{\HH(\sB/A)}}.
\end{equation}
We will now show how this formula implies the following
\begin{thm}\label{thm: gBCC}
With the same notation as above,
$$
\Bl(X/S)=-\dimtot(\Phi)\in \bbZ.
$$
\end{thm}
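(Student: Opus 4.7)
The plan is to deduce the numerical formula from the categorical one \eqref{eqn: categorical gBCC} by applying the homomorphism
$$
  \oint: \uH^0_\et\Bigt{S,\rl_S\bigt{\HH(\sB/A)}} \to \Qell
$$
constructed in Section \ref{sec: integrating differential forms on two-periodic complexes}. Since both sides of \eqref{eqn: categorical gBCC} live in this $\Qell$-vector space, applying $\oint$ yields an equality in $\Qell$, and the entire task is to show that both images lie in $\bbZ\subseteq \Qell$ and match the numerical invariants appearing in \eqref{eqn:gBCC in intro}. I would verify the two sides separately.

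For the left-hand side, I would unpack the definition
$$
  \Bl^{\on{cat}}(X/S) = \chern\bigq{\ev^{\HH}(\Delta_X)}
$$
and observe that, since $\oint$ is induced by a dg-functor and $\chern$ is natural in the dg-category, we have
$$
  \oint\Bigt{\Bl^{\on{cat}}(X/S)} = \chern\Bigt{\bigq{\oint \circ \ev^{\HH}(\Delta_X)}} = \chern\Bigt{\bigq{\textstyle\int_{X/S}(\Delta_X)}}.
$$
Corollary \ref{cor: intersection with the diagonal} identifies this $\ell$-adic number with the Bloch intersection number $\Bl(X/S)$, yielding the first needed identification.

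For the right-hand side, I would use the decomposition \eqref{eqn: categorical gBCF} and the auxiliary Lemma \ref{lem: idcat and arcat} (which asserts $\oint \circ \idcat = \id$ and $\oint \circ \arcat = \ar$) to compute
\begin{align*}
  \oint\bigt{-\dimtot^{\cat}(\Phi)}
  &= -\Tr_{\QellI}\bigt{\id,\uH^*_\et(X_s,\Phi^{\IK})} \\
  &\quad - \ar\Bigt{\Tr_{\QellI(\GLK)}\bigt{\id,\uH^*_\et(X_s,\Phi^{\IL}/\Phi^{\IK})}}.
\end{align*}
The first term is manifestly the Euler characteristic $\chi\bigt{\uH^*_\et(X_s,\Phi^{\IK})}$, and standard manipulations identify the Artin conductor of the virtual representation on $\Phi^{\IL}/\Phi^{\IK}$ with the sum $\chi\bigt{\uH^*_\et(X_s,\Phi^{\IL}/\Phi^{\IK})} + \Sw\bigt{\uH^*_\et(X_s,\Phi)}$. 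Combining, the right-hand side becomes $-\chi\bigt{\uH^*_\et(X_s,\Phi)} - \Sw\bigt{\uH^*_\et(X_s,\Phi)} = -\dimtot(\Phi)$.

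The main obstacle in this programme is the verification that $\oint \circ \idcat = \id$ and $\oint \circ \arcat = \ar$, i.e.\ Lemma \ref{lem: idcat and arcat}. The first identity reduces, via Remark \ref{rmk: composition B-->HH(B/A)-->Sing(A^1_S[-1])} and Theorem \ref{thm: M(B) is a retract of M(HH(B/A))}, to computing that the composition $\sB \to \HH(\sB/A) \xto{\oint} \Sing(\bbA^1_S[-1])_s$ is the pushforward along $G\simeq \bbA^1_s[-1] \to \bbA^1_S[-1]$ and then chasing Chern classes; this is essentially formal. The second identity is genuinely arithmetic: it requires tracing through the construction of $\arcat$ in \cite{beraldopippi24}, which is built from the twisted $\sB$-bimodule associated to a finite Galois extension $\uK \subseteq \uL$, and checking that under $\oint$ the cyclic coinvariants it encodes specialise to the usual formula for the Artin conductor. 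Once this Galois-theoretic bookkeeping is done, the theorem follows by assembling the two identifications into the equality $\Bl(X/S) = -\dimtot(\Phi)$ in $\Qell$, which holds in $\bbZ$ as both sides are integers by construction.
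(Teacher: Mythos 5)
Your proposal is correct and follows essentially the same route as the paper: apply $\oint$ to the categorical formula, identify $\oint\bigt{\Bl^{\on{cat}}(X/S)}$ with $\Bl(X/S)$ via Corollary \ref{cor: intersection with the diagonal} (i.e.\ the computation of $\int_{X/S}[\Delta_X]$), and handle the right-hand side via Lemma \ref{lem: idcat and arcat}, whose two parts are proved exactly as you indicate (the retraction of Theorem \ref{thm: M(B) is a retract of M(HH(B/A))} for $\idcat$, and the Galois-theoretic $\sTor$ computation from \cite{beraldopippi24} together with Theorem \ref{thm: intersection with the diagonal} for $\arcat$). No gaps beyond what the paper itself delegates to that lemma and to the cited results.
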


\begin{proof}
Apply
$$
  \oint:
  \uH^0_\et\Bigt{S,\rl_S\bigt{\HH(\sB/A)}} 
    \to 
  \uH^0_\et\Bigt{S,\rl_S\bigt{\Sing(\bbA^1_S[-1])}}
    \simeq 
  \Qell
$$
to both members of \eqref{eqn: categorical gBCC}.
By definition, $\Bl^{\cat}(X/S)= \chern \bigt{\bigq{\ev^{\HH}(\Delta_X)}} \in \uH^0_{\et}\Bigt{S,\rl_S \bigt{\HH(\sB/A)}}$.
Since $\int_{X/S}$ is by definition the composition
$$
  \Sing(X\times_SX)
    \xto{\ev^{\HH}}
  \HH(\sB/A)
    \xto{\oint} 
  \Sing(\bbA^1_S[-1]),
$$
Corollary \ref{cor: intersection with the diagonal} shows that 
$$
\oint\bigt{\Bl^{\cat}(X/S)}=\int_{X/S}[\Delta_X]=\Bl(X/S).
$$
On the other hand, by the definition given in \cite{beraldopippi24}, we have
$$
-\dimtot^{\cat}(\Phi)=\id^{\cat}\bigt{\Tr_{\Qell^{\uI}}(\id,\Phi[1])}-\arcat \bigt{\Tr_{\QellIGred}(\id,\Phi^\IL/\Phi^\IK)},
$$
where $\uK \subseteq \uL$ denotes a finite Galois extension with Galois group $\GLK$ such that the inertia group $\IL$ of $\uL$ acts unipotently on $\Phi$.
Then Lemma \ref{lem: idcat and arcat} below immediately implies that
$$
\oint \bigt{-\dimtot(\Phi)}=\Tr_{\Qell^{\uI}}(\id,\Phi[1])- \ar \bigt{\Tr_{\QellIGred}(\id,\Phi^\IL/\Phi^\IK)}=-\dimtot(\Phi)
$$
as claimed.
\end{proof}

\begin{lem}\label{lem: idcat and arcat}
Using the notation of \cite{beraldopippi24}, we have:
\begin{enumerate}
    \item The dg-functor 
    $$
    \sB \xto{\ev^{\HH}_{\sB/\sB}}
    \HH(\sB/A)
    \xto{\oint} \Sing(\bbA^1_S[-1])_s
    $$
    induces the identity at the level of homotopy-invariant non-connective algebraic K-theory:
    $$
    \id:\HK_0 (\sB)\simeq \bbZ \xto{\id^{\cat}} \HK_0\bigt{\HH(\sB/A)} \to \HK_0^{\on{sg}}(\bbA^1_S[-1])_s\simeq \bbZ.
    $$
    \item The dg-functor 
    $$
    \sC \xto{\ev^{\HH}_{\sC/\sC}} \HH(\sC/A)\simeq \HH(\sB/A)\xto{\oint} \Sing(\bbA^1_S[-1])_s
    $$
    induces the opposite of the Artin character at the level of homotopy-invariant non-connective algebraic K-theory:
    $$
    -\ar:\HK_0 (\sC)\simeq \bbZ(\GLK) \to \HK_0\bigt{\HH(\sB/A)} \to \HK_0^{\on{sg}}(\bbA^1_S[-1])_s\simeq \bbZ.
    $$
\end{enumerate}
\end{lem}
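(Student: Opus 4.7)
The plan is to treat the two parts in parallel, building on the strategy used in the proof of Theorem~\ref{thm: M(B) is a retract of M(HH(B/A))}.

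For part~(1), I first observe that when $\sT = \sB$ is taken as the regular $\sB$-bimodule, the evaluation dg-functor $\ev^{\HH}_{\sB/\sB}$ reduces, after unwinding the definitions of Section~\ref{sssec: defn evHH} with $\sT = \sB$, to the canonical dg-functor $\sB \to \HH(\sB/A)$ arising as the $0$-th node of the cyclic bar complex. By Remark~\ref{rmk: composition B-->HH(B/A)-->Sing(A^1_S[-1])}, the composition $\oint \circ \ev^{\HH}_{\sB/\sB}$ is therefore the pushforward along $G \simeq \bbA^1_s[-1] \to \bbA^1_S[-1]$. Since the proof of Theorem~\ref{thm: M(B) is a retract of M(HH(B/A))} already verifies that this dg-functor induces the identity on motivic realizations, the induced map on $\HK_0 \simeq \bbZ$ is the identity, which proves~(1).

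For part~(2), the dg-category $\sC$ is the Galois-theoretic analogue of $\sB$ introduced in \cite{beraldopippi24}: it is the convolution monoidal dg-category built from the derived self-intersection $s_L \times_S s_L$, where $s_L$ denotes the closed point of the ring of integers of $\uL$, equipped with its $\GLK$-action. The equivalence $\HH(\sC/A) \simeq \HH(\sB/A)$ arises from \'etale descent applied to the cover $s_L \to s$. The strategy is to mimic part~(1) in this equivariant setting: produce a $\GLK$-equivariant variant of Remark~\ref{rmk: composition B-->HH(B/A)-->Sing(A^1_S[-1])} identifying $\oint \circ \ev^{\HH}_{\sC/\sC}$ with the pushforward along the natural map $s_L \times_S s_L \to \bbA^1_S[-1]$. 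Decomposing $\HK_0(\sC) \simeq \bbZ(\GLK)$ into classes $[\Gamma_g]$ associated to graphs of Galois elements $g \in \GLK$, the image of each $[\Gamma_g]$ in $\HK_0^{\on{sg}}(\bbA^1_S[-1])_s \simeq \bbZ$ is computed via base change against $s \hto \bbA^1_S[-1]$. By the classical local Artin formula, this integer equals $-\ar(g)$, since it measures the valuation of $g(\varpi_L) - \varpi_L$ for a uniformizer $\varpi_L$ of $\uL$, which is by definition the Artin conductor.

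The main obstacle is the $\GLK$-equivariant refinement of the coherent homotopies of Section~\ref{sec: the E_2 structure on G} and of the integration dg-functor construction of Section~\ref{sec: integrating differential forms on two-periodic complexes}. This is needed to ensure that the equivalence $\HH(\sC/A) \simeq \HH(\sB/A)$ intertwines the two integration dg-functors, so that the computation above takes place inside the single $\Qell$-vector space $\uH^0_{\et}\bigt{S, \rl_S(\HH(\sB/A))}$. Once this compatibility is established, the final K-theoretic calculation reduces to the standard local Galois-theoretic identity between the Artin character and the valuation of $g(\varpi_L) - \varpi_L$, thereby yielding the sign $-\ar$ predicted by the statement.
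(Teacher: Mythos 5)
Your part (1) is correct and is essentially the paper's own argument: $\ev^{\HH}_{\sB/\sB}$ is the canonical functor $\sB\to\HH(\sB/A)$, so by Remark \ref{rmk: composition B-->HH(B/A)-->Sing(A^1_S[-1])} the composite is the pushforward along $G\simeq\bbA^1_s[-1]\to\bbA^1_S[-1]$, and Theorem \ref{thm: M(B) is a retract of M(HH(B/A))} then gives the identity on $\HK_0$.

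Part (2) has a genuine gap, and it also starts from a misidentification of $\sC$. In \cite{beraldopippi24} the Galois-twisted category $\sC$ is built from the whole ring of integers $S':=\Spec{\ccO_{\uL}}$ (its G-theory is generated by the graphs $\Gamma_g\subseteq S'\times_SS'$ of the Galois elements), not from the closed point of $\ccO_{\uL}$: since the residue extension is trivial, your $s_L\times_Ss_L$ is just $G$, so your \virg{$\sC$} would collapse to $\sB$ and the $\GLK$-structure would disappear; moreover $S'\to S$ is totally ramified, so the equivalence $\HH(\sC/A)\simeq\HH(\sB/A)$ cannot be obtained by \'etale descent along $s_L\to s$. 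More importantly, the step you yourself flag as \virg{the main obstacle} --- a $\GLK$-equivariant rerun of the homotopies of Section \ref{sec: the E_2 structure on G} and of the construction of $\oint$ --- is left unproved, and it is not needed: since $S'$ is itself an arithmetic $S$-scheme, Theorem \ref{thm: intersection with the diagonal} (through Corollary \ref{cor: intersection with the diagonal}) applies with $X=S'$ and identifies the composite on $\G_0(\sC)$ with the Kato--Saito localized intersection numbers $[M]\mapsto\sum_{i=0,1}(-1)^i\deg\bigq{\ul\sTor^{S'\times_SS'}_i(M,\Delta_{S'})}$; one then quotes \cite[Proposition 6.3.5]{beraldopippi24} both for the surjection $\G_0(\sC)\simeq\bbZ[\GLK]\tto\bbZ(\GLK)\simeq\HK_0(\sC)$ and for the identity $[\![\Gamma_g,\Delta_{S'}]\!]_S=-\ar(g)$. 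Finally, your closing appeal to \virg{the classical local Artin formula} is too loose on its own: the valuation of $g(\pi_{\uL})-\pi_{\uL}$ only covers $g\neq 1$, while $[\Gamma_1]=[\Delta_{S'}]$ requires the stable-Tor self-intersection of the diagonal (the valuation of the different), and without the identification above there is no argument yet that your composite computes these intersection numbers at all.
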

\begin{proof}
The first statement is an immediate consequence of Theorem \ref{thm: M(B) is a retract of M(HH(B/A))}. Let us prove the second statement. By Theorem $\ref{thm: intersection with the diagonal}$, the composition
$$
  \G_0(\sC)
    \to 
  \HK_0(\sC)  
    \xto{-\ar^{\cat}}
  \HK_0\bigt{\HH(\sC/A)}
    \simeq 
  \HK_0\bigt{\HH(\sB/A)}
    \to 
  \HK_0\bigt{\Sing(\bbA^1_S[-1])_s}
$$
is computed by
$$
  [M]
    \mapsto
  \sum_{i=0,1}(-1)^i\bigq{\sTor^{S'\times_SS'}_i(M,\Delta_{S'})}.
$$
By \cite[Proposition 6.3.5]{beraldopippi24}, we know that we have a surjection
$$
  \G_0(\sC)\simeq \bbZ[\GLK] 
    \tto 
  \bbZ(\GLK)\simeq
  \HK_0(\sC)
$$
and that 
$$
\sum_{i=0,1}(-1)^i\bigq{\sTor^{S'\times_SS'}_i(\Gamma_g,\Delta_{S'})}=-\ar(g)
$$
for every $g\in \GLK$.
\end{proof}

\subsection{Twisted Bloch conductor formula}

So far, we have only considered a small bit of intersection theory for arithmetic schemes.
Namely, we have restricted our attention to the intersection \emph{with the diagonal}.
However, our methods allow to understand the intersection with any object in $\Coh(X\times_SX)$. We briefly discuss this here.

\sssec{}

An object $M \in \Coh(X\times_SX)$ gives rise to an $A$-linear dg-functor
$$
  F_M: 
  \Perf(A)
    \to
  \Sing(X\times_SX).
$$
Since $\sT$ is a dualizable $\bigt{\sB,\Perf(A)}$-bimodule and $\Sing(X\times_SX)\simeq \sT^\op \otimes_{\sB}\sT$, the dg-functor $F_M$ corresponds to a $(\sB,\sB)$-bilinear dg-functor
$$
  \ev_M:
  \sT \otimes_A \sT^\op 
    \to
  \sB.
$$
Similarly, one gets a $(\sC,\sC)$-bilinear dg-functor
$$
  \ev_M':
  \sU \otimes_A \sU^\op 
    \to
  \sC,
$$
where we use the same notation as in \cite{beraldopippi24}.
As usual, this yields the dg-functor
$$
  \ev^{\HH}_M:
  \Sing(X\times_SX)
    \simeq 
  \sT^\op \otimes_{\sB}\sT
    \to
  \HH(\sB/A)
$$
sitting in a commutative
\begin{equation*}
    \begin{tikzcd}
    \sT \otimes_A \sT^\op 
    \rar
    \dar["\ev_M"]
    &
    \sT^\op \otimes_{\sB}\sT
    \simeq
    \sU^\op \otimes_{\sC}\sU
    \dar["\ev^{\HH}_M"]
    &
    \lar
    \sU \otimes_A \sU^\op 
    \dar["\ev_M'"]
    \\
    \sB
    \rar["\ev^{\HH}_{\sB}"]
    &
    \HH(\sB/A)
    \simeq
    \HH(\sC/A)
    &
    \lar["\ev^{\HH}_{\sC}"]
    \sC.
    \end{tikzcd}
\end{equation*}

\sssec{} 

Applying $\HK_0$ to the composition
$$
  \Coh(X\times_SX)
    \to
  \Sing(X\times_SX)
    \xto{\ev^{\HH}_M}
  \HH(\sB/A)
    \xto{\oint}
  \Sing(\bbA^1_S[-1])_s,
$$
one gets the homomorphism
$$
  \sG_0(X\times_SX)
    \to
  \bbZ
$$
$$
  [N] 
    \mapsto
  \sum_{i=0,1}(-1)^i\deg \bigq{\ul \sTor^{X\times_SX}_i(M,N)},
$$
which was denoted by $[\![M,-]\!]_S$ in \cite{katosaito04}.

\sssec{}\label{rmk: very general BCC}

Let $M \in \Coh(X\times_SX)$ and $\uK \subseteq \uL$ a sufficiently large finite Galois extension as in the proof of Theorem \ref{thm: gBCC}.
Let us use the same notation as in \cite{beraldopippi24}. 
In particular, let
$$
  \nu 
  :=
  p_* \Phi_{X/S}\bigt{\Ql{,X}(\beta)}
    \in
  \Mod_{\Ql{,S}(\beta)}\bigt{\Shv^{\IK}(S)}.
$$

Recall the equivalence 
$$
  \rl_S \bigt{\Sing(X\times_SX)}
    \simeq 
  \Bigt{
        \nu^{\IK}[-1]\otimes_{\Ql{,S}^{\uI}(\beta)}\nu^{\IK}[-1]
        }
    \oplus 
  \Bigt{
        (\nu^\IL/\nu^\IK)\otimes_{\Ql{,S}^{\uI}(\beta)(\GLK)}(\nu^\IL/\nu^\IK)
        }
$$
proven in \cite[Proposition 5.6.3]{beraldopippi24},
and the fact that $\nu^{\IK}[-1]$ (respectively, $\nu^\IL/\nu^\IK$) is a dualizable $\Ql{,S}^{\uI}(\beta)$-module 
(respectively, $\bigt{\Ql{,S}^{\uI}(\beta)(\GLK),\Ql{,S}^{\uI}(\beta)}$-bimodule),
see \cite[Proposition 5.4.1]{beraldopippi24} (respectively, \cite[Proposition 5.5.5]{beraldopippi24}).

Using these, we can define the \emph{$M$-twisted evaluations}
$$
  \ev^M_{\nu^{\IK}[-1]}: 
  \nu^{\IK}[-1]\otimes_{\Ql{,S}^{\uI}(\beta)}\nu^{\IK}[-1] 
    \to 
  \Ql{,S}^{\uI}(\beta),
$$
$$
  \ev^M_{\nu^{\IL}/\nu^\IK}: 
  (\nu^\IL/\nu^\IK)\otimes_{\Ql{,S}^{\uI}(\beta)}(\nu^\IL/\nu^\IK)
    \to
  \Ql{,S}^{\uI}(\beta)(\GLK).
$$

\sssec{} 

Similarly, for any $N \in \Coh(X\times_SX)$, we obtain \emph{$N$-twisted coevaluations} as follows: 
$$
  \coev^N_{\nu^{\IK}[-1]}:\Ql{,S}^{\uI}(\beta)
    \to
  \nu^{\IK}[-1]\otimes_{\Ql{,S}^{\uI}(\beta)}\nu^{\IK}[-1],
$$
$$
  \coev^N_{\nu^{\IL}/\nu^\IK}: 
  \Ql{,S}^{\uI}(\beta)
    \to 
  (\nu^\IL/\nu^\IK)\otimes_{\Ql{,S}^{\uI}(\beta)(\GLK)}(\nu^\IL/\nu^\IK).
$$

\sssec{} 
We can then consider the compositions
$$
  \Tr^{(M,N)}_{\Ql{,S}^{\uI}(\beta)}(\nu^\IK[-1]):
  \Ql{,S}^\IK(\beta)
    \xto{\coev^N_{\nu^{\IK}[-1]}}
  \nu^{\IK}[-1]\otimes_{\Ql{,S}^{\uI}(\beta)}\nu^{\IK}[-1] 
    \xto{\ev^M_{\nu^{\IK}[-1]}} 
  \Ql{,S}^\IK(\beta),
$$
\begin{align*}
    \Tr^{(M,N)}_{\Ql{,S}^{\uI}(\beta)(\GLK)}(\nu^\IL/\nu^\IK):\Ql{,S}^\IK(\beta)
    & 
    \xto{\coev^N_{\nu^{\IL}/\nu^\IK}}(\nu^\IL/\nu^\IK)\otimes_{\Ql{,S}^{\uI}(\beta)(\GLK)}(\nu^\IL/\nu^\IK) 
    \\
    & 
    \xto{\ev^{M,\HH}_{\nu^{\IL}/\nu^\IK}}
    \HH\bigt{\Ql{,S}^{\uI}(\beta)(\GLK)/\Ql{,S}^{\uI}(\beta)}. 
\end{align*}

\sssec{} 
Repeating the steps of the above proof, one obtains the following equality:
$$
  [\![ M,N ]\!]_S
    = 
  -\dimtot^{(M,N)} (\Phi)
    \in 
  \bbZ,
$$
where
$$
  -\dimtot^{(M,N)} (\Phi)
    \\
    := 
  \Tr^{(M,N)}_{\Ql{,S}^{\uI}(\beta)}(\nu^\IK[-1])
  -\ar \bigt{ \Tr^{(M,N)}_{\Ql{,S}^{\uI}(\beta)(\GLK)}(\nu^\IL/\nu^\IK)}.
$$

\printbibliography

\end{document}